% -*- TeX -*- -*- US -*- -*- Soft -*-
\documentclass[11pt]{amsart}
\usepackage{amsmath}
\usepackage{amssymb}
\usepackage{array}
\usepackage{amscd}
\usepackage{pb-diagram}
\usepackage{comment}
\usepackage{amssymb}
\usepackage{graphicx}
\usepackage{subcaption}
\usepackage{xcolor}

\usepackage[shortlabels]{enumitem}
\usepackage{bm}
\usepackage{hyperref}
\usepackage{tikz-cd}

\usepackage[english]{babel}
\usepackage[utf8]{inputenc}

\sloppy % Be less demanding about overfull errors
\def\refeq#1{\if\workingver y(\ref{#1})-[[#1]]\else(\ref{#1})\fi}
\def\refth#1{\if\workingver y\ref{#1}-[[#1]]\else\ref{#1}\fi}
\def\mylabel#1{\if\workingver y\label{#1}{\bf\ \ [[#1]]\ \ }\else\label{#1}\fi}
\def\mybibitem#1{\if\workingver y\bibitem{#1}{\bf\ \ [[#1]]\ \
}\else\bibitem{#1}\fi}

\newfont{\msam}{msam10}
\newfont{\msbm}{msbm10}
%\newsymbol\mvmap 1313

%\def\refeq#1{(\ref{#1})}
%\def\refth#1{\ref{#1}}
%\def\mylabel#1{\label{#1}}
%\def\mybibitem#1{\bibitem{#1}}

\def\articletheorems{
\newtheorem{thm}{Theorem}[section]
\newtheorem{lem}[thm]{Lemma}

\newtheorem{defn}[thm]{Definition}
\newtheorem{cor}[thm]{Corollary}
\newtheorem{prop}[thm]{Proposition}

\newtheorem{algo}{Algorithm}[section] % For algorithms in tables
  % For algorithms (MM)

}

\def\cA{\text{$\mathcal A$}}

\def\cK{\text{$\mathcal K$}}

\def\cM{\text{$\mathcal M$}}

\def\cP{\text{$\mathcal P$}}

\def\cT{\text{$\mathcal T$}}

\def\cV{\text{$\mathcal V$}}

\newcommand{\cl}{\operatorname{cl}}
\newcommand{\opn}{\operatorname{opn}}

\newcommand{\Int}{\operatorname{int}}
\newcommand{\inte}{\operatorname{int}}

\newcommand{\dom}{\operatorname{dom}}

\newcommand{\im}{\operatorname{im}}

\renewcommand{\emptyset}{\varnothing}

\newcommand{\Inv}{\operatorname{Inv}}

\def\begeq#1{\begin{equation}\mylabel{#1}}
\def\endeq{\end{equation}}

\def\mathobj#1{\mbox{$#1$}}

\def\NN{\mathobj{\mathbb{N}}}
\def\PP{\mathobj{\mathbb{P}}}

\def\RR{\mathobj{\mathbb{R}}}

\def\ZZ{\mathobj{\mathbb{Z}}}

\def\rep#1{\mbox{$\langle#1\rangle$}}

    % for Interval Newton method

\def\setof#1{\mbox{$\{\,#1\,\}$}}

%\definecolor{gray7}{gray}{0.7} % 30%

\def\0#1{\hbox{\kern25pt}$ #1 $\\}
\def\1#1{\hbox{\kern40pt}$ #1 $\\}
\def\2#1{\hbox{\kern55pt}$ #1 $\\}
\def\3#1{\hbox{\kern70pt}$ #1 $\\}

\newcounter{li}

\def\begalg#1{\begin{algo}\mylabel{#1}\normalshape:\small\baselineskip 10pt\\}
\def\endalg{\end{algo}}

\def\Figures(include=#1,cat=#2){
  \renewcommand{\textfraction}{.20}
  \renewcommand{\topfraction}{.80}
  \renewcommand{\bottomfraction}{.80}
  \renewcommand{\floatpagefraction}{.80}
  \newcount\figcount
  \figcount=0
%  \message{Top fraction is: \topfraction}
  \let\includefigures=#1
  \def\figcat{#2}
}

\def\FigureFromFile[#1][#2](#3)#4
{
  \begin{figure}[htbp]
     \global\advance\figcount by 1
     \if\includefigures y\special{anisoscale #1.wmf, \the\hsize #2}\fi
     \vspace{#2}
     \caption{#4}
     \mylabel{#3}
   \end{figure}
}

\def\FigureFromFileTwoD[#1][#2,#3](#4)#5
{
  \begin{figure}[htbp]
     \global\advance\figcount by 1
     \if\includefigures y\special{anisoscale #1.wmf, #2 #3}\fi
     \vspace{#2}
     \caption{#5}
     \mylabel{#4}
   \end{figure}
}

\def\FigureF<#1>[#2](#3)#4
{
  \begin{figure}[htbp]
     \global\advance\figcount by 1
     \if\includefigures y\special{anisoscale \figcat/fig\number\figcount.wmf,
       \the\hsize #2}
     \fi
     \if\includefigures p
       \leavevmode
       \epsfxsize=\hsize
       \epsffile{#1}
     \fi
     \if\includefigures y
          \vspace{#2}
     \fi
     \caption{#4}
     \mylabel{#3}
   \end{figure}
}

\def\Figure[#1](#2)#3
{
  \begin{figure}[htbp]
     \global\advance\figcount by 1
     \if\includefigures y\special{anisoscale \figcat/fig\number\figcount.wmf,
       \the\hsize #1}
     \fi
     \if\includefigures p
       \leavevmode
       \epsfxsize=\hsize
       \epsffile{fig\number\figcount.eps}
     \fi
     \if\includefigures y
          \vspace{#1}
     \fi
     \caption{#3}
     \mylabel{#2}
   \end{figure}
}

\def\0{\hbox{\kern5pt}}
\def\1{\hbox{\kern20pt}}
\def\2{\hbox{\kern35pt}}
\def\3{\hbox{\kern50pt}}
\def\4{\hbox{\kern65pt}}
\def\5{\hbox{\kern80pt}}
\def\6{\hbox{\kern95pt}}
 %$

% The keywords needed for algorithms
% !!! Do not put \; in front or back of the general definition below
%     because this spoils the indentation of the algorithms !!!
%     You may add it in those particular definitions which
%     never appear at the begining of the line like to or do (MM)

  %for boolean function TK.28.02.03

% Do not use true and false as keywords!!!
% They are literals and should be written
% \texttt{false} and \texttt{true}  MM, 28-04-2003
%
%\def\kwtrue{\kw{TRUE}\;}  %TK.28.02.03
%\def\kwfalse{\kw{FALSE}\;}  %TK.28.02.03
%
% Failure shoud be used in algorithms as \texttt{"Failure"}
% MM, 28-04-2003
%
%\def\kwfailure{\kw{FAILURE}\;}  %TK.28.02.03

 %for the subset propery (boolean) TK. 28.02.03.

% Single characters in tt to be used as variables in alogrithms

%\input correctmc
%\screenformat % Defiend in correctm. Turns on narrow margins (via geometry package)
%\manualformat  % Defiend in correctm. Turns on wide margins and double spacing (via geometry package)
\let\visiblecomments y % Defiend in correctm. Turns on (y) or off (n) visibility of comments

 % as above

\newcommand{\currentDate}{\today}
\def\PP{\mathobj{\mathbb{P}}}

\newcommand{\cTop}{\mathcal{T}^{\operatorname{op}{}}}
\newcommand{\Xop}{X^{\operatorname{op}{}}}
\newcommand{\cVop}{\mathcal{V}^{\operatorname{op}{}}}
\newcommand{\op}{\operatorname{op}}
\newcommand{\mouth}{\operatorname{mo}}
\newcommand{\mo}{\operatorname{mo}}
\newcommand{\supp}{\operatorname{supp}}

\def\vclass#1{[#1]_{\cV}}

\newcommand{\lep}[2][]{#2^{\sqsubset#1}}
\renewcommand{\rep}[2][]{#2^{\sqsupset#1}}

\newcommand{\cVp}{\mathcal{V}^+}
\newcommand{\cVm}{\mathcal{V}^-}
\newcommand{\uimp}{\operatorname{uim}^+}
\newcommand{\uimm}{\operatorname{uim}^-}

\newcommand{\inv}{\operatorname{Inv}}
\newcommand{\rank}{\operatorname{rank}}
\newcommand{\con}{\operatorname{Con}}

\newcommand{\mto}{\multimap}
\newcommand{\pto}{\nrightarrow}

\newcommand{\sol}{\operatorname{Sol}}
\newcommand{\esol}{\operatorname{eSol}}
\newcommand{\esolp}{\operatorname{eSol}_\cV^+}
\newcommand{\esolm}{\operatorname{eSol}_\cV^-}
\newcommand{\paths}{\operatorname{Path}}
\newcommand{\pathsv}{\operatorname{Path}_\cV}

\newcommand{\valpha}{\cV^-}
\newcommand{\vomega}{\cV^+}

\newcommand{\Piv}{\Pi_\cV}
\newcommand{\pipl}{\pi_\cV^+}
\newcommand{\pimn}{\pi_\cV^-}

\newcommand{\restr}[1]{|_{#1}}

\graphicspath{ {imgs/} }

\articletheorems

%%%%%%%%%%%%%%%%%%%%%%%%%%%%%%%%%%%%%%%%%%%%%%%%%%%%%%%%%%%%%%%%%
%%%%%%%%%%%%%%%%%%%%%%%%%%%%%%%%%%%%%%%%%%%%%%%%%%%%%%%%%%%%%%%%%
%%%%%%%%%%%%%%%%%%%%%%%%%%%%%%%%%%%%%%%%%%%%%%%%%%%%%%%%%%%%%%%%%

\begin{document}

\author{Micha\l{} Lipi\'nski}
\address{Micha\l{} Lipi\'nski, Division of Computational Mathematics,
  Faculty of Mathematics and Computer Science,
  Jagiellonian University, ul.~St. \L{}ojasiewicza 6, 30-348~Krak\'ow, Poland
}
\email{michal.lipinski@uj.edu.pl}
\author{Jacek Kubica}
\address{Jacek Kubica, Division of Computational Mathematics,
  Faculty of Mathematics and Computer Science,
  Jagiellonian University, ul.~St. \L{}ojasiewicza 6, 30-348~Krak\'ow, Poland
}
\email{jacek.kubica@student.uj.edu.pl}
\author{Marian Mrozek}
\address{Marian Mrozek, Division of Computational Mathematics,
  Faculty of Mathematics and Computer Science,
  Jagiellonian University, ul.~St. \L{}ojasiewicza 6, 30-348~Krak\'ow, Poland
}
\email{Marian.Mrozek@uj.edu.pl}
\author{Thomas Wanner}
\address{Thomas Wanner, Department of Mathematical Sciences,
George Mason University, Fairfax, VA 22030, USA
} \email{twanner@gmu.edu}
\date{today}
\thanks{
Research of  M.L.\ was partially supported by
  the Polish National Science Center under Preludium Grant No. 2018/29/N/ST1/00449.
M.M.\ was partially supported by
  the Polish National Science Center under Ma\-estro Grant No. 2014/14/A/ST1/00453.
   T.W.\ was partially supported by NSF grant 
   DMS-1407087 and by the Simons Foundation under Award 581334.
}
\subjclass[2010]{Primary: 37B30; Secondary: 37E15, 57M99, 57Q05, 57Q15.}
 \keywords{Combinatorial vector field, finite topological space,
 discrete Morse theory,  isolated invariant set, Conley theory.}

\title[Conley-Morse-Forman theory on finite topological spaces]
{Conley-Morse-Forman theory for generalized combinatorial multivector fields on finite topological spaces}

\date{Version compiled on \currentDate}

\begin{abstract}
  We generalize and extend the Conley-Morse-Forman theory for combinatorial multivector fields introduced in \cite{Mr2017}.
  The generalization consists in dropping the restrictive assumption in \cite{Mr2017} that every multivector has a unique maximal element.
  The extension is from the setting of Lefschetz complexes to the more general situation of finite topological spaces.
  We define isolated invariant sets, isolating neighbourhoods, Conley index and Morse decompositions. 
  We also establish the additivity property of the Conley index and the Morse inequalities.
  
\end{abstract}

\maketitle

\begin{center}
Version compiled on \today
\end{center}

%%%%%%%%%%%%%%%%%%%%%%%%%%%%%%%%%%%%%%%%%%%%%%%%%%%%%%%%%%%%%%%%%
%%%%%%%%%%%%%%%%%%%%%%%%%%%%%%%%%%%%%%%%%%%%%%%%%%%%%%%%%%%%%%%%%
%%%%%%%%%%%%%%%%%%%%%%%%%%%%%%%%%%%%%%%%%%%%%%%%%%%%%%%%%%%%%%%%%

\section{Introduction}
\label{sec:intro}

The combinatorial approach to dynamics has its origins in two papers by Robin Forman \cite{Fo98a,Fo98b} published in the late 1990s.
Central to the work of Forman is the concept of a combinatorial vector field.
One can think of a combinatorial vector field as a partition of a collection of cells of a cellular complex into combinatorial vectors which may be singletons (critical vectors or critical cells) or doubletons such that one element of the doubleton is a face of codimension one
of the other (regular vectors).
The original motivation of Forman was the presentation of a combinatorial analogue
of classical Morse theory.
However, soon the potential for applications of such an approach was discovered in data science.
Namely, the concept of combinatorial vector field enables direct applications
of the ideas of topological dynamics to data and eliminates the need of the cumbersome
construction of a classical vector field from data.

Recently, B. Batko, T. Kaczynski, M. Mrozek and Th. Wanner \cite{BKMW2020, KaMrWa2016},
in an attempt to build formal ties between
the classical and combinatorial Morse theory,
extended the combinatorial theory of Forman to Conley theory \cite{Conley1978}, a generalization of Morse theory.
In particular, they defined the concept of an isolated invariant set, the Conley index and Morse decomposition
in the case of a combinatorial vector field on the collection of simplices of a simplicial
complex. Later, M. Mrozek \cite{Mr2017} observed that certain dynamical structures, in particular
homoclinic connections, cannot have an analogue for combinatorial vector fields and as a remedy proposed
an extension of the concept of combinatorial vector field, a combinatorial multivector field.
We recall that in the collection of cells of a cellular complex
there is a natural partial order induced by the face relation.
Every combinatorial vector in the sense of Forman is convex with respect to this partial order.
A combinatorial multivector in the sense of \cite{Mr2017} is defined as a convex collection of cells
with a unique maximal element, and a combinatorial multivector field is then
defined as a partition of cells into multivectors.
The results of \cite{Mr2017} were presented in the algebraic setting of chain complexes
with a distinguished basis (Lefschetz complexes), an abstraction of the chain complex of a cellular complex
already studied by S.\ Lefschetz \cite{Le1942}.
The results of Forman were earlier generalized to the setting of Lefschetz complexes in \cite{JW2009,Ko2005,Sk2006}.

The aim of this paper is a threefold advancement of the results of \cite{Mr2017}.
We generalize the concept of combinatorial multivector field by lifting the assumption
that a multivector has a unique maximal element. This assumption was introduced in
\cite{Mr2017} for technical reasons but turned out to be a barrier for adapting the
techniques of continuation in topological dynamics to the combinatorial setting.
We change the setting from Lefschetz complexes to the more general finite topological spaces.
The combinatorial Morse theory in such a setting was introduced in \cite{Mi2012}.
And, following the ideas of \cite{DJKKLM2017}, we define the dynamics associated with a combinatorial multivector field
in a less restrictive way, better adjusted to persistence theory for combinatorial dynamics.

In this extended and generalized setting we define the concepts of isolated invariant set and Conley index.
We also define attractors, repellers, attractor-repeller pairs and Morse decompositions
and provide a topological characterization of attractors and repellers.
Furthermore, we prove the Morse equation  for Morse decompositions, and finally
deduce from it the Morse inequalities.

The organization of the paper is as follows.
  In Section~\ref{sec:main-results} we present the main results of the paper for an elementary geometric example.
  In Section~\ref{sec:preliminaries} we recall basic concepts and facts needed in the paper.
  Section~\ref{sec:dcmvf} is devoted to the study of the dynamics of combinatorial
    multivector fields and the introduction of isolated invariant sets.
  In Section~\ref{sec:index-pairs-conley-index} we define index pairs and the Conley index. 
  In Section~\ref{sec:attr-rep-lim} we investigate limit sets, attractors and repellers in the combinatorial setting. 
  Finally, Section~\ref{sec:morse-decomposition} is concerned with Morse decompositions and Morse inequalities for combinatorial multivector fields.

%%%%%%%%%%%%%%%%%%%%%%%%%%%%%%%%%%%%%%%%%%%%%%%%%%%%%%%%%%%%%%%%%
%%%%%%%%%%%%%%%%%%%%%%%%%%%%%%%%%%%%%%%%%%%%%%%%%%%%%%%%%%%%%%%%%
%%%%%%%%%%%%%%%%%%%%%%%%%%%%%%%%%%%%%%%%%%%%%%%%%%%%%%%%%%%%%%%%%

\section{Main results}\label{sec:main-results}

In this section we present the main ideas and results of the paper using a simple simplicial example.  
We also indicate the main conceptual differences between our combinatorial approach and the classical theory.

%%%%%%%%%%%%%%%%%%%%%%%%%%%%%%%%%%%%%%%%%%%%%%%
\subsection{A simple combinatorial flow on a simplicial complex}

The results of this paper apply to an arbitrary, finite, $T_0$ topological space. 
Among natural examples of such spaces are collections of cells of a finite cellular complex, in particular a simplicial complex.

Let $X$ be such a collection of simplices of a finite simplicial complex.
The face relation between cells is a partial order in $X$ (see Fig. \ref{fig:complex-example}) which by the Alexandrov Theorem (see Theorem \ref{thm:alexandroff}), induces a $T_0$ topology on $X$.
This topology is closely related to the $T_2$ topology of the polytope of the simplicial complex, that is, the union of the simplices in $X$. 
We can see it by identifying the nodes of this poset with open simplices.
Then, the set $A\subset X$ is open (respectively closed) in the $T_0$ topology of $X$ if and only if the union of the corresponing open simplices is open (respectively closed) in the Hausdorff topology of the polytope of $X$.

\begin{figure}
  \includegraphics[width=0.75\textwidth]{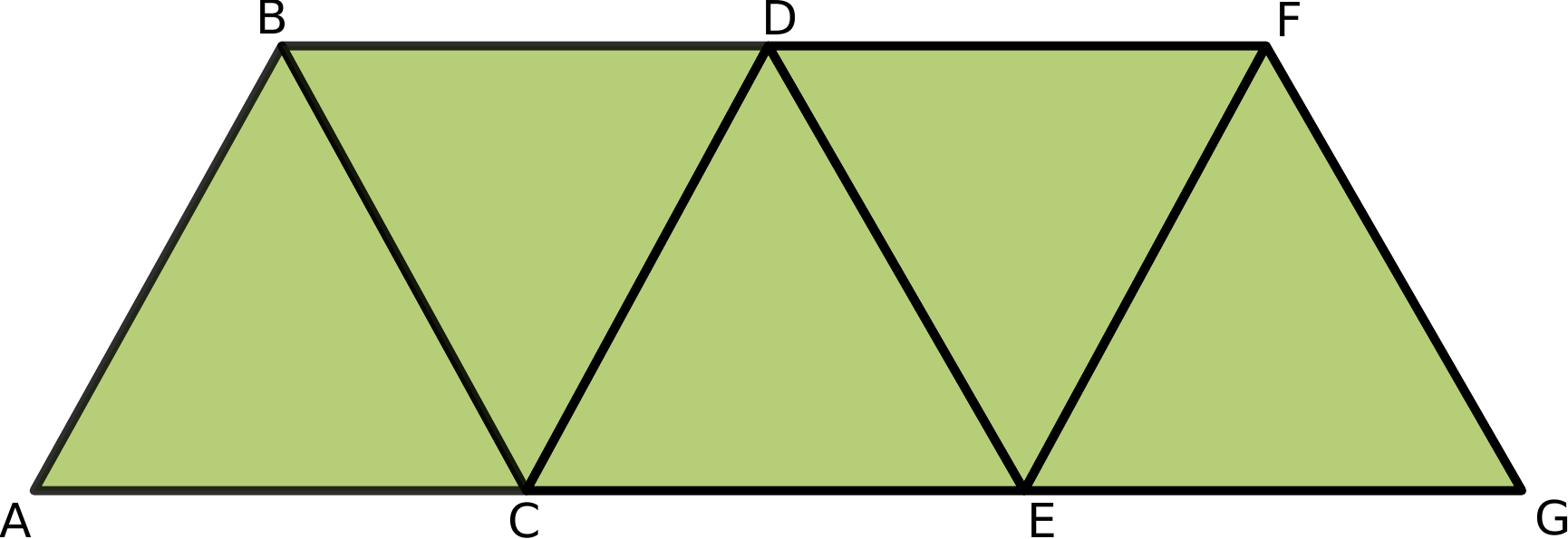}

  \vspace{0.4cm}
  \includegraphics[width=\textwidth]{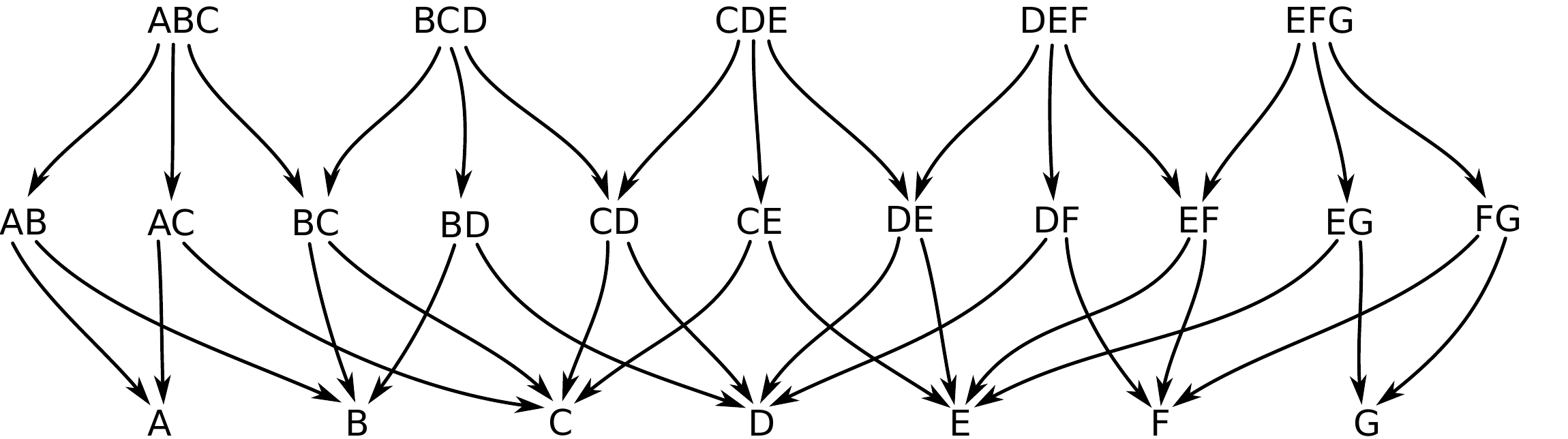}
  \caption{An example of a simplicial complex (top) and the poset (a finite $T_0$ topological space, bottom) induced by its face relation.}
  \label{fig:complex-example}
\end{figure}

Our combinatorial counterpart of the classical concept of dynamical system takes the form of the combinatorial flow induced by a combinatorial multivector field defined as follows.
By a \emph{multivector} we mean a nonempty, locally closed (or convex in terms of posets, see Proposition \ref{prop:lcl-in-ftop}) subset of $X$. 
A \emph{combinatorial multivector field} $\cV$ is a partition of $X$ into multivectors.  
We say that a multivector $V$ is \emph{critical} if $H(\cl V, \cl V\setminus V)\neq 0$,
where $H(\cdot,\cdot)$ denotes relative singular homology. Otherwise we call $V$ \emph{regular}.
One can think of a multivector as a "black box" where the inner dynamics cannot be determined.  
The only available information is the direction of the flow at the boundary of a multivector. 
In particular, our construction assumes that the flow may exit the closure of a multivector $V$ only through $\mo V:=\cl V\setminus V$ and may enter the closure of $V$ through $V$.
Hence, $\cl V$ may be interpreted as an isolating block with exit set $\mo V$ and the relative homology $H(\cl V, \mo V)$ may be interpreted as the Conley index of $\cl V$.
For the definition of Conley index and isolating block in the classical setting
see~\cite{Conley1978, conley:easton:71a, stephens:wanner:14a}.
\begin{figure}[b]
  \includegraphics[width=\textwidth]{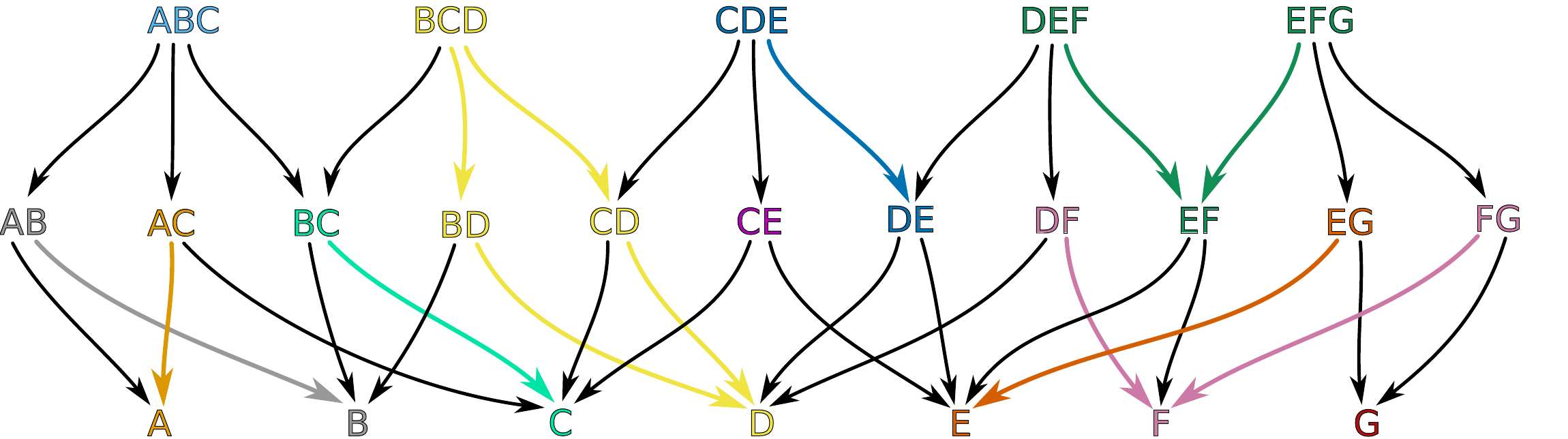}
  \caption{A partition of a poset into multivectors (convex subsets). Nodes as well as corresponding arrows of each multivector are highlighted with a distinct color. }
  \label{fig:mvf-poset-example}
\end{figure}

Figure \ref{fig:mvf-poset-example} shows an example of a multivector field on the poset in Figure~\ref{fig:complex-example}.
It consists of the following multivectors:
\begin{center}
  $\{A, AC\}, \{ABC\}, \{B, AB\}, \{C, BC\}, \{CE\}, \{D, BD, CD, BCD\},$\\
  $\{DE, CDE\}, \{E, EG\}, \{EF, DEF, EFG\}, \{F, DF, FG\}, \{G\}$
\end{center}
Every multivector in Figure \ref{fig:mvf-poset-example} is highlighted with a different color. 
The criticality of a multivector $V$, as a subset of a finite topological space, 
can be easily determined using the order complex of $\cl V$ (see Proposition \ref{prop:simplicial_rel_hom_of_posets}).
 There are five critical multivectors in the example in Figure \ref{fig:mvf-poset-example}:
\[ \{ABC\},\ \{CE\},\ \{DEF,EF,EFG\},\ \{DF,F,FG\},\ \{G\}.\]
Figure \ref{fig:mvf-complex-example} presents the same multivector field visualized in the polytope of the simplicial complex. 
Yellow regions indicate multivectors.  
With respect to the "black box" interpretation of a multivector 
the dotted part of the boundary of a multivector indicates the outward-directed flow while the solid part of the boundary indicates the inward flow.

We define the combinatorial flow associated with the multivector field as the multivalued map
$\Pi_\cV:X\multimap X$ given by
\[ \Pi_\cV(x):= \cl x \cup [x]_\cV\]
where $[x]_\cV$ denotes the unique multivector in $\cV$ containing $x$.  
The first, closure component of the union indicates that, by default, the flow moves towards the boundary of the simplex $x$.
The second component reflects the "black box" nature of a multivector: we cannot exclude that a given point can reach any other point within the same multivector.

\begin{figure}[t]
  \includegraphics[width=\textwidth]{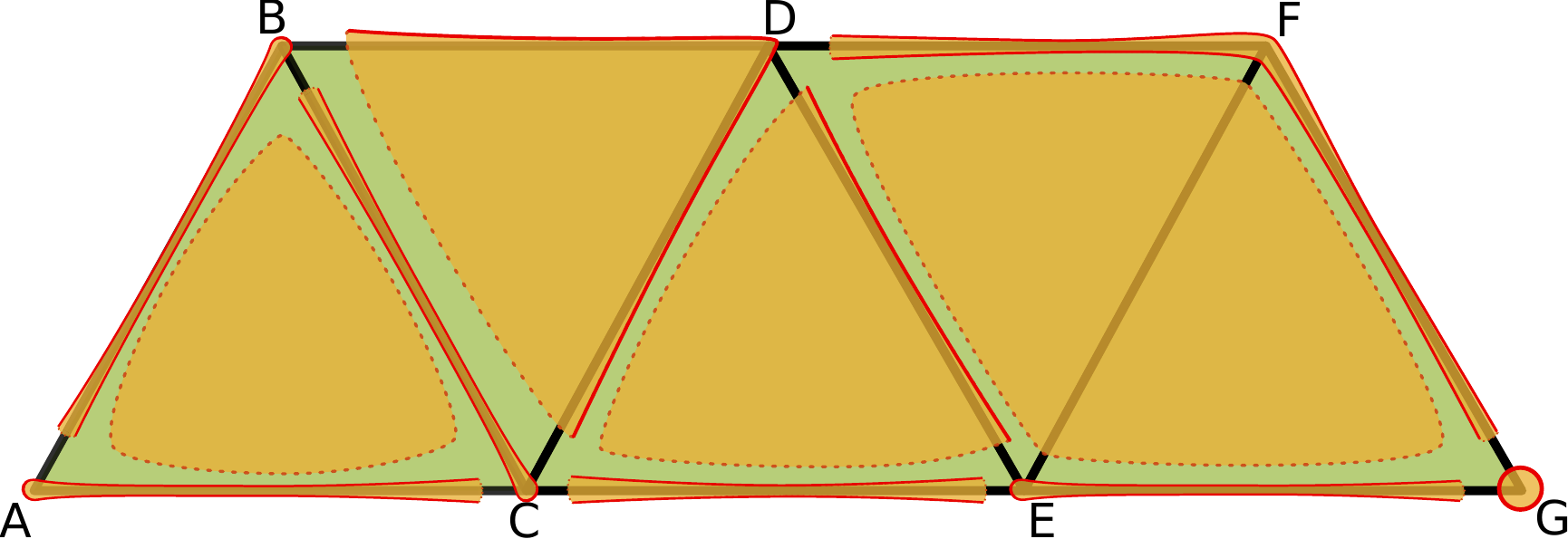}
  \caption{A geometric visualization of the combinatorial multivector field in Figure \ref{fig:mvf-poset-example}. 
  A multivector may be considered as a "black box" whose dynamics is known only via splitting its boundary into the exit and entrance parts.}
  \label{fig:mvf-complex-example}
\end{figure}

\begin{figure}[t]
  \includegraphics[width=\textwidth]{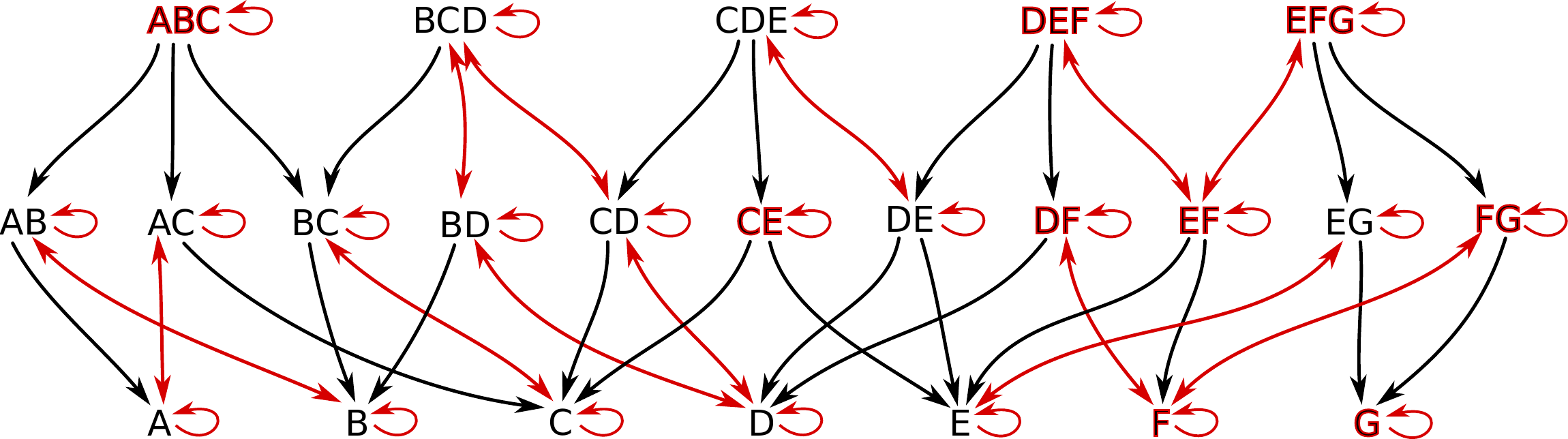}
  \caption{
    The combinatorial flow $\Piv$ of the multivector field in Figures \ref{fig:mvf-poset-example} and \ref{fig:mvf-complex-example} represented as the digraph $G_\cV$.
    Downward arrows are induced by the closure components of $\Piv$.
    Bi-directional edges and self-loops reflect dynamics within multivectors. 
    For clarity, we omit edges that can be obtained by between-level transitivity, e.g., the bi-directional connection between node $D$ and $BCD$. 
    The nodes of critical multivectors are bolded in red.
  }\label{fig:mvf-graph-example}
\end{figure}

The combinatorial flow $\Piv$ may be interpreted as a directed graph $G_\cV$ whose nodes are the simplices in $X$ and there is a directed arrow from $x$ to $y$ whenever $y\in\Piv(x)$. 
The graph $G_\cV$ for our example is presented in Figure \ref{fig:mvf-graph-example}.
The family of paths in the graph $G_\cV$ may be identified with the family of combinatorial solutions of the combinatorial flow $\Piv$, that is, maps $\gamma:A\rightarrow X$ such that $A\subset \ZZ$ is a bounded or unbounded $\ZZ$-interval and $\gamma(k+1)\in\Piv(\gamma(k))$ for all $k,k+1\in A$ (see Section \ref{subs:solutions} for precise definitions). 
We distinguish \emph{essential solutions}. An essential solution is a solution $\gamma$ 
such that if $\gamma(t)$ belongs to a regular multivector $V\in\cV$ then there exist a $k>0$ and an $l<0$ such that $\gamma(t+k),\gamma(t+l)\not\in V$. 
We restrict this assumption to regular multivectors, because for any critical multivector $V$ we have $H(\cl V, \mo V) \neq 0$, which, when interpreted as a nontrivial Conley index, implies that at least one solution stays inside $V$.
We denote by $\esol_\cV(x,A)$ the family of all essential solutions of $\cV$ contained in a set $A\subset X$ and passing through a point $x\in A$. 
An example of an essential solution $\gamma:\ZZ\rightarrow X$ for the multivector field in Figure \ref{fig:mvf-complex-example} is given by:
\[
  \gamma(t) = \begin{cases}
    CE  & t < 0,\\
    E & t \in \{0,2,3\},\\
    EG  & t \in \{1, 4\},\\
    G & t > 4.
    \end{cases} 
\]
We say that a set $S\subset X$ is \emph{invariant} if every $x\in S$ admits an essential solution through $x$ in $S$, that is, if $\esol_\cV(x,S)\neq\emptyset$. 
We say that $S$ is an \emph{isolated invariant} set if there exists a closed set $N$, called an \emph{isolating set} such that $S\subset N$, $\Piv(S)\subset N$ and every path in $N$ with endpoints in $S$ is a path in $S$. 
Note that our concept of isolating set is weaker than the classical concept of isolating neighborhood, because the maximal invariant subset of $N$ may not be contained in the interior of $N$.
The need of a weaker concept is motivated by the tightness in finite topological spaces.
In particular, an isolated invariant set $S$ may intersect the closure of another isolated invariant set $S'$ and be disjoint but not disconnected from $S'$.
For example, the sets $S_1:=\{A, AC, C, BC, B, AB\}$ and $S_2:=\{ABC\}$ are both isolated invariant sets isolated respectively by $N_1:=S_1$ and $N_2:=\cl S_1 = S_1\cup S_2$.
Observe that $S_1\subset N_2$.
Thus, the isolating set in the combinatorial setting of finite topological spaces is a relative concept.
Therefore, one has to specify each time which invariant set is considered as being isolated by a given isolating set. 

Given an isolated invariant set $S$ of a combinatorial multivector field $\cV$ we define index pairs similarly to the classical case (Definition \ref{def:index_pair}), we prove that $(\cl S, \mo S)$ is one of the possibly many index pairs for $S$ (Proposition \ref{prop:minimal_index_pair}) and we show that the homology of an index pair depends only on $S$, but not on the particular index pair (Theorem \ref{thm:index_pairs_isomorphism}).
This enables us to define the Conley index of an isolated invariant set $S$ (Definition \ref{def:iso-inv-set}) and the associated Poincar\'{e} polynomial
\[
  p_S(t):=\sum_{t=0}^\infty \beta_i(S)t^i,
\]
where $\beta_i(S):=\rank H_i(\cl S,\mo S)$ denotes the $i$th Betti number of the Conley index of $S$.
In our example in Figure \ref{fig:mvf-complex-example}, the Poincar\'{e} polynomials of the isolated invariant sets $S_1=\{A,AC,C,BC,B,AB\}$ and $S_2=\{ABC\}$ are respectively $p_{S_1}(t)=1+t$ and $p_{S_2}(t)=t^2$.

As in the classical case we define Morse decompositions (Definition \ref{def:morse_decomposition}).
Unlike the classical case, for a combinatorial multivector field $\cV$ we prove that the strongly connected components of the directed graph $G_\cV$ which admit an essential solution constitute the minimal Morse decomposition of $\cV$ (Theorem \ref{thm:sccs_morse_decomp}). 
For the example in Figure \ref{fig:mvf-complex-example} the minimal Morse decomposition consists of six isolated invariant sets:
\begin{align*}
  M_1 &= \{ A, AC, C, BC, B, AB\},\\
  M_2 &= \{ ABC\},\\
  M_3 &= \{ CE\},\\
  M_4 &= \{ DEF, EF, EFG\},\\
  M_5 &= \{ DF,F,FG\},\\
  M_6 &= \{ G\}.
\end{align*}
We say that an isolated invariant set $S$ is an \emph{attractor} (respectively a \emph{repeller}) if all solutions originating in it stay in $S$ in forward (respectively backward) time. 
There are two attractors in our example: $M_1$ is a periodic attractor, and $M_6$ is an attracting stationary point. 
Sets $M_2$ and $M_4$ are repellers, while $M_3$ and $M_5$ have characteristics of a saddle.

\begin{figure}[b]
  \includegraphics[width=0.4\textwidth]{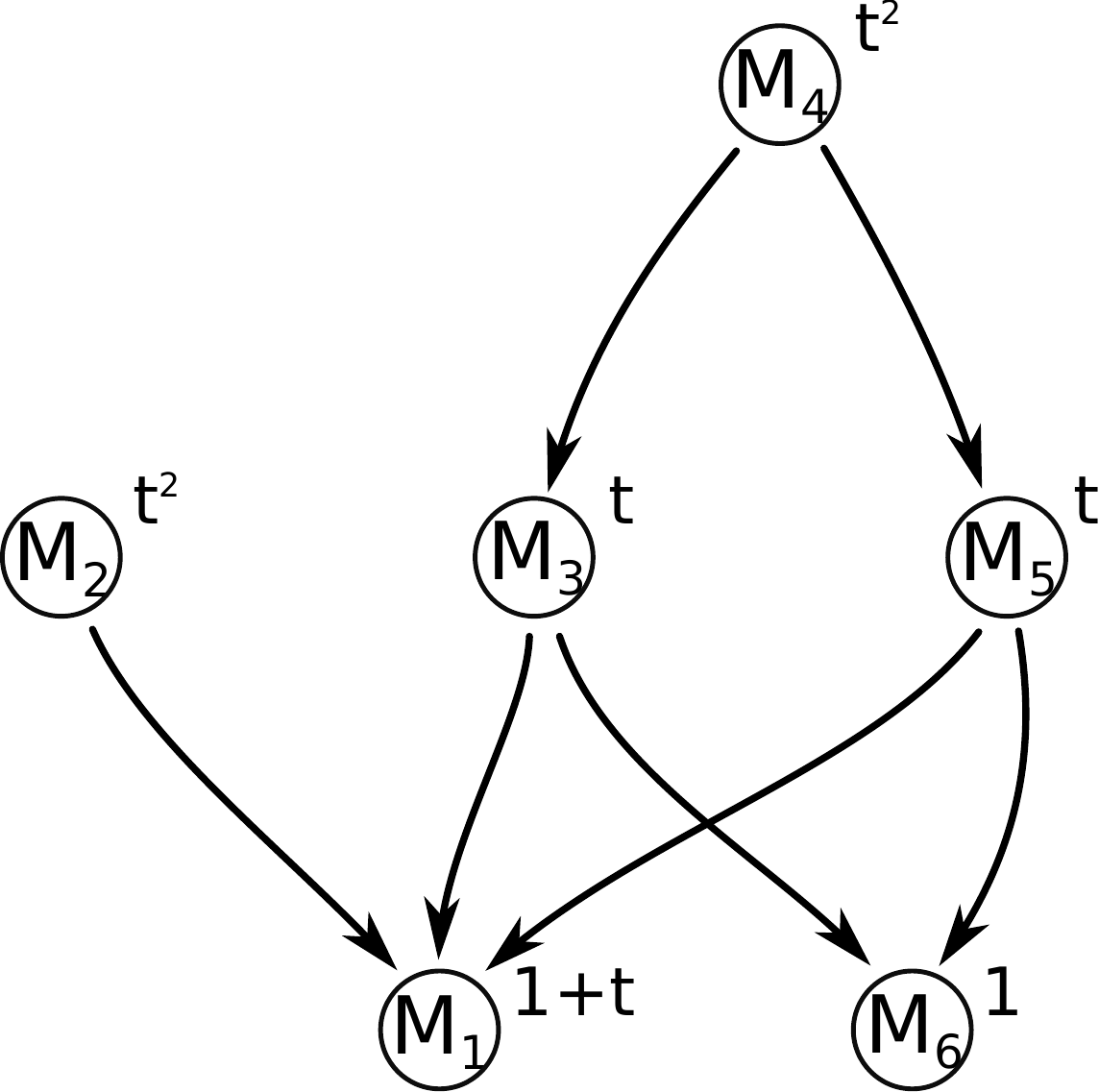}
  \caption{The Conley-Morse graph for the example in Figure \ref{fig:mvf-graph-example}.}
  \label{fig:conley-morse-graph}
\end{figure}

If there exists a path originating in $M_i$ and terminating in $M_j$, we say that there is a \emph{connection} from $M_i$ to $M_j$. 
The connection relation induces a partial order on Morse sets. 
The associated poset with nodes labeled with Poincar\'{e} polynomials is called the \emph{Conley-Morse graph} of the Morse decomposition, see also \cite{arai:etal:09a, bush:etal:12a}.

The Conley-Morse graph of the minimal Morse decomposition of the combinatorial multivector field in Figure \ref{fig:mvf-complex-example} is presented in Figure \ref{fig:conley-morse-graph}. 
The Morse equation (see Theorem \ref{thm:morse-equation}) for this Morse decomposition takes the form:
\[ 2t^2 + 3 t + 2 = 1 + (1+t) (1+2t).\]
As this brief overview of the results of this paper indicates, at least to some extent it is possible to construct a combinatorial analogue of the classical topological dynamics.
Such an analogue may be used to construct algorithmizable models of sampled dynamical systems as well as tools for computer assisted proofs in dynamics.
Translation of the problems in combinatorial dynamics to the language of the directed graph $G_\cV$ facilitates the algorithmic study of the models. 
However, we emphasize that the models cannot be reduced just to graph theory.
What is essential in the presented theory is the fact that the set of vertices of the directed graph constitutes a finite topological space.

%%%%%%%%%%%%%%%%%%%%%%%%%%%%%%%%%%%%%%%%%%%%%%%%%%%%%%%%%%%%%%%%%
%%%%%%%%%%%%%%%%%%%%%%%%%%%%%%%%%%%%%%%%%%%%%%%%%%%%%%%%%%%%%%%%%
%%%%%%%%%%%%%%%%%%%%%%%%%%%%%%%%%%%%%%%%%%%%%%%%%%%%%%%%%%%%%%%%%

\section{Preliminaries}\label{sec:preliminaries}

%%%%%%%%%%%%%%%%%%%%%%%%%%%%%%%%%%%%%%%%%%%%%%%
\subsection{Sets and maps}\label{sec:sets-and-maps}
We denote the sets of integers, non-negative integers, non-positive integers, and positive integers, respectively, by $\ZZ$, $\ZZ^+$, $\ZZ^-$, and~$\NN$. 
Given a set $A$, we write~$\#A$ for the number of elements in $A$ and we denote by $\cP(A)$ the family of all subsets of $X$.
We write $f:X\nrightarrow Y$ for a partial map from~$X$ to~$Y$, that is, a map defined on a subset $\dom{f}\subset X$, called the \emph{domain} of~$f$, and such that the set of values of $f$, denoted $\im f$, is contained in $Y$.

\emph{A multivalued map} $F: X\multimap Y$ is a map $F: X\rightarrow \cP(Y)$ which assigns to every point $x\in X$ a subset $F(x)\subset Y$. Given $A\subset X$, the \emph{image} of $A$ under $F$ is defined by
\begin{equation*}
  F(A):=\bigcup_{x\in A}F(a).
\end{equation*}
By the \emph{preimage} of a set $B\subset Y$ with respect to $F$ we mean the large preimage, that is,
\begin{equation}\label{eq:large_premage}
  F^{-1}(B) := \left\{x\in X\ \mid\ F(x)\cap B\neq\emptyset\right\}.
\end{equation}
In particular, if $B=\{y\}$ is a singleton, we get
\begin{equation*}
  F^{-1}(\{y\}) := \left\{x\in X\ \mid\ y\in F(x)\right\}.
\end{equation*}
Thus, we have a multivalued map $F^{-1}:Y\multimap X$ given by $F^{-1}(y):=F^{-1}(\{y\})$.  
We call it the \emph{inverse} of $F$.

%%%%%%%%%%%%%%%%%%%%%%%%%%%%%%%%%%%%%%%%%%%%%%%%%%%%%%%%%%%%%%%%%
\subsection{Relations and digraphs}
Recall that a binary relation or briefly a relation in a space $X$ is a subset $E\subset X\times X$. We write $xEy$ as a shorthand for $(x,y)\in E$.
The \emph{inverse} of $E$ is the relation 
\begin{equation*}
  E^{-1}:=\left\{ (y,x)\in X\times X\mid xEy\right\}.
\end{equation*}
Given a relation $E$ in $X$, the pair $(X,E)$ may be interpreted as a {\em directed graph} ({\em digraph}) with vertex set $X$, and edge set $E$.

Relation $E$ may also be considered as a multivalued map $E:X\multimap X$ with $E(x):=\{y\in X\mid xEy\}$.  
Thus, the three concepts: binary relation, multivalued map and directed graph are, in principle, the same and in this paper will be used interchangeably.

We recall that a \emph{path} in a directed graph $G=(X,E)$ is a sequence $x_0, x_1,\dots,x_k$ of vertices such that $(x_{i-1}, x_i)\in E$ for $i=1,2,\dots k$.
The path is \emph{closed} if $x_0 = x_k$. 
A closed path consisting of two elements is a \emph{loop}.
Thus, an $x\in X$ is a loop if and only if $x\in E(x)$.  
We note that loops may be present at some vertices of $G$ but at some other vertices they may be absent.

A vertex is \emph{recurrent} if it belongs to a closed path.
In particular, if there is a loop at $x\in X$, then $x$ is recurrent.
The digraph $G$ is {\em recurrent} if all of its vertices are recurrent.
We say that two vertices $x$ and $y$ in a recurrent digraph $G$ are equivalent if there is a path from $x$ to $y$ and a path from $y$ to $x$ in $G$. 
Equivalence of recurrent vertices in a recurrent digraph is easily seen to be an equivalence relation.
The equivalence classes of this relation are called \emph{strongly connected components} of digraph $G$.
They form a partition of the vertex set of $G$.

We say that a recurrent digraph $G$ is \emph{strongly connected} if it has exactly one strongly connected component.
A non-empty subset $A\subset X$ is \emph{strongly connected} if $(A, E\cap A\times A)$ is strongly connected. 
In other words, $A\subset X$ is strongly connected
if and only if for all $x,y\in A$ both~$\pathsv(x,y,A)$ and~$\pathsv(y,x,A)$ are non-empty.

%%%%%%%%%%%%%%%%%%%%%%%%%%%%%%%%%%%%%%%%%%%%%%%%%%%%%%%%%%%%%%%%%
\subsection{Posets}
\label{sec:posets}

Let $X$ be a finite set. We recall that a reflexive and transitive relation $\leq$ on $X$ is a \emph{preorder} and the pair $(X, \leq)$ is a \emph{preordered set}. 
If $\leq$ is also antisymmetric, then it is a \emph{partial order} and $(X,\leq)$ is a \emph{poset}. 
A partial order in which any two elements are comparable is a \emph{linear} (\emph{total}) \emph{order}.

Given a poset $(X,\leq)$, a set $A\subset X$ is \emph{convex} if $x\leq y\leq z$ with $x,z\in A,\ y\in X$ implies $y\in A$. 
It is an \emph{upper set} if $x\leq y$ with $x\in A$ and $y\in X$ implies $y\in A$.
Similarly, $A$ is a \emph{down set} with respect to $\leq$ if $x\leq y$ with $y\in A$ and $x\in X$ implies $x\in A$.
A \emph{chain} is a totally ordered subset of a poset.

The \emph{order complex} of $(X,\leq)$, denoted $\cK(X)$, is the abstract simplicial complex consisting of all nonempty chains of $X$. 
We denote its \emph{geometric realization} or \emph{polytope} by $|\cK(X)|$.
Note that the geometric realization is unique up to a homeomorphism.
Every point $\alpha\in|\cK(X)|$ may be represented as the \emph{barycentric combination} $\alpha=t_1x_1 + t_2x_2 +...+t_nx_n$ where $\sum_{i=1}^{n}t_i=1$, 
  the coefficients $t_i$ are positive and $x_1<x_2<...<x_n$ is a chain in $(X, \leq)$. 
This chain is called the \emph{support} of $\alpha$ and denoted $\supp{\alpha}=\{x_1, x_2,\dots,x_n\}$.
Given preordered sets $(X,\leq)$ and $(Y,\leq)$ a map $f:X\rightarrow Y$ is called \emph{order-preserving} if $x\leq x'$ implies $f(x)\leq f(x')$ for all $x,x'\in X$.

\begin{figure}
% \begin{tabular}{c c} 
    \begin{tabular}{m{0.45\textwidth} m{0.45\textwidth}} 
% sadas & dasda \\
  \includegraphics[width=0.45\textwidth]{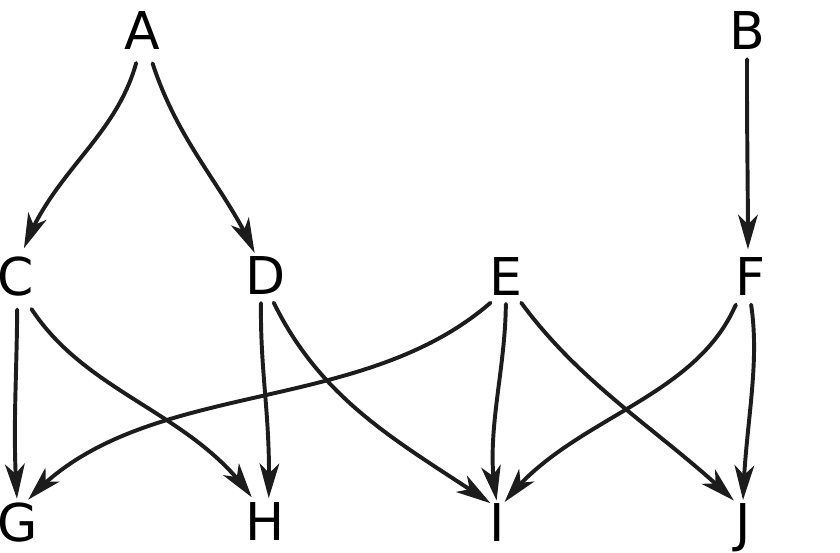} & \includegraphics[width=0.45\textwidth]{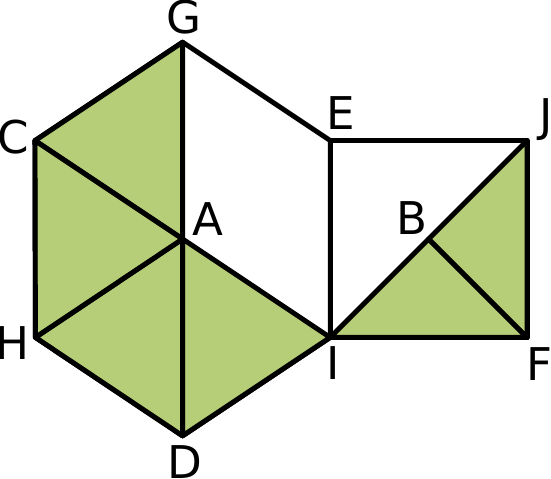} \\
\end{tabular}
  \caption{Left: an example of a poset (a finite topological space). Right: the order complex of the finite topological space from the left panel.}
\end{figure}

\begin{prop}\label{prop:order-complex-oper}
  Let $(X,\leq)$ be a poset and let $A, B\subset X$.Then 
  \begin{equation}\label{eq:order-complex-oper-cap}
    \cK(A\cap B) = \cK(A)\cap\cK(B) \quad \text{and}\quad |\cK(A\cap B)| = |\cK(A)|\cap|\cK(B)|.
  \end{equation}
  Moreover, if $A$ and $B$ are down sets, then
  \begin{equation}\label{eq:order-complex-oper-cup}
    \cK(A\cup B) = \cK(A)\cup\cK(B) \quad \text{and}\quad |\cK(A\cup B)| = |\cK(A)|\cup|\cK(B)|.
  \end{equation}
\end{prop}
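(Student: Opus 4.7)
The proof splits naturally into four verifications; the only nontrivial one is the equality $\cK(A\cup B)=\cK(A)\cup\cK(B)$ under the down-set hypothesis.

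For the intersection equalities, I would unfold the definition: a simplex $\sigma$ of $\cK(A\cap B)$ is by definition a nonempty chain with $\sigma\subseteq A\cap B$, which is equivalent to $\sigma$ being a nonempty chain in $A$ and in $B$ simultaneously, that is, $\sigma\in\cK(A)\cap\cK(B)$. No order-theoretic hypothesis on $A,B$ is needed here. For the geometric counterpart, the inclusion $|\cK(A\cap B)|\subseteq|\cK(A)|\cap|\cK(B)|$ is immediate from the combinatorial equality. For the reverse inclusion I would take $\alpha\in|\cK(A)|\cap|\cK(B)|$, use the uniqueness of the barycentric representation guaranteed by the positivity of the coefficients $t_i$ (which forces $\supp\alpha$ to be the unique chain carrying $\alpha$), deduce $\supp\alpha\in\cK(A)\cap\cK(B)=\cK(A\cap B)$, and conclude $\alpha\in|\cK(A\cap B)|$.

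The heart of the matter is the union case. The inclusion $\cK(A)\cup\cK(B)\subseteq\cK(A\cup B)$ is trivial; for the reverse one I would take a nonempty chain $\sigma=\{x_1<x_2<\dots<x_n\}\subseteq A\cup B$ and show by contradiction that $\sigma\subseteq A$ or $\sigma\subseteq B$. Assume neither holds. Then there exist indices $i,j$ with $x_i\notin A$ and $x_j\notin B$; since $\sigma\subseteq A\cup B$, we must have $x_i\in B$ and $x_j\in A$. Because $\sigma$ is a chain, $x_i$ and $x_j$ are comparable, so either $x_i\leq x_j$ or $x_j\leq x_i$. In the first subcase, $x_j\in A$ and $A$ being a down set force $x_i\in A$, contradicting $x_i\notin A$. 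In the second subcase, $x_i\in B$ and $B$ being a down set force $x_j\in B$, contradicting $x_j\notin B$. Hence $\sigma\in\cK(A)\cup\cK(B)$.

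Finally, for the geometric realization of the union I would again use uniqueness of supports: $|\cK(A)|\cup|\cK(B)|\subseteq|\cK(A\cup B)|$ is automatic, and conversely, if $\alpha\in|\cK(A\cup B)|$ then $\supp\alpha$ is a nonempty chain in $A\cup B$, which by the combinatorial statement already proved lies entirely in $A$ or entirely in $B$, placing $\alpha$ in $|\cK(A)|$ or in $|\cK(B)|$. The main conceptual point, and the only place where the down-set hypothesis is essential, is the dichotomy argument for chains; everything else is a formal bookkeeping consequence of the definition of $\cK(\cdot)$ and of the unique barycentric representation of points of the polytope.
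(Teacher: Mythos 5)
Your proof is correct and follows essentially the same strategy as the paper's: everything except the reverse inclusion $\cK(A\cup B)\subseteq\cK(A)\cup\cK(B)$ is formal, and that inclusion hinges on the down-set hypothesis combined with the total order of a chain. The paper argues slightly more directly by looking at the maximal element $x_k$ of the chain (if $x_k\in A$, the down-set property pulls the entire chain into $A$), whereas you reach the same dichotomy by contradiction with two witness elements; both are instantaneous consequences of the same observation, so the difference is purely stylistic.
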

\begin{proof}
  Property (\ref{eq:order-complex-oper-cap}) and inclusions $\cK(A)\cup\cK(B)\subset\cK(A\cup B)$, $|\cK(A)|\cup|\cK(B)|\subset|\cK(A\cup B)|$ are straightforward.
  To see that 
  $\cK(A\cup B) \subset \cK(A)\cup\cK(B)$ and $|\cK(A\cup B)| \subset |\cK(A)|\cup|\cK(B)|$
  consider a chain $x_1<x_2<\dots < x_k$ in $\cK(A\cup B)$.
  Without loss of generality we may assume that $x_k\in A$.
  Since $A$ is a down set, we see that $x_1<x_2<\dots < x_k$ is a chain in $\cK(A)\subset\cK(A)\cup\cK(B)$.
  Hence, $\cK(A\cup B)\subset\cK(A)\cup\cK(B)$. The inclusion $|\cK(A\cup B)| \subset |\cK(A)|\cup|\cK(B)|$ follows.
\end{proof}

For $A\subset X$ we write
\begin{align*}
  A^\leq&:=\{a\in X\ |\ \exists_{b\in A}\ a\leq b\},\\
  A^< &:=A^\leq\setminus A.
\end{align*}

\begin{prop}\label{prop:A-leq-down-set}
  Let $(X,\leq)$ be a poset and let $A\subset X$ be a convex set. Then the sets $A^\leq$ and $A^<$ are down sets.
\end{prop}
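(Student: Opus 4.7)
The plan is to verify the two down-set properties directly from the definitions, observing that convexity of $A$ is only needed for the second claim.

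First I would handle $A^\leq$, which does not require convexity at all. Given $y \in A^\leq$ and $x \in X$ with $x \leq y$, by the definition of $A^\leq$ there exists some $b \in A$ with $y \leq b$. Transitivity of $\leq$ then yields $x \leq b$, so $x \in A^\leq$. This shows $A^\leq$ is a down set.

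Next I would turn to $A^< = A^\leq \setminus A$. Take $y \in A^<$ and $x \leq y$. By the previous step $x \in A^\leq$, so it only remains to show $x \notin A$. I would argue by contradiction: suppose $x \in A$. Since $y \in A^\leq$, pick $b \in A$ with $y \leq b$, so that $x \leq y \leq b$ with both endpoints $x, b \in A$. Convexity of $A$ then forces $y \in A$, contradicting $y \in A^< = A^\leq \setminus A$. Hence $x \in A^<$, and $A^<$ is a down set.

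There is really no main obstacle here; the only subtle point is recognizing that convexity enters exactly once, namely to rule out $x \in A$ in the second argument, which is why the statement requires $A$ convex rather than arbitrary.
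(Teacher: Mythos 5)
Your proof is correct and follows essentially the same argument as the paper: show $A^\leq$ is a down set directly, then for $A^<$ use that $A^\leq$ is a down set and rule out membership in $A$ by applying convexity to a chain with both endpoints in $A$. Your version is marginally cleaner since you phrase the chain with non-strict inequalities $x \leq y \leq b$, matching the stated definition of convexity exactly, whereas the paper works with strict inequalities.
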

\begin{proof}
  Clearly, $A^\leq$ is a down set directly from the definition.
  To see it for $A^<$, consider an $a\in A^<$ and a $b\in X$ such that $b< a$. 
  The definitions of $A^\leq$ and $A^<$ imply that there exists a $c\in A$ such that $a\leq c$.
  Since $A^\leq$ is a down set we also have $b\in A^\leq$. 
  We cannot have $b\in A$, because otherwise $b< a< c$ which contradicts the convexity of $A$. Hence, $b\in A^<$ which proves that $A^<$ is a down set.
\end{proof}

%%%%%%%%%%%%%%%%%%%%%%%%%%%%%%%%%%%%%%%%%%%%%%%%%%%%%%%%%%%%%%%%%
\subsection{Finite topological spaces}
\label{sec:ftop}

Given a topology $\cT$ on $X$, we call $(X,\cT)$ a topological space.
When the topology $\cT$ is clear from the context we also  refer to $X$ as a topological space.
We denote the {\em interior} of $A\subset X$ with respect to $\cT$ by  $\inte_\cT A$ and the {\em closure} of $A$ with respect to $\cT$ by  $\cl_\cT A$.
We define the {\em mouth} of $A$ as the set $\mo_\cT A := \cl_\cT A\setminus A$.  
We say that $X$ is a {\em finite topological space} if $X$ is a finite set.  
If $X$ is finite, we also distinguish the \emph{minimal open superset} (or \emph{open hull}) of $A$ as the intersection of all the open sets containing $A$.
We denote it by $\opn_\cT A$. 
We note that when $X$ is finite then the family $\cTop:=\{X\setminus U\mid U\in\cT\}$ of closed sets is also a topology on $X$, called \emph{dual} or \emph{opposite topology}.
The following Proposition is straightforward.
\begin{prop}\label{prop:dual-top}
  If $(X,\cT)$ is a finite topological space then for every set $A\subset X$ we have $\opn_\cT A=\cl_{\cTop} A$.
\end{prop}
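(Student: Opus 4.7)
The plan is to show the equality by unfolding definitions and identifying the family of sets over which each side is taken as an intersection. The key observation is that, by the very definition of the opposite topology $\cTop$, its closed sets are precisely the open sets of $\cT$; once this is recognized, both $\opn_\cT A$ and $\cl_{\cTop} A$ are seen to be intersections over the same family.

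First, I would record the identification of closed sets: a set $C \subset X$ is closed in $\cTop$ iff $X \setminus C \in \cTop$, and by construction $\cTop$ consists exactly of the complements of sets in $\cT$, so $X \setminus C \in \cTop$ iff $X \setminus C$ is closed in $\cT$, iff $C$ is open in $\cT$. Thus the closed sets of $\cTop$ are exactly the elements of $\cT$. (Finiteness of $X$ is used implicitly here only to guarantee that $\cTop$ really is a topology, since arbitrary unions of $\cT$-closed sets are $\cT$-closed when $X$ is finite.)

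Second, I would unfold both sides as intersections:
\[
  \opn_\cT A \;=\; \bigcap \{\, U \in \cT \mid A \subset U \,\},
  \qquad
  \cl_{\cTop} A \;=\; \bigcap \{\, C \subset X \mid C \text{ closed in } \cTop,\ A \subset C \,\}.
\]
By the identification above, the defining families on the right-hand sides coincide, so the intersections are equal. This is the entire argument.

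There is no substantive obstacle; the statement is essentially a tautology once $\cTop$ is unpacked. The only thing to be careful about is the direction of the duality (closed sets of $\cTop$ versus open sets of $\cTop$), which is why the verification of the first step deserves to be stated explicitly even though it is routine.
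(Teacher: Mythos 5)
Your proof is correct and is the natural unfolding of the definitions; the paper labels this proposition as straightforward and gives no proof, and your argument — identifying the $\cTop$-closed sets with the $\cT$-open sets and observing that both $\opn_\cT A$ and $\cl_{\cTop} A$ are intersections over that same family — is exactly the intended one. Your aside about where finiteness enters (ensuring $\cTop$ is in fact a topology) is also accurate.
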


If $A=\{a\}$ is a singleton, we simplify the notation $\Int_\cT\{a\}$, $\cl_\cT\{a\}$, $\mo_\cT\{a\}$ and $\opn_\cT\{a\}$ to $\Int_\cT a$, $\cl_\cT a$, $\mo_\cT a$ and $\opn_\cT a$.
When the topology $\cT$ is clear from the context, we drop the subscript $\cT$ in this notation.
Given a finite topological space $(X,\cT)$ we briefly write $X^{\operatorname{op}} := (X,\cTop)$ for the same space $X$ but with the opposite topology.

We recall that a subset $A$ of a topological space $X$
is {\em locally closed}
if every $x\in A$ admits a neighborhood $U$ in $X$ such that $A\cap U$ is closed in $U$.
Locally closed sets are important in the sequel.  
In particular, we have the following characterization of locally closed sets.

\begin{prop}
\label{prop:lcl}
(\cite[Problem 2.7.1]{En1989})
Assume $A$ is a subset of a topological space $X$. Then the following conditions are equivalent.
\begin{itemize}
   \item[(i)] $A$ is locally closed,
   \item[(ii)] $\mo_\cT A:=\cl_\cT A\setminus A$ is closed in $X$,
   \item[(iii)] $A$ is a difference of two closed subsets of $X$,
   \item[(iv)] $A$ is an intersection of an open set in $X$ and a closed set in $X$.
\end{itemize}
\end{prop}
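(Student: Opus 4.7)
The plan is to prove the cycle of implications $(i) \Rightarrow (ii) \Rightarrow (iii) \Rightarrow (iv) \Rightarrow (i)$. Three of these four steps are essentially just reformulations, so the only real work is in $(i) \Rightarrow (ii)$, which has to promote a local hypothesis (a neighborhood condition at every point of $A$) into a global statement (closedness of $\mo_\cT A$ in all of $X$).

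For $(i) \Rightarrow (ii)$, for each $x \in A$ I would fix an open neighborhood $U_x$ of $x$ in $X$ such that $A \cap U_x$ is closed in $U_x$, and then set $U := \bigcup_{x \in A} U_x$. The key intermediate claim is that $A$ is closed in $U$. To see this I would check that $U \setminus A$ is open in $X$: for each $x \in A$ the set $U_x \setminus (A \cap U_x) = U_x \setminus A$ is open in $U_x$, hence open in $X$ (since $U_x$ itself is open in $X$); and $U \setminus A$ is the union of these pieces over $x \in A$ together with $U \setminus \bigcup_{x\in A} U_x$... actually more directly, every point of $U \setminus A$ lies in some $U_x$ and hence in $U_x \setminus A$, so $U \setminus A = \bigcup_{x\in A}(U_x \setminus A)$ is open. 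Consequently $\cl_\cT A \cap U = A$, and therefore
\[
\mo_\cT A \;=\; \cl_\cT A \setminus A \;=\; \cl_\cT A \setminus (\cl_\cT A \cap U) \;=\; \cl_\cT A \cap (X \setminus U),
\]
which is closed in $X$ as an intersection of two closed sets.

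The remaining steps are short. For $(ii) \Rightarrow (iii)$, one simply writes $A = \cl_\cT A \setminus \mo_\cT A$; both $\cl_\cT A$ and $\mo_\cT A$ are closed by hypothesis. For $(iii) \Rightarrow (iv)$, if $A = C_1 \setminus C_2$ with both $C_i$ closed, then $A = C_1 \cap (X \setminus C_2)$, an intersection of a closed set with an open set. For $(iv) \Rightarrow (i)$, if $A = U \cap C$ with $U$ open and $C$ closed, then for every $x \in A$ the set $U$ is an open neighborhood of $x$ and $A \cap U = A = C \cap U$ is closed in $U$ because $C$ is closed in $X$.

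The main (and essentially only) obstacle is the implication $(i) \Rightarrow (ii)$, because gluing the local closedness witnessed on each $U_x$ into global closedness of $\mo_\cT A$ requires the right choice of ambient open set $U$; the other three implications are immediate manipulations of the definitions.
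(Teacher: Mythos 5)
Your proof is correct. The paper does not actually prove this proposition; it simply cites it to Engelking (Problem~2.7.1), so there is no in-text argument to compare against. Your cyclic chain $(i)\Rightarrow(ii)\Rightarrow(iii)\Rightarrow(iv)\Rightarrow(i)$ is the standard one, and you correctly identify that the only substantive step is $(i)\Rightarrow(ii)$. The gluing there is sound: with $U:=\bigcup_{x\in A}U_x$ you show $U\setminus A=\bigcup_{x\in A}(U_x\setminus A)$ is open in $X$, hence $A$ is closed in $U$, which gives $\cl_\cT A\cap U=A$ and therefore $\mo_\cT A=\cl_\cT A\cap(X\setminus U)$ is closed. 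The remaining implications $(ii)\Rightarrow(iii)\Rightarrow(iv)\Rightarrow(i)$ are, as you say, immediate from the definitions.
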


As an immediate consequence of Proposition \ref{prop:lcl}(iv) we get the following three propositions.

\begin{prop}\label{prop:lcl-intersection}
  The intersection of a finite family of locally closed sets is locally closed.
\end{prop}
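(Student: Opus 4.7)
The plan is to invoke the characterization in Proposition~\ref{prop:lcl}(iv), which reduces locally closedness to the simpler algebraic condition of being the intersection of an open set and a closed set. Once we are on that level, the statement becomes a routine bookkeeping exercise using the fact that finite intersections of open sets are open and finite intersections of closed sets are closed.

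Concretely, I would let $A_1,\dots,A_n$ be a finite family of locally closed subsets of a topological space $X$. By Proposition~\ref{prop:lcl}(iv), for each $i$ we can write $A_i = U_i \cap C_i$ with $U_i$ open in $X$ and $C_i$ closed in $X$. Setting $U := \bigcap_{i=1}^n U_i$ and $C := \bigcap_{i=1}^n C_i$, the standard topology axioms give that $U$ is open and $C$ is closed in $X$. A short computation shows
\begin{equation*}
  \bigcap_{i=1}^n A_i \;=\; \bigcap_{i=1}^n (U_i\cap C_i) \;=\; \Bigl(\bigcap_{i=1}^n U_i\Bigr)\cap\Bigl(\bigcap_{i=1}^n C_i\Bigr) \;=\; U\cap C,
\end{equation*}
so $\bigcap_i A_i$ is the intersection of an open set and a closed set, hence locally closed by another appeal to Proposition~\ref{prop:lcl}(iv).

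There is essentially no obstacle here: the whole content is that the equivalent form (iv) of local closedness is manifestly stable under finite intersection, whereas the original definition (local neighborhoods in which $A$ is relatively closed) would require a slightly more fiddly argument. The only thing worth mentioning is that finiteness of the family is used precisely to ensure that $U = \bigcap U_i$ is still open; the statement fails for infinite intersections, which is why the proposition explicitly restricts to finite families.
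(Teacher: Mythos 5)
Your argument is correct and matches the paper's intent exactly: the paper states this proposition (together with Propositions~\ref{prop:lcl-cl-sub} and~\ref{prop:lcl-op}) as an immediate consequence of the characterization in Proposition~\ref{prop:lcl}(iv), which is precisely the decomposition $A_i = U_i \cap C_i$ you use. Nothing to add.
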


\begin{prop}\label{prop:lcl-cl-sub}
  If $A$ is locally closed and $B$ is closed, then $A\setminus B$ is locally closed.
\end{prop}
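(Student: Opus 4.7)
The plan is to reduce the statement to Proposition~\ref{prop:lcl-intersection} by rewriting the set-theoretic difference $A\setminus B$ as an intersection of two locally closed sets. First I would observe the elementary identity $A\setminus B = A\cap (X\setminus B)$, which holds for any subsets of $X$ regardless of topology.

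Next I would argue that each factor on the right is locally closed. The set $A$ is locally closed by hypothesis. For $X\setminus B$, since $B$ is closed, its complement is open, and by Proposition~\ref{prop:lcl}(iv) every open set $U\subset X$ is locally closed because $U = U\cap X$ is the intersection of an open set with a closed set. Applying Proposition~\ref{prop:lcl-intersection} to the two-element family $\{A,\,X\setminus B\}$ then gives that $A\setminus B$ is locally closed, which is what we wanted.

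There is essentially no obstacle here: the result is a one-line consequence of the closure of the class of locally closed sets under finite intersections, once one notes that open sets belong to that class. An alternative route, which I would mention if the intersection lemma were not already available, is to invoke Proposition~\ref{prop:lcl}(iv) directly: writing $A = U\cap C$ with $U$ open and $C$ closed, one obtains $A\setminus B = \bigl(U\cap (X\setminus B)\bigr)\cap C$, exhibiting $A\setminus B$ as the intersection of the open set $U\cap(X\setminus B)$ with the closed set $C$, and thereby satisfying condition~(iv) again.
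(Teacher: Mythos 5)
Your proof is correct and matches the paper's intent: the paper states this proposition (together with Propositions~\ref{prop:lcl-intersection} and~\ref{prop:lcl-op}) as an immediate consequence of Proposition~\ref{prop:lcl}(iv) without giving details, and your alternative route --- writing $A = U\cap C$ with $U$ open, $C$ closed, and then $A\setminus B = \bigl(U\cap(X\setminus B)\bigr)\cap C$ --- is exactly that argument. Your first route via Proposition~\ref{prop:lcl-intersection} is equally valid and amounts to the same thing.
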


\begin{prop}\label{prop:lcl-op}
  Let $(X,\cT)$ be a finite topological space. A subset $A\subset X$ is locally closed in the topology $\cT$ if and only if it is locally closed in the topology $\cTop$.
\end{prop}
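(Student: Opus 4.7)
The plan is to deduce the statement directly from characterization (iv) of Proposition \ref{prop:lcl}, which says that $A$ is locally closed in a topology exactly when $A$ can be written as the intersection of an open set and a closed set in that topology. Since this characterization is symmetric in the roles of ``open'' and ``closed'', and since the opposite topology $\cTop$ is precisely obtained by swapping the two families (the $\cTop$-open sets are the $\cT$-closed sets and vice versa), the claim should follow by a one-line change of roles.

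More concretely, I would first assume $A$ is locally closed in $\cT$. By Proposition \ref{prop:lcl}(iv) there exist a $\cT$-open set $U$ and a $\cT$-closed set $F$ with $A = U \cap F$. By the definition of the opposite topology, $U$ is $\cTop$-closed and $F$ is $\cTop$-open, so $A = F \cap U$ exhibits $A$ as an intersection of a $\cTop$-open set and a $\cTop$-closed set. Applying Proposition \ref{prop:lcl}(iv) in the space $(X,\cTop)$ yields that $A$ is locally closed in $\cTop$. The converse is identical since $(\cTop)^{\operatorname{op}} = \cT$, so one can simply run the argument again starting from $\cTop$.

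There is essentially no obstacle here; the only thing to be careful about is citing the right equivalence from Proposition \ref{prop:lcl}. Using the characterization (ii) (namely, that $A$ is locally closed iff $\cl A \setminus A$ is closed) would instead require relating the $\cT$-closure and $\cTop$-closure operations, which is possible via Proposition \ref{prop:dual-top} but less efficient. Hence the cleanest route is to work with characterization (iv), where the duality between open and closed sets under passage to $\cTop$ makes the equivalence transparent.
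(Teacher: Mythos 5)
Your proof is correct and follows exactly the paper's intended route: the paper states Proposition \ref{prop:lcl-op} as an immediate consequence of Proposition \ref{prop:lcl}(iv), and you simply spell out the one-line observation that swapping the roles of open and closed sets under passage to $\cTop$ preserves the characterization ``intersection of an open set and a closed set.''
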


We recall that the topology $\cT$ is $T_2$
or {\em Hausdorff} if for any two different points $x,y\in X$, there exist disjoint sets
$U,V\in\cT$ such that $x\in U$ and $y\in V$.
It is $T_0$ or {\em Kolmogorov} if for any two different points $x,y\in X$ there exists a $U\in\cT$
such that $U\cap\{x,y\}$ is a singleton.

Finite topological spaces stand out from general topological spaces by the fact that the only Hausdorff topology
on a finite topological space $X$
is the discrete topology consisting of all subsets of $X$.

\begin{prop}\label{prop:cl-as-union}
Let $(X,\cT)$ be a finite topological space and $A\subset X$. Then 
  \[ \cl A =\bigcup_{a\in A}\cl a. \]
\end{prop}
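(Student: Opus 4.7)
The plan is to prove the equality by showing two inclusions, both of which follow from elementary properties of the closure operator once we exploit the finiteness of $X$ (and hence of $A$).

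For the inclusion $\bigcup_{a\in A}\cl a\subseteq\cl A$, I would use monotonicity of closure: for each $a\in A$ the singleton $\{a\}$ is contained in $A$, so $\cl a=\cl\{a\}\subseteq\cl A$. Taking the union over $a\in A$ gives the desired inclusion.

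For the reverse inclusion $\cl A\subseteq\bigcup_{a\in A}\cl a$, I would argue that the right-hand side is a closed set containing $A$, and then invoke the fact that $\cl A$ is the smallest such set. To see that $\bigcup_{a\in A}\cl a$ is closed, note that $X$ is finite, hence $A$ is finite, so this is a \emph{finite} union of closed singletons' closures; a finite union of closed sets is closed in any topological space. Moreover, $a\in\cl a$ for every $a$, so $A\subseteq\bigcup_{a\in A}\cl a$.

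The only substantive point is the finiteness of $A$, which is where the hypothesis that $(X,\cT)$ is a finite topological space enters (in a general topological space closure does not distribute over arbitrary unions). There is no real obstacle here; the statement is essentially a bookkeeping consequence of the definition of closure together with the closure of the class of closed sets under finite unions.
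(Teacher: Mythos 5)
Your proof is correct and matches the paper's argument essentially verbatim: both establish $\bigcup_{a\in A}\cl a\subseteq\cl A$ by monotonicity, and both get the reverse inclusion by noting that the union is a finite union of closed sets (using finiteness of $X$), hence closed, and contains $A$.
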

\begin{proof}
  Let $A':=\bigcup_{a\in A}\cl a$. 
  Clearly, $A\subset A'\subset \cl A$. Since $X$ is finite, $A'$ is closed as a finite union of closed sets. Therefore, also $\cl A\subset A'$.
\end{proof}

A remarkable feature of finite topological spaces is the following
theorem.
\begin{thm}
  \label{thm:alexandroff}
  (P. Alexandrov, \cite{Al1937})
  For a preorder $\leq$ on a finite set $X$, there is a  topology $\cT_\leq$ on $X$
  whose open sets are the upper sets with respect to $\leq$.
  For a topology $\cT$ on a finite set $X$, there is a preorder $\leq_\cT$ where $x\leq_\cT y$ if and only if $x\in\cl_\cT y$. 
  The correspondences $\cT\mapsto\;\leq_\cT$ and $\leq\;\mapsto\cT_\leq$ are mutually inverse.
  Under these correspondences continuous maps are transformed into  order-preserving maps and vice versa.
  Moreover, the topology $\cT$ is $T_0$ (Kolmogorov)  if and only if the preorder $\leq_\cT$ is a partial order.
\end{thm}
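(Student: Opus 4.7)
The plan is to verify the five assertions in turn, pivoting on the key observation that in a finite topological space every point $x$ has a smallest open neighborhood $\opn_\cT x$ (the intersection of all opens containing $x$, which is open by finiteness of $\cT$). This will let me pass freely between pointwise closure/open-hull data and the full topology.

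First I would check that for a preorder $\leq$ the upper sets form a topology $\cT_\leq$: emptyset and $X$ are vacuously upper, and if $x$ lies in a union or intersection of upper sets and $x\leq y$, then $y$ lies in each of those sets, hence in the union or intersection. Next, for a topology $\cT$ on the finite set $X$, I would show that $\leq_\cT$ is a preorder: reflexivity is $x\in\cl_\cT x$, and transitivity uses that $y\in\cl_\cT z$ means every open containing $y$ contains $z$, so if additionally $x\in\cl_\cT y$ then any open neighborhood $U$ of $x$ contains $y$ and hence $z$, giving $x\in\cl_\cT z$.

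For the mutual inversion, I would argue in both directions. Starting with a preorder $\leq$, the closed sets of $\cT_\leq$ are down sets, so $\cl_{\cT_\leq}y$ is the smallest down set through $y$, namely $\{x:x\leq y\}$; hence $\leq_{\cT_\leq}\,=\,\leq$. Starting with a topology $\cT$, the nontrivial direction is that every upper set with respect to $\leq_\cT$ is open. Here I would use the minimal open neighborhood: verify first that $\opn_\cT x=\{y:x\leq_\cT y\}$, since $x\in\opn_\cT y$ is equivalent to every open through $y$ containing $x$, i.e.\ to $y\in\cl_\cT x$. Any upper set $U$ then equals $\bigcup_{x\in U}\opn_\cT x$, which is open; conversely every open is upper because $x\in U$ and $x\leq_\cT y$ force $y\in U$ by definition of $\leq_\cT$ applied with the neighborhood $U$ of $x$.

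For the functoriality statement, if $f:X\to Y$ is continuous and $x\leq_{\cT_X}x'$, then $f(\cl\{x'\})\subset\cl\{f(x')\}$ gives $f(x)\leq_{\cT_Y}f(x')$; conversely if $f$ is order-preserving then the preimage of an upper set is upper, hence open, so $f$ is continuous. Finally, for the $T_0$ characterization I would use the negation: $\cT$ fails $T_0$ iff there exist $x\neq y$ such that no open set separates them, equivalently $x\in\cl_\cT y$ and $y\in\cl_\cT x$, equivalently $\leq_\cT$ fails antisymmetry on the pair $\{x,y\}$. The only real subtlety in the whole argument, and the step most worth spelling out, is the identification $\opn_\cT x=\{y:x\leq_\cT y\}$ together with the existence of $\opn_\cT x$ as an open set, since this is precisely where the finiteness hypothesis intervenes and where the bijection between topologies and preorders is actually anchored.
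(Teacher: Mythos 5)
The paper does not prove Theorem~\ref{thm:alexandroff} --- it is stated as a classical result of Alexandrov and cited directly to \cite{Al1937}, so there is no in-paper argument to compare against. Your proof is correct and is the standard one. The decomposition into five claims is natural, and you have correctly located the one place where finiteness is indispensable: the existence of the minimal open neighborhood $\opn_\cT x$ (a finite intersection of opens, hence open) together with its identification as $\{y : x \leq_\cT y\}$. That identity is exactly what makes an arbitrary $\leq_\cT$-upper set expressible as a union of opens, and hence what anchors the bijection in the direction $\cT \mapsto \leq_\cT \mapsto \cT_{\leq_\cT}$. The other four parts (upper sets form a topology, $\leq_\cT$ is a preorder, $\leq_{\cT_\leq}=\leq$, functoriality, and the $T_0$/antisymmetry equivalence) are handled correctly, each by direct unwinding of the definitions; one could tighten the phrasing ``$y$ lies in each of those sets'' when checking that unions of upper sets are upper (for a union, $y$ need only lie in some $U_i$ with $x\in U_i$), but the conclusion is obviously right and this is a wording issue, not a gap.
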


The correspondence resulting from Theorem \ref{thm:alexandroff} provides a method to translate concepts and problems between topology and order theory in finite spaces.
In particular, closed sets are translated to down sets in this correspondence and we have the following straightforward proposition.

\begin{prop}\label{prop:cl-in-ftop}
 Let $(X,\cT)$ be a finite topological space. Then, for $A\subset X$ we have
  \begin{align*}
    \cl_\cT A &= \{x\in X\mid \exists_{a\in A}\ x\leq_\cT a\},\\
    \opn_\cT A &= \{x\in X\mid \exists_{a\in A}\ x\geq_\cT a\},\\
    \Int_\cT A &= \{a\in A\mid \forall_{x\in X}\ x\geq_\cT a\ \Rightarrow x\in A\}.
  \end{align*}
\end{prop}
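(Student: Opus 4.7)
The plan is to derive each of the three formulas directly from Alexandrov's theorem (Theorem~\ref{thm:alexandroff}), together with the auxiliary propositions \ref{prop:cl-as-union} and \ref{prop:dual-top}, without ever reasoning topologically from scratch. Throughout, I will use the tautology $x\leq_\cT y \iff x\in\cl_\cT y$ that comes directly from the definition of $\leq_\cT$ in Theorem~\ref{thm:alexandroff}.

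For the closure formula, I would start from Proposition~\ref{prop:cl-as-union}, which gives $\cl_\cT A = \bigcup_{a\in A}\cl_\cT a$. Substituting $\cl_\cT a = \{x\in X\mid x\leq_\cT a\}$ into this union yields exactly $\{x\in X\mid \exists_{a\in A}\ x\leq_\cT a\}$. This step is essentially bookkeeping.

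For the open-hull formula, I would pass to the opposite topology via Proposition~\ref{prop:dual-top}, writing $\opn_\cT A = \cl_{\cTop}A$. The one small check needed here is that the preorder induced on $X$ by $\cTop$ is the reverse of $\leq_\cT$: by Alexandrov applied to $\cTop$, we have $x\leq_{\cTop}y \iff x\in\cl_{\cTop}y = \opn_\cT y$, and the latter means every open set containing $y$ also contains $x$, which is equivalent to $y\in\cl_\cT x$, i.e.\ $y\leq_\cT x$. Applying the closure formula in the opposite topology then yields $\opn_\cT A = \{x\in X\mid \exists_{a\in A}\ x\leq_{\cTop}a\} = \{x\in X\mid \exists_{a\in A}\ x\geq_\cT a\}$.

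For the interior formula, I would use the standard identity $\Int_\cT A = X\setminus \cl_\cT(X\setminus A)$ and plug in the closure formula already obtained: a point $a$ lies in $\Int_\cT A$ iff there is no $b\in X\setminus A$ with $a\leq_\cT b$. Taking $b=a$ shows $a\in A$ is forced, and the remaining condition is precisely that every $x\geq_\cT a$ belongs to $A$, as claimed. The only mildly subtle point in the whole proof is keeping the direction conventions consistent when invoking the dual topology, but once the identification $\leq_{\cTop}\;=\;(\leq_\cT)^{-1}$ is nailed down, each of the three equalities reduces to a one-line substitution.
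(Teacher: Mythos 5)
Your proof is correct. The paper labels this proposition ``straightforward'' and omits a proof, and your argument supplies exactly the intended routine deduction: closure via Proposition~\ref{prop:cl-as-union} plus the definition $x\leq_\cT y\iff x\in\cl_\cT y$ from Theorem~\ref{thm:alexandroff}; open hull by passing to $\cTop$ through Proposition~\ref{prop:dual-top} after correctly verifying that $\leq_{\cTop}$ is the reverse of $\leq_\cT$; and interior via $\Int_\cT A=X\setminus\cl_\cT(X\setminus A)$ together with the already-established closure formula, with the reflexivity observation $a\geq_\cT a$ accounting for the extra constraint $a\in A$ in the stated formula.
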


In other words, 
$\cl_\cT A$ is the minimal down set with respect to $\leq_\cT$ containing $A$,
$\opn_\cT A$ is the minimal upper set with respect to $\leq_\cT$ containing $A$ and
$\Int_\cT A$ is the maximal upper set with respect to $\leq_\cT$ contained in $A$.

\begin{prop}
\label{prop:lcl-in-ftop}
Assume $X$ is a $T_0$ finite topological space and $A\subset X$.
Then $A$ is locally closed if and only if $A$ is convex with respect to $\leq_\cT$.
\end{prop}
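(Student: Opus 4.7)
The plan is to translate between the topological characterization of locally closed sets in Proposition \ref{prop:lcl}(iv), which says $A$ is locally closed iff $A = U\cap C$ for some open $U$ and closed $C$, and the order-theoretic description of open/closed sets in a finite $T_0$ space provided by Theorem \ref{thm:alexandroff} and Proposition \ref{prop:cl-in-ftop}: namely, open sets are exactly upper sets and closed sets are exactly down sets with respect to $\leq_\cT$.

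For the forward implication, I would assume $A$ is locally closed and pick any representation $A = U\cap C$ with $U$ open and $C$ closed. Given $x,z\in A$ and $y\in X$ with $x\leq_\cT y\leq_\cT z$, the point $x\in U$ together with $x\leq_\cT y$ and $U$ being an upper set yields $y\in U$; symmetrically, $z\in C$, $y\leq_\cT z$, and $C$ being a down set yields $y\in C$. Hence $y\in U\cap C = A$, which is exactly the convexity condition.

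For the reverse implication, assume $A$ is convex, and take the canonical candidates $U := \opn_\cT A$ and $C := \cl_\cT A$, which are open and closed respectively. The inclusion $A\subset U\cap C$ is immediate. For the opposite inclusion, pick any $y\in U\cap C$. By Proposition \ref{prop:cl-in-ftop} there exist $x\in A$ with $x\leq_\cT y$ (from $y\in\opn_\cT A$) and $z\in A$ with $y\leq_\cT z$ (from $y\in\cl_\cT A$). Convexity of $A$ applied to the chain $x\leq_\cT y\leq_\cT z$ forces $y\in A$. Therefore $A = \opn_\cT A\cap \cl_\cT A$ is an intersection of an open and a closed set, so by Proposition \ref{prop:lcl}(iv) it is locally closed.

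I do not anticipate a real obstacle here; the argument is a direct dictionary translation, and the only step that uses the $T_0$ hypothesis at all is the invocation of Theorem \ref{thm:alexandroff} to identify the topology with an actual partial order (rather than merely a preorder). The mildly nontrivial ingredient is recognizing that the natural ``open hull intersect closure'' decomposition is the right witness in the reverse direction, which is what makes convexity exactly enough to close up the intersection back into $A$.
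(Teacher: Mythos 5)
Your forward direction matches the paper's proof essentially word for word: both take a representation $A = U\cap C$ from Proposition~\ref{prop:lcl}(iv), use Alexandrov's correspondence to identify $U$ as an upper set and $C$ as a down set, and sandwich $y$ into $U\cap C$. Your reverse direction, however, takes a genuinely different route. The paper argues by contradiction via Proposition~\ref{prop:lcl}(ii): it supposes $\mo_\cT A = \cl_\cT A \setminus A$ is not closed, extracts points $x \leq_\cT y$ with $y\in\mo_\cT A$, $x\notin\mo_\cT A$, pushes up to some $z\in A$ with $y\leq_\cT z$, derives $x\in A$, and then lets convexity force the contradiction $y\in A$. You instead produce an explicit decomposition: you show directly that convexity implies $A = \opn_\cT A \cap \cl_\cT A$, verifying the nontrivial inclusion by taking $y$ in the intersection, using Proposition~\ref{prop:cl-in-ftop} to find $x\in A$ with $x\leq_\cT y$ and $z\in A$ with $y\leq_\cT z$, and then closing with convexity. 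This is correct, a bit cleaner, and avoids the detour through the mouth and the argument by contradiction; it also makes the ``canonical witness'' $\opn_\cT A \cap \cl_\cT A$ explicit, which the paper's proof leaves implicit. Both approaches rely on the same ingredients (Proposition~\ref{prop:lcl}, Theorem~\ref{thm:alexandroff}, Proposition~\ref{prop:cl-in-ftop}), so the difference is purely in presentation, but yours is the more direct of the two.
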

\begin{proof}
  Assume that $A$ is locally closed. Let $x,y\in A$. 
  By Proposition \ref{prop:lcl} we can write $A=U\cap D$, where $U$ is open and $D$ is closed.  
  By Theorem \ref{thm:alexandroff} we know that $U$ is an upper set and $D$ is a down set with respect to $\leq_\cT$.
  Let $x,z\in A$ and let $y\in X$ be such that $x\leq_\cT y\leq_\cT z$. Since $x\in U$ and $U$ is an upper set, it follows, that $y\in U$. Since $z\in D$ and $D$ is a down set, it follows $y\in D$. Thus $y\in U\cap D=A$.

  Conversely, assume that $A$ is convex with respect to $\leq_\cT$.  
  By Proposition \ref{prop:lcl}(ii) it suffices to prove that $\mo_\cT A = \cl_\cT A\setminus A$ is closed. 
  Suppose the contrary. 
  Then there exist an $x\not\in\mo_\cT A$ and a $y\in\mo_\cT A$ such that $x \leq_\cT y$. 
  It follows from Proposition \ref{prop:cl-in-ftop} and $y \in \mo_\cT A \subset \cl_\cT A$ that there exists an element $z\in A$ such that $y \leq_\cT z$. 
  In consequence we get $x \leq_\cT z$, and therefore $x \in \cl_\cT A$. In view of
  $x\not\in\mo_\cT A$ this implies $x \in A$, and the assumed convexity of~$A$ then
  gives $y \in A$, which contradicts $y\in\mo_\cT A$. 
\end{proof}

For a $T_0$ finite topological space $(X,\cT)$ we define the associated abstract simplicial complex as the order complex of $(X,\leq_\cT)$. We denote it $\cK(X)$ and its geometric realization by $|\cK(X)|$.

%%%%%%%%%%%%%%%%%%%%%%%%%%%%%%%%%%%%%%%%%%%%%%%%%%%%%%%%%%%%%%%%%
\subsection{Homology of finite topological spaces}\label{sec:hom-ftop}

Given a topological space $X$ we denote by $H(X)$ the singular homology of $X$.
Note that singular homology is well-defined for all topological spaces.
In particular, it is defined for finite topological spaces. 

Let $(X,\cT)$ be a finite topological space. 
The McCord map $\mu_X: |\cK(X)|\mapsto X$ maps an $\alpha\in|\cK(X)|$ to the maximal element of the chain $\operatorname{supp}(\alpha)$. 
The Theorem of McCord \cite{MC1966} states that $\mu_X$ is a weak homotopy equivalence, that is, it induces isomorphisms in all homotopy groups.
In particular, $\mu$ induces an isomorphism ${\mu_X}_{\ast}: H(|\cK(X)|)\mapsto H(X)$ in singular homology. 
Moreover, there exists a chain map $\eta$
from simplicial chains of $\cK(X)$ to singular chains of $|\cK(X)|$ which
induces an isomorphism $\eta_\ast:H(\cK(X))\mapsto H(|\cK(X)|)$ between simplicial and singular homology \cite[Theorem 34.3]{Munkres1984}.
In particular
  \begin{align*}
    H(X)\cong H(|\cK(X)|)\cong H(\cK(X)).
  \end{align*}
For computational purposes this allows us to replace the singular homology of finite topological spaces by the simplicial homology of the associated simplicial complex.

Now, we recall some basic results from homology theory in the context of finite topological spaces.

\begin{prop}\label{prop:simplicial_rel_hom_of_posets}
Let $B\subset A$ be subsets of a finite topological space $X$. Then $\cK(B)$ is a subcomplex of $\cK(A)$ and
\[
H(A,B)\cong H(\cK(A),\cK(B)).
\]
\end{prop}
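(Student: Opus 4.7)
The plan is to deduce the relative isomorphism from the absolute McCord theorem (already cited in Section~\ref{sec:hom-ftop}) by applying the five lemma to the long exact sequences of the pairs $(A,B)$ and $(|\cK(A)|,|\cK(B)|)$. The first half of the statement is formal: since $B\subset A$ and the subspace topology on $B$ corresponds to the restriction of the partial order $\leq_\cT$, every nonempty chain in $B$ is a nonempty chain in $A$, so by the very definition of the order complex $\cK(B)$ is a subcomplex of $\cK(A)$ and $|\cK(B)|$ is a closed subpolytope of $|\cK(A)|$.

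For the relative homology isomorphism, the key observation is the naturality of the McCord map. Recall that $\mu_A\colon|\cK(A)|\to A$ sends $\alpha$ to $\max\supp(\alpha)$. For any $\alpha\in|\cK(B)|$ the support is a chain entirely contained in $B$, so $\mu_A(\alpha)\in B$, and in fact $\mu_A(\alpha)=\mu_B(\alpha)$. Hence $\mu_A$ restricts to a map of pairs $\mu\colon(|\cK(A)|,|\cK(B)|)\to(A,B)$. This map produces a commutative ladder between the long exact sequences of the two pairs, and McCord's theorem guarantees that the vertical arrows induced by $\mu_A$ and $\mu_B$ are isomorphisms on every absolute singular homology group. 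The five lemma then forces the remaining arrow $\mu_\ast\colon H(|\cK(A)|,|\cK(B)|)\to H(A,B)$ to be an isomorphism as well.

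Finally, the natural chain map $\eta$ from simplicial to singular chains respects subcomplex inclusions, so an analogous five-lemma argument applied to the pair $(\cK(A),\cK(B))$ and its realization yields $H(\cK(A),\cK(B))\cong H(|\cK(A)|,|\cK(B)|)$. Composing the two isomorphisms gives the desired identification $H(A,B)\cong H(\cK(A),\cK(B))$. The main obstacle is purely bookkeeping: one must verify that the McCord construction and the simplicial/singular comparison are natural enough to produce commutative ladders, in particular that the McCord map of the subspace $B\subset X$ coincides with the restriction of $\mu_A$ to $|\cK(B)|$. No new topological input beyond what has already been recalled in Section~\ref{sec:hom-ftop} is needed.
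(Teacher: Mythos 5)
Your argument is essentially identical to the paper's: both use the McCord weak equivalence and the simplicial-to-singular comparison chain map $\eta$, and in each case apply the five lemma to the commutative ladder of long exact sequences of the pairs $(A,B)$, $(|\cK(A)|,|\cK(B)|)$, and $(\cK(A),\cK(B))$. Your extra remark that $\mu_A$ restricts to $\mu_B$ is a correct piece of bookkeeping that the paper avoids by simply working with the ambient McCord map $\mu_X$ throughout.
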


\begin{proof} The McCord map $\mu_X$ naturally induces a homomorphism ${\mu_X}_\ast(A,B)$ in relative homology. Consider the commutative diagram
\begin{center}
    \begin{footnotesize}
  \begin{tikzcd}[row sep=normal, column sep = 1.0em]  
  H_n(|\cK(B)|) \arrow[r]\arrow[d, "{\mu_X}_\ast"]  & 
  H_n(|\cK(A)|)\arrow[r]\arrow[d, "{\mu_X}_\ast"]  & 
  H_n(|\cK(A)|, |\cK(B)|) \arrow[r]\arrow[d, "{\mu_X}_\ast(A\text{,}B)"] & 
  H_{n-1}(|\cK(B)|) \arrow[r]\arrow[d, "{\mu_X}_\ast"] & 
  H_{n-1}(|\cK(A)|) \arrow[d, "{\mu_X}_\ast"]\\
H_n(B) \arrow[r]& 
  H_n(A) \arrow[r] & 
  H_n(A, B) \arrow[r] & 
  H_{n-1}(B) \arrow[r] & 
  H_{n-1}(A)
  \end{tikzcd}
  \end{footnotesize}
\end{center}
The Five Lemma \cite[Lemma 24.3]{Munkres1984} implies that ${\mu_X}_\ast(A,B)$ is also an isomorphism.
Similarly, the chain map $\eta$ induces a homomorphism $\eta_\ast(A, B)$. Thus again, the commutative diagram
\begin{center}
    \begin{footnotesize}
  \begin{tikzcd}[row sep=normal, column sep = 1.0em]
H_n(\cK(B)) \arrow[r]\arrow[d, "\eta_\ast"]  & 
  H_n(\cK(A))\arrow[r]\arrow[d, "\eta_\ast"]  & 
  H_n(\cK(A), \cK(B)) \arrow[r]\arrow[d, "\eta_\ast(A\text{,}B)"] & 
  H_{n-1}(\cK(B)) \arrow[r]\arrow[d, "\eta_\ast"] & 
  H_{n-1}(\cK(A)) \arrow[d, "\eta_\ast"]\\
  H_n(|\cK(B)|) \arrow[r] & 
  H_n(|\cK(A)|)\arrow[r] & 
  H_n(|\cK(A)|, |\cK(B)|) \arrow[r] & 
  H_{n-1}(|\cK(B)|) \arrow[r] & 
  H_{n-1}(|\cK(A)|) 
  \end{tikzcd}
  \end{footnotesize}
\end{center}
together with the Five Lemma implies that $\eta_\ast(A,B)$ is an isomorphism. It follows that ${\mu_X}_\ast(A,B)\circ\eta_\ast(A, B)$ is also an isomorphism.
\end{proof}

It follows from Proposition \ref{prop:simplicial_rel_hom_of_posets} that under its assumptions $H(A,B)$ is finitely generated. 
In particular, the $i$th \emph{Betti number} $\beta_i(A,B):=\rank H_i(A,B)$ is well-defined, as well as the \emph{Poincar\'{e} polynomial} 
\begin{equation}\label{eq:poincare_polynomial}
  p_{A,B}(t) := \sum_{i=1}^{\infty}\beta_i(A,B)t^i.
\end{equation}
In the sequel, we also need the finite counterpart of the excision theorem.

\begin{thm}\label{thm:simpl_excision}\cite[Theorem 9.1]{Munkres1984} (Excision theorem)  
  Let $K$ be a simplicial complex and let $K_0$ be its subcomplex. 
  Assume that $U$ is an open set contained in $|K_0|$ such that $|K|\setminus U$ is a polytope of a subcomplex $L$ of $K$ and $L_0$ is the subcomplex of $K$ whose polytope is $|K_0|\setminus U$. 
  Then the inclusion $(L,L_0)\hookrightarrow (K,K_0)$ induces an isomorphism 
\begin{equation*}
  H(L, L_0) \cong H(K,K_0)
\end{equation*}
in simplicial homology.
\end{thm}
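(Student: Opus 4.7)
The plan is to prove the excision theorem purely at the level of simplicial chain complexes, exploiting the hypothesis that the excised set $U$ is a union of open simplices. Write $C(K,K_0)$ for the simplicial chain complex of the pair, which as an abelian group is freely generated by the simplices in $K\setminus K_0$. The inclusion $(L,L_0)\hookrightarrow(K,K_0)$ induces a chain map $i_\#\colon C(L,L_0)\to C(K,K_0)$, and the strategy is to show that $i_\#$ is already an isomorphism of chain complexes, which in turn yields an isomorphism in homology.

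The key point is the bijection between the generating simplices of $C(L,L_0)$ and those of $C(K,K_0)$, that is, the equality $L\setminus L_0=K\setminus K_0$ of sets of simplices. For the inclusion $L\setminus L_0\subset K\setminus K_0$: every simplex $\sigma\in L$ lies in $K$ since $L$ is a subcomplex of $K$; if in addition $\sigma\in L_0$ were to belong to $K_0$, one would need to trace through the identification $L_0=L\cap K_0$, which I would establish first by comparing the underlying polytopes $|L\cap K_0|=|L|\cap|K_0|=(|K|\setminus U)\cap|K_0|=|K_0|\setminus U=|L_0|$ and using that a subcomplex is determined by its polytope. For the reverse inclusion $K\setminus K_0\subset L\setminus L_0$: if $\sigma\in K$ and $\sigma\notin L$, then the open simplex $\inte\sigma$ is disjoint from $|L|=|K|\setminus U$, so $\inte\sigma\subset U\subset|K_0|$, which forces $\sigma\in K_0$ because the open simplices of $K$ partition $|K|$ and $K_0$ is a subcomplex. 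Contrapositively, $\sigma\in K\setminus K_0$ implies $\sigma\in L$, and then $\sigma\notin K_0\supset L_0$ gives $\sigma\in L\setminus L_0$.

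With the bijection of generators in hand, $i_\#$ is the identity map on free abelian groups with the same basis, hence an isomorphism of graded groups. It is automatically a chain map because it is induced by an inclusion of simplicial pairs, so it commutes with the boundary operators given by the standard simplicial formulas. Passing to homology yields the required isomorphism $H(L,L_0)\cong H(K,K_0)$.

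The main obstacle, though entirely set-theoretic, is the careful unpacking of the hypothesis: one must use that $U$ is open and lies inside $|K_0|$, and that both $|K|\setminus U$ and $|K_0|\setminus U$ are polytopes of subcomplexes, to justify the two polytope-level identifications $|L\cap K_0|=|L_0|$ and the partitioning argument that places $\sigma\notin L$ into $K_0$. Once these identifications are in place, the homological conclusion follows without any further use of chain-level subdivision or acyclic-model techniques that are needed in the singular version of excision.
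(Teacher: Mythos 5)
The paper does not prove this statement: it is cited directly from Munkres \cite[Theorem 9.1]{Munkres1984} as a known result, so there is no proof in the paper to compare against. That said, your argument is correct and is in fact the standard proof (and essentially the one in Munkres's book): establish the equality $L\setminus L_0 = K\setminus K_0$ of simplex sets, conclude that the relative chain complexes $C(L,L_0)$ and $C(K,K_0)$ are identified by the inclusion as chain complexes (same generators, and the boundary formulas agree because faces of a simplex in $L$ lie in $L$, and $L_0 = L\cap K_0$ so the quotient boundaries coincide), and pass to homology. The set-theoretic lemma $L_0 = L\cap K_0$ via comparison of polytopes, and the partition of $|K|$ into open simplices to place any $\sigma\in K\setminus L$ into $K_0$, are exactly the points that need care, and you handle them correctly; this is what makes simplicial excision an algebraic triviality in contrast to the singular version.
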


\begin{thm}\label{thm:excision_ftop}
%(Excision for finite topological spaces)
  Let $(X,\cT)$ be a finite topological space and let $A, B, C, D$ be closed subsets of $X$ such that $B\subset A$, $D\subset C$ and $A \setminus B = C \setminus D$. 
  Then $H(A, B) \cong H(C, D)$.
\end{thm}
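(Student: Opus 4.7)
The plan is to reduce the claim to simplicial homology via Proposition~\ref{prop:simplicial_rel_hom_of_posets} and then show that the relative simplicial chain complexes $(\cK(A), \cK(B))$ and $(\cK(C), \cK(D))$ are literally identified, so that no excision argument has to be invoked. By Proposition~\ref{prop:simplicial_rel_hom_of_posets} it suffices to prove $H(\cK(A), \cK(B)) \cong H(\cK(C), \cK(D))$. By Theorem~\ref{thm:alexandroff} the closed sets $A$, $B$, $C$, $D$ correspond to down-sets with respect to $\leq_\cT$.

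The key observation is that $\cK(A) \setminus \cK(B) = \cK(C) \setminus \cK(D)$ as sets of simplices. Indeed, because $B$ is a down-set, a chain $x_1 < \cdots < x_n$ in $\cK(A)$ lies in $\cK(B)$ iff its top element $x_n$ lies in $B$; hence $\cK(A) \setminus \cK(B)$ consists of those chains in $A$ with top element in $E := A \setminus B = C \setminus D$. Since $E \subset C$ and $C$ is a down-set, every such chain lies entirely in $\cK(C)$, and its top being in $E$ is not in $D$. This gives $\cK(A) \setminus \cK(B) \subset \cK(C) \setminus \cK(D)$, and the reverse inclusion is obtained by swapping the roles of the two pairs. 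Denote this common simplex set by $\Sigma$.

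Next I would compare the two quotient chain complexes $C_*(\cK(A))/C_*(\cK(B))$ and $C_*(\cK(C))/C_*(\cK(D))$. Both are freely generated, as graded abelian groups, by $\Sigma$; the only thing to check is that their boundaries agree. For $\sigma \in \Sigma$, a face $\tau$ of $\sigma$ either belongs to $\Sigma$ (in which case it contributes the same term to both relative boundaries) or does not. In the latter case, $\sigma \in \cK(A) \cap \cK(C) = \cK(A \cap C)$ by~(\ref{eq:order-complex-oper-cap}), so all vertices of $\tau$ lie in $A \cap C = (B \cap D) \cup E$, using the disjoint decompositions $A = B \sqcup E$ and $C = D \sqcup E$. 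The top element of $\tau$ is not in $E$ (otherwise $\tau \in \Sigma$), so it lies in $B \cap D$; because $B$ and $D$ are down-sets, this forces $\tau \in \cK(B) \cap \cK(D)$, and hence $\tau$ vanishes in both quotients. Therefore the two relative chain complexes are identical, so their homologies coincide, and combined with the first step this yields $H(A, B) \cong H(C, D)$.

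The main point to get right is the bookkeeping of which subchains of a given chain lie in $\cK(B)$, in $\cK(D)$, or in $\Sigma$; the decomposition $A \cap C = (B \cap D) \cup E$ together with the down-set property of $B$ and $D$ does essentially all of the work. No appeal to Theorem~\ref{thm:simpl_excision} is required, although the argument is morally a discrete excision that removes the ``common locally closed piece'' $E$ from both pairs simultaneously.
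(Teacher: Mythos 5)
Your proof is correct, and it takes a genuinely different route from the paper's in its second half. Both proofs start the same way: reduce to simplicial homology via Proposition~\ref{prop:simplicial_rel_hom_of_posets}, then establish the simplex-set identity $\cK(A)\setminus\cK(B)=\cK(C)\setminus\cK(D)$ using the fact that closed sets are down-sets and a chain lies in a down-set iff its top element does. From there, the paper invokes the simplicial excision theorem (Theorem~\ref{thm:simpl_excision}) twice, carving out an open subset of each polytope and reducing both pairs to the common pair $\bigl(\cK(\cl(A\setminus B)),\,\cK(\mo(A\setminus B))\bigr)$. You instead work at the chain level: after noting that both relative chain complexes have the same free basis $\Sigma$, you check that the induced boundaries agree, so the two complexes are literally identical. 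This is a cleaner, more elementary argument that never leaves combinatorics, and it avoids the geometric-realization bookkeeping (constructing the open set $\breve B$, verifying the hypotheses of Theorem~\ref{thm:simpl_excision}) that the paper has to carry out. The paper's route, on the other hand, reuses the standard excision machinery and produces along the way the intermediate isomorphism $H(A,B)\cong H\bigl(\cl(A\setminus B),\,\mo(A\setminus B)\bigr)$, which is of independent interest.

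One small remark: your boundary-matching argument can be shortened. Any face $\tau$ of $\sigma\in\Sigma$ automatically lies in both $\cK(A)$ and $\cK(C)$, so $\tau\notin\Sigma$ is equivalent to $\tau\in\cK(B)$ and also to $\tau\in\cK(D)$; hence the terms surviving in the two quotient boundaries are exactly the faces in $\Sigma$, with the same signs, and the detour through $A\cap C=(B\cap D)\cup E$ is not needed. Your version is nonetheless correct.
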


\begin{proof}
  We first observe that $\cK (A) \setminus \cK (B) = \cK (C) \setminus \cK(D)$. Indeed, consider a chain $q$ in $A$ which is not a chain in $B$. 
  Let $q_0$ be the maximal element of $q$. 
  Then $q_0\not\in B$, because otherwise, since $B$ is a closed set, and therefore also a down set with respect to $\leq_\cT$, we get $q\subset B$.  
  Hence, $q_0\in A\setminus B = C\setminus D$. Since $C$ is a down set as a closed set, it follows that $q\subset C$ and clearly $q\not\subset D$.  
  Thus, $q\in\cK(C)\setminus\cK(D)$ which proves that $\cK(A)\setminus\cK(B)\subset\cK(C)\setminus\cK(D)$.  
  The proof of the opposite inclusion is analogous.

  Define $\breve{B}:=|\cK(A)|\setminus |\cK(\cl(A\setminus B))|$. 
  Clearly, $\breve{B}$ is open in $|\cK(A)|$.
  We will show that $\breve{B}\subset |\cK(B)|$.
  Let $\alpha\in\breve{B}$. Set $r:=\supp{(\alpha)}$ and $r_0:=\max(r)$. 
  Suppose that $r_0\not\in B$. Then, $r_0\in A\setminus B$ and $r\subset \cl(A\setminus B)$ which implies 
  $\alpha\in|r|\subset|\cK(\cl(A\setminus B))|$, a contradiction.
  Hence, $r\subset B$ and $\alpha\in|r|\subset|\cK(B)|$. 
  Moreover,
  \begin{align*}
    |\cK(A)|\setminus\breve{B} &= |\cK(A)|\setminus \big( |\cK(A)|\setminus |\cK(\cl(A\setminus B))|\big) = |\cK(\cl(A\setminus B))|
  \end{align*}
  and by Proposition \ref{prop:order-complex-oper}
  \begin{align*}
    |\cK(B)|\setminus\breve{B} &= |\cK(B)|\cap|\cK(\cl(A\setminus B))|=
    |\cK(\cl(A\setminus B)\setminus(A\setminus B))| \\
    &= |\cK(\mo(A\setminus B))|.
  \end{align*}
  Analogous properties hold for $\breve{D}:=|\cK(C)|\setminus |\cK(\cl(C\setminus D))|$ in $|\cK(C)|$.
  Therefore, by Theorem \ref{thm:simpl_excision} we have the following isomorphisms
  \begin{align*}
    H(\cK(A), \cK(B)) &\cong H(\cK(\cl(A\setminus B), \cK(\mo(A\setminus B)), \\
    H(\cK(C), \cK(D)) &\cong H(\cK(\cl(C\setminus D), \cK(\mo(C\setminus D)).
  \end{align*}
  Note that according to $A\setminus B = C\setminus D$ we have
  $\cK (\cl(A\setminus B)) =\cK(\cl(C\setminus D))$ and
  $\cK (\mo(A\setminus B)) =\cK(\mo(C\setminus D))$. 
  Thus, with Proposition \ref{prop:simplicial_rel_hom_of_posets} we get
  \begin{align*}
  H\left(A, B\right) &\cong H(\cK(A), \cK(B)) \cong 
    H(\cK(\cl(A\setminus B), \cK(\mo(A\setminus B))\\
    &= H(\cK(\cl(C\setminus D), \cK(\mo(C\setminus D))\cong H(\cK(C), \cK(D))
    \cong H\left(C, D\right),
  \end{align*}
  which completes the proof of the theorem.
\end{proof}

% (Long exact sequence of homology of a triple) 
\begin{thm}\label{thm:exact-hom-seq-triple}\cite[Chapter 24]{Munkres1984} %(p.141, p. 224)
Let $B\subset A\subset X$ be a triple of topological spaces. The inclusions induce the following exact sequence, called the \emph{exact homology sequence of the triple}:
  \begin{align*}
    \dotsc\rightarrow H_n(A,B)\rightarrow H_n(X,B)\rightarrow H_n(X,A)\rightarrow H_{n-1}(A,B)\rightarrow\dotsc.
  \end{align*}
\end{thm}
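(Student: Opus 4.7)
The plan is to derive the long exact sequence of the triple from a short exact sequence of singular chain complexes together with the standard zig-zag (connecting homomorphism) construction. The authors simply cite this as \cite[Chapter 24]{Munkres1984}, but the canonical route is entirely formal and lives one level below homology.

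First I would fix a coefficient ring and let $S_*$ denote the singular chain complex functor. For any pair $(Y,Z)$ with $Z \subset Y$, recall $S_*(Y,Z) := S_*(Y)/S_*(Z)$. Because $B \subset A \subset X$, the chain subcomplex $S_*(B)$ sits inside $S_*(A)$ which sits inside $S_*(X)$, and by the third isomorphism theorem applied degreewise we obtain a short exact sequence of chain complexes
\begin{equation*}
0 \longrightarrow S_*(A)/S_*(B) \longrightarrow S_*(X)/S_*(B) \longrightarrow S_*(X)/S_*(A) \longrightarrow 0,
\end{equation*}
where the first map is induced by the inclusion $S_*(A)\hookrightarrow S_*(X)$ and the second by the quotient $S_*(X)/S_*(B) \twoheadrightarrow S_*(X)/S_*(A)$. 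Rewriting, this is
\begin{equation*}
0 \longrightarrow S_*(A,B) \longrightarrow S_*(X,B) \longrightarrow S_*(X,A) \longrightarrow 0.
\end{equation*}
Exactness in each degree is immediate: the kernel of the quotient $S_n(X)/S_n(B)\to S_n(X)/S_n(A)$ is precisely $S_n(A)/S_n(B)$. All maps are chain maps because inclusions and quotients commute with the singular boundary operator~$\partial$.

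Second, I would invoke the zig-zag lemma (snake lemma applied to chain complexes) to obtain the induced long exact sequence in homology
\begin{equation*}
\dotsc \to H_n(A,B) \to H_n(X,B) \to H_n(X,A) \xrightarrow{\ \partial_*\ } H_{n-1}(A,B) \to \dotsc,
\end{equation*}
with the connecting map $\partial_*$ defined by the usual recipe: given a relative cycle $z \in S_n(X)/S_n(A)$, lift it to $\tilde z \in S_n(X)/S_n(B)$, take $\partial \tilde z$, and verify it lies in the image of $S_{n-1}(A,B)$. This verification is exactly the diagram chase in the snake lemma and is independent of the topology of $X, A, B$.

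The only step requiring attention, and what I would expect to be the main obstacle for a reader unfamiliar with the machinery, is the \emph{well-definedness} and \emph{naturality} of $\partial_*$: one must check it is independent of the lift $\tilde z$, vanishes on boundaries, and commutes with the maps induced by inclusions of triples. All three checks are routine diagram chases in the short exact sequence above, essentially the content of the zig-zag lemma. Once $\partial_*$ is in hand, exactness at each of the three positions $H_n(A,B)$, $H_n(X,B)$, $H_n(X,A)$ again follows by standard diagram chases, identical to those used in the more familiar long exact sequence of a pair (which is the special case $B=\emptyset$).
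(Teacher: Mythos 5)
Your proposal is correct and follows the standard textbook route: build the short exact sequence of singular chain complexes $0\to S_*(A,B)\to S_*(X,B)\to S_*(X,A)\to 0$ via the third isomorphism theorem, then apply the zig-zag lemma. The paper itself gives no proof but simply cites Munkres (Chapter 24), and that reference proves the result in exactly this way, so your argument matches the intended one.
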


\begin{thm}\label{thm:relMV-simplicial}\cite[Chapter 25 Ex.2]{Munkres1984}(Relative simplicial Mayer-Vietoris sequence)
Let $K$ be a simplicial complex.  
Assume that $K_0$ and $K_1$ are subcomplexes of $K$ such that $K=K_0\cup K_1$ and $L_0$ and $L_1$ are subcomplexes of $K_0$ and $K_1$, respectively. Then there is an exact sequence
  \begin{align*}
    \dotsc\rightarrow H_n(K_0\cap K_1, L_0\cap L_1)\rightarrow H_n(K_0,L_0)\oplus H_n(K_1,L_1)\rightarrow \\
    H_n(K_0\cup K_1, L_0\cup L_1) \rightarrow H_{n-1}(K_0\cap K_1, L_0\cap L_1)\dotsc,
  \end{align*}
  called the \emph{relative Mayer-Vietoris sequence}.
  % There is a similar exact sequence in reduced homology if $A$ is nonempty.
\end{thm}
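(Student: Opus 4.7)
The plan is to derive the sequence from the zig-zag lemma applied to a short exact sequence of relative simplicial chain complexes. Throughout, write $C_\ast(\cdot)$ for the oriented simplicial chain complex of a simplicial complex.

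First I would record the two identities that make the construction possible: for any subcomplexes $P,Q$ of a common simplicial complex,
\[
C_\ast(P\cup Q)=C_\ast(P)+C_\ast(Q)\quad\text{and}\quad C_\ast(P\cap Q)=C_\ast(P)\cap C_\ast(Q),
\]
because the free generators on either side are the same set of oriented simplices. Applied to the pairs $(K_0,K_1)$ and $(L_0,L_1)$, this lets me view all relative chain groups $C_\ast(K_i,L_i)=C_\ast(K_i)/C_\ast(L_i)$ as subquotients of the single ambient group $C_\ast(K)$.

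Next I would define
\[
\phi\colon C_\ast(K_0\cap K_1,L_0\cap L_1)\longrightarrow C_\ast(K_0,L_0)\oplus C_\ast(K_1,L_1),\qquad \phi([\sigma])=([\sigma],-[\sigma]),
\]
and
\[
\psi\colon C_\ast(K_0,L_0)\oplus C_\ast(K_1,L_1)\longrightarrow C_\ast(K_0\cup K_1,L_0\cup L_1),\qquad \psi([a],[b])=[a+b].
\]
Well-definedness and the chain-map property follow from the containments $L_0\cap L_1\subset L_i\subset L_0\cup L_1$ together with the identities above. Injectivity of $\phi$ is immediate from $C_\ast(L_0)\cap C_\ast(L_1)=C_\ast(L_0\cap L_1)$, and surjectivity of $\psi$ follows from $C_\ast(K_0\cup K_1)=C_\ast(K_0)+C_\ast(K_1)$.

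The main step is exactness at the middle term. The inclusion $\operatorname{im}\phi\subseteq\ker\psi$ is obvious from $\sigma+(-\sigma)=0$. For the reverse, suppose $\psi([a],[b])=0$, i.e.\ $a+b\in C_\ast(L_0\cup L_1)=C_\ast(L_0)+C_\ast(L_1)$. Then I can write $a+b=\ell_0+\ell_1$ with $\ell_i\in C_\ast(L_i)$ and set $\sigma:=a-\ell_0=\ell_1-b$; this element lies in $C_\ast(K_0)\cap C_\ast(K_1)=C_\ast(K_0\cap K_1)$, and comparing residues modulo $C_\ast(L_0)$ and $C_\ast(L_1)$ respectively yields $\phi([\sigma])=([a],[b])$. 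I expect this bookkeeping, matching the two natural representatives of $\sigma$ against the relations in each factor, to be the only delicate point; everything else is formal.

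Once the short exact sequence
\[
0\longrightarrow C_\ast(K_0\cap K_1,L_0\cap L_1)\xrightarrow{\phi}C_\ast(K_0,L_0)\oplus C_\ast(K_1,L_1)\xrightarrow{\psi}C_\ast(K_0\cup K_1,L_0\cup L_1)\longrightarrow 0
\]
is in place, the zig-zag lemma for short exact sequences of chain complexes produces the claimed long exact sequence of relative simplicial homology groups, with connecting homomorphism induced by the snake lemma construction applied to $\phi$ and $\psi$.
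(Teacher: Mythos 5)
Your proof is correct and is the standard argument: the key point that simplicial chain groups are free on the simplices gives $C_\ast(K_0\cup K_1)=C_\ast(K_0)+C_\ast(K_1)$ and $C_\ast(L_0)\cap C_\ast(L_1)=C_\ast(L_0\cap L_1)$ for free (this is precisely what makes the simplicial Mayer--Vietoris sequence easier than the singular one, where one needs barycentric subdivision), and the rest is the zig-zag lemma applied to the short exact sequence you construct. The paper itself offers no proof of this statement, citing it to \cite[Chapter 25 Ex.\,2]{Munkres1984}, so your argument is exactly the proof that reference expects the reader to supply.
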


\begin{thm}\label{thm:relMV-ftop}(Relative Mayer-Vietoris sequence for finite topological spaces)
Let $X$ be a finite topological space.
Assume that $Y_0\subset X_0, Y_1\subset X_1$ are pairs of closed sets in $X$ such that
$X=X_0\cup X_1$. Then there is an exact sequence 
  \begin{align*}
    \dotsc\rightarrow H_n(X_0\cap X_1, Y_0\cap Y_1)\rightarrow H_n(X_0,Y_0)\oplus H_n(X_1,Y_1)\rightarrow \\
    H_n(X_0\cup X_1, Y_0\cup Y_1)\rightarrow H_{n-1}(X_0\cap X_1, Y_0\cap Y_1)\dotsc.
  \end{align*}
\end{thm}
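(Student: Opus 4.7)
The plan is to reduce Theorem \ref{thm:relMV-ftop} to the simplicial relative Mayer--Vietoris sequence (Theorem \ref{thm:relMV-simplicial}) by passing through the order complex and invoking the isomorphism $H(A,B)\cong H(\cK(A),\cK(B))$ from Proposition \ref{prop:simplicial_rel_hom_of_posets}.

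First, I would set $K:=\cK(X)$, $K_i:=\cK(X_i)$, and $L_i:=\cK(Y_i)$ for $i=0,1$. Since every $X_i$ and $Y_i$ is closed in $X$, by Theorem \ref{thm:alexandroff} they are down sets with respect to $\leq_\cT$, so Proposition \ref{prop:order-complex-oper} yields
\[
  \cK(X_0\cup X_1)=K_0\cup K_1,\qquad \cK(Y_0\cup Y_1)=L_0\cup L_1,
\]
together with the always-valid identities $\cK(X_0\cap X_1)=K_0\cap K_1$ and $\cK(Y_0\cap Y_1)=L_0\cap L_1$. In particular $K=K_0\cup K_1$, and $L_i$ is a subcomplex of $K_i$, so Theorem \ref{thm:relMV-simplicial} applies and provides an exact sequence
\[
  \cdots\to H_n(K_0\cap K_1, L_0\cap L_1)\to H_n(K_0,L_0)\oplus H_n(K_1,L_1)\to H_n(K,L_0\cup L_1)\to\cdots .
\]

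Next, I would rewrite each term using Proposition \ref{prop:simplicial_rel_hom_of_posets}, obtaining isomorphisms
\[
  H_n(K_i,L_i)\cong H_n(X_i,Y_i),\quad H_n(K_0\cap K_1,L_0\cap L_1)\cong H_n(X_0\cap X_1,Y_0\cap Y_1),
\]
and, via the identifications above, $H_n(K,L_0\cup L_1)\cong H_n(X_0\cup X_1,Y_0\cup Y_1)$. Splicing these into the simplicial sequence produces the desired exact sequence, provided the isomorphisms intertwine the connecting homomorphisms and the maps induced by inclusions.

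The key point, which is also where the only real work lies, is the naturality of the isomorphism of Proposition \ref{prop:simplicial_rel_hom_of_posets} with respect to inclusions of pairs. Both constituent maps---the McCord weak homotopy equivalence $\mu_X\colon|\cK(X)|\to X$ and the chain-level comparison $\eta$ between simplicial and singular chains of a polytope---are natural under inclusion of subspaces and subcomplexes, so the induced isomorphisms commute with the maps $H_n(A,B)\to H_n(A',B')$ coming from inclusions $(A,B)\hookrightarrow(A',B')$, and with the connecting homomorphism of the long exact sequence of a triple. This naturality forces the diagram relating the simplicial Mayer--Vietoris sequence to the candidate finite-space sequence to commute, and exactness of the former is transferred to the latter. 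The only mildly delicate piece is verifying that the Mayer--Vietoris connecting map, which in the simplicial setting is built from a chain-level splitting on $K_0\cup K_1$, is compatible with the singular connecting map on $X_0\cup X_1$; this follows because the simplicial-to-singular comparison $\eta$ is a chain map and the McCord map respects the decompositions induced by the down-set splittings of $X_0\cup X_1$ and $Y_0\cup Y_1$ (which is precisely where the closedness hypothesis on $X_i$ and $Y_i$ is used, via Proposition \ref{prop:order-complex-oper}). With commutativity established, the Five Lemma (or a direct diagram chase) delivers the exact sequence in the statement.
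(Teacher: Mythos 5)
Your proposal is correct and follows essentially the same route as the paper: reduce to the simplicial relative Mayer--Vietoris sequence (Theorem \ref{thm:relMV-simplicial}) by passing to order complexes, using Proposition \ref{prop:order-complex-oper} (which needs the closedness hypothesis to identify $\cK$ of a union) and Proposition \ref{prop:simplicial_rel_hom_of_posets}. The paper states this in two lines and leaves the naturality of the McCord and simplicial-to-singular comparison isomorphisms implicit, whereas you make that step explicit---a welcome elaboration rather than a different argument.
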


\begin{proof}
  By Proposition \ref{prop:order-complex-oper} we have $\cK(X) = \cK(X_0)\cup\cK(X_1)$ and $\cK(Y_0\cup Y_1) = \cK(Y_0)\cup\cK(Y_1)$.
  Thus, the proof follows from the relative simplicial Mayer-Vietoris Theorem \ref{thm:relMV-simplicial} and Proposition \ref{prop:simplicial_rel_hom_of_posets}.
\end{proof}

%%%%%%%%%%%%%%%%%%%%%%%%%%%%%%%%%%%%%%%%%%%%%%%%%%%%%%%%%%%%%%%%%
%%%%%%%%%%%%%%%%%%%%%%%%%%%%%%%%%%%%%%%%%%%%%%%%%%%%%%%%%%%%%%%%%
%%%%%%%%%%%%%%%%%%%%%%%%%%%%%%%%%%%%%%%%%%%%%%%%%%%%%%%%%%%%%%%%%

\section{Dynamics of combinatorial multivector fields}
\label{sec:dcmvf}

%%%%%%%%%%%%%%%%%%%%%%%%%%%%%%%%%%%%%%%%%%%%%%%
\subsection{Multivalued dynamical systems in finite topological spaces}

By a \emph{combinatorial dynamical system} or briefly, a \emph{dynamical system} in a finite topological space $X$ we mean a multivalued map $\Pi:X\times\ZZ^+ \multimap X$ such that
  \begin{equation}\label{eq:cds}
    \Pi\left(\Pi(x,m),n\right)=\Pi(x,m+n)
    \quad\mbox{for all}\quad
    m,n \in \ZZ^+, \; x \in X.
  \end{equation}
Typically, one also assumes that $\Pi$ is continuous in some sense but we do not need such an assumption in this paper.
% \corrc Just an observation: $\Pi^2$ that we define later is lower semi-continuous. and $\Pi$ is trivially upper semi-continuous <<>>

Let $\Pi$ be a combinatorial dynamical system. Consider the multivalued map $\Pi^n:X\multimap X$ given by $\Pi^n(x):=\Pi(x,n)$. We call $\Pi^1$ the \emph{generator\/} of the dynamical system $\Pi$.  
It follows from (\ref{eq:cds}) that the combinatorial dynamical system $\Pi$ is uniquely determined by its generator.
Thus, it is natural to identify a combinatorial dynamical system with its generator.
In particular, we consider any multivalued map $\Pi: X\multimap X$ as a combinatorial dynamical system $\Pi: X\times \ZZ^+ \multimap X$ defined recursively by
  \begin{align*}
    \Pi(x,1)&:=\Pi(x),\\
    \Pi(x,n+1)&:=\Pi(\Pi(x, n)),
  \end{align*}
as well as $\Pi(x,0) := x$.
We call it the combinatorial dynamical system induced by a map $\Pi$.
In particular, the inverse $\Pi^{-1}$ of $\Pi$ also induces a combinatorial dynamical system. We call it the \emph{dual dynamical system}.

%%%%%%%%%%%%%%%%%%%%%%%%%%%%%%%%%%%%%%%%%%%%%%%
\subsection{Solutions and paths}\label{subs:solutions}

By a $\ZZ$-interval we mean a set of the form $\ZZ\cap I$ where $I$ is an interval in $\RR$.
A $\ZZ$-interval is {\em left bounded} if it has a minimum; otherwise it is \emph{left-infinite}.
It is {\em right bounded} if it has a maximum; otherwise it is \emph{right-infinite}.
It is {\em bounded} if it has both a minimum and a maximum.
It is {\em unbounded} if it is not bounded.

A {\em solution} of a combinatorial dynamical system $\Pi:X\multimap X$ in $A\subset X$ is a partial map $\varphi:\ZZ\pto A$ whose {\em domain},
denoted $\dom \varphi$,  is a $\ZZ$-interval and for any $i,i+1\in\dom\varphi$ the inclusion $\varphi(i+1)\in \Pi(\varphi(i))$ holds.
The solution {\em passes} through $x\in X$ if $x=\varphi(i)$ for some $i\in\dom\varphi$.
The solution $\varphi$ is {\em full} if $\dom \varphi=\ZZ$.
It is a {\em backward solution} if $\dom \varphi$ is left-infinite.
It is a {\em forward solution} if $\dom \varphi$ is right-infinite.
It is a {\em partial solution} or simply a {\em path} if $\dom \varphi$ is bounded.

A full solution $\varphi:\ZZ\rightarrow X$ is \emph{periodic} if there exists a $T\in\NN$ such that $\varphi(t+T) =\varphi(t)$ for all $t\in\ZZ$.
Note that every closed path may be extended to a periodic solution.

If the maximum of $\dom\varphi$ exists, we call the value of $\varphi$ at this maximum the {\em right endpoint} of $\varphi$.
If the minimum of $\dom\varphi$ exists, we call the value of $\varphi$ at this minimum the {\em left endpoint} of $\varphi$.
We denote the left and right endpoints of $\varphi$, respectively, by $\lep{\varphi}$ and $\rep{\varphi}$.

By a {\em shift} of a solution $\varphi$ we mean the composition
$\varphi\circ\tau_n$, where the map $\tau_n:\ZZ\ni m\mapsto m+n\in \ZZ$ is translation.
Given two solutions $\varphi$ and $\psi$ such that $\lep{\psi}$ and $\rep{\varphi}$ exist and $\lep{\psi}\in\Pi(\rep{\varphi})$,
there is a unique shift $\tau_n$ such that $\varphi\cup\psi\circ\tau_n$ is a solution.
We call this union of paths the {\em concatenation} of $\varphi$ and $\psi$ and we denote it by $\varphi\cdot\psi$.   
We also identify each $x\in X$ with the trivial solution $\varphi:\{0\}\rightarrow \{x\}$.
For a full solution $\varphi$ we denote the restrictions $\left. \varphi \right|_{\ZZ^+}$ by $\varphi^+$ and $\left. \varphi \right|_{\ZZ^-}$ by $\varphi^-$.
We finish this section with the following straightforward proposition.

\begin{prop}
  If $\varphi:\ZZ\rightarrow X$ is a full solution of a dynamical system $\Pi:X\multimap X$, then $\ZZ\ni t\rightarrow\varphi(-t)\in X$ is a solution of the dual dynamical system induced by $\Pi^{-1}$. 
  We call it the \emph{dual solution} and denote it $\varphi^{\op}$.
\end{prop}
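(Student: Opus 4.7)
The plan is to verify the solution condition for $\psi(t) := \varphi(-t)$ with respect to the dynamical system generated by $\Pi^{-1}$ by a direct index-shift argument; no topology or closure properties are needed, only the definition of a solution and of the large preimage.

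First, I would note that $\dom\psi = \ZZ$ is itself a $\ZZ$-interval (in fact it equals $\ZZ$), so the domain requirement in the definition of a solution in Section~\ref{subs:solutions} is automatically satisfied. Thus the only thing to check is the recursive inclusion: for every pair $i, i+1 \in \ZZ$ we must show $\psi(i+1) \in \Pi^{-1}(\psi(i))$.

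Next, I would unwind the definitions. By construction $\psi(i) = \varphi(-i)$ and $\psi(i+1) = \varphi(-i-1)$, so the required inclusion rewrites as $\varphi(-i-1) \in \Pi^{-1}(\varphi(-i))$. By the definition of the inverse multivalued map (equation~(\ref{eq:large_premage}) applied to the singleton case from Section~\ref{sec:sets-and-maps}), this is equivalent to $\varphi(-i) \in \Pi(\varphi(-i-1))$. Now set $j := -i-1$, so that $j+1 = -i$; since $\varphi$ is a full solution of $\Pi$ and both $j, j+1$ lie in $\dom\varphi = \ZZ$, the defining condition of a solution gives exactly $\varphi(j+1) \in \Pi(\varphi(j))$, i.e.\ $\varphi(-i) \in \Pi(\varphi(-i-1))$.

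There is no real obstacle here; the statement is a bookkeeping lemma recording that time-reversal converts $\Pi$-solutions to $\Pi^{-1}$-solutions, and the proof is a single index substitution $j = -i-1$ together with the definition of $\Pi^{-1}$. The only minor thing worth remarking is that the argument uses full solutions precisely so that $\dom\psi = \ZZ$ automatically; for a one-sided or bounded solution the analogous statement would require additionally observing that the reflected domain is again a $\ZZ$-interval, but in the stated proposition this is immediate.
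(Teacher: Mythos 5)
Your proof is correct, and it supplies exactly the direct index-shift verification that the paper implicitly has in mind: the paper labels this a ``straightforward proposition'' and omits the argument, and what you wrote ($\varphi(-i-1)\in\Pi^{-1}(\varphi(-i))$ iff $\varphi(-i)\in\Pi(\varphi(-i-1))$, then substitute $j=-i-1$) is that straightforward argument spelled out.
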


%%%%%%%%%%%%%%%%%%%%%%%%%%%%%%%%%%%%%%%%%%%%%%%
\subsection{Combinatorial multivector fields}
\label{sec:cmvf}

Combinatorial multivector fields on Lefschetz complexes were introduced in \cite[Definition 5.10]{Mr2017}.
In this paper, we generalize this definition as follows.
Let $(X,\cT)$ be a finite topological space.
By a {\em combinatorial multivector} in $X$ we mean a locally closed and nonempty subset of $X$.
We define a {\em combinatorial multivector field} as a partition $\cV$ of $X$ into multivectors.
Therefore, unlike \cite{Mr2017}, we do not assume that a multivector has a unique maximal element with respect to $\leq_\cT$.
The unique maximal element assumption was introduced in \cite{Mr2017} for technical reasons but
it is very inconvenient in applications. As an example we mention the following straightforward proposition 
which is not true in the setting of \cite{Mr2017}.
\begin{prop}
\label{prop:subfield}
  Assume $\cV$ is a combinatorial multivector field on a finite topological space $X$ and $Y\subset X$ is a locally closed
  subspace. Then 
\[
   \cV_Y:=\setof{V\cap Y\mid V\in\cV, V\cap Y\neq\emptyset}
\]
is a multivector field in $Y$. We call it the {\em multivector field induced by $\cV$ on $Y$}.
\qed
\end{prop}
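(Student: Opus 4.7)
The plan is to verify the two defining properties of a combinatorial multivector field on $Y$: that every element of $\cV_Y$ is a nonempty locally closed subset of $Y$ (with respect to the subspace topology), and that $\cV_Y$ forms a partition of $Y$.

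First I would handle the local closedness. Fix $V \in \cV$ with $V \cap Y \neq \emptyset$. Since $V$ is locally closed in $X$ by hypothesis, and $Y$ is locally closed in $X$ by assumption, Proposition~\ref{prop:lcl-intersection} gives that $V \cap Y$ is locally closed in $X$. To pass to the subspace $Y$, I invoke Proposition~\ref{prop:lcl}(iv): write $V \cap Y = U \cap D$ with $U$ open and $D$ closed in $X$; then $V \cap Y = (U \cap Y) \cap (D \cap Y)$, where $U \cap Y$ is open and $D \cap Y$ is closed in $Y$, so $V \cap Y$ is locally closed in $Y$ by the same characterization. Nonemptiness of $V \cap Y$ is built into the definition of $\cV_Y$, so each member of $\cV_Y$ is indeed a multivector in $Y$.

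Next I would verify the partition property. Coverage: for any $y \in Y$, since $\cV$ partitions $X$, there exists a (unique) $V \in \cV$ with $y \in V$; then $y \in V \cap Y$ and $V \cap Y \in \cV_Y$. Disjointness: if $V, V' \in \cV$ are distinct, then $V \cap V' = \emptyset$ because $\cV$ is a partition, hence $(V \cap Y) \cap (V' \cap Y) = \emptyset$. Combined, $\cV_Y$ is a partition of $Y$ into multivectors.

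There is really no substantial obstacle here; the only place that requires a moment's thought is the passage from local closedness in $X$ to local closedness in the subspace $Y$, which is handled cleanly by the intersection-of-open-and-closed characterization from Proposition~\ref{prop:lcl}. The reason the analogous statement fails in the framework of \cite{Mr2017} (and is worth highlighting as a motivation) is that intersecting with $Y$ can destroy the existence of a unique maximal element, whereas in the present generalized setting only local closedness needs to survive the restriction, which it does.
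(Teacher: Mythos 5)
Your proof is correct; the paper marks this proposition with \qed and supplies no proof, treating it as straightforward, and your argument is exactly the direct verification one would expect. The one small simplification worth noting: you need not first establish that $V\cap Y$ is locally closed in $X$ via Proposition~\ref{prop:lcl-intersection} — writing $V=U\cap D$ with $U$ open and $D$ closed in $X$ already gives $V\cap Y=(U\cap Y)\cap(D\cap Y)$ directly, which is open~$\cap$~closed in the subspace $Y$.
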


We say that a multivector $V$ is {\em critical} if the relative singular homology $H(\cl V,\mouth V)$  is non-zero.
A multivector $V$ which is not critical is called {\em regular}.
For each $x\in X$ we denote by $\vclass{x}$ the unique multivector in $\cV$ which contains~$x$.
If the multivector field $\cV$ is clear from the context, we write briefly $[x]:=\vclass{x}$.  
% If $A\subset X$, then we denote
%   \[ [A]_\cV := \left\{ \vclass{a}\in\cV\mid a\in A \right\}.
%   \]

We say that $x\in X$ is \emph{critical} (respectively \emph{regular}) with respect to $\cV$ if $\vclass{x}$ is critical (respectively regular).
We say that a subset $A\subset X$ is $\cV$-{\em compatible} if for each $x\in X$ either $\vclass{x}\cap A=\emptyset$ or $\vclass{x}\subset A$.
Note that every $\cV$-compatible set $A\subset X$ induces a well-defined multivector field $\cV_A:=\{V\in\cV\mid V\subset A\}$ on $A$.
The next proposition follows immediately from the definition of a $\cV$-compatible set.
\begin{prop}\label{prop:union-intersection-vcomp}
  The union and the intersection of a family of $\cV$-compatible sets is $\cV$-compatible.
\end{prop}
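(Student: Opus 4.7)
The plan is a direct verification from the definition, handling the union and intersection cases separately; both are straightforward, and I do not anticipate any real obstacle. Let $\{A_j\}_{j\in J}$ be a family of $\cV$-compatible subsets of $X$.

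For the union, set $A:=\bigcup_{j\in J}A_j$ and fix $x\in X$ with $[x]_\cV\cap A\neq\emptyset$. I would pick a witness $y\in[x]_\cV\cap A$, locate an index $j$ with $y\in A_j$, and observe that then $[x]_\cV\cap A_j\neq\emptyset$. By the $\cV$-compatibility of $A_j$ this forces $[x]_\cV\subset A_j\subset A$, which is the condition required of $A$.

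For the intersection, set $B:=\bigcap_{j\in J}A_j$ and again fix $x\in X$ with $[x]_\cV\cap B\neq\emptyset$. Picking $y\in[x]_\cV\cap B$, the point $y$ lies in every $A_j$, so $[x]_\cV\cap A_j\neq\emptyset$ for every $j$, and the $\cV$-compatibility of each $A_j$ yields $[x]_\cV\subset A_j$. Intersecting over $j\in J$ gives $[x]_\cV\subset B$, as required. The proof would conclude by combining the two cases.
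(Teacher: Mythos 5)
Your proof is correct and is exactly the direct verification the paper has in mind (it omits the proof, remarking only that the claim "follows immediately from the definition"). Both cases are handled properly; nothing is missing.
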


We associate with every multivector field $\cV$ a combinatorial dynamical system on~$X$ induced by the multivalued map $\Pi_\cV: X\mto X$ given by
\begin{equation}\label{eq:piv}
   \Piv(x):= \vclass{x}\cup\cl x.
\end{equation}
The following proposition is straightforward.
\begin{prop}
  Let $\cV$ be a multivector field on $X$. Then 
  \[ \Piv(x) = \vclass{x}\cup\mo x. \]
\end{prop}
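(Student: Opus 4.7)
The plan is to unwind the definitions and observe that the single point $x$ can be absorbed into $[x]_\cV$. By definition $\Pi_\cV(x) = [x]_\cV \cup \cl x$, and the mouth is $\mo x = \cl x \setminus \{x\}$, so $\cl x = \{x\} \cup \mo x$ (using that $x \in \cl x$ in any topological space). Since the multivector $[x]_\cV$ contains $x$ by definition of $[\cdot]_\cV$, the singleton $\{x\}$ is already present on the left-hand side of the union.

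Concretely, I would carry out the following one-line chain of equalities:
\begin{equation*}
  \Pi_\cV(x) \;=\; [x]_\cV \cup \cl x \;=\; [x]_\cV \cup \bigl(\{x\} \cup \mo x\bigr) \;=\; \bigl([x]_\cV \cup \{x\}\bigr) \cup \mo x \;=\; [x]_\cV \cup \mo x,
\end{equation*}
where the last equality uses $\{x\} \subset [x]_\cV$. The first equality is just (\ref{eq:piv}), the second is the definition of $\mo x = \cl x \setminus \{x\}$ (applied as $\cl x = \{x\} \cup (\cl x \setminus \{x\})$), and the final equality is set-theoretic absorption.

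There is no real obstacle here: the statement is essentially bookkeeping. The only point worth mentioning explicitly is that $x \in \cl x$, which holds because closure is an extensive operation, and that $x \in [x]_\cV$, which is the defining property of the notation $[x]_\cV$ introduced just before the proposition. No topological subtlety (in particular, no use of the finiteness of $X$ or of the $T_0$ axiom) is required, so the proof can be stated in two or three lines.
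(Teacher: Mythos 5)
Your proof is correct and is the obvious route; the paper itself labels this proposition ``straightforward'' and gives no proof, but the argument it has in mind is exactly your bookkeeping chain: $\Pi_\cV(x) = [x]_\cV \cup \cl x = [x]_\cV \cup (\{x\} \cup \mo x) = [x]_\cV \cup \mo x$ using $x \in [x]_\cV$.
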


\begin{figure}
  \includegraphics[width=0.5\textwidth]{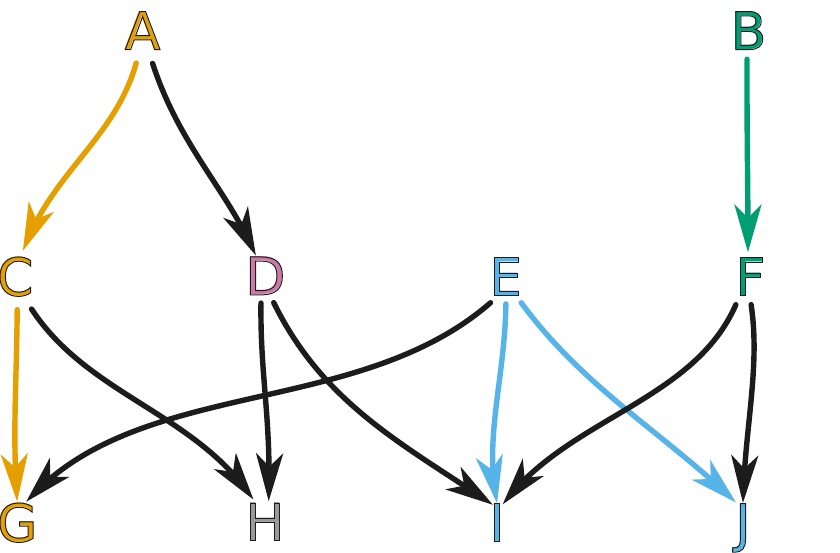}  
  \caption{An example of a combinatorial  multivector field 
  $\cV=\{\{A,C,G\}, \{D\}, \{H\}, \{E,I,J\}, \{B,F\}\}$ 
  on a finite topological space consisting of ten points. There are two regular multivectors, $\{A,C,G\}$ and $\{E,I,J\}$, the others are critical. Both the nodes and the connecting edges of each multivector are highlighted with a different color.} \label{fig:poset-mvf}
\end{figure}

We have following proposition.
\begin{prop}\label{prop:preimage-piv-opn}
  Let $\cV$ be a combinatorial multivector field on $(X,\cT)$. If $A\subset X$, then
  \begin{align*}
    \Piv^{-1}(A)=\bigcup_{x\in A} [x]_\cV\cup \opn x.
  \end{align*}
\end{prop}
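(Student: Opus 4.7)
The plan is to unwind both sides directly from the definitions, using Proposition~\ref{prop:cl-in-ftop} to trade the closure operator $\cl$ for the open-hull operator $\opn$ via the partial order $\leq_\cT$. Concretely, the large preimage (\ref{eq:large_premage}) gives
\[
  \Piv^{-1}(A) = \{y \in X \mid \Piv(y)\cap A \neq \emptyset\},
\]
and by (\ref{eq:piv}) the condition $\Piv(y)\cap A \neq \emptyset$ splits as the disjunction
\[
  [y]_\cV \cap A \neq \emptyset \quad\text{or}\quad \cl y \cap A \neq \emptyset.
\]
I will handle each disjunct separately.

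First, the multivector case. Since $\cV$ is a partition, $[y]_\cV \cap A \neq \emptyset$ is equivalent to the existence of some $x\in A$ with $[x]_\cV = [y]_\cV$, hence to $y \in [x]_\cV$ for some $x\in A$. This already yields $y \in \bigcup_{x\in A}[x]_\cV$, and the converse is immediate.

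Second, the closure case. Using Proposition~\ref{prop:cl-as-union} one has $\cl y \cap A \neq \emptyset$ iff there is an $x\in A$ with $x \in \cl y$, i.e.\ $x \leq_\cT y$. By Proposition~\ref{prop:cl-in-ftop}, this is the same as $y \in \opn x$. Thus the closure contribution to the preimage is exactly $\bigcup_{x\in A}\opn x$.

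Combining the two cases gives
\[
  \Piv^{-1}(A) \;=\; \bigcup_{x\in A}\bigl([x]_\cV \cup \opn x\bigr),
\]
matching the claim. No obstacle really arises here; the only thing to be careful about is not to confuse the large preimage with the small preimage and to read the right-hand side with the union over $x\in A$ distributing over both $[x]_\cV$ and $\opn x$. The translation between $x\in\cl y$ and $y\in\opn x$ via $\leq_\cT$ is the one nontrivial point, and it is supplied verbatim by Proposition~\ref{prop:cl-in-ftop}.
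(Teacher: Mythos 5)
Your proof is correct and follows essentially the same route as the paper: unwind the large preimage, split $\Pi_\cV(y)\cap A\neq\emptyset$ according to the two parts of $\Pi_\cV(y)=[y]_\cV\cup\cl y$, and convert $x\in\cl y$ into $y\in\opn x$ via $\leq_\cT$ using Proposition~\ref{prop:cl-in-ftop} (the paper invokes Theorem~\ref{thm:alexandroff} for the same step). One small slip: the equivalence ``$\cl y\cap A\neq\emptyset$ iff there is $x\in A$ with $x\in\cl y$'' is just the meaning of non-empty intersection, not a consequence of Proposition~\ref{prop:cl-as-union}; the citation is unnecessary but harmless.
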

\begin{proof}
  Assume $y\in\Piv^{-1}(A)$. By (\ref{eq:large_premage}) there exists an $x\in A$ such that $x\in \Piv(y)$, that is, $x\in\cl y\cup[y]_\cV=\Piv(y)$.
  If $x \in\cl y$, then from Proposition \ref{thm:alexandroff} we have $x\leq_\cT y$. It follows by \ref{prop:cl-in-ftop} that $y\in\opn x$. The case when $x\in [y]_\cV$ is trivial. 
  Hence, $y\in \opn x\cup [x]_\cV$ and consequently 
  \[\Piv^{-1}(A)\subset \bigcup_{x\in A}[x]_\cV\cup\opn x.\]
  
  In order to show the opposite inclusion consider an $x\in A$ and a $y\in\opn x\cup [x]_\cV$.
  If $y\in[x]_\cV$, then clearly $x\in[y]_\cV\subset\Piv(y)$ which implies $x\in\Piv(y)\cap A\neq\emptyset$.
  Thus $y\in\Piv^{-1}(A)$. 
  If $y\in\opn{x}$, then by Proposition \ref{prop:cl-in-ftop} we have $x\leq_\cT y$ and therefore $x\in\cl y$. 
  Thus, $x\in\Piv(y)$ and again $\Piv(y)\cap A\neq\emptyset$. 
  Hence, $y\in\Piv^{-1}(A)$ which completes the proof of the opposite inclusion. 
%   Consequently
%   \[\bigcup_{x\in A}\opn x\cup [x]_\cV\subset \Piv^{-1}(A).\]
\end{proof}

\begin{figure}
  \includegraphics[width=0.2\textwidth]{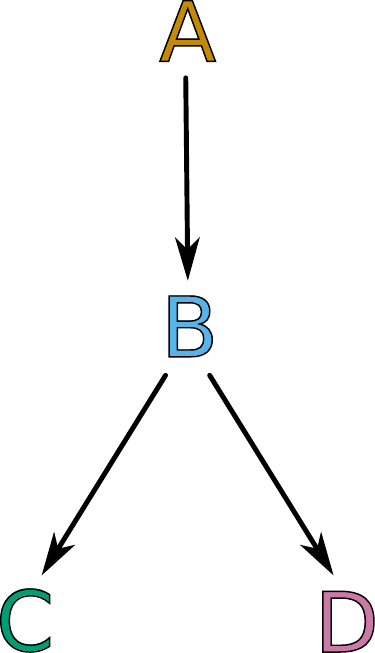}
  \hspace{2cm}
  \includegraphics[width=0.2\textwidth]{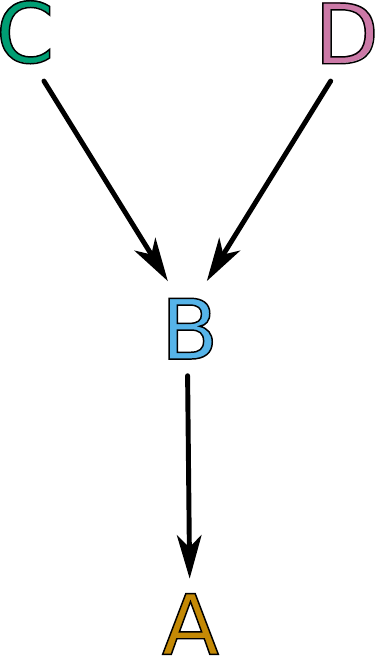}
  \caption{
    An example of a finite topological space $X$ and $\Xop$ consisting of four points and with the same partition into multivectors $\cV=\{\{A\}, \{B\}, \{C\}, \{D\}\}$. 
    In $X$ multivectors $\{B\}$, $\{C\}$ and $\{D\}$ are critical, while in $\Xop$ only $\{A\}$
    is critical.
  }\label{fig:non-dual-crit}
\end{figure}

Note that by Proposition \ref{prop:lcl-op} a multivector in a finite topological space $X$ is also a multivector in $\Xop$, that is, in the space $X$ with the opposite topology.
Thus, a multivector field $\cV$ in $X$ is also a multivector field in $\Xop$.
However, the two multivector fields cannot be considered the same, because the change in topology implies the change of the location of critical and regular multivectors (see Figure \ref{fig:non-dual-crit}).
We indicate this in notation by writing $\cVop$ for the multivector field $\cV$ considered with the opposite topology.

The multivector field $\cVop$ induces a combinatorial dynamical system $\Pi_{\cVop}:\Xop\multimap\Xop$ given by $\Pi_{\cVop}(x):=[x]_\cV\cup\cl_{\cTop} x$.
As an immediate consequence of Proposition \ref{prop:dual-top} and Proposition \ref{prop:preimage-piv-opn} we get following result.
%\ref{prop:lcl-intersection}.
\begin{prop}\label{prop:piv-dual}
  The combinatorial dynamical system $\Pi_\cVop$ is dual to the combinatorial dynamical system $\Pi_\cV$,
  that is, we have $\Pi_\cVop =\Pi_\cV^{-1}$.
\end{prop}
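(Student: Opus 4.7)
The plan is essentially to verify the asserted equality on the level of generators, applying the two results mentioned in the statement. Since a combinatorial dynamical system is identified with its generator via the recursion following (\ref{eq:cds}), showing $\Pi_{\cVop}(y) = \Pi_\cV^{-1}(y)$ for every $y\in X$ will suffice.

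First I would fix an arbitrary $y \in X$ and unfold the definition of the generator $\Pi_{\cVop}$: by the description just before the statement,
\[
\Pi_{\cVop}(y) = [y]_\cV \cup \cl_{\cTop} y.
\]
Since $X$ is finite, Proposition \ref{prop:dual-top} applied to the singleton $\{y\}$ rewrites the second summand as the minimal open superset, giving
\[
\Pi_{\cVop}(y) = [y]_\cV \cup \opn_\cT y.
\]

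Next I would compute $\Pi_\cV^{-1}(y)$ directly from Proposition \ref{prop:preimage-piv-opn} applied to the singleton $A = \{y\}$; it yields
\[
\Pi_\cV^{-1}(y) = [y]_\cV \cup \opn_\cT y.
\]
Comparing the two expressions we obtain $\Pi_{\cVop}(y) = \Pi_\cV^{-1}(y)$ for every $y$, hence the two generators coincide. The identification of a combinatorial dynamical system with its generator then extends the equality to all $n\in\ZZ^+$, completing the proof.

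There is no real obstacle here: the content of the proposition is packaged into Propositions \ref{prop:dual-top} and \ref{prop:preimage-piv-opn}. The only conceptual point worth flagging is that $[y]_\cV$ does not change when we pass from $\cT$ to $\cTop$ (the partition $\cV$ is a partition of the underlying set, and by Proposition \ref{prop:lcl-op} each block is still locally closed in the opposite topology), so the only piece of $\Piv(y)$ that flips is the closure component, which is precisely the swap $\cl_\cT \leftrightarrow \opn_\cT = \cl_{\cTop}$ exploited above.
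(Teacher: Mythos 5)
Your proof is correct and is essentially the paper's proof made explicit: the paper simply declares the proposition an immediate consequence of Propositions \ref{prop:dual-top} and \ref{prop:preimage-piv-opn}, and you have spelled out exactly the two substitutions (singleton $A=\{y\}$ in Proposition \ref{prop:preimage-piv-opn}, and $\opn_\cT y = \cl_{\cTop}y$ from Proposition \ref{prop:dual-top}) that make this immediate.
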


%%%%%%%%%%%%%%%%%%%%%%%%%%%%%%%%%%%%%%%%%%%%%%%
\subsection{Essential solutions}
Given a multivector field $\cV$ on a finite topological space $X$ by a solution (full solution, forward solution, backward solution, partial solution or path) of $\cV$ we mean a corresponding solution type of the combinatorial dynamical system $\Pi_\cV$.
Given a solution $\varphi$ of $\cV$ we denote by $\cV(\varphi)$ the set of multivectors $V\in\cV$ such that $V\cap\im\varphi\neq\emptyset$.
We denote the set of all paths of $\cV$ in a set $A$ by $\paths_\cV(A)$ and define
  \begin{align*}
    \pathsv(x, A) &:= \{\varphi\in\pathsv(A)\ \mid \ \varphi(0)=x\},\\
%   \pathsv(x, A) &:= \{\varphi\in\pathsv(A)\ \mid \ \lep{\varphi}=x\},\\
    \pathsv(x, y, A) &:= \{\varphi\in\pathsv(A)\ \mid\ \lep{\varphi}=x\ \text{and}\ \rep{\varphi}=y\}.
  \end{align*}
%We refer to the cardinality of the domain of a path as the {\em length} of the path.
We denote the set of full solutions of $\cV$ in $A$ (respectively backward or forward solutions in $A$) by $\sol_\cV(A)$ (respectively $\sol_\cV^-(A)$, $\sol_\cV^+(A)$).  
We also write
\begin{align*}
\sol_\cV(x,A) &:= \{\varphi\in\sol_\cV(A)\ \mid \ \varphi(0)=x\}.
% \sol_\cV(x,A) &:= \{\varphi\in\sol_\cV(A)\ \mid \ \lep{\varphi}=x\}.
\end{align*}
Observe that by (\ref{eq:piv}) $x\in\Piv(x)$ for every $x\in X$.
Hence, a constant map from an interval to a point is always a solution. 
This means that every solution can easily be extended to a full solution. 
In consequence, every point is recurrent which is not typical.
To remedy this we introduce the concept of an essential solution.

A full solution $\varphi:\ZZ\to X$ is {\em left-essential} (respectively {\em right-essential}) if for every regular $x\in \im\varphi$ the set $\setof{t\in\ZZ\mid\varphi(t)\not\in\vclass{x}}$ is {\em left-infinite} (respectively {\em right-infinite}).
We say that $\varphi$ is {\em essential} if it is both left- and right-essential.
We say that a point $x\in X$ is \emph{essentially recurrent} if an essential periodic solution passes through $x$.
Note that a periodic solution $\varphi$ is essential either if $\#\cV(\varphi)\geq 2$ or if the unique multivector in $\cV(\varphi)$ is critical.

We denote the set of all essential solutions in $A\subset X$ (respectively left- or right-essential solutions in $A$) by $\esol_\cV(A)$ (respectively $\esolp(A)$, $\esolm(A)$) and   
  the set of all essential solutions in a set $A\subset X$ passing through a point~$x$ by
\begin{align*}
  \esol_\cV(x,A) &:= \{\varphi\in\esol(A)\ \mid \ \varphi(0)=x\}
\end{align*}
and we define the \emph{invariant part} of $A\subset X$ by
\begin{equation}\label{eq:inv_set}
  \inv_\cV A := \left\{ x\in A\ |\ \esol(x,A)\neq\emptyset \right\}.
\end{equation}
In particular, if $\inv_\cV A = A$ then we say that $A$ is an \emph{invariant set} for $\cV$.
We drop the subscript $\cV$ in $\sol_\cV$, $\esol_\cV$ and $\inv_\cV$ whenever $\cV$ is clear from the context.

\begin{prop}\label{prop:union-of-invariant-sets}
  Let $A,B\subset X$ be invariant sets. Then $A\cup B$ is also an invariant set.
\end{prop}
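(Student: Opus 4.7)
The plan is to unpack the definition of invariance and observe that the essential solution witnessing invariance in one piece automatically works in the union.

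Recall that $A$ is invariant means $\inv_\cV A = A$, i.e., every $x\in A$ admits some full essential solution $\varphi\in\esol(x,A)$, which by definition satisfies $\im\varphi\subset A$. The containment $\inv_\cV(A\cup B)\subset A\cup B$ is immediate from the definition in (\ref{eq:inv_set}), so only the reverse inclusion requires argument.

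For the nontrivial direction, I would take an arbitrary $x\in A\cup B$ and split into the two cases $x\in A$ and $x\in B$. In the first case, invariance of $A$ supplies $\varphi\in\esol(x,A)$. Since $\im\varphi\subset A\subset A\cup B$, the same map $\varphi$ lies in $\esol(x,A\cup B)$, hence $x\in\inv_\cV(A\cup B)$. The case $x\in B$ is symmetric, using invariance of $B$. This yields $A\cup B\subset\inv_\cV(A\cup B)$, completing the proof.

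There is essentially no obstacle here: the argument is a one-line monotonicity observation, namely that enlarging the ambient set cannot destroy an essential solution. The property of being essential depends only on the solution $\varphi$ and on which multivectors are critical or regular (which is intrinsic to $\cV$ and does not reference the ambient set), so no recheck of essentiality is needed when passing from $\esol(x,A)$ to $\esol(x,A\cup B)$. One could even state a general monotonicity lemma: if $A\subset A'$ then $\esol(x,A)\subset\esol(x,A')$, from which the proposition follows in a single line.
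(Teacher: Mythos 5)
Your proof is correct and follows exactly the paper's own argument: given $x$ in one of the two pieces, take the essential solution witnessing invariance there and observe it remains an essential solution in the union. The extra remark that essentiality depends only on $\varphi$ and on $\cV$ (not on the ambient set) makes explicit a point the paper treats as clear, but the approach is identical.
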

\begin{proof}
  Let $x\in A$. 
  By the definition of an invariant set there exists an essential solution  $\varphi\in\esol(x,A)$. 
  It is clear that $\varphi$ is also an essential solution in $A\cup B$. 
  Thus $\esol(x,A\cup B)\neq\emptyset$. 
  The same holds for $B$. 
  Hence $\inv(A\cup B)=A\cup B$.
\end{proof}

%%%%%%%%%%%%%%%%%%%%%%%%%%%%%%%%%%%%%%%%%%%%%%%
\subsection{Isolated invariant sets}
In this subsection we introduce the combinatorial counterpart of the concept of an isolated invariant set. 
In order to emphasize the difference, we say that an isolated invariant set is isolated by an isolating set, not by an isolating neighborhood.
In comparison to the classical theory of dynamical systems, the crucial difference is that we cannot guarantee the existence of disjoint isolating sets for two disjoint isolated invariant sets. 
This is caused by the tightness of the finite topological space. 

\begin{defn}\label{def:iso-inv-set}
  A closed set $N$ \emph{isolates} an invariant set $S\subset N$, if the following two conditions hold:
  \begin{enumerate}[(a)]
    \item Every path in $N$ with endpoints in $S$ is a path in $S$,
    \item $\Pi_\cV(S)\subset N$.
  \end{enumerate}
  In this case, we also say that $N$ is an \emph{isolating set} for $S$.  
  An invariant set~$S$ is \emph{isolated} if there exists a closed set $N$ meeting the above conditions.
\end{defn}

An important example is given by the following straightforward proposition. 
\begin{prop}
\label{prop:invX}
The whole space $X$ isolates its invariant part $\Inv X$. In particular, $\Inv X$ is an isolated invariant set.
\qed
\end{prop}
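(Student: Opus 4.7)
The plan is to verify the two conditions of Definition \ref{def:iso-inv-set} with $N := X$ and $S := \Inv X$, after first checking that $\Inv X$ is itself an invariant set. The space $X$ is trivially closed, and condition (b), $\Pi_\cV(\Inv X) \subset X$, holds vacuously. For invariance of $\Inv X$, the key observation is that any translate of a full essential solution is again a full essential solution, because essentiality is phrased purely in terms of left- and right-infiniteness of subsets of $\ZZ$, which are translation-invariant properties. Thus if $x \in \Inv X$ is witnessed by $\varphi \in \esol(x, X)$, then for every $t \in \ZZ$ the shift $\varphi \circ \tau_{-t}$ lies in $\esol(\varphi(t), X)$, so $\varphi(t) \in \Inv X$. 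Consequently $\varphi$ is actually a solution in $\Inv X$, yielding $\Inv(\Inv X) = \Inv X$.

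The substantive step is condition (a): any path $\varphi$ in $X$ with endpoints $x_0 := \lep{\varphi}$ and $x_n := \rep{\varphi}$ in $\Inv X$ must visit only points of $\Inv X$. I would pick essential solutions $\psi_0 \in \esol(x_0, X)$ and $\psi_n \in \esol(x_n, X)$ and form the concatenation
\[
\psi := \psi_0|_{\ZZ^-} \cdot \varphi \cdot \psi_n|_{\ZZ^+},
\]
which is well-defined because $\psi_0(0) = x_0 = \varphi(0)$ and $\psi_n(0) = x_n = \varphi(n)$, and which, up to the reindexing built into the $\cdot$ operation, passes through every value of $\varphi$. Hence once $\psi$ is shown to be essential, each vertex of $\varphi$ lies in $\Inv X$, and the ``in particular'' clause follows immediately from Definition \ref{def:iso-inv-set}.

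I expect the essentiality of $\psi$ to be the only step requiring genuine care. Fix a regular $x \in \im \psi$ and suppose, for contradiction, that $\{t : \psi(t) \notin [x]_\cV\}$ is not left-infinite. Then $\psi(t) \in [x]_\cV$ for all sufficiently negative $t$, and since $\varphi$ has bounded domain these $t$ fall in the $\psi_0|_{\ZZ^-}$ portion of $\psi$; in particular there exists $t_*$ with $\psi_0(t_*) \in [x]_\cV$. The multivector of $\psi_0(t_*)$ equals $[x]_\cV$, which is regular, so $\psi_0(t_*)$ is regular, and then the left-essentiality of $\psi_0$ applied at $\psi_0(t_*)$ forces $\{s : \psi_0(s) \notin [x]_\cV\}$ to be left-infinite, the desired contradiction. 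The right-infinite case is symmetric, with $\psi_n$ playing the role of $\psi_0$. All remaining aspects are bookkeeping.
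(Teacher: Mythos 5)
The paper records this proposition as ``straightforward'' and places the \qed\ directly after the statement, supplying no proof, so there is no argument of the authors' to compare against. Your proof is correct and fills in exactly the routine verification being elided: invariance of $\Inv X$ via translation-invariance of essentiality, condition~(b) of Definition~\ref{def:iso-inv-set} being vacuous for $N=X$, and condition~(a) via splicing essential tails $\psi_0^-$ and $\psi_n^+$ onto the path and checking essentiality of the concatenation. The only blemish is a harmless sign slip: with the paper's convention $\tau_n(m)=m+n$, the shift placing $\varphi(t)$ at index $0$ is $\varphi\circ\tau_t$, not $\varphi\circ\tau_{-t}$; this has no bearing on the reasoning.
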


\begin{prop}\label{prop:iso-is-vcomp}
  If $S\subset X$ is an isolated invariant set, then $S$ is $\cV$-compatible.
\end{prop}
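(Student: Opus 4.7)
The plan is to show directly that every multivector meeting $S$ lies entirely in $S$, using both conditions in the definition of an isolating set simultaneously.

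Fix an isolating set $N$ for $S$ and let $x\in S$ and $y\in [x]_\cV$; I will show $y\in S$. First I would use condition (b): since $[x]_\cV\subseteq [x]_\cV\cup\cl x=\Pi_\cV(x)$, the element $y$ lies in $\Pi_\cV(S)\subseteq N$. Next I would use the symmetry inherent in membership in the same multivector: because $y$ and $x$ share a multivector, $[y]_\cV=[x]_\cV$, and therefore $x\in [y]_\cV\subseteq\Pi_\cV(y)$. Combining these two observations, the sequence $x,y,x$ is a valid path in $N$ (its vertices all sit in $N$, and consecutive vertices are related by $\Pi_\cV$), with both endpoints equal to $x\in S$.

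Now I would apply condition (a): every path in $N$ with endpoints in $S$ is a path in $S$. This forces the intermediate vertex $y$ to belong to $S$ as well. Since $y\in [x]_\cV$ was arbitrary, this gives $[x]_\cV\subseteq S$, and as $x\in S$ was arbitrary, $S$ is $\cV$-compatible in the sense of Section~\ref{sec:cmvf}.

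There is no real obstacle here; the argument is a two-line consequence of Definition~\ref{def:iso-inv-set} once one notices that the two-step path $x\to y\to x$ sits entirely in $N$. The only thing worth being careful about is that I am not invoking invariance of $S$ at all in the displayed argument—invariance is already baked into the hypothesis that $S$ is \emph{the} invariant set isolated by $N$, and what does the work is purely conditions (a) and (b) together with the fact that multivector membership is symmetric, i.e.\ $y\in[x]_\cV$ implies $x\in[y]_\cV$.
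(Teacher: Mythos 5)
Your argument is correct and is essentially the same as the paper's: both observe that $y\in\Pi_\cV(x)\subset N$ via condition (b), that $x\in\Pi_\cV(y)$ since $[x]_\cV=[y]_\cV$, and then apply condition (a) to the two-step path $x\cdot y\cdot x$ in $N$. The only cosmetic difference is that the paper frames it as a proof by contradiction while you argue directly.
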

\begin{proof}
  Suppose the contrary. Then there exists an $x\in S$ and a $y\in[x]_\cV\setminus S$.  
  Let $N$ be an isolating set for $S$. 
  It follows from Definition \ref{def:iso-inv-set}(b), that $y\in\Piv(x)\subset N$. 
  It is also clear that $x\in\Piv(y)$. 
  Thus the path $x\cdot y\cdot x$ is a path in $N$ with endpoints in $S$,
  but it is not contained in $S$, and this in turn contradicts Definition \ref{def:iso-inv-set}(a).
\end{proof}

%\begin{defn}\label{def:iso-inv-set}
% We say that an invariant set $S\subset \cX$ is an \emph{isolated invariant set} if $S$ is $\cV$-compatible and admits an isolating set $N$. We then say that $N$ \emph{isolates} $S$ or $S$ is \emph{isolated} by $N$.
%\end{defn}

The finiteness of the space allows us to construct the smallest possible isolating set. 
More precisely, we have the following straightforward proposition.

\begin{prop}\label{prop:smallest_iso_neigh}
Let $N$ be an isolating set for an isolated invariant set $S$. If $M$ is a closed set such that $S\subset M\subset N$, then $S$ is also isolated by $M$. In particular, $\cl S$ is the smallest isolating set for $S$.\qed
\end{prop}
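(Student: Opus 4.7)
The plan is to verify both conditions of Definition \ref{def:iso-inv-set} for $M$ directly, then derive the ``in particular'' claim by taking $M = \cl S$ and observing minimality.

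First I would check condition (a). Any path in $M$ with endpoints in $S$ is also a path in $N$ with endpoints in $S$, since $M \subset N$. Because $N$ isolates $S$, this path must lie entirely in $S$. So condition (a) transfers from $N$ to $M$ essentially for free.

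The slightly more delicate step is condition (b), namely $\Pi_\cV(S) \subset M$. Fix $x \in S$; by the formula $\Pi_\cV(x) = [x]_\cV \cup \cl x$ we have two parts to control. For the multivector part, Proposition \ref{prop:iso-is-vcomp} guarantees that $S$ is $\cV$-compatible, so $[x]_\cV \subset S \subset M$. For the closure part, $\cl x \subset \cl S$, and since $M$ is closed with $S \subset M$ we have $\cl S \subset M$. Combining these gives $\Pi_\cV(x) \subset M$, and hence $\Pi_\cV(S) \subset M$, which is condition (b).

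For the ``in particular'' statement, I would apply what has just been proved to $M := \cl S$: this is a closed set squeezed between $S$ and $N$ (the inclusion $\cl S \subset N$ holds because $N$ is closed and contains $S$), so it is itself an isolating set for $S$. Conversely, any isolating set $N'$ for $S$ is by definition closed and contains $S$, hence contains $\cl S$, establishing that $\cl S$ is the smallest isolating set.

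I do not expect any serious obstacle here; the only point worth flagging is the use of Proposition \ref{prop:iso-is-vcomp} to handle the multivector component of $\Pi_\cV(S)$, without which condition (b) would not obviously shrink from $N$ down to $M$. Everything else is a direct consequence of $M$ being closed and sandwiched between $S$ and $N$.
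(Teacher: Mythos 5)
Your proof is correct and fills in the details the paper leaves implicit (the paper marks this proposition as straightforward and gives no proof). Both conditions of Definition \ref{def:iso-inv-set} are verified cleanly: condition (a) transfers from $N$ to $M$ by inclusion, and for condition (b) you correctly observe that the key identity is $\Pi_\cV(S) = \cl S$, which follows from the $\cV$-compatibility of $S$ (Proposition \ref{prop:iso-is-vcomp}) together with $\bigcup_{x\in S}\cl x = \cl S$; since $M$ is closed and contains $S$, this gives $\Pi_\cV(S)\subset M$. The ``in particular'' step is handled properly in both directions. You are right that the $\cV$-compatibility step is the only non-trivial ingredient: without it one only knows $\Pi_\cV(S)\subset N$, which is not enough to shrink down to $M$. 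Note that Proposition \ref{prop:iso-is-vcomp} itself is proved precisely by the two-step path argument ($x\cdot y\cdot x$ in the isolating set), so there is no circularity, and your invocation of it is the natural route.
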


\begin{prop}\label{prop:iso_inv_is_conv}
Let $S \subset X$.
If $S$ is an isolated invariant set, then $S$ is locally closed.
\end{prop}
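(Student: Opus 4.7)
The plan is to use the order-theoretic characterization of local closedness provided by Proposition~\ref{prop:lcl-in-ftop}: since $(X,\cT)$ is a $T_0$ finite topological space, it suffices to show that $S$ is convex with respect to $\leq_\cT$. So I would fix $x, z \in S$ and $y \in X$ with $x \leq_\cT y \leq_\cT z$, and aim to prove $y \in S$.

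The key observation is that the partial order encodes the edges of the graph $G_\cV$ coming from the closure component of $\Piv$. Specifically, by Proposition~\ref{prop:cl-in-ftop}, $x \leq_\cT y$ means $x \in \cl y \subset \Piv(y)$, and $y \leq_\cT z$ means $y \in \cl z \subset \Piv(z)$. Therefore the sequence $z, y, x$ is a path of $\Pi_\cV$ whose endpoints $z$ and $x$ both lie in $S$.

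Next I would bring in an isolating set $N$ for $S$. Since $N$ is closed, Theorem~\ref{thm:alexandroff} tells us $N$ is a down set with respect to $\leq_\cT$; from $z \in S \subset N$ and $y \leq_\cT z$ we conclude $y \in N$. Since also $x, z \in S \subset N$, the entire path $z, y, x$ lies in $N$ and has endpoints in $S$. Condition (a) of Definition~\ref{def:iso-inv-set} then forces the path to be contained in $S$, which yields $y \in S$ as required, establishing convexity and hence local closedness.

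There is no serious obstacle here; the whole argument is essentially one short diagram chase, and the only thing one has to notice is that the intermediate point $y$ is automatically available in $N$ because closed sets in finite topological spaces are down sets, so that the isolation condition on paths can be directly applied to the three-term path $z \to y \to x$ produced by the face relations.
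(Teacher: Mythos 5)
Your proof is correct and follows essentially the same route as the paper: produce the three-term path $z\cdot y\cdot x$ from the face relations, note that it lies in an isolating set (the paper uses $N=\cl S$ and observes $y\in\cl z\subset\cl S$, while you use an arbitrary isolating set $N$ together with the down-set property of closed sets — the same observation phrased slightly differently), and then invoke condition (a) of Definition~\ref{def:iso-inv-set} to conclude $y\in S$.
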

\begin{proof}
By Proposition \ref{prop:smallest_iso_neigh} the set $N := \cl S$ is an isolating set for $S$. 
Assume that $S$ is not locally closed. 
By Proposition \ref{prop:lcl-in-ftop} there exist $x, z \in S$ and a $y \not\in S$ such that $x \leq_\cT y \leq_\cT z$. 
Hence, it follows from Theorem \ref{thm:alexandroff} that $x\in\cl_\cT y$ and $y\in\cl_\cT z$.  
In particular, $x,y,z\in\cl S$. 
It follows that $\varphi:=z\cdot y\cdot x$ is a solution in $\cl S$ with endpoints in $S$. 
In consequence, $y\in S$, a contradiction.
\end{proof}

In particular, it follows from Proposition~\ref{prop:iso_inv_is_conv} that if $S$ is an isolated invariant set, 
then we have the induced multivector field $\cV_S$ on $X$.

\begin{prop}\label{prop:inv_conv_vcomp_is_iso}
Let $S$ be a locally closed, $\cV$-compatible invariant set. Then $S$ is an isolated invariant set.
\end{prop}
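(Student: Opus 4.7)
The plan is to verify that $N := \cl S$ serves as an isolating set for $S$ in the sense of Definition \ref{def:iso-inv-set}. Since $N$ is closed with $S \subset N$, and $S$ is invariant by hypothesis, only the two bullet conditions of that definition need to be checked.

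Condition (b) will be immediate: for any $x \in S$, we have $\Pi_\cV(x) = [x]_\cV \cup \cl x$, and the $\cl x$ part lies in $\cl S$ trivially while the $[x]_\cV$ part lies in $S \subset \cl S$ by $\cV$-compatibility.

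The real work goes into condition (a). Given a path $\varphi = (x_0, x_1, \ldots, x_k)$ in $\cl S$ with $x_0, x_k \in S$, I would argue by contradiction, picking the \emph{largest} index $i$ with $x_i \notin S$. Such an $i$ must satisfy $i < k$ and $x_{i+1} \in S$. I would then split the transition $x_{i+1} \in [x_i]_\cV \cup \cl x_i$ into two cases. In the in-multivector case $x_{i+1} \in [x_i]_\cV$, the partition property gives $[x_i]_\cV = [x_{i+1}]_\cV$, so $\cV$-compatibility of $S$ forces $x_i \in [x_i]_\cV \subset S$, a contradiction. In the remaining case $x_{i+1} \in \cl x_i$, Theorem \ref{thm:alexandroff} yields $x_{i+1} \leq_\cT x_i$; combining this with $x_i \in \cl S$ (which produces some $s \in S$ with $x_i \leq_\cT s$ via Proposition \ref{prop:cl-in-ftop}), I obtain $x_{i+1} \leq_\cT x_i \leq_\cT s$ with $x_{i+1}, s \in S$, whence convexity of $S$ (equivalently, local closedness, by Proposition \ref{prop:lcl-in-ftop}) forces $x_i \in S$ and produces the contradiction.

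The main conceptual point is the choice of $i$: a naive forward induction along the path fails because the path can freely descend out of $S$ and then wander within a multivector disjoint from $S$ before returning. Selecting the \emph{last} time the path lies outside $S$ is what forces the next step to be a single closure step back into $S$, which is exactly the configuration the convexity argument needs. After that choice, the verification reduces to a short case-check built on the formula $\Pi_\cV(x) = [x]_\cV \cup \cl x$, $\cV$-compatibility, and convexity; no deeper machinery is required.
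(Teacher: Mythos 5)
Your proposal is correct and matches the paper's proof essentially step for step: same choice of $N=\cl S$, same immediate verification of condition (b) from $\Piv(x)=[x]_\cV\cup\cl x$ and $\cV$-compatibility, and for (a) the same trick of taking the \emph{maximal} index $i$ with $x_i\notin S$, ruling out $x_{i+1}\in[x_i]_\cV$ via $\cV$-compatibility, and then using local closedness (convexity) with $x_{i+1}\leq_\cT x_i\leq_\cT s$ to force $x_i\in S$. No differences worth noting.
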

\begin{proof}
  Assume that $S$ is a $\cV$-compatible and locally closed invariant set. We will show that $N:=\cl S$ isolates $S$. 
  We have 
  \[
  \Piv(S) = \bigcup_{x\in S} \cl x \cup \bigcup_{x\in S} [x]_\cV = \cl S \cup S = \cl S \subset N.
  \]
  Therefore condition (b) of Definition \ref{def:iso-inv-set} is satisfied.

  We will now show that every path in $N$ with endpoints in $S$ is a path in $S$.
  Let $\varphi:=x_0\cdot x_1\cdot...\cdot x_n$ be a path in $N$ with endpoints in $S$.
  Thus, $x_0, x_n\in S$. Suppose that there is an $i\in\{0,1,...,n\}$ such that $x_i\not\in S$. 
  Without loss of generality we may assume that $i$ is maximal such that $x_i\not\in S$. 
  Then $x_{i+1}\neq x_i$ and $i<n$, because $x_n\in S$. 
  We have $x_{i+1}\in \Pi_\cV(x_i)=[x_i]_\cV\cup\cl x_i$.
  Since $x_i\not\in S$, $x_{i+1}\in S$ and $S$ is $\cV$-compatible, we cannot have $x_{i+1}\in[x_i]_\cV$. 
  Therefore, $x_{i+1}\in\cl x_i$.
  Since $\varphi$ is a path in $N=\cl{S}$, we have $x_i\in\cl S$.
  Hence, $x_i\in\cl z$ for a $z\in S$.
  It follows from Proposition \ref{prop:lcl-in-ftop} that $x_i\in S$, because 
$x_{i+1},z\in S$, $x_{i+1}\in\cl x_i$, $x_i\in\cl z$ and $S$ is locally closed.
Thus, we get a contradiction proving that also condition (a) of Definition \ref{def:iso-inv-set} is satisfied.
  In consequence, $N$ isolates $S$ and $S$ is an isolated invariant set.
\end{proof}

%%%%%%%%%%%%%%%%%%%%%%%%%%%%%%%%%%%%%%%%%%%%%%%
\subsection{Multivector field as a digraph}

Let $\cV$ be a multivector field in $X$. 
We denote by $G_\cV$ the multivalued map $\Piv$ interpreted as a digraph.
% The multivalued map $\Pi_\cV$ may be interpreted as a digraph denoted by $G_\cV=(X, E_\cV)$, where $E_\cV:=\{(x,y)\in X\times X\mid y\in\Piv(x)\}$.
% There is a clear bijection between the family of paths in a graph $G_\cV$ and the family $\pathsv(X)$. 
% Thus we keep the notation from Section \ref{subs:solutions} to describe paths in the graph.
% 
% Let $G=(X,E)$ be a digraph. Define an equivalence relation on $X$ by
%   \begin{align}\label{eq:spcc}
%     x\sim y\ \Leftrightarrow\ x=y\ \vee\ (\pathsv(x,y,X)\neq\emptyset\ \wedge\ \pathsv(y,x,X)\neq\emptyset).  
%   \end{align}
% A class $[x]_\sim\subset X$ is called a \emph{strongly path connected component (SPCC)} of $G$.
% %$x\in G$ and the existence of a nonempty, directed path from $x$ to any node in $C$ and from any node in $C$ to $x$ implies $x\in C$.

\begin{prop}
  Assume $A\subset X$ is strongly connected in $G_\cV$. Then the following conditions are pairwise equivalent.
  \begin{enumerate}[(i)]
    \item There exists an essentially recurrent point~$x$ in~$A$, that is,
          there exists an essential periodic solution in~$A$ through~$x$,
    \item $A$ is non-empty and every point in~$A$ is essentially recurrent in~$A$,
    \item $\inv A\neq\emptyset$.
  \end{enumerate}
\end{prop}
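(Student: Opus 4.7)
My plan is to establish the cycle (ii) $\Rightarrow$ (i) $\Rightarrow$ (iii) $\Rightarrow$ (ii). The first two implications are essentially definitional; the substantive step is (iii) $\Rightarrow$ (ii), which I will prove by a dichotomy on how many multivectors of $\cV$ meet $A$, using strong connectivity together with the characterization recalled just before the proposition that a periodic solution $\varphi$ is essential iff $\#\cV(\varphi)\geq 2$ or the unique multivector in $\cV(\varphi)$ is critical.

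For (ii) $\Rightarrow$ (i), non-emptiness of $A$ lets me pick any $x\in A$, and by hypothesis it carries an essential periodic solution. For (i) $\Rightarrow$ (iii), an essential periodic solution is in particular an essential full solution through $x$, so $x\in\inv A$. Conversely, (iii) forces $A\neq\emptyset$ since $\inv A\subseteq A$, which I will need below.

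For (iii) $\Rightarrow$ (ii), fix $x\in A$ and split cases on the number of multivectors meeting $A$. If at least two distinct $V_1,V_2\in\cV$ satisfy $V_i\cap A\neq\emptyset$, pick $v_i\in V_i\cap A$; strong connectivity of $A$ in $G_\cV$ supplies paths $\alpha\in\pathsv(x,v_1,A)$, $\beta\in\pathsv(v_1,v_2,A)$, $\gamma\in\pathsv(v_2,x,A)$, whose concatenation is a closed path in $A$ based at $x$. Extending to a periodic full solution $\varphi$ yields $\{V_1,V_2\}\subseteq\cV(\varphi)$, hence $\#\cV(\varphi)\geq 2$ and $\varphi$ is essential. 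If instead a single $V\in\cV$ meets $A$, then $A\subseteq V$; picking $\psi\in\esol(y,A)$ for some $y\in\inv A$ gives $\cV(\psi)=\{V\}$, and were $V$ regular the set $\{t\in\ZZ:\psi(t)\notin V\}$ would be empty, violating essentiality of $\psi$. So $V$ is critical, and then the constant map $\varphi\equiv x$ is a periodic full solution (since $x\in[x]_\cV\subseteq\Pi_\cV(x)$) with $\cV(\varphi)=\{V\}$ critical, hence essential. Either way $x$ is essentially recurrent in $A$.

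The main obstacle I anticipate is producing a periodic solution through $x$ that is essential, not merely closed; a direct pigeonhole extraction of a closed segment from the essential solution granted by (iii) need not yield an essential periodization. The dichotomy above circumvents this by either assembling $\varphi$ through $x$ so as to visibly traverse two distinct multivectors, or by reducing to the critical-multivector regime in which the constant solution already suffices.
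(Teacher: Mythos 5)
Your proof is correct but is organized differently from the paper's. The paper proves (i)\,$\Rightarrow$\,(ii)\,$\Rightarrow$\,(iii)\,$\Rightarrow$\,(i), spreading the work over two non-trivial steps, namely (i)\,$\Rightarrow$\,(ii) and (iii)\,$\Rightarrow$\,(i), each with its own dichotomy taken on a specific solution or path: in (iii)\,$\Rightarrow$\,(i) the split is on $\#\cV(\varphi)$ for the essential solution granted by $\inv A\neq\emptyset$, while in (i)\,$\Rightarrow$\,(ii) the split is on $\#\cV$ of a chosen closed path from $x$ through $y$ and back, with a ``modification'' argument in the degenerate case $[x]_\cV=[y]_\cV$. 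You instead follow the cycle (ii)\,$\Rightarrow$\,(i)\,$\Rightarrow$\,(iii)\,$\Rightarrow$\,(ii), compressing all the substance into the single step (iii)\,$\Rightarrow$\,(ii), and your dichotomy is taken on the set $A$ itself (one multivector meets $A$, or at least two). This is cleaner: the two-multivector branch needs only strong connectivity and non-emptiness to produce an essential periodic solution through any given $x$, and the single-multivector branch uses the essential solution from (iii) solely to certify that the unique multivector is critical, after which the constant solution suffices. The underlying ingredients — closed paths from strong connectivity, periodic extension of closed paths, the characterization that a periodic solution is essential iff it visits at least two multivectors or a single critical one — coincide with the paper's, so this is a reorganization rather than a new method, but it is a tidier and slightly more economical one.
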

\begin{proof}
  Assume (i). Then $A\neq\emptyset$.
  Let $x\in A$ be an essentially recurrent point in~$A$ and let $y\in A$ be arbitrary.
  Since $A$ is strongly connected, we can find a periodic solution $\varphi$ in~$A$ passing through~$x$ and~$y$.
  If $\#\cV(\varphi)\geq 2$, then $\varphi$ is essential and $y$ is essentially recurrent in~$A$.
  Otherwise $[x]_\cV=[y]_\cV$ and we may easily modify the essential periodic solution in~$A$ through $x$ to an essential periodic solution in~$A$ through~$y$.
  This proves (ii).
  Implication (ii)$\Rightarrow$(iii) is straightforward.
  To prove that (iii) implies (i) assume that $\varphi$ is an essential solution in $A$,
  If $\#\cV(\varphi)=1$, then the unique multivector $V\in\cV(\varphi)$ is critical and every $x\in V\subset A$ is essentially recurrent in~$A$.
  Otherwise we can find points $x,y\in A$ such that $[x]_\cV\neq[y]_\cV$.
  Since $A$ is strongly connected, we can find paths $\psi_1\in\pathsv(x,y,A)$ and $\psi_2\in\pathsv(y,x,A)$.
  Then $\psi_1\cdot\psi_2$ extends to an essential periodic solution in~$A$ through $x$ proving that $x\in A$ is essentially recurrent in~$A$.
\end{proof}

The above result considered the situation of a strongly connected set in~$G_\cV$.
If in addition we assume that this set is maximal, that is, a strongly connected
component in~$G_\cV$, one obtains the following result.

\begin{prop}\label{prop:scc-Vcomp-lcl}
  Let $\cV$ be a multivector field on $X$ and let $G_\cV$ be the associated digraph.  
  If $C\subset X$ is a strongly connected component of $G_\cV$, then $C$ is $\cV$-compatible and locally closed.
\end{prop}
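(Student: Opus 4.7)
The approach is to verify the two conditions separately. In each case, the strategy is to take a candidate point $y$ (either a fellow member of a multivector, or a point ``sandwiched'' between two points of $C$) and exhibit direct edges in $G_\cV$ between $y$ and points of $C$, whence the strongly connected component property forces $y\in C$.

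First I would note that the notion of a strongly connected component requires the ambient digraph to be recurrent. But by definition $\Piv(x)=[x]_\cV\cup\cl x$, so $x\in\Piv(x)$ for every $x\in X$; hence every vertex of $G_\cV$ carries a self-loop and $G_\cV$ is recurrent, so the strongly connected component $C$ is well defined.

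For $\cV$-compatibility, let $x\in C$ and $y\in[x]_\cV$. Since $y\in[x]_\cV\subset\Piv(x)$ there is an edge $x\to y$, and since $[y]_\cV=[x]_\cV$ we also have $x\in\Piv(y)$, giving an edge $y\to x$. Thus $x$ and $y$ lie on a common closed path (the length-two loop), so they are equivalent in the sense of the paper's definition of strongly connected components, and therefore $y\in C$.

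For local closedness, by Proposition~\ref{prop:lcl-in-ftop} it is enough to check that $C$ is convex with respect to $\leq_\cT$. So let $x,z\in C$ and suppose $y\in X$ satisfies $x\leq_\cT y\leq_\cT z$. By Theorem~\ref{thm:alexandroff} this means $x\in\cl y\subset\Piv(y)$ and $y\in\cl z\subset\Piv(z)$, giving edges $y\to x$ and $z\to y$ in $G_\cV$. Since $C$ is strongly connected there is a path in $C$ from $x$ to $z$; concatenating with the two edges above yields a closed path $y\to x\to\cdots\to z\to y$ in $G_\cV$, showing that $y$ is mutually reachable with $x$ and $z$. As $C$ is a strongly connected component (a maximal such equivalence class), $y\in C$. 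This completes both parts.

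The argument is short once the right characterizations are in hand; the only conceptual step is remembering that local closedness in a finite $T_0$ space is just order-convexity (Proposition~\ref{prop:lcl-in-ftop}), which allows the topological conclusion to be derived from an elementary reachability argument in the digraph.
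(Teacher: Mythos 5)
Your proof is correct and follows essentially the same route as the paper: showing $\cV$-compatibility via the two-cycle $x\leftrightarrow y$ within a multivector, and showing convexity by producing a closed path through the sandwiched point $y$ using the covering edges $y\to x$ and $z\to y$ together with a path from $x$ to $z$ inside $C$, then invoking Proposition~\ref{prop:lcl-in-ftop}. The preliminary remark that $G_\cV$ is recurrent (so strongly connected components are well defined) is a minor addition the paper leaves implicit.
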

\begin{proof}
  Let $x\in C$ and $y\in\vclass{x}$. It is clear that $x\cdot y\in\pathsv(x,y,X)$ and $y\cdot x\in\pathsv(y,x,X)$.  
  Hence $C$ is $\cV$-compatible.

  Let $x,z\in C$, $y\in X$ be such that $x\leq_\cT y\leq_\cT z$. 
  Since $C$ is strongly connected we can find a path $\rho$ from $x$ to $z$. 
  Clearly, by Proposition \ref{prop:cl-in-ftop} and (\ref{eq:piv}) we have $y\in\Piv(z)$ and $x\in\Piv(y)$. 
  Thus $y\cdot\rho\in\pathsv(y,z,X)$ and $z\cdot y\in\pathsv(z,y,X)$. 
  It follows that $y\in C$.
  Hence, $C$ is convex and, by Proposition \ref{prop:lcl-in-ftop}, C is locally closed.
\end{proof}

\begin{thm}\label{thm:scc_is_iso_inv}
  Let $\cV$ be a multivector field on $X$ and let $G_\cV$ be the associated digraph. 
  If $C\subset X$ is a strongly connected component of $G_\cV$ such that $\esol(C)\neq\emptyset$, then $C$ is an isolated invariant set.
\end{thm}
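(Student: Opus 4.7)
The proof should be essentially a synthesis of the three immediately preceding results, so my plan is to show that $C$ satisfies all three hypotheses of Proposition~\ref{prop:inv_conv_vcomp_is_iso} (locally closed, $\cV$-compatible, invariant) and then invoke that proposition to conclude.

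First, I would observe that $C$ is $\cV$-compatible and locally closed directly by Proposition~\ref{prop:scc-Vcomp-lcl}, since the hypothesis that $C$ is a strongly connected component of $G_\cV$ is exactly what that proposition requires. This takes care of two of the three structural conditions for free.

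The remaining task is to verify that $C$ is invariant, i.e.\ $\inv_\cV C = C$. The assumption $\esol(C)\neq\emptyset$ yields at least one essential solution $\varphi$ contained in $C$; choosing any $t_0\in\dom\varphi$, the point $\varphi(t_0)$ lies in $\inv_\cV C$, so $\inv_\cV C\neq\emptyset$. Now I would apply the unnamed proposition stated just above (the one listing equivalences (i)--(iii) for strongly connected subsets of $G_\cV$): its implication (iii)$\Rightarrow$(ii) gives that every point of $C$ is essentially recurrent in $C$, and in particular every $x\in C$ admits an essential (periodic) solution in $C$ through~$x$. This shows $\inv_\cV C = C$.

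With $C$ locally closed, $\cV$-compatible, and invariant, Proposition~\ref{prop:inv_conv_vcomp_is_iso} directly concludes that $C$ is an isolated invariant set, completing the proof. There is no real obstacle here, since all the work is already done in the three cited results; the only subtlety is making sure the implication chain $\esol(C)\neq\emptyset \Rightarrow \inv_\cV C \neq \emptyset \Rightarrow \inv_\cV C = C$ correctly uses the equivalence proposition rather than trying to prove invariance by hand from the essential solution~$\varphi$, which would otherwise require constructing essential solutions through arbitrary points of~$C$ by the same concatenation-and-periodic-extension argument already given there.
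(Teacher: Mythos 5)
Your proof is correct and follows the paper's high-level structure exactly: reduce via Proposition~\ref{prop:inv_conv_vcomp_is_iso} to showing that $C$ is locally closed, $\cV$-compatible, and invariant, and use Proposition~\ref{prop:scc-Vcomp-lcl} for the first two. The only difference is in establishing invariance: the paper takes $\varphi\in\esol(x,C)$ and two paths $\rho\in\pathsv(x,y,C)$, $\rho'\in\pathsv(y,x,C)$ and concatenates $\varphi^-\cdot\rho\cdot\rho'\cdot\varphi^+$ to exhibit an essential solution through each $y\in C$ by hand, whereas you route through the unnamed equivalence proposition (whose implication (iii)$\Rightarrow$(ii) encapsulates precisely this kind of concatenation-and-periodic-extension argument) to conclude that every point of $C$ is essentially recurrent. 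Both arguments are valid and use the same underlying idea; yours is a cleaner reuse of an already-proved lemma, while the paper's is more self-contained.
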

\begin{proof}
  According to Proposition \ref{prop:inv_conv_vcomp_is_iso} it suffices to prove that $C$ is a $\cV$-compatible, locally closed invariant set.
  It follows from Proposition \ref{prop:scc-Vcomp-lcl} that $C$ is $\cV$-compatible and locally closed.
  Thus, we only need to show that $C$ is invariant.
  Since $\inv C\subset C$, we only need to prove that $C\subset\inv C$.
  Let $y\in C$.
  Since $\esol(C)\neq\emptyset$, we may take an $x\in C$ and a $\varphi\in\esol(x,C)$.
  Since $C$ is strongly connected we can find paths $\rho$ and $\rho'$ in $C$ from $x$ to $y$ and from $y$ to $x$ respectively. 
  Then the solution $\varphi^-\cdot\rho\cdot\rho'\cdot\varphi^+$ is a well-defined essential solution through $y$ in $C$. 
  Thus, $\esol(y,C)\neq\emptyset$, which proves that we have $y\in\inv C$. 
\end{proof}

\begin{figure}
  \includegraphics[width=0.5\textwidth]{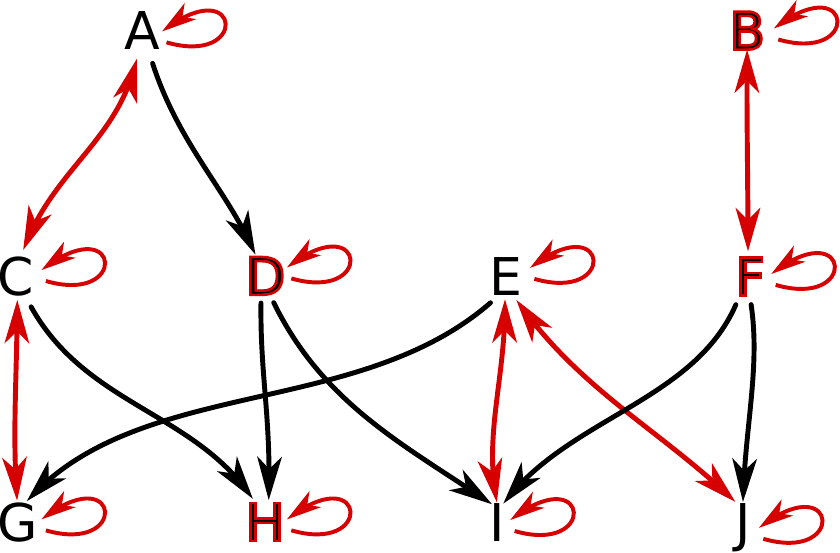}
  \caption{Digraph $G_\cV$ for a multivector field from Figure~\ref{fig:poset-mvf}.   
Black edges are induced by closure relation, while the red bi-directional edges represent connections within a multivector. 
  For clarity, we omit the edges that can be obtained by the between-level transitivity (e.g., from $A$ to $G$). Nodes that are part of a critical multivector are additionally bolded in red.
  }
\end{figure}

%%%%%%%%%%%%%%%%%%%%%%%%%%%%%%%%%%%%%%%%%%%%%%%
%%%%%%%%%%%%%%%%%%%%%%%%%%%%%%%%%%%%%%%%%%%%%%%
%%%%%%%%%%%%%%%%%%%%%%%%%%%%%%%%%%%%%%%%%%%%%%%
\section{Index pairs and Conley index} \label{sec:index-pairs-conley-index}

In this section we construct the Conley index of an isolated invariant set of a combinatorial multivector field.
As in the classical case we define index pairs, prove their existence and prove that the homology of an index pair depends only on the isolated invariant set and not on the choice of index pair.

%%%%%%%%%%%%%%%%%%%%%%%%%%%%%%%%%%%%%%%%%%%%%%%
\subsection{Index pairs and their properties}
\begin{defn}\label{def:index_pair}
Let $S$ be an isolated invariant set. 
A pair $P=(P_1, P_2)$ of closed subsets of $X$ such that $P_2\subset P_1$, is called an \emph{index pair for $S$} if
\begin{enumerate}[label={(IP\arabic*)}]
  \item\label{def:index_pair_cond_1} 
    $x \in P_2,\ y \in \Pi_\cV(x) \cap P_1\ \Rightarrow\ y \in P_2$ (positive invariance),\label{it:ind_pair_1}
  \item $x \in P_1,\ \Pi_\cV(x) \setminus P_1 \neq \emptyset\ \Rightarrow\ x \in P_2$ (exit set),\label{it:ind_pair_2}
  \item $S = \Inv (P_1\setminus P_2)$ (invariant part).\label{it:ind_pair_3}
% \item $S \subset P_1\setminus P_2$,
% \item $A\subset P_1\setminus(P_2\cup S),\ A\ \text{is isolated invariant} \Rightarrow\ H(\cl A, \mo A)=0$ (uniqueness of a nontrivial invariant part).
\end{enumerate}
An index pair $P$ is said to be \emph{saturated} if $S=P_1\setminus P_2$.
\end{defn}

\begin{prop}\label{prop:P1_isolates}
  Let $P$ be an index pair for an isolated invariant set $S$. Then $P_1$ isolates $S$.
\end{prop}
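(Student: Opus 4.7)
The plan is to verify both conditions of Definition~\ref{def:iso-inv-set} with $N:=P_1$, using the three index pair axioms. Condition (b) is the easy half. By \ref{def:index_pair_cond_1} applied contrapositively (really condition (IP2)): for any $x \in S = \Inv(P_1 \setminus P_2) \subset P_1 \setminus P_2$ we have $x \notin P_2$, hence $\Pi_\cV(x) \setminus P_1 = \emptyset$, which gives $\Pi_\cV(S) \subset P_1$. This also shows $S \subset P_1$, and $P_1$ is closed by the definition of index pair.

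For condition (a), let $\varphi = x_0 \cdot x_1 \cdots x_n$ be a path in $P_1$ with $x_0, x_n \in S$. The first step is to show that the entire path avoids $P_2$. If not, let $i$ be maximal with $x_i \in P_2$; then $i < n$ since $x_n \in S$, and $x_{i+1} \in \Pi_\cV(x_i) \cap P_1$ with $x_{i+1} \notin P_2$. But (IP1) forces $x_{i+1} \in P_2$, a contradiction. Hence $\im\varphi \subset P_1 \setminus P_2$.

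The final and main step is to show each intermediate $x_i$ belongs to $S = \Inv(P_1 \setminus P_2)$, i.e.\ admits an essential solution in $P_1 \setminus P_2$. The idea is to graft: pick $\varphi_0 \in \esol_\cV(x_0, P_1\setminus P_2)$ and $\varphi_n \in \esol_\cV(x_n, P_1\setminus P_2)$, which exist by (IP3), and form the concatenation
\[
\psi := \varphi_0^- \cdot (x_0 \cdot x_1 \cdots x_n) \cdot \varphi_n^+ .
\]
This is a well-defined full solution passing through $x_i$, with image contained in $P_1 \setminus P_2$ by the previous step. The only nontrivial point, and the main obstacle, is essentiality: for any regular $y \in \im \psi$ we must show the set $\{t : \psi(t) \notin [y]_\cV\}$ is both left- and right-infinite. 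If $[y]_\cV$ is disjoint from $\im(\varphi_0^-)$, this is immediate on the left; otherwise we pick $z \in [y]_\cV \cap \im(\varphi_0^-)$, note $z$ is regular since $[z]_\cV = [y]_\cV$, and apply left-essentiality of $\varphi_0$ at $z$ to get left-infinitely many times where $\varphi_0^-$ (and hence $\psi$) misses $[y]_\cV$. The right side is handled symmetrically using $\varphi_n^+$ and the right-essentiality of $\varphi_n$. The finite middle block contributes only finitely many times and cannot spoil either tail. Hence $\psi$ is essential, $x_i \in S$, and the path is contained in $S$, completing the verification of condition (a).
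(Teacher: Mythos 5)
Your proof is correct and takes essentially the same route as the paper: both verify condition (b) directly from (IP2), both show the path avoids $P_2$ by a maximal-index contradiction with (IP1), and both graft essential solutions at $x_0$ and $x_n$ onto the middle path to certify each interior vertex lies in $\Inv(P_1\setminus P_2)=S$. (The paper takes $\varphi_0 \in \esol(x_0,S)$ rather than $\esol(x_0, P_1\setminus P_2)$, but since $S\subset P_1\setminus P_2$ this is the same move.) The one place you add genuine detail is the verification that the concatenation $\varphi_0^-\cdot(x_0\cdots x_n)\cdot\varphi_n^+$ is \emph{essential}: the paper asserts this without comment, whereas you spell out the case distinction on whether $[y]_\cV$ meets the relevant tail, transferring left- and right-essentiality from $\varphi_0$ and $\varphi_n$. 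That extra care is correct and worth having, since essentiality of a concatenation is not automatic; a minor note is that you initially cite (IP1) for condition (b) before correcting to (IP2), so clean up that reference.
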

\begin{proof}
    According to our assumptions, the set~$P_1$ is closed, and we clearly have
    $S=\Inv (P_1\setminus P_2)\subset P_1\setminus P_2\subset P_1$. Thus, it only
    remains to be shown that conditions~(a) and~(b) in Definition~\ref{def:iso-inv-set}
    are satisfied.
    
  Suppose there exists a path $\psi:=x_0\cdot x_1\cdot\dotsc\cdot x_n$ in $P_1$ such that $x_0, x_n\in S$ and $x_i\in P_1\setminus S$ for some $i\in\{1,2,\dotsc,n-1\}$.
  First, we will show that $\im\psi\subset P_1\setminus P_2$. 
  To this end, suppose the contrary. 
  Then, there exists an $i\in\{1,2,\dotsc,n-1\}$ such that $x_i\in P_2$ and $x_{i+1}\in P_1\setminus P_2$.
  Since $\psi$ is a path we have $x_{i+1}\in\Pi_\cV(x_i)$.
  But, \ref{it:ind_pair_1} implies $x_{i+1}\in P_2$, a contradiction.

  Since $S$ is invariant and $x_0,x_n\in S$, we may take a $\varphi_0\in\esol(x_0, S)$ and a $\varphi_n\in\esol(x_n, S)$. 
  The solution $\varphi_0^-\cdot\psi\cdot\varphi_n^+$ is an essential solution in $P_1\setminus P_2$ through $x_i$. 
  Thus, $x_i\in\Inv(P_1\setminus P_2)=S$, a contradiction.
  This proves that every path in $P_1$ with endpoints in $S$ is contained in $S$, 
  and therefore Definition~\ref{def:iso-inv-set}(a) is satisfied.
  
  In order to verify~(b), let $x \in S$ be arbitrary. We have already seen that then
  $x \in P_1 \setminus P_2 \subset P_1$. Now suppose that $\Pi_\cV(x) \setminus P_1
  \neq \emptyset$. Then~\ref{it:ind_pair_2} implies $x \in P_2$, which contradicts
  $x \in P_1 \setminus P_2$. Therefore, we necessarily have $\Pi_\cV(x) \setminus P_1
  = \emptyset$, that is, $\Pi_\cV(x) \subset P_1$, which immediately implies~(b).
  Hence, $P_1$ isolates $S$.
\end{proof}

One can easily see from the above proof that in Definition~\ref{def:index_pair}
one does not have to assume that~$S$ is an isolated invariant set. In fact, the
proof of Proposition~\ref{prop:P1_isolates} implies that any invariant set which
admits an index pair is automatically an isolated invariant set. Furthermore, the
following result shows that every isolated invariant set~$S$ does indeed admit at
least one index pair.

\begin{prop}\label{prop:minimal_index_pair}
Let $S$ be an isolated invariant set. Then $\left(\cl S, \mo S\right)$ is a saturated index pair for $S$.
\end{prop}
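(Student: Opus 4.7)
\medskip

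The plan is to verify each of the index pair axioms \ref{def:index_pair_cond_1}--\ref{it:ind_pair_3} together with the saturation condition, relying on three already established structural properties of~$S$: it is locally closed (Proposition~\ref{prop:iso_inv_is_conv}), it is $\cV$-compatible (Proposition~\ref{prop:iso-is-vcomp}), and $\cl S$ is an isolating set (Proposition~\ref{prop:smallest_iso_neigh}). First I would check the basic hypotheses of the definition: $\cl S$ is closed by construction, and $\mo S = \cl S \setminus S$ is closed because $S$ is locally closed (Proposition~\ref{prop:lcl}(ii)). The inclusion $\mo S \subset \cl S$ is immediate. Saturation is just the tautology $\cl S \setminus \mo S = S$, and \ref{it:ind_pair_3} follows from this together with the assumed invariance of~$S$.

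To verify \ref{def:index_pair_cond_1}, I would take $x \in \mo S$ and $y \in \Pi_\cV(x) \cap \cl S$, and show $y \notin S$. Recall $\Pi_\cV(x) = [x]_\cV \cup \cl x$. If $y \in [x]_\cV$, then since $x \notin S$ and $S$ is $\cV$-compatible, the entire multivector $[x]_\cV$ is disjoint from~$S$, so $y \notin S$. If $y \in \cl x$, then by Theorem~\ref{thm:alexandroff} we have $y \leq_\cT x$; meanwhile $x \in \cl S$ gives some $z \in S$ with $x \leq_\cT z$ by Proposition~\ref{prop:cl-in-ftop}. Were $y \in S$, the chain $y \leq_\cT x \leq_\cT z$ with $y, z \in S$ would force $x \in S$ by local closedness and Proposition~\ref{prop:lcl-in-ftop}, contradicting $x \in \mo S$. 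Either way $y \in \cl S \setminus S = \mo S$.

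For \ref{it:ind_pair_2} I would argue contrapositively: suppose $x \in \cl S$ with $x \notin \mo S$, so $x \in S$. Since $\cl S$ isolates~$S$, Definition~\ref{def:iso-inv-set}(b) gives $\Pi_\cV(x) \subset \Pi_\cV(S) \subset \cl S$, so $\Pi_\cV(x) \setminus \cl S = \emptyset$, as needed. Finally, for \ref{it:ind_pair_3}, the inclusion $\Inv(\cl S \setminus \mo S) = \Inv S \supset S$ follows since $S$ is invariant, and the reverse inclusion is automatic because $\Inv A \subset A$ for any set~$A$.

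The main obstacle is the case analysis in \ref{def:index_pair_cond_1} where $y \in \cl x$: one must carefully exploit the convexity form of local closedness in a finite $T_0$ space (Proposition~\ref{prop:lcl-in-ftop}) to trap~$x$ between two elements of~$S$. The rest is bookkeeping, made possible by the strong consequences already derived for isolated invariant sets.
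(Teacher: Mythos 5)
Your proof is correct and takes essentially the same approach as the paper: verify each index-pair axiom directly, using local closedness, $\cV$-compatibility, and the fact that $\cl S$ isolates $S$. The only cosmetic differences are in \ref{def:index_pair_cond_1}, where the paper observes $\mo S$ is closed (by Proposition~\ref{prop:lcl}(ii)) so that $\cl x\subset\mo S$ immediately, whereas you reach the same conclusion via the chain $y\leq_\cT x\leq_\cT z$ and convexity, and in \ref{it:ind_pair_2}, where you invoke Definition~\ref{def:iso-inv-set}(b) for the isolating set $\cl S$ while the paper computes $\Piv(S)=\cl S$ directly from $\cV$-compatibility.
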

\begin{proof}
  To prove \ref{it:ind_pair_1} assume that $x\in\mo S$ and $y\in\Pi_\cV(x)\cap\cl S$. 
  Since $S$ is $\cV$-compatible we have $[x]_\cV\cap S=\emptyset$.
  Therefore, $[x]_\cV\cap\cl S\subset \cl S\setminus S=\mo S$.
  Clearly, due to Propositions \ref{prop:lcl} and \ref{prop:iso_inv_is_conv}, $\cl x\subset\mo S\subset \cl S$. Hence,
  \[
    y\in \Pi_\cV(x)\cap\cl S = ([x]_\cV\cup\cl x)\cap\cl S = ([x]_\cV\cap\cl S) \cup (\cl x \cap\cl S) \subset \mo S.
  \]
  To see \ref{it:ind_pair_2} note that by Proposition \ref{prop:iso-is-vcomp} the set $S$ is $\cV$-compatible and
    \[ \Piv(S)=\bigcup_{x\in S} \cl x\cup[x]_\cV = \bigcup_{x\in S} \cl x\cup S = \cl S.
    \]
  Thus, if $x\in S$, then $\Pi_\cV(x) \setminus \cl S = \emptyset$. 
  Therefore, $\Pi_\cV(x) \setminus \cl S \neq \emptyset$ for $x\in P_1=\cl S$ implies $x\in \cl S\setminus S=\mo S$.

  Finally, directly from the definition of mouth we have $\cl S \setminus \mo S=S$, which proves \ref{it:ind_pair_3}, as well as the fact that $(\cl S,\mo S)$ is saturated.
\end{proof}

We write $P\subset Q$ for index pairs $P$, $Q$ meaning $P_i\subset Q_i$ for $i=1,2$.
We say that index pairs $P$, $Q$ of $S$
are {\em semi-equal} if $P\subset Q$ and either $P_1=Q_1$ or
$P_2=Q_2$. For semi-equal pairs $P$, $Q$, we let
\[ A(P,Q):=\begin{cases}
             Q_1\setminus P_1 & \text{ if $P_2=Q_2$,}\\
             Q_2\setminus P_2 & \text{ if $P_1=Q_1$.}
          \end{cases}
\]

\begin{figure}
  \includegraphics[width=0.4\textwidth]{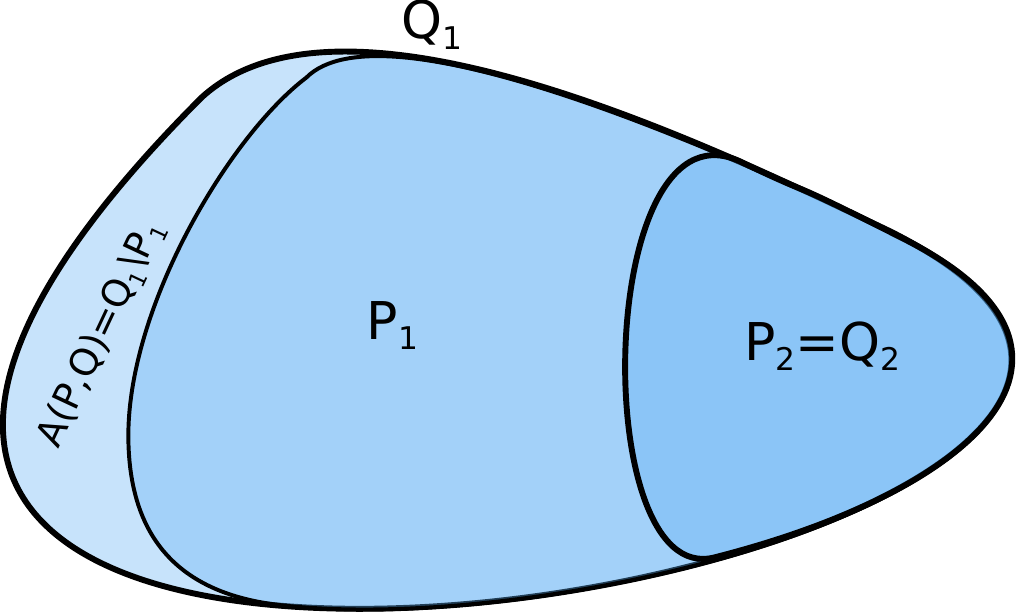}
  \hfill
  \includegraphics[width=0.4\textwidth]{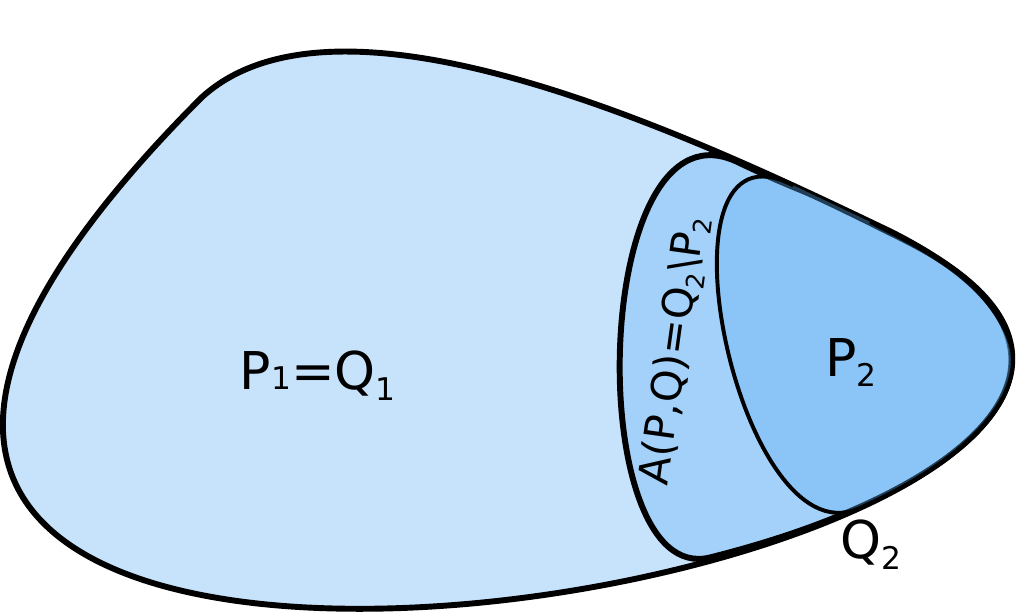}
  \caption{Schematic depiction of the two cases of a set $A(P,Q)$.}
\end{figure}

\begin{prop}\label{prop:no_full_sol_in_A}
  Let $P$ and $Q$ be semi-equal index pairs for $S$. 
  Then there is no essential solution in the set $A(P,Q)$.
\end{prop}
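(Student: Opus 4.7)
The plan is a case analysis based on which of the two semi-equality conditions holds, namely $P_2 = Q_2$ versus $P_1 = Q_1$. In both cases I intend to show that an essential solution in $A(P,Q)$ would, by one of the two index pair invariance conditions \ref{it:ind_pair_3}, be forced to lie in $S$, yet $A(P,Q)$ is disjoint from $S$. The essential observation is simply that $A(P,Q)$ lands inside one of the two ``gaps'' $P_1 \setminus P_2$ or $Q_1 \setminus Q_2$.

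Suppose, toward a contradiction, that $\varphi$ is an essential solution with $\im\varphi \subset A(P,Q)$.

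In the first case, assume $P_2 = Q_2$, so that $A(P,Q) = Q_1 \setminus P_1$. Since $Q_2 = P_2 \subset P_1$, we obtain $A(P,Q) \subset Q_1 \setminus Q_2$, and therefore $\varphi$ is an essential solution in $Q_1 \setminus Q_2$. Property~\ref{it:ind_pair_3} for the index pair $Q$ then gives $\im\varphi \subset \Inv(Q_1 \setminus Q_2) = S$. But $S \subset P_1 \setminus P_2 \subset P_1$ by~\ref{it:ind_pair_3} for $P$, which contradicts $\im\varphi \subset Q_1 \setminus P_1$ being disjoint from $P_1$.

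In the second case, assume $P_1 = Q_1$, so that $A(P,Q) = Q_2 \setminus P_2$. Here $A(P,Q) \subset Q_2 \subset Q_1 = P_1$ and $A(P,Q) \cap P_2 = \emptyset$, so $A(P,Q) \subset P_1 \setminus P_2$. Thus $\varphi$ is an essential solution in $P_1 \setminus P_2$, and property~\ref{it:ind_pair_3} for $P$ forces $\im\varphi \subset \Inv(P_1 \setminus P_2) = S$. Using~\ref{it:ind_pair_3} for $Q$, we get $S \subset Q_1 \setminus Q_2$, so $\im\varphi \cap Q_2 = \emptyset$, contradicting $\im\varphi \subset A(P,Q) \subset Q_2$. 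In either case we reach a contradiction, and the statement follows.

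There is no real obstacle here; the argument is a bookkeeping exercise built around property~\ref{it:ind_pair_3}. The only point worth double-checking is the simple containment $A(P,Q) \subset P_1 \setminus P_2$ (in Case~2) and $A(P,Q) \subset Q_1 \setminus Q_2$ (in Case~1), which lets us apply the invariant-part property of the appropriate index pair to pin the image of $\varphi$ inside $S$.
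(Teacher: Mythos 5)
Your proof is correct and follows essentially the same approach as the paper: both rely on the observation that $A(P,Q)$ lands inside $Q_1\setminus Q_2$ (when $P_2=Q_2$) or inside $P_1\setminus P_2$ (when $P_1=Q_1$), apply~\ref{it:ind_pair_3} of the appropriate index pair to force the image of an essential solution into $S$, and then derive a contradiction from the disjointness of $A(P,Q)$ and $S$. The paper phrases this as $\inv A(P,Q) \subset S$ together with $A(P,Q)\cap S = \emptyset$, which yields $\inv A(P,Q)=\emptyset$, but this is the same argument.
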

\begin{proof}
  First note that the definition of~$A(P,Q)$ implies either
  \[ A(P,Q) = Q_1\setminus P_1 \subset Q_1\setminus P_2 = Q_1\setminus Q_2
     \quad\mbox{and}\quad
     A(P,Q) \cap (P_1 \setminus P_2) = \emptyset , \]
  or
  \[ A(P,Q) = Q_2\setminus P_2 \subset Q_1\setminus P_2 = P_1\setminus P_2
     \quad\mbox{and}\quad
     A(P,Q) \cap (Q_1 \setminus Q_2) = \emptyset . \]
  Therefore, by \ref{it:ind_pair_3} and the first inclusions in the above two
  statements we get $\inv A(P,Q)\subset S$. Yet, the second identities above
  clearly show that $A(P,Q)$ is disjoint from~$S$.
  Thus, $\inv A(P,Q) = \emptyset$, and by the definition of the invariant part (see~(\ref{eq:inv_set})) there is no essential solution in $A(P,Q)$.
\end{proof}

\begin{lem}\label{lem:iso_of_saturated_idx_pairs}
  Assume $S$ is an isolated invariant set. 
  Let $P$ and $Q$ be saturated index pairs for $S$.
  % in isolating neighborhood $N_P$ and $N_Q$, respectively. 
  Then $H(P_1,P_2)\cong H(Q_1,Q_2)$.
\end{lem}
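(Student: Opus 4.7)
The plan is to observe that this lemma is an almost immediate corollary of the excision theorem for finite topological spaces, namely Theorem~\ref{thm:excision_ftop}. In fact, the assumption that both $P$ and $Q$ are saturated index pairs for the same isolated invariant set~$S$ is exactly what is needed to line up the hypotheses of that theorem.

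First I would unpack the definitions. By Definition~\ref{def:index_pair}, the four sets $P_1$, $P_2$, $Q_1$, $Q_2$ are closed subsets of $X$, and we have the inclusions $P_2\subset P_1$ and $Q_2\subset Q_1$. The saturation hypothesis on $P$ and $Q$ means that
\[
  P_1\setminus P_2 \;=\; S \;=\; Q_1\setminus Q_2.
\]
Thus the two closed pairs $(P_1,P_2)$ and $(Q_1,Q_2)$ have identical set-theoretic differences.

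Next I would invoke Theorem~\ref{thm:excision_ftop} with $A:=P_1$, $B:=P_2$, $C:=Q_1$, $D:=Q_2$. All four sets are closed in~$X$, the required inclusions hold, and the identity $A\setminus B=C\setminus D$ is precisely the displayed equation above. The theorem then yields the desired isomorphism $H(P_1,P_2)\cong H(Q_1,Q_2)$.

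There is essentially no obstacle here: the nontrivial content was already absorbed into Theorem~\ref{thm:excision_ftop}, whose proof passed through order complexes, the simplicial excision theorem, and Proposition~\ref{prop:simplicial_rel_hom_of_posets}. The role of the saturation hypothesis is simply to guarantee that the set-theoretic differences of the two index pairs agree on the nose, so that no further manipulation of $P$ or $Q$ is required. Neither conditions \ref{it:ind_pair_1} nor \ref{it:ind_pair_2} from Definition~\ref{def:index_pair} are needed in this step; they will only become relevant when one subsequently relaxes the saturation assumption and compares arbitrary index pairs by inserting saturated ones between them.
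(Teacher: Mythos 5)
Your proof is correct and follows exactly the same route as the paper: both arguments simply note that saturation forces $P_1\setminus P_2 = S = Q_1\setminus Q_2$ and then invoke the excision theorem for finite topological spaces (Theorem~\ref{thm:excision_ftop}) to conclude. Your remarks about which parts of the index-pair definition are and are not used are accurate but not part of the paper's (two-line) proof.
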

\begin{proof}
  By the definition of a saturated index pair $Q_1\setminus Q_2 = S = P_1\setminus P_2$. 
  Hence, using Theorem \ref{thm:excision_ftop} we get $H(P_1,P_2)\cong H(Q_1,Q_2)$.
\end{proof}

\begin{prop}\label{prop:P1-P2_vcomp_and_conx}
  Assume $S$ is an isolated invariant set. 
  Let $P$ be an index pair for $S$. 
  Then the set $P_1 \setminus P_2$ is $\cV$-compatible and locally closed.
\end{prop}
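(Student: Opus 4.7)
The plan is to treat the two claims separately, with both following directly from Definition~\ref{def:index_pair}. The local closedness is the shorter argument, while $\cV$-compatibility requires a two-step use of the axioms \ref{it:ind_pair_1} and \ref{it:ind_pair_2}.

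For local closedness, observe that~$P_1$ and~$P_2$ are both closed subsets of~$X$ by the hypothesis of Definition~\ref{def:index_pair}. Since $P_1\setminus P_2$ is the set-theoretic difference of two closed sets, Proposition~\ref{prop:lcl}(iii) immediately yields that $P_1\setminus P_2$ is locally closed. No further information about~$S$ is used here.

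For $\cV$-compatibility, I would pick an arbitrary $x\in P_1\setminus P_2$ and an arbitrary $y\in[x]_\cV$, and show $y\in P_1\setminus P_2$. First, since $[x]_\cV\subset\Piv(x)$ by~(\ref{eq:piv}), we have $y\in\Piv(x)$. If $y\notin P_1$, then $\Piv(x)\setminus P_1\neq\emptyset$, so axiom~\ref{it:ind_pair_2} would force $x\in P_2$, contradicting $x\in P_1\setminus P_2$. Hence $y\in P_1$. Second, since $x$ and~$y$ lie in the same multivector, we have $[y]_\cV=[x]_\cV$, hence $x\in[y]_\cV\subset\Piv(y)$; combined with $x\in P_1$, axiom~\ref{it:ind_pair_1} would give $x\in P_2$ as soon as $y\in P_2$, again contradicting $x\in P_1\setminus P_2$. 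Thus $y\notin P_2$, proving $[x]_\cV\subset P_1\setminus P_2$.

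I do not expect any genuine obstacle: the whole point is to recognize that the two index-pair axioms are precisely what is needed to control the membership of $[x]_\cV$ relative to~$P_1$ and~$P_2$. The only mild subtlety worth flagging is that we never invoke the isolation of~$S$ or condition~\ref{it:ind_pair_3}; the conclusion is a formal consequence of $P_1,P_2$ being closed and the positive-invariance/exit-set conditions alone.
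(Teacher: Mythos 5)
Your proof is correct and follows essentially the same route as the paper: local closedness comes directly from Proposition~\ref{prop:lcl}(iii) since $P_1,P_2$ are closed, and $\cV$-compatibility is obtained by applying \ref{it:ind_pair_2} to rule out $y\notin P_1$ and \ref{it:ind_pair_1} (via $x\in\Pi_\cV(y)$) to rule out $y\in P_2$. The paper phrases the $\cV$-compatibility argument as a proof by contradiction with two cases, but the logical content and the use of the two axioms are identical to yours.
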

\begin{proof}
  Assume that $P_1 \setminus P_2$ is not $\cV$-compatible. 
  This means that for some $x \in P_1\setminus P_2$ there exists a $y\in [x]_\cV \setminus (P_1 \setminus P_2)$. 
  Then $y\in P_2$ or $y\not\in P_1$.
  Consider the case $y\in P_2$. 
  Since $[x]_\cV=[y]_\cV$, we have $x\in\Pi_\cV(y)$. 
  It follows from \ref{it:ind_pair_1} that $x\in P_2$, a contradiction.
  Consider now the case $y\not\in P_1$. 
  Then from \ref{it:ind_pair_2} one obtains $x\in P_2$, which is again a contradiction.
  Together, these cases imply that $P_1\setminus P_2$ is $\cV$-compatible.

  Finally, the local closedness of $P_1\setminus P_2$ follows immediately from Proposition \ref{prop:lcl}(iii).
\end{proof}

\begin{prop}\label{prop:A(P,Q)_vcomp_and_conx}
  Assume $S$ is an isolated invariant set. 
  Let $P \subset Q$ be semi-equal index pairs for $S$. 
  Then $A(P,Q)$ is $\cV$-compatible and locally closed.
\end{prop}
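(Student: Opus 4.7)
The plan is to handle the two conclusions, \emph{locally closed} and \emph{$\cV$-compatible}, separately. Local closedness is essentially free: in either case, $A(P,Q)$ equals $Q_1\setminus P_1$ or $Q_2\setminus P_2$, and all four of $P_1,P_2,Q_1,Q_2$ are closed by Definition~\ref{def:index_pair}. So $A(P,Q)$ is the difference of two closed sets and Proposition~\ref{prop:lcl}(iii) applies directly. I would dispatch this in one line.

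For $\cV$-compatibility, I would pick $x\in A(P,Q)$ and $y\in [x]_\cV$ and show $y\in A(P,Q)$. The central observation is that $y\in [x]_\cV$ means $[y]_\cV=[x]_\cV$, so simultaneously $x\in\Pi_\cV(y)$ and $y\in\Pi_\cV(x)$; this lets me apply the index-pair axioms with either point as the ``base.'' The two cases in the definition of $A(P,Q)$ are then treated symmetrically, and each splits into two sub-claims: $y$ lies in the larger set and $y$ avoids the smaller set.

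Consider first the case $P_2=Q_2$ and $A(P,Q)=Q_1\setminus P_1$. To rule out $y\in P_1$: if $y\in P_1$ then $x\in\Pi_\cV(y)\setminus P_1$ is a witness, so \ref{it:ind_pair_2} for $P$ forces $y\in P_2=Q_2$; but then \ref{it:ind_pair_1} for $Q$ applied to $y\in Q_2$ and $x\in \Pi_\cV(y)\cap Q_1$ pushes $x$ into $Q_2=P_2\subset P_1$, contradicting $x\in A(P,Q)$. To ensure $y\in Q_1$: if $y\notin Q_1$ then $y\in\Pi_\cV(x)\setminus Q_1$, so \ref{it:ind_pair_2} for $Q$ forces $x\in Q_2=P_2\subset P_1$, again a contradiction. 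The other case $P_1=Q_1$ and $A(P,Q)=Q_2\setminus P_2$ is handled by exactly the dual bookkeeping: to rule out $y\in P_2$ I apply \ref{it:ind_pair_1} for $P$ with $x\in\Pi_\cV(y)\cap P_1$, and to place $y$ in $Q_2$ I first use \ref{it:ind_pair_2} for $P$ (applied at $x$, noting $y\in\Pi_\cV(x)$) to conclude $y\in P_1=Q_1$, and then \ref{it:ind_pair_1} for $Q$ with $x\in Q_2$ to conclude $y\in Q_2$.

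There is no conceptual obstacle here; the substance is administrative. The only thing to be careful about is choosing, in each sub-argument, the correct axiom (\ref{it:ind_pair_1} or \ref{it:ind_pair_2}) for the correct pair ($P$ or $Q$), and using the semi-equality hypothesis $P_2=Q_2$ or $P_1=Q_1$ at exactly the right step to transfer membership between the two pairs and derive a contradiction with $x\in A(P,Q)$.
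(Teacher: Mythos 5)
Your proof is correct, but it takes a genuinely different route from the paper's. The paper leans on the already-proved Proposition~\ref{prop:P1-P2_vcomp_and_conx} (that $P_1\setminus P_2$ is $\cV$-compatible and locally closed for any index pair $P$), together with the set identities $A(P,Q) = (Q_1\setminus Q_2)\setminus(P_1\setminus P_2)$ when $P_2=Q_2$, and $A(P,Q) = (P_1\setminus P_2)\setminus(Q_1\setminus Q_2)$ when $P_1=Q_1$; since a difference of $\cV$-compatible sets is again $\cV$-compatible, the claim then follows at once. You instead verify $\cV$-compatibility directly from the index-pair axioms \ref{it:ind_pair_1} and \ref{it:ind_pair_2}, applied alternately to $P$ and to $Q$ in each of the two semi-equality cases, using the symmetry that $[x]_\cV=[y]_\cV$ yields both $x\in\Pi_\cV(y)$ and $y\in\Pi_\cV(x)$. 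Both arguments are sound: the paper's is shorter and more modular, doing the hard work once in Proposition~\ref{prop:P1-P2_vcomp_and_conx} and then reusing it through an algebraic reduction, whereas yours is more elementary and self-contained but effectively re-derives that proposition inline, spread across the four sub-cases. Your treatment of local closedness (difference of closed sets, Proposition~\ref{prop:lcl}(iii)) coincides with the paper's.
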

\begin{proof}
  First note that our assumptions give $P_2,Q_2\subset P_1$ and $P_2,Q_2\subset Q_1$.
  If $P_2=Q_2$, then 
    \[A(P,Q) =Q_1\setminus P_1 = (Q_1\setminus P_2)\setminus (P_1\setminus P_2) = (Q_1\setminus Q_2)\setminus (P_1\setminus P_2).\]
  If $P_1=Q_1$, then 
    \begin{align*}
      A(P,Q)&= Q_2\setminus P_2 = Q_2\cap P_2^c = (P_1\cap P_2^c)\cap Q_2\\ 
       &=(P_1\cap P_2^c)\cap (Q_1\cap Q_2^c)^c = (P_1\setminus P_2)\setminus(Q_1\setminus Q_2),
    \end{align*}
  where the superscript $c$ denotes the set complement in~$X$.
  Thus, by Proposition \ref{prop:P1-P2_vcomp_and_conx}, in both cases, $A(P,Q)$ may be represented as a difference of $\cV$-compatible sets. 
  Therefore, it is also $\cV$-compatible.

  The local closedness of $A(P,Q)$ follows from Proposition \ref{prop:lcl}.
\end{proof}

\begin{lem}\label{lem:trivial_scc_partition_gives_trivial}
  Let $A$ be a $\cV$-compatible, locally closed subset of $X$ such that there is no essential solution in $A$. 
  Then $H(\cl A, \mo A)=0$.
\end{lem}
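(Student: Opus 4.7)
The plan is to induct on $n:=|\cV_A|$, the number of multivectors contained in~$A$; the case $n=0$ is vacuous, so assume $n\geq 1$. First, I exploit the no-essential-solution hypothesis structurally. Any critical $V\in\cV_A$ would immediately produce an essential solution, namely the constant solution at any $x\in V$, since its image $\{x\}$ contains no regular point and the essentiality requirement is then vacuous; so every $V\in\cV_A$ must be regular. Moreover, the digraph $D_A$ on $\cV_A$ with edges $V\to W$ (for $V\neq W$) whenever some $y\in W$ lies in $\cl x$ for some $x\in V$ must be acyclic: a hypothetical cycle $V_1\to V_2\to\dots\to V_k\to V_1$ (even $k=2$ suffices) can be realized as a closed path in $X$ by alternating the edge-jumps $y_{i+1}\in V_{i+1}\cap\cl x_i\subset\Piv(x_i)$ with intra-multivector moves $x_i\in V_i=[y_i]_\cV\subset\Piv(y_i)$, and extending this closed path periodically yields a full solution hitting at least two distinct multivectors, hence essential.

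Using acyclicity, I pick $V\in\cV_A$ maximal in $D_A$, meaning $V$ has no outgoing edges. This translates exactly to $\cl V\cap A=V$, equivalently $\mo V\cap A=\emptyset$. Set $A':=A\setminus V=A\cap(X\setminus\cl V)$: as an intersection of the locally closed set $A$ with the open set $X\setminus\cl V$, the set $A'$ is locally closed (Proposition~\ref{prop:lcl-intersection}); it is $\cV$-compatible by construction; and it admits no essential solution, since any such would still be essential in the larger set~$A$. Because $|\cV_{A'}|=n-1$, the inductive hypothesis gives $H(\cl A',\mo A')=0$.

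A direct set-theoretic computation using $A=V\sqcup A'$ together with $\mo V\cap A=\emptyset$ delivers $\cl A=\cl V\cup\cl A'$, $\mo A=\mo V\cup(\mo A'\setminus V)$, and the two identities
\[
  \cl A\setminus(\cl V\cup\mo A')=A'=\cl A'\setminus\mo A',
  \qquad
  (\cl V\cup\mo A')\setminus\mo A=V=\cl V\setminus\mo V.
\]
All sets appearing above are closed in $X$ (for $\mo A,\mo A',\mo V$ this uses local closedness via Proposition~\ref{prop:lcl}(ii)), so the excision theorem for finite topological spaces (Theorem~\ref{thm:excision_ftop}) applies to each identity and yields
\[
  H(\cl A,\cl V\cup\mo A')\cong H(\cl A',\mo A')=0,
  \qquad
  H(\cl V\cup\mo A',\mo A)\cong H(\cl V,\mo V)=0,
\]
the second vanishing by regularity of~$V$. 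Plugging both vanishings into the exact homology sequence of the triple $\mo A\subset\cl V\cup\mo A'\subset\cl A$ (Theorem~\ref{thm:exact-hom-seq-triple}) forces $H(\cl A,\mo A)=0$, completing the induction.

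The main obstacle I anticipate is the combinatorial–topological bookkeeping: the maximality of~$V$ in~$D_A$ must be invoked with care to guarantee $\cl V\cap A=V$ and hence $\mo V\cap A'=\emptyset$, which is exactly what makes the intermediate closed set $\cl V\cup\mo A'$ separate the pair $(\cl A,\mo A)$ into the two pieces $(\cl A',\mo A')$ and $(\cl V,\mo V)$ on which the inductive hypothesis and regularity, respectively, deliver vanishing relative homology.
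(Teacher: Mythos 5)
Your proof is correct and uses essentially the same strategy as the paper's: show all multivectors in $A$ are regular, show the no-essential-solution hypothesis forces an acyclic ordering of the multivectors by the flow, peel off a sink multivector $V$, and use excision plus the long exact sequence of a triple to reduce $H(\cl A,\mo A)$ to $H(\cl V,\mo V)=0$. The only difference is organizational: you run an induction on $\#\cV_A$ that removes one multivector at a time, whereas the paper picks a single compatible linear extension $V_1\leq_\cA\dots\leq_\cA V_m$ at once and iterates the same excision/triple argument along the filtration $Z_k:=\bigcup_{j\le k}V_j\cup\mo A$; in fact your intermediate set $\cl V\cup\mo A'$ equals the paper's $Z_1$, so the two arguments coincide step by step.
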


\begin{proof}
  Let $\cA:=\{V\in\cV\mid V\subset A\}$. 
  Since $A$ is $\cV$-compatible, we have $A=\bigcup\cA$. 
  Let $\lesssim_\cA$ denote the transitive closure of the relation $\preceq_\cA$ in $\cA$ given for $V,W\in\cA$ by
    \begin{align}\label{eq:lem-partial-order-condition}
      V\preceq_\cA W\ \Leftrightarrow\ V\cap\cl W\neq\emptyset.
    \end{align}  
  We claim that $\lesssim_\cA$ is a partial order in $\cA$. 
  Clearly, $\lesssim_\cA$ is reflective and transitive. 
  Hence, we only need to prove that $\lesssim_\cA$ is antisymmetric. 
  To verify this, suppose the contrary. 
  Then there exists a cycle $V_n\preceq_\cA V_{n-1}\preceq_\cA\dots\preceq_\cA V_0=V_n$ with $n>1$ and $V_i\neq V_j$ for $i\neq j$ and $i,j\in\{1,2,\dots n\}$. 
  Since $V_i\cap\cl V_{i-1}\neq\emptyset$ we can choose $v_i\in V_i\cap\cl V_{i-1}$ and $v_{i-1}'\in V_{i-1}$ such that $v_i\in\cl v_{i-1}'$.
  Then $v_i\in\Piv(v_{i-1}')$ and $v_{i-1}'\in\Piv(v_{i-1})$.
  Thus, we can construct an essential solution 
    \[\dotsc \cdot v_n'\cdot v_1\cdot v_1'\cdot v_2\cdot v_2'\cdot\dotsc \cdot v_{n-1}'\cdot v_n\cdot v_n'\cdot v_1\cdot\dotsc. \]
  This contradicts our assumption and proves that $\lesssim_\cA$ is a partial order.

  Moreover, since a constant solution in a critical multivector is essential, all multivectors in $\cA$ have to be regular. 
  Thus, 
  \begin{equation}\label{eq:lem-zero-hom}
    H(\cl V, \mo V) = 0 \quad\text{for every}\quad V\in\cA.
  \end{equation}
  Since $\lesssim_\cA$ is a partial order, we may assume that $\cA=\{V_i\}_{i=1}^m$ where the numbering of $V_i$ extends the partial order $\lesssim_\cA$ to a linear order $\leq_\cA$, that is,
    \[ V_1 \leq_\cA V_2 \leq_\cA \dots \leq_\cA V_m.\]
  We claim that
  \begin{equation}\label{eq:lem-ij}
    i<j\quad\Rightarrow\quad\cl V_i\setminus V_j=\cl V_i.
  \end{equation}
  Indeed, if this were not satisfied, then $V_j\cap\cl V_i\neq\emptyset$ which, by the definition~(\ref{eq:lem-partial-order-condition}) of $\preceq_\cA$ gives $V_j\preceq_\cA V_i$
  as well as $V_j\lesssim_\cA V_i$, and therefore $j\leq i$, a contradiction.
  For $k\in\{0,1,\dots m\}$ define set $W_k:=\bigcup_{j=1}^kV_j$.
  Then $W_0=\emptyset$ and $W_m=A$.
  Now fix a $k\in\{0,1,\dots m\}$.  
  Observe that by (\ref{eq:lem-ij}) we have
  \[
    \cl W_k\setminus A 
      = \bigcup_{j=1}^k\cl V_j\setminus\bigcup_{j=1}^m V_j 
      = \bigcup_{j=1}^k\cl V_j\setminus\bigcup_{j=1}^k V_j
      = \cl W_k\setminus W_k = \mo W_k.
  \]
  Therefore,
  \[
    \mo W_k = \cl W_k\setminus A\subset\cl A\setminus A=\mo A.
  \]
  It follows that $W_k\cup\mo A = \cl W_k\cup\mo A$.
  Hence, the set $Z_k:=W_k\cup\mo A$ is closed.
  For $k>0$ we have
  \[
    Z_k\setminus Z_{k-1} = W_k\setminus W_{k-1}\setminus \mo A =
    V_k\cap A = V_k = \cl V_k\setminus\mo V_k.
  \]
  Hence, we get from Theorem \ref{thm:excision_ftop} and (\ref{eq:lem-zero-hom})
  \[
    H(Z_k,Z_{k-1}) = H(\cl V_k,\mo V_k)=0.
  \]
  Now it follows from the exact sequence of the triple $(Z_{k-1}, Z_k, \cl A)$ that
  \[
    H(\cl A, Z_k)\cong H(\cl A,Z_{k-1}).
  \]
  Note that $Z_0=W_0\cup \mo A = \mo A$ and $Z_m=W_m\cup\mo A=A\cup\mo A=\cl A$.
  Therefore, we finally obtain
  \[
    H(\cl A,\mo A)=H(\cl A, Z_0)\cong H(\cl A,Z_m) = H(\cl A, \cl A) = 0,
  \]
    which completes the proof of the lemma.
\end{proof}

\begin{lem}\label{lem:semieq_q1p1}
  Let $P \subset Q$ be semi-equal index pairs of an isolated invariant set $S$. 
  If $P_1 = Q_1$, then $H(Q_2, P_2) = 0$, and analogously, if $P_2 = Q_2$, then $H(Q_1, P_1) = 0$.
\end{lem}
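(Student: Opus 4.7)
The plan is to reduce both statements to a single application of Lemma \ref{lem:trivial_scc_partition_gives_trivial} applied to the set $A := A(P,Q)$, bridging it to the desired homology via the finite-space excision theorem (Theorem \ref{thm:excision_ftop}).

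First I would observe that in both cases of the hypothesis ($P_1 = Q_1$ or $P_2 = Q_2$), the set $A = A(P,Q)$ is $\cV$-compatible and locally closed by Proposition \ref{prop:A(P,Q)_vcomp_and_conx}, and contains no essential solution by Proposition \ref{prop:no_full_sol_in_A}. Consequently, Lemma \ref{lem:trivial_scc_partition_gives_trivial} applies and yields
\[
  H(\cl A(P,Q), \mo A(P,Q)) = 0.
\]
This is the key input, and it does all the homological work in a unified way.

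Next, I would separate the two cases only at the excision step. If $P_1 = Q_1$, then by definition $A(P,Q) = Q_2 \setminus P_2$, so
\[
  Q_2 \setminus P_2 \;=\; A(P,Q) \;=\; \cl A(P,Q) \setminus \mo A(P,Q).
\]
The four sets $Q_2$, $P_2$, $\cl A(P,Q)$, $\mo A(P,Q)$ are all closed ($Q_2, P_2$ by the definition of an index pair; the other two because $A(P,Q)$ is locally closed, so $\mo A(P,Q)$ is closed by Proposition \ref{prop:lcl}(ii)), and the required inclusions $P_2 \subset Q_2$ and $\mo A(P,Q) \subset \cl A(P,Q)$ are automatic. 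Thus Theorem \ref{thm:excision_ftop} gives
\[
  H(Q_2, P_2) \;\cong\; H(\cl A(P,Q), \mo A(P,Q)) \;=\; 0.
\]
The case $P_2 = Q_2$ is entirely analogous, with $A(P,Q) = Q_1 \setminus P_1$, the same excision argument producing $H(Q_1, P_1) \cong H(\cl A(P,Q), \mo A(P,Q)) = 0$.

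There is no real obstacle here: once Lemma \ref{lem:trivial_scc_partition_gives_trivial} and the excision theorem are in place, the proof is essentially a bookkeeping exercise about which subset of $X$ to isolate and how to express it as a difference of two closed sets in two different ways. The only mild point of caution is verifying that $\mo A(P,Q)$ is closed, which requires local closedness of $A(P,Q)$; this is exactly what Proposition \ref{prop:A(P,Q)_vcomp_and_conx} supplies.
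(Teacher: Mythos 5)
Your proof is correct and follows essentially the same route as the paper: reduce to the set $A(P,Q)$, invoke Proposition \ref{prop:A(P,Q)_vcomp_and_conx} for $\cV$-compatibility and local closedness, Proposition \ref{prop:no_full_sol_in_A} for the absence of essential solutions, and then Lemma \ref{lem:trivial_scc_partition_gives_trivial} to get $H(\cl A(P,Q),\mo A(P,Q))=0$. The paper's own proof is more terse and does not spell out the final step you make explicit, namely the application of Theorem \ref{thm:excision_ftop} to pass from $H(\cl A(P,Q),\mo A(P,Q))=0$ to $H(Q_2,P_2)=0$ (respectively $H(Q_1,P_1)=0$). That step is in fact necessary --- $\cl A(P,Q)$ and $\mo A(P,Q)$ need not equal $Q_2$ and $P_2$, so one cannot just identify the two pairs --- and your verification that the four sets are closed and that both differences equal $A(P,Q)$ is exactly what the excision theorem requires. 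So this is the same argument, written out more completely.
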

\begin{proof}
  By Theorem \ref{prop:A(P,Q)_vcomp_and_conx} the set $A(P,Q)$ is locally closed and $\cV$-compatible. 
  Hence, the conclusion follows from Proposition \ref{prop:no_full_sol_in_A} and Lemma \ref{lem:trivial_scc_partition_gives_trivial}.
\end{proof}

\begin{lem}\label{lem:semieq_ip_iso}
  Let $P\subset Q$ be semi-equal index pairs of an isolated invariant set $S$. 
  Then $H(P_1, P_2)\cong H(Q_1, Q_2)$.
\end{lem}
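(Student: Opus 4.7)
The plan is to derive this directly from the previous lemma (Lemma \ref{lem:semieq_q1p1}) by feeding its vanishing statements into the long exact sequence of a triple (Theorem \ref{thm:exact-hom-seq-triple}). There are two subcases determined by the definition of semi-equality, and each will be handled by picking the correct triple.

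\textbf{Case 1:} Suppose $P_1 = Q_1$, so $P_2 \subset Q_2 \subset P_1 = Q_1$. I would apply Theorem \ref{thm:exact-hom-seq-triple} to the triple $P_2 \subset Q_2 \subset P_1$, producing the exact sequence
\[
  \cdots \to H_n(Q_2, P_2) \to H_n(P_1, P_2) \to H_n(P_1, Q_2) \to H_{n-1}(Q_2, P_2) \to \cdots .
\]
By Lemma \ref{lem:semieq_q1p1}, the terms $H_n(Q_2, P_2)$ vanish in every degree, so the middle map becomes an isomorphism $H_n(P_1, P_2) \cong H_n(P_1, Q_2) = H_n(Q_1, Q_2)$, as desired.

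\textbf{Case 2:} Suppose $P_2 = Q_2$, so $P_2 = Q_2 \subset P_1 \subset Q_1$. I would apply Theorem \ref{thm:exact-hom-seq-triple} to the triple $P_2 \subset P_1 \subset Q_1$, giving
\[
  \cdots \to H_n(P_1, P_2) \to H_n(Q_1, P_2) \to H_n(Q_1, P_1) \to H_{n-1}(P_1, P_2) \to \cdots .
\]
The other half of Lemma \ref{lem:semieq_q1p1} gives $H_n(Q_1, P_1) = 0$ for all $n$, so again the relevant map is an isomorphism, yielding $H_n(P_1, P_2) \cong H_n(Q_1, P_2) = H_n(Q_1, Q_2)$.

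There is really no obstacle here; the main conceptual work was already absorbed into Lemma \ref{lem:semieq_q1p1} (whose proof in turn rested on the nontrivial Lemma \ref{lem:trivial_scc_partition_gives_trivial} showing that a $\cV$-compatible locally closed set containing no essential solution has trivial relative homology on its closure-mouth pair). Given those ingredients, the present lemma is a clean two-line argument with the exact sequence of a triple. The only thing worth being careful about is making sure to handle both semi-equality cases separately, since the roles of $P_1$ and $P_2$ in the triple differ between them.
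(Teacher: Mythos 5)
Your proof is correct and follows exactly the same route as the paper: in each of the two semi-equality cases you pick the appropriate triple, invoke the exact homology sequence of the triple (Theorem~\ref{thm:exact-hom-seq-triple}), and kill the relevant term using Lemma~\ref{lem:semieq_q1p1}. The paper's version is merely terser (handling $P_2=Q_2$ first and noting the $P_1=Q_1$ case is symmetric), but the triples and vanishing arguments match yours.
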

\begin{proof}
  Assume $P_2 = Q_2$. 
  We get from Lemma \ref{lem:semieq_q1p1} that $H(Q_1,P_1)=0$.
  Using Theorem \ref{thm:exact-hom-seq-triple} for the triple $P_2\subset P_1\subset Q_1$
  then implies
  \[ H(P_1,P_2) \cong H(Q_1, P_2) = H(Q_1, Q_2). \]
  Similarly, if $P_1 = Q_1$ we consider the triple $P_2\subset Q_2\subset Q_1$ and obtain  
  \[H(P_1, P_2)=H(Q_1, P_2)\cong H(Q_1, Q_2).\]
\end{proof}

In order to show that two arbitrary index pairs carry the same homological information, we need to construct auxiliary, intermediate index pairs. 
To this end, we define the \emph{push-forward} and the \emph{pull-back} of a set $A$ in $B$.
\begin{align}
  \pipl(A,B) &:= \{ x \in B \mid\ \exists_{\varphi\in\paths_\cV(B)}\ \lep{\varphi} \in A,\ \rep{\varphi} = x \},\label{eq:pi_push_forward}\\
  \pimn(A,B) &:= \{ x \in B \mid\ \exists_{\varphi\in\paths_\cV(B)}\ \lep{\varphi} = x,\ \rep{\varphi} \in A\}\label{eq:pi_pull_back}
\end{align}

\begin{prop}\label{prop:closedness_of_pi}
  Let $A\subset X$ then $\pi^+_\cV(A,X)$ ($\pi^-_\cV(A,X)$) is closed (open) and $\cV$-compatible.
\end{prop}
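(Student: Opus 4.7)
The plan is to verify each of the four assertions (closedness of $\pipl$, openness of $\pimn$, and $\cV$-compatibility of both) by simply extending or prepending one step to an existing path. The key observation driving everything is that by~(\ref{eq:piv}), for any $z \in X$ we have both $\cl z \subset \Piv(z)$ and $[z]_\cV \subset \Piv(z)$, and that by Theorem~\ref{thm:alexandroff} closed sets in $X$ are exactly the down sets with respect to $\leq_\cT$ while open sets are exactly the upper sets.

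First I would handle $\pipl(A,X)$. To see it is closed, by Proposition~\ref{prop:cl-in-ftop} it suffices to show $\pipl(A,X)$ is a down set. So fix $x \in \pipl(A,X)$ and $y \leq_\cT x$; by definition there is a path $\varphi \in \pathsv(X)$ with $\lep{\varphi} \in A$ and $\rep{\varphi} = x$. From $y \leq_\cT x$ and Proposition~\ref{prop:cl-in-ftop} we have $y \in \cl x \subset \Piv(x)$, so the concatenation $\varphi \cdot y$ is a path in $X$ with left endpoint in $A$ and right endpoint $y$, giving $y \in \pipl(A,X)$. For $\cV$-compatibility, fix $x \in \pipl(A,X)$ and $y \in [x]_\cV$; since $y \in [x]_\cV \subset \Piv(x)$, the same concatenation trick $\varphi \cdot y$ gives $y \in \pipl(A,X)$.

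Next I would treat $\pimn(A,X)$ in a perfectly dual fashion, now prepending instead of appending. To see $\pimn(A,X)$ is open, it suffices by Proposition~\ref{prop:cl-in-ftop} together with Theorem~\ref{thm:alexandroff} to show it is an upper set. Fix $x \in \pimn(A,X)$ with witnessing path $\varphi$ satisfying $\lep{\varphi} = x$ and $\rep{\varphi} \in A$, and let $y \geq_\cT x$. Then $x \leq_\cT y$ means $x \in \cl y \subset \Piv(y)$, so $y \cdot \varphi$ is a path in $X$ with left endpoint $y$ and right endpoint in $A$, giving $y \in \pimn(A,X)$. For $\cV$-compatibility, if $y \in [x]_\cV = [y]_\cV$, then $x \in [y]_\cV \subset \Piv(y)$, and once again $y \cdot \varphi$ is a witnessing path for $y \in \pimn(A,X)$.

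There is no real obstacle here: the argument reduces to the single observation that both $\cl x$ and $[x]_\cV$ lie inside $\Piv(x)$, which lets us freely extend paths by one step in either direction. The only mild point to be careful about is the direction of the inequality when translating between $x \leq_\cT y$ and $y \in \cl x$ versus $x \in \cl y$, and making sure that the concatenation of a trivial one-step path with $\varphi$ is carried out on the correct side (appended for $\pipl$, prepended for $\pimn$), since this is what distinguishes the closed case from the open case.
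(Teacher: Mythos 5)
Your proof is correct and takes essentially the same approach as the paper: extend a witnessing path by one step, using $\cl x \subset \Piv(x)$ for the topological part and $[x]_\cV \subset \Piv(x)$ for $\cV$-compatibility. The only cosmetic difference is that you phrase closedness/openness via the down set/upper set characterization (Theorem~\ref{thm:alexandroff}), whereas the paper instead invokes $\cl\pipl(A,X)=\bigcup_{x\in\pipl(A,X)}\cl x$ from Proposition~\ref{prop:cl-as-union} and handles $\pimn$ by declared symmetry rather than spelling it out.
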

\begin{proof}
  Let $x\in \pi^+_\cV(A,X)$ be arbitrary. 
  Then there exists a point $a\in A$ and a $\varphi\in\pathsv(a,x,X)$. 
  For any $y\in[x]_\cV$ the concatenation $\varphi\cdot y$ is also a path. 
  Thus, $\pi^+_\cV(A,X)$ is $\cV$-compatible. 

  To show closedness, take an $x\in\pi^+_\cV(A,X)$ and $y\in\cl x$. 
  By (\ref{eq:pi_push_forward}) there exists an $a\in A$ and a $\varphi\in\pathsv(a, x, X)$. 
% There exists a path $\varphi$ such that $\varphi(0)\in A$, $\varphi(k)=a$ for some $k\geq 0$. 
  Then the path $\varphi\cdot y$ is a path from $A$ to $y$, implying that $y\in \pi^+_\cV(A,X)$. 
  Since $X$ is finite one obtains
  \[ 
    \cl\pipl(A,X)=
    %\cl\bigcup_{x\in\pipl(A,X)}\{x\} = 
    \bigcup_{x\in\Pi_\cV^+(A,X)}\cl x = \pipl(A,X),
  \]
  and therefore $\pipl(A,X)$ is closed.
  The proof for $\pimn(A,X)$ is symmetric.
\end{proof}

Let $P$ be an index pair for $S$. 
Define the set $\hat{P} \subset P_1$ of all points $x\in P_1$ for which there exists no path in $P_1$ which starts in $x$ and ends in $S$, that is, 
\begin{align}\label{eq:hatP}
  \hat{P} := \{ x\in P_1\ |\ \pi^+_\cV(x,P_1)\cap S=\emptyset \} .
\end{align}

\begin{prop}\label{prop:p2_sub_hatP}
  If $P$ is an index pair for an isolated invariant set $S$, then $S\cap\hat{P}=\emptyset$ and $P_2\subset\hat{P}$.
\end{prop}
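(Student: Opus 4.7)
The statement splits into two claims, and both should follow quickly from the defining conditions of an index pair together with the fact that the trivial one-point map at $x$ qualifies as a path in $\pathsv(P_1)$. The plan is to handle each inclusion separately, using invariance of $S$ for the first and the positive invariance condition \ref{it:ind_pair_1} for the second.

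For $S\cap\hat{P}=\emptyset$: take any $x\in S$. By \ref{it:ind_pair_3} we have $S\subset P_1\setminus P_2\subset P_1$, so $x\in P_1$. The constant sequence consisting of the single point $x$ is a path in $\pathsv(P_1)$ with left and right endpoints both equal to $x$. Thus $x\in\pipl(x,P_1)$, and since $x\in S$ we conclude $\pipl(x,P_1)\cap S\neq\emptyset$, so by~(\ref{eq:hatP}) the point $x$ does not lie in $\hat{P}$.

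For $P_2\subset\hat{P}$: let $x\in P_2$ and suppose, for contradiction, that $x\notin\hat{P}$. Then there is a path $\varphi=x_0\cdot x_1\cdots x_n$ in $P_1$ with $x_0=x$ and $x_n\in S$. I would now argue by induction on $i$ that $x_i\in P_2$ for every $i$: the base case $i=0$ is given, and the step follows because $x_{i+1}\in\Pi_\cV(x_i)\cap P_1$ whenever $\varphi$ is a path in $P_1$, so \ref{it:ind_pair_1} yields $x_{i+1}\in P_2$. In particular $x_n\in P_2$. On the other hand \ref{it:ind_pair_3} gives $S\subset P_1\setminus P_2$, which is disjoint from $P_2$, contradicting $x_n\in S$. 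Hence $x\in\hat{P}$.

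Neither step should present a real obstacle; the only subtlety is being explicit that a one-element sequence counts as a path (making $x\in\pipl(x,P_1)$ automatic for any $x\in P_1$), and that (IP1) propagates membership in $P_2$ forward along any path that stays in $P_1$. Once these two observations are in place, the proof is essentially immediate.
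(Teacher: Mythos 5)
Your proof is correct and follows essentially the same route as the paper's: the first claim is immediate from the trivial path at $x$, and the second is exactly the paper's argument (that \ref{it:ind_pair_1} propagates membership in $P_2$ along any path inside $P_1$), with the induction written out explicitly rather than summarized as "$\im\varphi\subset P_2$."
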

\begin{proof}
  The first assertion is obvious. 
  In order to see the second take an $x\in P_2$ and suppose that $x\not\in\hat{P}$. 
  This means that there exists a path $\varphi$ in $P_1$ such that $\lep{\varphi}=x$ and $\rep{\varphi}\in S$. 
  The condition \ref{def:index_pair_cond_1} of Definition \ref{def:index_pair} implies $\im\varphi\in P_2$.
  Therefore, $\rep{\varphi}\in P_2$ and $P_2\cap S\neq\emptyset$ which contradicts $S\subset P_1\setminus P_2$.
\end{proof}

\begin{prop}\label{prop:mouth_sub_hatP}
  If $P$ is an index pair for an isolated invariant set $S$, then $\mo S\subset\hat{P}$. 
  Moreover, $\Pi_\cV(S)\subset S\cup\hat{P}$.
\end{prop}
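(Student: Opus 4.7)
The plan is to prove the two inclusions in sequence, using the first to deduce the second almost immediately.

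First I would argue that $\mo S \subset P_1$, so that asking whether points of $\mo S$ lie in $\hat P$ makes sense. By Proposition~\ref{prop:P1_isolates} the closed set $P_1$ is an isolating set for $S$, and then Proposition~\ref{prop:smallest_iso_neigh} gives $\cl S \subset P_1$; in particular $\mo S \subset P_1$.

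Next, for the inclusion $\mo S \subset \hat P$, I would take an arbitrary $x \in \mo S$ and argue by contradiction. If $x \notin \hat P$, then by~\eqref{eq:hatP} there exists a path $\varphi$ in $P_1$ with $\lep{\varphi}=x$ and $\rep{\varphi}\in S$. Since $x \in \cl S$, Proposition~\ref{prop:cl-as-union} yields some $z \in S$ with $x \in \cl z$, and then $x \in \cl z \subset \Pi_\cV(z)$ by~\eqref{eq:piv}. Concatenating, $z \cdot \varphi$ is a well-defined path in $P_1$ (note $z \in S \subset P_1\setminus P_2$ by~\ref{it:ind_pair_3}) whose endpoints $z$ and $\rep{\varphi}$ both lie in $S$. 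Since $P_1$ isolates $S$, condition~(a) of Definition~\ref{def:iso-inv-set} forces the entire path, and hence $x$, to lie in $S$, contradicting $x \in \mo S$.

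For the second assertion, I would use that $S$ is $\cV$-compatible by Proposition~\ref{prop:iso-is-vcomp}, so $[x]_\cV \subset S$ for every $x \in S$, while $\cl x \subset \cl S$ trivially. Combining with~\eqref{eq:piv} gives
\[
  \Pi_\cV(S) = \bigcup_{x \in S}\bigl(\cl x \cup [x]_\cV\bigr) \subset \cl S = S \cup \mo S \subset S \cup \hat P,
\]
where the last inclusion uses the first part of the proposition.

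I do not expect a genuine obstacle here; the only delicate point is verifying that the concatenation $z\cdot\varphi$ is both a legitimate path (which reduces to checking $x\in\Pi_\cV(z)$, handled by $x\in\cl z$) and stays in $P_1$ (which follows from $z\in P_1$ and $\im\varphi\subset P_1$), so that the isolating property of $P_1$ can be invoked.
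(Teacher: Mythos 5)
Your argument is correct and follows essentially the same route as the paper's own proof: assume $x\in\mo S\setminus\hat P$, obtain a path $\varphi$ in $P_1$ from $x$ to $S$, prepend a point $z\in S$ with $x\in\cl z\subset\Pi_\cV(z)$ to produce a path in $P_1$ with both endpoints in $S$, invoke the isolation property of $P_1$ (via Proposition~\ref{prop:P1_isolates}) to conclude $x\in S$, and derive the second inclusion from $\cV$-compatibility via $\Pi_\cV(S)\subset\cl S = S\cup\mo S$. The only addition you make is the explicit preliminary check that $\mo S\subset P_1$ (so that membership in $\hat P$ is even well-posed), which the paper leaves implicit; this is a harmless and arguably welcome bit of extra rigor, not a different approach.
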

\begin{proof}
  To prove that $\mo S\subset \hat{P}$ assume the contrary. 
  Then there exists an $x\in\mo S$, such that $\pi^+_\cV(x,P_1)\cap S\neq\emptyset$.
  It follows that there exists a path $\varphi$ in $P_1$ from $x$ to $S$.
  Since $x\in\mo S\subset \cl S$, we can take a $y\in S$ such that $x\in \cl y\subset \Pi_\cV(y)$.
  It follows that $\psi:=y\cdot\varphi$ is a path in $P_1$ through $x$ with endpoints in $S$. 
  Since, by Proposition \ref{prop:P1_isolates}, $P_1$ isolates $S$, we get $x\in S$, a contradiction.

  Finally, by $\cV$-compatibility of $S$ guaranteed by Proposition \ref{prop:iso-is-vcomp},
  we have the inclusion $\Pi_\cV(S)=\cl S \subset S\cup \mo S\cup\hat{P}= S\cup \hat{P}$,
  which proves the remaining assertion.
\end{proof}

\begin{prop}\label{prop:hatP_closed}
  Let $P$ be an index pair for an isolated invariant set $S$. 
  Then the sets $\hat{P}$ and $\hat{P}\cup S$ are closed.
\end{prop}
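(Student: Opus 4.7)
The plan is to verify closedness by the pointwise criterion valid in a finite topological space: a set is closed if and only if it contains the closure of each of its points, equivalently, it is a down set with respect to $\leq_\cT$.

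For $\hat{P}$, I would take an arbitrary $x \in \hat{P}$ and $y \in \cl x$, and show $y \in \hat{P}$. First, $y \in P_1$ since $P_1$ is closed and contains $x$. Next, suppose toward a contradiction that there exists a path $\varphi \in \paths_\cV(P_1)$ with $\lep{\varphi} = y$ and $\rep{\varphi} \in S$. Since $y \in \cl x \subset \Pi_\cV(x)$ by the definition of $\Pi_\cV$, the concatenation $x \cdot \varphi$ is a well-defined path in $P_1$ from $x$ to $S$, which contradicts $x \in \hat{P}$. Hence no such $\varphi$ exists, so $\pi_\cV^+(y, P_1) \cap S = \emptyset$ and $y \in \hat{P}$. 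By Proposition \ref{prop:cl-as-union}, this suffices to conclude that $\hat{P}$ is closed.

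For $\hat{P} \cup S$, I would use the general identity $\cl(A \cup B) = \cl A \cup \cl B$ in a topological space, together with the fact that $\hat{P}$ is now known to be closed, to compute
\[
\cl(\hat{P} \cup S) = \cl \hat{P} \cup \cl S = \hat{P} \cup S \cup \mo S.
\]
Then Proposition \ref{prop:mouth_sub_hatP} gives $\mo S \subset \hat{P}$, so $\cl(\hat{P} \cup S) \subset \hat{P} \cup S$, which yields closedness.

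I do not anticipate a serious obstacle: the argument is entirely a down-set computation, and the only nontrivial ingredient is the concatenation trick $x \cdot \varphi$, which relies on the elementary fact $\cl x \subset \Pi_\cV(x)$. The use of the preceding Proposition \ref{prop:mouth_sub_hatP} (which already encapsulates the interaction between $\mo S$ and $\hat{P}$) makes the second assertion immediate once the first is established.
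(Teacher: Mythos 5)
Your proof is correct and follows essentially the same route as the paper: the closure of $\hat{P}$ is checked pointwise via the concatenation trick $x\cdot\varphi$ (the paper phrases this as the containment $\pi^+_\cV(y,P_1)\subset\pi^+_\cV(x,P_1)$, but this is the same argument), and the closedness of $\hat{P}\cup S$ is then reduced to Proposition~\ref{prop:mouth_sub_hatP} exactly as you do. No gaps.
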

\begin{proof}
  Let $x\in \hat{P}$ and let $y\in \cl x$. 
  Then $y\in\Pi_\cV(x)$. 
  Moreover, $y\in P_1$, because $P_1$ is closed. 
  Clearly, if $\varphi\in\pathsv(y,P_1)$, then $x\cdot\varphi\in\paths_\cV(x,P_1)$.
  Therefore $\pi^+_\cV(y,P_1)\subset\pi^+_\cV(x,P_1)$. 
  Since, by (\ref{eq:hatP}), the latter set is disjoint from $S$, so is the former one. 
  Therefore, $y\in\hat{P}$. It follows that $\hat{P}$ is closed.

  %In particular, $\cl(S\cup \hat{P})=\cl S\cup \cl\hat{P}=\cl S\cup \hat{P}=S\cup\mo S\cup\hat{P}$. Therefore, 
  Proposition \ref{prop:mouth_sub_hatP} implies that 
  $\cl(S\cup\hat{P})=\cl S\cup \hat{P}=S\cup\mo{S}\cup\hat{P}=S\cup\hat{P}$, 
  which proves the closedness of $S\cup\hat{P}$.
\end{proof}

\begin{lem}\label{lem:index_pairs_star_sstar}
  If $P$ is an index pair for an isolated invariant set $S$, then $P^*:=(S\cup\hat{P}, P_2)$ is an index pair for $S$ and $P^{**}:=(S\cup\hat{P}, \hat{P})$ is a saturated index pair for $S$.
\end{lem}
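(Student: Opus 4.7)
The plan is to verify the three defining conditions \ref{it:ind_pair_1}--\ref{it:ind_pair_3} of Definition~\ref{def:index_pair} for both pairs $P^*$ and $P^{**}$, together with the saturation identity $(S\cup\hat{P})\setminus\hat{P}=S$ for $P^{**}$. Most of the work is bookkeeping that uses the key properties already proved: $\hat{P}$ and $S\cup\hat{P}$ are closed (Proposition~\ref{prop:hatP_closed}); $P_2\subset\hat{P}$ and $S\cap\hat{P}=\emptyset$ (Proposition~\ref{prop:p2_sub_hatP}); $\mo S\subset\hat{P}$ and $\Pi_\cV(S)\subset S\cup\hat{P}$ (Proposition~\ref{prop:mouth_sub_hatP}); and the observation that $S\cup\hat{P}\subset P_1$, which is immediate from $\hat P\subset P_1$ and $S\subset P_1\setminus P_2\subset P_1$.

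The closedness of the ingredients and the inclusions $P_2\subset S\cup\hat{P}$ and $\hat{P}\subset S\cup\hat{P}$ are immediate from the results cited above. For the saturation of $P^{**}$, the identity $(S\cup\hat{P})\setminus\hat{P}=S$ follows from $S\cap\hat{P}=\emptyset$. Condition~\ref{it:ind_pair_3} is then cheap for both pairs: for $P^{**}$ it reduces to $\Inv S=S$, which holds because $S$ is invariant; for $P^*$ I would sandwich $(S\cup\hat{P})\setminus P_2$ between $S$ (since $P_2\cap S=\emptyset$) and $P_1\setminus P_2$, apply $\Inv$, and use monotonicity of $\Inv$ together with $\Inv(P_1\setminus P_2)=S$ from the hypothesis that $P$ is an index pair.

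For the positive invariance~\ref{it:ind_pair_1} the argument is uniform for both pairs. Suppose $y\in\Pi_\cV(x)\cap(S\cup\hat{P})$. Since $S\cup\hat{P}\subset P_1$, \ref{it:ind_pair_1} for $P$ forces $y\in P_2$ whenever $x\in P_2$, which handles $P^*$. For $P^{**}$, suppose $x\in\hat{P}$ and $y\in\Pi_\cV(x)\cap(S\cup\hat{P})$. If $y\in S$, then the one-step path $x\cdot y$ lies in $P_1$ and witnesses $y\in\pi_\cV^+(x,P_1)\cap S$, contradicting $x\in\hat{P}$; hence $y\in\hat{P}$.

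The main technical point is~\ref{it:ind_pair_2}, which is where the construction of $\hat{P}$ pays off. Let $x\in S\cup\hat{P}$ with $\Pi_\cV(x)\setminus(S\cup\hat{P})\neq\emptyset$. The case $x\in S$ is ruled out by $\Pi_\cV(S)\subset S\cup\hat{P}$ from Proposition~\ref{prop:mouth_sub_hatP}, so $x\in\hat{P}$. This already gives~\ref{it:ind_pair_2} for $P^{**}$. For $P^*$ I still need $x\in P_2$; by~\ref{it:ind_pair_2} for $P$ it suffices to exhibit a point of $\Pi_\cV(x)\setminus P_1$. Picking any $y\in\Pi_\cV(x)\setminus(S\cup\hat{P})$, if $y$ were in $P_1$ then the single edge $x\cdot y$ would be a path in $P_1$, and since $y\notin\hat{P}$ there would be a path in $P_1$ from $y$ to $S$; concatenating would produce a path in $P_1$ from $x$ to $S$, contradicting $x\in\hat{P}$. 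Thus $y\notin P_1$, \ref{it:ind_pair_2} for $P$ applies, and $x\in P_2$. This closes the verification and is the only step requiring a genuine path argument; the rest is a straightforward assembly of the previously established properties of~$\hat{P}$.
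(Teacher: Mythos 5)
Your proof is correct and follows essentially the same line of argument as the paper: verify the three index-pair conditions using the closedness of $\hat P$ and $S\cup\hat P$, the inclusions $P_2\subset\hat P$ and $S\cap\hat P=\emptyset$, $\Pi_\cV(S)\subset S\cup\hat P$, and a short path-concatenation argument to show that any $y\in\Pi_\cV(x)$ escaping $S\cup\hat P$ must also escape $P_1$. The minor repackaging of \ref{it:ind_pair_3} for $P^*$ via monotonicity of $\Inv$ and the sandwich $S\subset(S\cup\hat P)\setminus P_2\subset P_1\setminus P_2$ is a presentational variant of the paper's direct construction of an essential solution, not a genuinely different route.
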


\begin{proof}
  First consider $P^*$.
  By Proposition \ref{prop:hatP_closed} set $P^*_1=S\cup\hat{P}$ is closed. 
  By Proposition \ref{prop:p2_sub_hatP} we have $P_2\subset\hat{P}\subset S\cup\hat{P}$.

  Let $x\in P_2^*=P_2$ and let $y\in\Pi_\cV(x)\cap P_1^*$.
  Then $y\in\Pi_\cV(x)\cap P_1$. 
  It follows from \ref{it:ind_pair_1} for $P$ that $y\in P_2$.
  Thus, \ref{it:ind_pair_1} is satisfied for $P^*$.

  Now, let $x\in P_1^*=S\cup\hat{P}$ and suppose that there is a $y\in\Pi_\cV(x)\setminus P_1^*\neq\emptyset$.
%%  We claim that $y\not\in P_1$. 
%%  Suppose the contrary. 
%%  % Point $x\in P_1^*$, therefore it is either in $S$ or $\hat{P}$.
%%  Since $x\in P_1^*$, it is either in $S$ or $\hat{P}$.
  We have $x\not\in S$, because otherwise $\Piv(x)\subset\mo S\cup S=\cl S\subset \cl(S\cup\hat{P})$ and then Proposition \ref{prop:hatP_closed} implies 
  $\Pi_\cV(x)\subset S\cup\hat{P}\subset P_1^*$ which contradicts $\Piv(x)\setminus P_1^* \neq \emptyset$.
  Hence, $x\in\hat{P}$. 
  We have $y\not\in P_1$ because otherwise $y\in\pipl(x,P_1)\subset \hat{P}\subset P_1^*$, a contradiction. 
  Thus $\Piv(x)\setminus P_1\neq\emptyset$. 
  Since $x\in P_1^*\subset P_1$, by \ref{it:ind_pair_2} for $P$ we get $x\in P_2=P_2^*$. 
  This proves \ref{it:ind_pair_2} for $P^*$.

  Clearly, $P_1^*\setminus P_2^* = P_1^*\setminus P_2\subset P_1\setminus P_2$,
  and therefore we have the inclusion $\inv \left( P^*_1\setminus P^*_2 \right)\subset\inv \left( P_1\setminus P_2 \right)=S$. 
  To verify the opposite inclusion, let $x\in S$ be arbitrary. 
  Since $S$ is an invariant set, there exists an essential solution $\varphi\in\esol(x, S)$.
  We have
    \[\im\varphi\subset S\subset (\hat{P}\cup S)\setminus P_2 =P^*_1\setminus P^*_2, \]
  because $P_2\cap S=\emptyset$.
  Consequently, $x\in\inv(P^*_1\setminus P^*_2)$ and
  $S=\inv(P^*_1\setminus P^*_2)$. 
  Hence, $P^*$ also satisfies \ref{it:ind_pair_3}, which completes the proof that $P^*$ is an index pair for $S$.

  Consider now the second pair $P^{**}$. 
  Let $x\in P_2^{**}=\hat{P}$ be arbitrary and choose $y\in\Pi_\cV(x)\cap P_1^{**}=\Pi_\cV(x)\cap (\hat{P}\cup S)$. 
  Since $x\in\hat{P}$ we get from (\ref{eq:hatP}) that $\Piv(x)\cap S=\emptyset$. 
  Thus, $y\in \Piv(x)\cap\hat{P}\subset\hat{P}=P_2^{**}$.
  This proves \ref{it:ind_pair_1} for the pair $P^{**}$.

  To see \ref{it:ind_pair_2} take an $x\in P_1^{**}=\hat{P}\cup S$ and assume $\Piv(x)\setminus P_1^{**}\neq\emptyset$. 
  We cannot have $x\in S$, because then $\Piv(x)\subset\Piv(S)$ and Proposition \ref{prop:mouth_sub_hatP} implies $\Piv(x)\subset S\cup\hat{P}=P_1^{**}$, a contradiction.
  Hence, $x\in\hat{P}=P_2^{**}$ which proves \ref{it:ind_pair_2} for $P^{**}$.

  Finally, we clearly have $S\cap\hat{P}=\emptyset$. 
  Therefore, $(S\cup\hat{P})\setminus \hat{P}= S$ and
    \[ 
      \inv(P_1^{**}\setminus P_2^{**}) = \inv((S\cup\hat{P})\setminus \hat{P}) = \inv{S}= S.
    \]
  This proves that $P^{**}$ satisfies \ref{it:ind_pair_3} and that it is saturated.
\end{proof}

\begin{thm}\label{thm:index_pairs_isomorphism}
  Let $P$ and $Q$ be two index pairs for an invariant set $S$. 
  Then $H(P_1, P_2)\cong H(Q_1, Q_2)$.
\end{thm}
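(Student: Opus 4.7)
The plan is to use the auxiliary index pairs $P^\ast$ and $P^{\ast\ast}$ constructed in Lemma~\ref{lem:index_pairs_star_sstar} to build a chain of isomorphisms connecting $(P_1,P_2)$ to $(Q_1,Q_2)$ through a pair of saturated index pairs, where Lemma~\ref{lem:iso_of_saturated_idx_pairs} applies.

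First, I would verify the relevant inclusions so that Lemma~\ref{lem:semieq_ip_iso} can be invoked. For $P^\ast=(S\cup\hat P,P_2)$, observe that $\hat P\subset P_1$ directly from its definition \eqref{eq:hatP} and $S\subset P_1\setminus P_2\subset P_1$ by \ref{it:ind_pair_3}, so $S\cup\hat P\subset P_1$. Together with $P^\ast_2=P_2$ this means $P^\ast\subset P$ and the two pairs are semi-equal. For $P^{\ast\ast}=(S\cup\hat P,\hat P)$, Proposition~\ref{prop:p2_sub_hatP} gives $P_2\subset\hat P$, and since $P^\ast_1=P^{\ast\ast}_1=S\cup\hat P$, we have $P^\ast\subset P^{\ast\ast}$ semi-equal. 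The same observations applied to $Q$ give $Q^\ast\subset Q$ and $Q^\ast\subset Q^{\ast\ast}$, semi-equal in each case.

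Applying Lemma~\ref{lem:semieq_ip_iso} twice on each side yields
\[
H(P_1,P_2)\;\cong\;H(P^\ast_1,P^\ast_2)\;\cong\;H(P^{\ast\ast}_1,P^{\ast\ast}_2)
\qquad\text{and}\qquad
H(Q_1,Q_2)\;\cong\;H(Q^\ast_1,Q^\ast_2)\;\cong\;H(Q^{\ast\ast}_1,Q^{\ast\ast}_2).
\]
By Lemma~\ref{lem:index_pairs_star_sstar}, both $P^{\ast\ast}$ and $Q^{\ast\ast}$ are saturated index pairs for $S$, so Lemma~\ref{lem:iso_of_saturated_idx_pairs} supplies the missing link
\[
H(P^{\ast\ast}_1,P^{\ast\ast}_2)\;\cong\;H(Q^{\ast\ast}_1,Q^{\ast\ast}_2),
\]
and chaining the five isomorphisms completes the proof.

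There is no real obstacle here; the entire argument is a bookkeeping exercise that relies on the earlier lemmas. The only point requiring care is making sure the inclusions $P^\ast\subset P$ and $P^\ast\subset P^{\ast\ast}$ are in the correct direction so that semi-equality applies — in particular, that $S\cup\hat P\subset P_1$ and $P_2\subset\hat P$, both of which were already established in Proposition~\ref{prop:p2_sub_hatP} and from the definition of $\hat P$.
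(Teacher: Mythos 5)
Your proof is correct and follows essentially the same route as the paper: pass from $P$ to $P^\ast$ to $P^{\ast\ast}$ via Lemma~\ref{lem:semieq_ip_iso}, do the same for $Q$, and bridge the two saturated pairs with Lemma~\ref{lem:iso_of_saturated_idx_pairs}. The extra verification of the inclusions establishing semi-equality is a nice touch but is already implicit in Lemma~\ref{lem:index_pairs_star_sstar}.
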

\begin{proof}
  It follows from Lemma \ref{lem:index_pairs_star_sstar} that $P^{*}\subset P$ as well as $P^*\subset P^{**}$ are semi-equal index pairs.  
  Hence, we get from Lemma \ref{lem:semieq_ip_iso} that 
  \begin{align*}
    H(P_1,P_2)&\cong H(P_1^{*},P_2^{*})\cong H(P_1^{**},P_2^{**}).
  \end{align*}
  Similarly, one obtains
  \begin{align*}
    H(Q_1,Q_2)&\cong H(Q_1^{*},Q_2^{*})\cong H(Q_1^{**},Q_2^{**}).
  \end{align*}
  Since both pairs $P^{**}$ and $Q^{**}$ are saturated, it follows from Lemma \ref{lem:iso_of_saturated_idx_pairs} that
    $H(P_1^{**},P_2^{**}) \cong H(Q_1^{**},Q_2^{**}).$
  Therefore, $H(P_1,P_2) \cong H(Q_1,Q_2)$.
\end{proof}

%%%%%%%%%%%%%%%%%%%%%%%%%%%%%%%%%%%%%%%%%%%%%%%
\subsection{Conley index}

We define the \emph{homology Conley index} of an isolated invariant set $S$ as $H(P_1,P_2)$ where $(P_1, P_2)$ is an index pair for $S$. 
We denote the homology Conley index of $S$ by $\con(S)$.
Proposition \ref{prop:minimal_index_pair} and Theorem~\ref{thm:index_pairs_isomorphism} guarantee that the homology Conley index is well-defined.

Given a locally closed set $A\subset X$ we define its \emph{$i$th Betti number} $\beta_i(A)$ and \emph{Poincar\'e polynomial} $p_A(t)$, respectively, as the $i$th Betti number and the Poincar\'e polynomial of the pair $(\cl A,\mo A)$, that is, $\beta_i(A):=\beta_i(\cl A, \mo A)$ and 
$p_A(t):=p_{\cl A, \mo A}(t)$ (see (\ref{eq:poincare_polynomial})).
%%    \[
%%    p_A(t) := \sum_{i=0}^{\infty}{\beta_i(A)\, t^i. }
%%    \]

The theorem used in the following Proposition originally comes from \cite{rybakowski_zehnder_1985}, but we use its more general version that was stated in \cite{Mr2017}.

\begin{prop}\label{prop:poincare_equation}
  If $(P_1,P_2)$ is an index pair for an isolated invariant set~$S$, then
  \begin{equation}\label{eq:poincare_equation}
    p_S(t) + p_{P_2}(t) = p_{P_1}(t) + (1+t)q(t),
  \end{equation}
  where $q(t)$ is a polynomial with non-negative coefficients.  
  Moreover, if
  \[  
    H(P_1) = H(P_2) \oplus H(\cl S, \mo S) 
  \]
  then $q(t)=0$.
\end{prop}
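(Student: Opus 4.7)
The plan is to reduce the claim to a standard algebraic fact about Poincar\'e polynomials of long exact sequences, after first identifying the Conley index on both sides of the formula.

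First, I would use Proposition~\ref{prop:minimal_index_pair} together with Theorem~\ref{thm:index_pairs_isomorphism} to identify $p_S(t)$ with $p_{P_1,P_2}(t)$. Indeed, by definition $p_S(t)=p_{\cl S,\mo S}(t)$, and since both $(\cl S,\mo S)$ and $(P_1,P_2)$ are index pairs for $S$, we have $H(P_1,P_2)\cong H(\cl S,\mo S)$ and hence equal Betti numbers. With this identification the desired identity~(\ref{eq:poincare_equation}) becomes
\[
p_{P_1,P_2}(t)+p_{P_2}(t)=p_{P_1}(t)+(1+t)\,q(t).
\]

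Second, I would invoke the long exact homology sequence of the pair $(P_1,P_2)$, which is the special case $B=\emptyset$ of Theorem~\ref{thm:exact-hom-seq-triple}:
\[
\cdots\to H_n(P_2)\to H_n(P_1)\to H_n(P_1,P_2)\to H_{n-1}(P_2)\to\cdots.
\]
By Proposition~\ref{prop:simplicial_rel_hom_of_posets} all three sequences $H_\ast(P_1)$, $H_\ast(P_2)$, $H_\ast(P_1,P_2)$ are finitely generated and eventually zero. Then I would apply the standard algebraic lemma referenced in the statement (attributed to Rybakowski--Zehnder and in the generalized form used in \cite{Mr2017}): for any long exact sequence of finitely generated modules
\(\cdots\to A_n\to B_n\to C_n\to A_{n-1}\to\cdots\)
the Poincar\'e polynomials satisfy
\[
p_A(t)+p_C(t)=p_B(t)+(1+t)\,q(t),
\]
with $q$ having non-negative integer coefficients. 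Setting $A=P_2$, $B=P_1$ and $C=(P_1,P_2)$ yields the required relation. This step is purely algebraic and uses nothing about the combinatorial multivector field.

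Third, for the \emph{moreover} clause, I would observe that under the hypothesis $H(P_1)=H(P_2)\oplus H(\cl S,\mo S)$, combined with the isomorphism $H(P_1,P_2)\cong H(\cl S,\mo S)$ established in the first step, the maps $H_n(P_2)\to H_n(P_1)$ in the long exact sequence are injective (the canonical inclusion onto the first summand) and the maps $H_n(P_1)\to H_n(P_1,P_2)$ are surjective (projection onto the second summand). Consequently every connecting homomorphism $H_n(P_1,P_2)\to H_{n-1}(P_2)$ vanishes, the long exact sequence breaks into short exact sequences, and the $q$ produced by the algebraic lemma is identically zero.

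There is no real obstacle here: the main work has already been carried out in Theorem~\ref{thm:index_pairs_isomorphism} and Proposition~\ref{prop:minimal_index_pair}, and the remaining content is the well-known Poincar\'e-polynomial identity for long exact sequences, which we cite rather than reprove. The only mild care needed is to verify that the splitting hypothesis indeed forces the boundary maps to vanish in the degree-wise short exact sequences, which follows from a straightforward rank count combined with the exactness of the long sequence.
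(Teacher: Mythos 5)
Your proof takes essentially the same route as the paper: reduce $p_S$ to $p_{P_1,P_2}$ via Proposition~\ref{prop:minimal_index_pair} and Theorem~\ref{thm:index_pairs_isomorphism}, feed the long exact sequence of the pair $(P_1,P_2)$ into the Rybakowski--Zehnder Poincar\'e-polynomial identity in the form cited from~\cite{Mr2017}, and invoke the second part of that same theorem for the vanishing of~$q$. One small caveat in your ``moreover'' discussion: the hypothesis $H(P_1)\cong H(P_2)\oplus H(\cl S,\mo S)$ is only an abstract graded isomorphism, so it does not directly say that the maps $H_n(P_2)\to H_n(P_1)$ are literal inclusions onto a summand or that $H_n(P_1)\to H_n(P_1,P_2)$ are projections — your parenthetical claims there are unjustified as stated; but your closing sentence (deduce vanishing of the connecting homomorphisms' ranks by a rank count from exactness, which gives $q=0$ since $q$ records exactly those ranks) is the correct argument and is precisely what the second half of the cited theorem asserts, so the net result is sound and matches the paper.
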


\begin{proof}
An index pair $(P_1, P_2)$ induces a long exact sequence of homology modules
  \begin{equation}\label{eq:poincare_equation_long_sequence}
    \dotsc \rightarrow H_n(P_2) \rightarrow H_n(P_1) \rightarrow H_n(P_1, P_2) \rightarrow H_{n-1}(P_2) \rightarrow \dotsc.
  \end{equation}
By Proposition \ref{prop:minimal_index_pair} and Theorem \ref{thm:index_pairs_isomorphism} we have $H(P_1, P_2)\cong H(\cl S, \mo S)$. Thus, we can replace (\ref{eq:poincare_equation_long_sequence}) with
  \begin{equation*}
    \dotsc \rightarrow H_n(P_2) \rightarrow H_n(P_1) \rightarrow H_n(\cl S, \mo S) \rightarrow H_{n-1}(P_2) \rightarrow \dotsc.
  \end{equation*}
In view of \cite[Theorem 4.6]{Mr2017} we further get
\[ p_{S}(t) + p_{P_2}(t) = p_{P_1}(t) + (1+t)q(t). \]
for some polynomial $q$ with non-negative coefficients. 
The second assertion follows directly from the second part of \cite[Theorem~4.6]{Mr2017} (see also \cite{rybakowski_zehnder_1985}).
\end{proof}

We say that an isolated invariant set $S$ \emph{decomposes} into the isolated invariant sets $S'$ and $S''$ if $\cl S'\cap S''=\emptyset$, $S'\cap\cl S''=\emptyset$, as well as $S=S'\cup S''$.

\begin{prop}
  Assume an isolated invariant set $S$ decomposes into the isolated invariant sets $S'$ and $S''$. Then $\sol(S)=\sol(S')\cup\sol(S'')$.
\end{prop}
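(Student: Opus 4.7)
The inclusion $\sol(S')\cup\sol(S'')\subset\sol(S)$ is immediate from $S=S'\cup S''$, since any solution in~$S'$ (resp.~$S''$) has image in~$S$. The plan is therefore to establish the reverse inclusion by showing that no solution in~$S$ can have its image split between~$S'$ and~$S''$; connectedness of the $\ZZ$-interval $\dom\varphi$ will then force $\im\varphi\subset S'$ or $\im\varphi\subset S''$.

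The first observation to record is that the hypotheses imply $S'\cap S''=\emptyset$, since $S'\subset\cl S'$ and $\cl S'\cap S''=\emptyset$. Now assume $\varphi\in\sol(S)$ and, aiming for a contradiction, suppose there exist consecutive times $t,t+1\in\dom\varphi$ with $\varphi(t)$ and $\varphi(t+1)$ lying in different pieces of the decomposition. By swapping $S'$ and $S''$ if necessary we may assume $\varphi(t)\in S'$ and $\varphi(t+1)\in S''$. From the definition of~$\Piv$ we have
\[
\varphi(t+1)\in\Piv(\varphi(t))=\vclass{\varphi(t)}\cup\cl\varphi(t).
\]
The main step is to rule out both possibilities using the two disjointness hypotheses.

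If $\varphi(t+1)\in\vclass{\varphi(t)}$, then Proposition~\ref{prop:iso-is-vcomp} applied to the isolated invariant set~$S'$ gives $\vclass{\varphi(t)}\subset S'$, so $\varphi(t+1)\in S'\cap S''=\emptyset$, a contradiction. If instead $\varphi(t+1)\in\cl\varphi(t)$, then by Proposition~\ref{prop:cl-as-union} we have $\cl\varphi(t)\subset\cl S'$, so $\varphi(t+1)\in\cl S'\cap S''=\emptyset$, again a contradiction. Hence any two consecutive values of~$\varphi$ lie in the same component of the decomposition, and a straightforward induction along the $\ZZ$-interval~$\dom\varphi$ shows that $\im\varphi$ is contained entirely in~$S'$ or entirely in~$S''$; that is, $\varphi\in\sol(S')\cup\sol(S'')$.

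I do not foresee a significant obstacle here: the proof reduces to a case analysis on the two summands in~$\Piv(\varphi(t))$, and the $\cV$-compatibility of~$S'$ together with the two disjointness conditions $\cl S'\cap S''=\emptyset$ and $S'\cap\cl S''=\emptyset$ are exactly what is needed to exclude each case. The only mildly subtle point worth stating explicitly is that the disjointness of the closures is used in the "closure" case while $\cV$-compatibility of the isolated invariant sets handles the "multivector" case; neither argument alone suffices.
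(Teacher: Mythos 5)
Your proof is correct and follows essentially the same route as the paper: you rule out a jump between $S'$ and $S''$ by splitting $\Piv(\varphi(t))$ into its multivector and closure parts, using $\cV$-compatibility of $S'$ (Proposition~\ref{prop:iso-is-vcomp}) for the former and $\cl S'\cap S''=\emptyset$ for the latter. Your explicit appeal to connectedness of the $\ZZ$-interval $\dom\varphi$ and the explicit derivation of $S'\cap S''=\emptyset$ are small clarifications of steps the paper leaves implicit, but the argument is the same.
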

\begin{proof}
  The inclusion $\sol(S')\cup\sol(S'')\subset\sol(S)$ is trivial. 
  To see the opposite inclusion, let $\varphi\in\sol(S)$. 
  We have to prove that $\im\varphi\subset S'$ or $\im\varphi\subset S''$.
  If this were not the case, then without loss of generality we can assume that there exists a $j\in\ZZ$ such that both $\varphi(j)\in S'$ and $\varphi(j+1)\in S''$ are satisfied. 
  This immediately implies $\varphi(j+1)\in\cl\varphi(j)\cup[\varphi(j)]_\cV$.
  We have $\varphi(j+1)\not\in[\varphi(j)]_\cV$, because otherwise the $\cV$-compatibility of $S'$ (see Propostition \ref{prop:iso-is-vcomp}) implies $\varphi(j+1)\in S'$ and, in consequence, $S'\cap S''\neq\emptyset$, a contradiction.
  Hence, $\varphi(j+1)\in\cl \varphi(j)\subset\cl S'$ which yields $\cl S'\cap S''\neq\emptyset$, another contradiction, proving that $\varphi\in\sol(S')\cup\sol(S'')$. 
\end{proof}

\begin{thm}\label{thm:additivity_con_idx}
  Assume an isolated invariant set $S$ decomposes into the isolated invariant sets $S'$ and $S''$. 
  Then we have
  \[\con(S)=\con(S')\oplus\con(S'').\]
\end{thm}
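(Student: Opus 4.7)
The plan is to compute $\con(S) = H(\cl S, \mo S)$ using the minimal index pair provided by Proposition \ref{prop:minimal_index_pair} and reduce it to $\con(S')$ and $\con(S'')$ via a relative Mayer--Vietoris argument (Theorem \ref{thm:relMV-ftop}).

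First I would establish two set-theoretic identities. Since closure commutes with finite unions, $\cl S = \cl S' \cup \cl S''$. I claim also that $\mo S = \mo S' \cup \mo S''$: if $x \in \mo S'$, then $x \in \cl S' \subset \cl S$ and $x \notin S'$, while the hypothesis $\cl S' \cap S'' = \emptyset$ forces $x \notin S''$, so $x \in \cl S \setminus S = \mo S$; the case $x \in \mo S''$ is symmetric, and the reverse inclusion is immediate from $\cl S = \cl S' \cup \cl S''$ and $S = S' \cup S''$.

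The key step is the identity $\cl S' \cap \cl S'' = \mo S' \cap \mo S''$. The inclusion $\supset$ is trivial. For $\subset$, any $y \in \cl S' \cap \cl S''$ satisfies $y \notin S''$ by $\cl S' \cap S'' = \emptyset$ and $y \notin S'$ by $S' \cap \cl S'' = \emptyset$, so $y \in (\cl S' \setminus S') \cap (\cl S'' \setminus S'') = \mo S' \cap \mo S''$. This is where the decomposition hypothesis is used in an essential way.

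With these in hand I would apply Theorem \ref{thm:relMV-ftop} in the finite topological subspace $\cl S$ with $X_0 := \cl S'$, $X_1 := \cl S''$, $Y_0 := \mo S'$, $Y_1 := \mo S''$ (all four closed by Proposition \ref{prop:lcl}, since $S'$ and $S''$ are locally closed by Proposition \ref{prop:iso_inv_is_conv}). Because $X_0 \cap X_1 = Y_0 \cap Y_1$, the relative homology $H_n(X_0 \cap X_1, Y_0 \cap Y_1)$ vanishes in every degree, so the relative Mayer--Vietoris sequence collapses to an isomorphism
\[
H_n(\cl S', \mo S') \oplus H_n(\cl S'', \mo S'') \;\cong\; H_n(\cl S' \cup \cl S'',\, \mo S' \cup \mo S'') \;=\; H_n(\cl S, \mo S).
\]
Invoking Proposition \ref{prop:minimal_index_pair} together with Theorem \ref{thm:index_pairs_isomorphism} to identify the three terms with $\con(S')$, $\con(S'')$ and $\con(S)$ respectively gives the desired $\con(S) = \con(S') \oplus \con(S'')$. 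The main conceptual obstacle is recognizing that the intersection pair $(\cl S' \cap \cl S'', \mo S' \cap \mo S'')$ is trivial in homology rather than merely small; once this is seen, everything else is routine.
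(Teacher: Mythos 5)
Your proof is correct and follows essentially the same route as the paper's: apply the relative Mayer--Vietoris sequence to the saturated index pairs $(\cl S',\mo S')$ and $(\cl S'',\mo S'')$, and use the decomposition hypothesis to show the intersection term $H_n(\cl S'\cap\cl S'', \mo S'\cap\mo S'')$ vanishes because the two sets coincide. The only (welcome) simplification is at the end: where the paper invokes the excision theorem to identify $H(\cl S,\mo S)$ with $H(\cl S'\cup\cl S'',\mo S'\cup\mo S'')$, you establish the set identity $\mo S=\mo S'\cup\mo S''$ directly, so the two pairs are literally equal and no excision is needed.
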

\begin{proof}
  In view of Proposition~\ref{prop:minimal_index_pair}, the two pairs $P=(\cl S',\mo S')$ and $Q=(\cl S'', \mo S'')$ are saturated index pairs for $S'$ and $S''$, respectively. 
  Consider the following exact sequence given by Theorem \ref{thm:relMV-ftop}:
  \begin{align}\label{eq:additivity_con_idx_1}
    \begin{split}
      \dotsc \rightarrow& H_n(P_1\cap Q_1, P_2\cap Q_2) 
      \rightarrow H_n(P_1, P_2)\oplus H_n(Q_1, Q_2)\\
      \rightarrow& H_n(P_1\cup Q_1, P_2\cup Q_2)
      \rightarrow H_{n-1}(P_1\cap Q_1, P_2\cap Q_2)
      \rightarrow \dotsc.
    \end{split}
  \end{align}
  Note that $S'\cap Q_2\subset S'\cap\cl S''=\emptyset$ and similarly $S''\cap P_2=\emptyset$. 
  Since both~$P$ and~$Q$ are saturated and $S'\cap S''=\emptyset$ we get
  \begin{align*}
    P_1\cap Q_1 &= (S'\cup P_2)\cap (S''\cup Q_2)\\
    &=(S'\cap S'')\cup(S'\cap Q_2)\cup(P_2\cap S'')\cup(P_2\cap Q_2)
    =P_2\cap Q_2.
  \end{align*}
  Thus, $H(P_1\cap Q_1, P_2\cap Q_2)=0$, which together with the exact sequence (\ref{eq:additivity_con_idx_1}) implies
  \begin{equation}\label{eq:con-additivity-iso1}
    H_*(P_1\cup Q_1,P_2\cup Q_2)\cong H_*(P_1,P_2)\oplus H_*(Q_1,Q_2).
  \end{equation}  
  Notice further that $S'\cap\cl S''=\emptyset$ implies $S'\setminus Q_2=S'$.
  Similarly $S''\setminus P_2=S''$.
  Therefore, one obtains the identity
  \begin{align*}
    (P_1\setminus P_2\setminus Q_2)\cup(Q_1\setminus Q_2\setminus P_2) = (S'\setminus Q_2)\cup(S''\setminus P_2) = S'\cup S'' = S.
  \end{align*}
  Hence, by Theorem \ref{thm:excision_ftop},
    \begin{equation}\label{eq:con-additivity-iso2}
      H(\cl S, \mo S)\cong H(P_1\cup Q_1, P_2\cup Q_2).
    \end{equation}
  Finally, from (\ref{eq:con-additivity-iso1}) and (\ref{eq:con-additivity-iso2}) we get
    \begin{align*}
      \con(S) &= H(\cl S, \mo S)\cong H(P_1\cup Q_1, P_2\cup Q_2) \\
      &\cong H(P_1, P_2)\oplus H(Q_1, Q_2)= \con(S')\oplus \con(S''),
    \end{align*}
  which completes the proof of the theorem.
\end{proof}

%%%%%%%%%%%%%%%%%%%%%%%%%%%%%%%%%%%%%%%%%%%%%%%
%%%%%%%%%%%%%%%%%%%%%%%%%%%%%%%%%%%%%%%%%%%%%%%
%%%%%%%%%%%%%%%%%%%%%%%%%%%%%%%%%%%%%%%%%%%%%%%
\section{Attractors, repellers and limit sets}\label{sec:attr-rep-lim}

In the rest of the paper we assume that a combinatorial multivector field~$\cV$ on a finite topological space~$X$ is fixed, and
that the space~$X$ is an invariant set. 
We need the invariance assumption to guarantee the existence of an essential solution through every point in $X$.
This assumption is not very restrictive, because if $X$ is not invariant, then we can replace the space $X$ by its
invariant part $\Inv X$ and the multivector field $\cV$ by its restriction $\cV_{\Inv X}$ (see Propositions~\ref{prop:subfield}~and~\ref{prop:invX}).
%Moreover, it is the source of the duality between the notion of attractor and repeller. 
%The duality is seen in Theorems \ref{thm:attractor_closedness} and \ref{thm:repeller_openess}. 

%%%%%%%%%%%%%%%%%%%%%%%%%%%%%%%%%%%%%%%%%%%%%%%
\subsection{Attractors, repellers and minimal sets}

% \subsection{Attractors and repellers}
We say that an invariant set $A\subset X$ is an \emph{attractor} if $\Pi_\cV(A) = A$. 
In addition, an invariant set $R\subset X$ is a \emph{repeller} if $\Pi^{-1}_\cV(R) = R$.

The following proposition shows that we can also express the concepts of attractor and repeller in terms of push-forward and pull-back.
\begin{prop}\label{prop:attr-push-forward}
  Let $A$ be an invariant set. 
  Then $A$ is an~attractor (a~repeller) in $X$ if and only if $\pipl(A,X)=A$ ($\pimn(A,X)=A$).
\end{prop}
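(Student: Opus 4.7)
The plan is to prove the attractor equivalence directly and derive the repeller one by passing to the dual multivector field via Proposition~\ref{prop:piv-dual}. The entire argument rests on the reflexivity $x\in\Pi_\cV(x)$, which holds because $x\in[x]_\cV\subset\Pi_\cV(x)$ by~(\ref{eq:piv}), together with two trivial containments valid for any $A\subset X$: the inclusion $A\subset\pipl(A,X)$, witnessed at each $x\in A$ by the trivial one-point path $\varphi\colon\{0\}\to X$ with $\varphi(0)=x$; and the inclusion $\Pi_\cV(A)\subset\pipl(A,X)$, witnessed for each $a\in A$ and $y\in\Pi_\cV(a)$ by the two-point path $a\cdot y\in\pathsv(X)$.

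For $(\Rightarrow)$ of the attractor equivalence, assume $\Pi_\cV(A)=A$. A straightforward induction on the length $k$ of a path shows that any $x_0\cdot x_1\cdots x_k\in\pathsv(X)$ with $x_0\in A$ satisfies $x_i\in A$ at every step, since $x_i\in\Pi_\cV(x_{i-1})\subset\Pi_\cV(A)=A$. Hence $\pipl(A,X)\subset A$, and combining this with the first inclusion above gives $\pipl(A,X)=A$. For $(\Leftarrow)$, assume $\pipl(A,X)=A$. The second inclusion yields $\Pi_\cV(A)\subset A$, while the reverse inclusion $A\subset\Pi_\cV(A)$ is immediate from the reflexivity noted above.

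The repeller equivalence follows by duality. Proposition~\ref{prop:piv-dual} gives $\Pi_\cVop=\Pi_\cV^{-1}$, so $\Pi_\cV^{-1}(A)=A$ if and only if $\Pi_\cVop(A)=A$. Moreover, reversing the time parameter of a path of $\cV$ produces a path of $\cVop$ with its left and right endpoints exchanged, and therefore $\pimn(A,X)$ computed for $\cV$ coincides with $\pipl(A,X)$ computed for $\cVop$. Applying the already-proven attractor equivalence to the multivector field $\cVop$ on $\Xop$ then yields the repeller statement. I do not foresee a real obstacle; the only point requiring minor care is to note that single-point and two-point $\ZZ$-intervals are admissible domains in the definition of a path, so the trivial and elementary two-step paths used above are genuine elements of $\pathsv(X)$.
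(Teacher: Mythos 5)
Your argument for the attractor case is correct and matches the paper's in substance: the paper argues by contradiction at the first step where a path starting in $A$ leaves $A$, which is precisely your induction unrolled; and for $(\Leftarrow)$ both arguments amount to $A\subset\Piv(A)\subset\pipl(A,X)=A$ (the paper phrases the second inclusion via the identity $\pipl(A,X)=\Piv(\pipl(A,X))$). For the repeller case the paper just declares the proof analogous, meaning one reruns the identical argument with $\Piv^{-1}$, $\opn$, and $\pimn$ in place of $\Piv$, $\cl$, and $\pipl$; you instead invoke the already-proved attractor equivalence for $\cVop$ via Proposition~\ref{prop:piv-dual}. That route is slicker but has a small wrinkle you should name: the proposition's stated hypothesis is that $A$ is invariant, and invariance of $A$ with respect to $\cV$ does not automatically give invariance with respect to $\cVop$, since passing to the opposite topology can change which multivectors are critical (cf.\ Figure~\ref{fig:non-dual-crit}) and hence which solutions count as essential. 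The wrinkle is harmless in this instance because, as your own forward/backward arguments make clear, the biconditional $\Piv(A)=A\Leftrightarrow\pipl(A,X)=A$ never actually uses invariance of $A$ --- invariance enters only through the definitions of ``attractor'' and ``repeller'' --- but that observation should be made explicit rather than invoking the proposition for $\cVop$ as a black box.
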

\begin{proof}
  Let $A$ be an attractor.
  The inclusion $S\subset\pipl(S,X)$ is true for an arbitrary set.
  Suppose that there exists a $y\in\pipl(A,X)\setminus A$.
  Then by (\ref{eq:pi_push_forward}) we can find an $x\in A$ and $\varphi\in\pathsv(x,y,X)$.
  This implies that there exists a $k\in\ZZ$ such that $\varphi(k)\in A$ and $\varphi(k+1)\not\in A$.
  But $\varphi(k+1)\in\Piv(\varphi(k))\subset\Piv(A)=A$, a contradiction.
  Therefore, $\pipl(A,X)=A$.

  Now assume that $\pipl(A,X)=A$. 
  Again, by (\ref{eq:pi_push_forward}), we get $A=\pipl(A,X)=\Piv(\pipl(A,X))=\Piv(A)$.

  The proof for a repeller is analogous.
\end{proof}

%% The $\cV$-compatibility and local closedness of attractors and repellers follow straightforward from Proposition \ref{prop:attr-push-forward} and \ref{prop:closedness_of_pi}.
%% \begin{prop}\label{prop:attr-are-vcomp}
%%  Let $A\subset X$ be an attractor and let $R\subset X$ be a repeller. 
%%  Then, $A$ and $R$ are $\cV$-compatible.
%% \end{prop}
%\begin{proof}
% Let $x\in A$ and let $y\in[x]_\cV$. 
% Since $A$ is an attractor we get $y\in\Piv(x)\subset\Piv(A)=A$.
% Hence, $[x]_\cV\subset A$ which proves that $A$ is $\cV$-compatible.
%
% Now, let $x\in R$ and let $y\in[x]_\cV$. 
% By Proposition \ref{prop:preimage-piv-opn} and the fact that $R$ is a repeller we get $y\in\Piv^{-1}(x)\subset \Piv^{-1}(R)=R$.
% Hence, $[x]_\cV\subset R$ which proves that $R$ is $\cV$-compatible.
%\end{proof}

\begin{thm}\label{thm:attractor_closedness}
  The following conditions are equivalent:
  \begin{enumerate}
    \item $A$ is an attractor,
    \item $A$ is closed, $\cV$-compatible, and invariant,
    \item $A$ is a closed isolated invariant set.
  \end{enumerate}
\end{thm}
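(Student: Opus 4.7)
The plan is to prove the equivalences by going around the cycle $(1)\Rightarrow(2)\Rightarrow(3)\Rightarrow(1)$. The entire argument is essentially a careful unwinding of the definition $\Piv(x) = [x]_\cV \cup \cl x$ combined with previously established propositions, so no single step is a serious obstacle; the main thing to be careful about is keeping track of which property is being used where.

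For (1)$\Rightarrow$(2), I would start from the hypothesis $\Piv(A) = A$ together with invariance. For each $x \in A$, one has $[x]_\cV \cup \cl x = \Piv(x) \subset \Piv(A) = A$. The inclusion $[x]_\cV \subset A$ gives $\cV$-compatibility of $A$, and the inclusion $\cl x \subset A$ for every $x \in A$ together with Proposition \ref{prop:cl-as-union} yields $\cl A = \bigcup_{x\in A}\cl x \subset A$, so that $A$ is closed. Invariance is part of the definition of attractor, so (2) holds.

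For (2)$\Rightarrow$(3), since $A$ is closed it is in particular locally closed (by Proposition \ref{prop:lcl}, for instance taking $A = A \cap X$). Together with the assumed $\cV$-compatibility and invariance, Proposition \ref{prop:inv_conv_vcomp_is_iso} applies and produces $\cl A = A$ as an isolating set, so $A$ is an isolated invariant set. Closedness is preserved from the hypothesis.

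For (3)$\Rightarrow$(1), I would use Proposition \ref{prop:iso-is-vcomp} to conclude that $A$ is $\cV$-compatible, and then combine this with closedness of $A$. For any $x \in A$, both $[x]_\cV \subset A$ (by $\cV$-compatibility) and $\cl x \subset \cl A = A$ (since $A$ is closed) hold, hence $\Piv(x) \subset A$. Taking the union over $x \in A$ yields $\Piv(A) \subset A$; the reverse inclusion $A \subset \Piv(A)$ is immediate because $x \in \Piv(x)$ for every $x$. Combined with invariance, this shows $A$ is an attractor. The argument never becomes subtle; the only place I would double-check is the implicit use of Proposition \ref{prop:cl-as-union} in the first step, which relies on finiteness of $X$, but this is already standing in the paper's setup.
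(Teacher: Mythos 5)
Your proof is correct and follows the same cyclic structure $(1)\Rightarrow(2)\Rightarrow(3)\Rightarrow(1)$ as the paper. The implications $(2)\Rightarrow(3)$ (via Proposition \ref{prop:inv_conv_vcomp_is_iso}) and $(3)\Rightarrow(1)$ (via Proposition \ref{prop:iso-is-vcomp} plus the computation $\Piv(A)=\cl A\cup A=A$) are essentially identical to the paper's. The one place you genuinely diverge is $(1)\Rightarrow(2)$: the paper routes this through Propositions \ref{prop:attr-push-forward} and \ref{prop:closedness_of_pi}, first identifying an attractor $A$ with the push-forward $\pipl(A,X)$ and then invoking the general fact that push-forwards are closed and $\cV$-compatible. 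You instead unwind $\Piv(A)=A$ directly to get $[x]_\cV\subset A$ and $\cl x\subset A$ for each $x\in A$, then close up with Proposition \ref{prop:cl-as-union}. Your version is more elementary and self-contained; the paper's version is slightly slicker because it reuses the push-forward machinery already set up for index-pair constructions. Both are sound, and your attention to the finiteness hypothesis behind Proposition \ref{prop:cl-as-union} is well placed.
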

\begin{proof}
  Let $A$ be an attractor. 
  It follows immediately from Propositions~\ref{prop:attr-push-forward} and~\ref{prop:closedness_of_pi} that condition {\it (1)\/} implies condition {\it (2)\/}.
% Proposition \ref{prop:cl-as-union} gives us
% \[
%   \Pi_\cV(A) = A \subset \cl A = \bigcup_{x\in A}\cl x \subset \bigcup_{x\in A}\cl x\cup\vclass{x} = \Pi_\cV(A).
% \]
% Hence, $A$ is closed.
% Proposition \ref{prop:attr-are-vcomp} implies that $A$ is $\cV$-compatible. 
% Thus, condition $(1)$ implies condition $(2)$.
  Moreover, Proposition~\ref{prop:inv_conv_vcomp_is_iso} shows that {\it (2)\/} implies {\it (3)\/}. 
  Finally, suppose that {\it (3)\/} holds. 
  By Proposition \ref{prop:iso-is-vcomp} set $A$ is $\cV$-compatible. 
  It is also closed. 
  Therefore, we have
  \[
    \Pi_\cV(A) = \bigcup_{x\in A}\cl x\cup\vclass{x} =
    \bigcup_{x\in A}\cl x \cup \bigcup_{x\in A}\vclass{x} = \cl A\cup A= A,
  \]
  which proves that $A$ is an attractor.
\end{proof}

\begin{thm}\label{thm:repeller_openess}
  The following conditions are equivalent:
  \begin{enumerate}
    \item $R$ is a repeller,
    \item $R$ is open, $\cV$-compatible, and invariant,
    \item $R$ is an open isolated invariant set.
  \end{enumerate}
\end{thm}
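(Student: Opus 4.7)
The proof will mirror that of Theorem~\ref{thm:attractor_closedness} step by step, replacing closure/forward-dynamics arguments with their open/backward-dynamics counterparts. I will prove the implications in a cycle $(1)\Rightarrow(2)\Rightarrow(3)\Rightarrow(1)$.

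For $(1)\Rightarrow(2)$, assume $R$ is a repeller. By the repeller part of Proposition~\ref{prop:attr-push-forward}, $R=\pimn(R,X)$, and then Proposition~\ref{prop:closedness_of_pi} tells us directly that $\pimn(R,X)$ is open and $\cV$-compatible. Invariance is part of the definition of a repeller, so $R$ is open, $\cV$-compatible, and invariant.

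For $(2)\Rightarrow(3)$, suppose $R$ is open, $\cV$-compatible, and invariant. Every open subset of $X$ is in particular locally closed (take its intersection with the closed set $X$, or apply Proposition~\ref{prop:lcl}(iv)), so Proposition~\ref{prop:inv_conv_vcomp_is_iso} applies and yields that $R$ is an isolated invariant set, which is moreover open.

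For $(3)\Rightarrow(1)$, let $R$ be an open isolated invariant set. Proposition~\ref{prop:iso-is-vcomp} gives $\cV$-compatibility, and invariance is built into the notion of isolated invariant set. It remains to verify $\Pi_\cV^{-1}(R)=R$. The inclusion $R\subset\Pi_\cV^{-1}(R)$ is immediate from $x\in\Pi_\cV(x)$ for every $x\in X$. For the reverse inclusion, apply Proposition~\ref{prop:preimage-piv-opn} to write
\[
\Pi_\cV^{-1}(R)=\bigcup_{x\in R}\bigl([x]_\cV\cup\opn x\bigr).
\]
$\cV$-compatibility of $R$ yields $[x]_\cV\subset R$ for each $x\in R$. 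Since $R$ is open, Theorem~\ref{thm:alexandroff} tells us that $R$ is an upper set with respect to $\leq_\cT$, and Proposition~\ref{prop:cl-in-ftop} identifies $\opn x$ with the set of elements $\geq_\cT x$; hence $\opn x\subset R$ for every $x\in R$. Therefore $\Pi_\cV^{-1}(R)\subset R$, completing the cycle.

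There is no substantial obstacle: the only care needed is at the last step, where one must correctly translate openness of $R$ into the order-theoretic statement that $\opn x\subset R$ for $x\in R$ via Alexandrov's theorem. I deliberately avoid routing the argument through the dual space $X^{\op}$ and the equivalence ``repeller in $X$ = attractor in $X^{\op}$'', because, as Figure~\ref{fig:non-dual-crit} illustrates, criticality of multivectors is not preserved under passage to $\cV^{\op}$, and so essentiality of solutions (and hence invariance) does not transfer for free; the direct proof above bypasses this subtlety entirely.
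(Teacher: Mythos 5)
Your proof is correct and follows essentially the same route as the paper's: $(1)\Rightarrow(2)$ via Propositions~\ref{prop:attr-push-forward} and~\ref{prop:closedness_of_pi}, $(2)\Rightarrow(3)$ via Proposition~\ref{prop:inv_conv_vcomp_is_iso}, and $(3)\Rightarrow(1)$ via Propositions~\ref{prop:iso-is-vcomp} and~\ref{prop:preimage-piv-opn}, with the paper compressing the final computation into the single chain $\Pi^{-1}_\cV(R)=\bigcup_{x\in R}\opn x\cup\vclass{x}=R$ and leaving the trivial inclusion $R\subset\Pi^{-1}_\cV(R)$ and the order-theoretic justification of $\opn x\subset R$ implicit. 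Your closing remark about avoiding the passage to $X^{\op}$ identifies a genuine subtlety (regularity and hence essentiality are not preserved under reversing the topology), but it is somewhat beside the point here since the paper also argues directly rather than by duality.
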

\begin{proof}
  Assume $R$ is a repeller.
  It follows from Propositions~\ref{prop:attr-push-forward} and~\ref{prop:closedness_of_pi} that condition {\it (1)\/} implies condition {\it (2)\/},
% By Proposition \ref{prop:attr-are-vcomp} repellers are $\cV$-compatible. 
% By Proposition \ref{prop:preimage-piv-opn} we have
% \[
%   \Piv^{-1}(R) = R \subset \opn R = \bigcup_{x\in R}\opn x \subset \bigcup_{x\in R}\opn x\cup\vclass{x} = \Piv^{-1}(R).
% \]
% Thus, $R=\opn{R}$, that is, $R$ is open.
% This proves that condition $(1)$ implies condition $(2)$.
  and Proposition~\ref{prop:inv_conv_vcomp_is_iso} shows that~{\it (2)\/} implies~{\it (3)\/}.
  Finally, assume that condition {\it (3)\/} holds. Then $R$ is $\cV$-compatible by Proposition \ref{prop:iso-is-vcomp}. 
  The openness of $R$ and Proposition \ref{prop:preimage-piv-opn} imply
  \[
    \Pi^{-1}_\cV(R) = \bigcup_{x\in R}\opn x\cup\vclass{x} =
    \bigcup_{x\in R}\opn x \cup \bigcup_{x\in R}\vclass{x} = R,
  \]
  which proves that $R$ is a repeller.
\end{proof}

Let $\varphi$ be a full solution in $X$. 
We define the \emph{ultimate backward} and \emph{forward image} of $\varphi$  respectively by
\begin{align*}
  \uimm{\varphi}&:=\bigcap_{t\in\ZZ^-}\varphi\left( (-\infty,t] \right),\\
  \uimp{\varphi}&:=\bigcap_{t\in\ZZ^+}\varphi\left( [t,+\infty) \right).
\end{align*}
Note that in a finite space a descending sequence of sets eventually must become constant.
Therefore, we get the following result.
\begin{prop}\label{prop:uim-non-empty}
  There exists a $k\in\NN$ such that $\uimm{\varphi} = \varphi( (-\infty,-k])$ and $\uimp{\varphi} = \varphi( [k,+\infty))$.
  In particular, the sets $\uimm{\varphi}$ and $\uimp{\varphi}$ are always non-empty.  
\end{prop}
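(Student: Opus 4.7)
The plan is to exploit the finiteness of $X$ to deduce that the nested family $\{\varphi((-\infty,t])\}_{t\in\ZZ^-}$ eventually stabilizes. First I would observe that this family is monotone in the inclusion order: if $t\leq t'$ are both in $\ZZ^-$, then obviously $\varphi((-\infty,t])\subset\varphi((-\infty,t'])$. Thus $\{\varphi((-\infty,t])\}_{t\in\ZZ^-}$ is a decreasing (as $t$ decreases) family of subsets of the finite set $X$.

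Next I would use the fact that any strictly decreasing chain of subsets of the finite set $X$ has length at most $\#X+1$. Therefore there exists a $k^-\in\NN$ such that for every $t\leq -k^-$ we have the equality $\varphi((-\infty,t])=\varphi((-\infty,-k^-])$. Taking the intersection over all $t\in\ZZ^-$ then yields
\[
  \uimm{\varphi} \;=\; \bigcap_{t\in\ZZ^-}\varphi((-\infty,t]) \;=\; \varphi((-\infty,-k^-]).
\]
Applying the symmetric argument to the increasing sequence $\{\varphi([t,+\infty))\}_{t\in\ZZ^+}$ gives a $k^+\in\NN$ with $\uimp{\varphi}=\varphi([k^+,+\infty))$. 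Setting $k:=\max(k^-,k^+)$ yields the single constant claimed in the statement.

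Finally, non-emptiness of $\uimm{\varphi}$ and $\uimp{\varphi}$ is immediate: since $\varphi$ is a full solution, the sets $(-\infty,-k]$ and $[k,+\infty)$ are non-empty subsets of $\dom\varphi=\ZZ$, so their images under $\varphi$ are non-empty. I do not foresee any real obstacle here: the proof is a finite pigeonhole argument combined with the observation that a decreasing chain in the power set of a finite set must stabilize. The only mild bookkeeping point is to choose a single $k$ that works for both the backward and forward case, which is handled simply by taking a maximum.
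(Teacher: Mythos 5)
Your proof is correct and is precisely the argument the paper intends: the paper states this proposition immediately after the remark that in a finite space a descending sequence of sets must eventually become constant, and leaves the formalization (monotonicity of the nested family, stabilization by finiteness of $X$, taking the max of the two resulting bounds) implicit. Your write-up fills in exactly those routine details without deviating from the paper's approach.
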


\begin{prop}\label{prop:esol-in-uim}
  If $\varphi$ is a left-essential (a right-essential) solution, then we can find an essential solution $\psi$ such that $\im\psi\subset\uimm\varphi$ ($\im\psi\subset\uimp\varphi$).
\end{prop}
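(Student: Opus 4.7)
The plan is to handle the left-essential case (the right-essential case is entirely symmetric, working with $\uimp\varphi$ and running time forward, or equivalently by passing to the dual solution $\varphi^\op$ on $\Xop$). By Proposition~\ref{prop:uim-non-empty} there exists $k\in\NN$ with $\uimm\varphi=\varphi((-\infty,-k])$, so in particular every $s\in\uimm\varphi$ is attained by $\varphi$ at infinitely many times $t\leq -k$. The goal is to exhibit an essential full solution with image contained in $\uimm\varphi$.

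First I would dispose of the easy subcase: if some $s\in\uimm\varphi$ is critical, then the constant map $\psi\equiv s$ is a full solution (since $s\in[s]_\cV\subset\Piv(s)$), its image $\{s\}$ contains no regular point, and so the essentiality condition is vacuously satisfied. This already yields an essential solution with $\im\psi=\{s\}\subset\uimm\varphi$.

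So assume every $s\in\uimm\varphi$ is regular. The key observation is that $\uimm\varphi$ cannot be contained in a single multivector $V\in\cV$: if it were, then $\varphi(t)\in V$ for all $t\leq -k$, so the set $\{t:\varphi(t)\notin V\}$ would be bounded below, contradicting left-essentiality applied to any $x\in\uimm\varphi\subset V$. Thus there exist $s,s'\in\uimm\varphi$ with $[s]_\cV\neq[s']_\cV$. Using that both $s$ and $s'$ are attained infinitely often on $(-\infty,-k]$, I would first pick $t_1\leq -k$ with $\varphi(t_1)=s$, then $u<t_1$ with $\varphi(u)=s'$, and finally $t_2<u$ with $\varphi(t_2)=s$. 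The restriction $\rho:=\varphi|_{[t_2,t_1]}$ is then a closed path from $s$ to $s$ which visits $s'$, and since $t_1\leq -k$ its image satisfies $\varphi([t_2,t_1])\subset\varphi((-\infty,-k])=\uimm\varphi$. Extending $\rho$ periodically produces a periodic full solution $\psi:\ZZ\to X$ with $\cV(\psi)\supset\{[s]_\cV,[s']_\cV\}$, hence $\#\cV(\psi)\geq 2$, which makes $\psi$ essential by the remark recorded in Section~\ref{subs:solutions}, and $\im\psi\subset\uimm\varphi$ as required.

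The only delicate point is the ordering in the last step: one must take $t_1$ already inside the stabilized window $(-\infty,-k]$ before searching for $u$ and $t_2$ further to the left, because otherwise the closed path could escape $\uimm\varphi$. Once this ordering is set, everything is driven by finiteness of $X$ (stabilization of $\uimm\varphi$) and by the meaning of left-essentiality (used to rule out the single-regular-multivector case), and no subtlety remains.
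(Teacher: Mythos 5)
Your proof is correct and takes essentially the same route as the paper's: dispose of the case where $\uimm\varphi$ (resp.\ $\uimp\varphi$) meets a critical multivector via a stationary solution, then show the ultimate image must meet at least two regular multivectors by appealing to (left/right-) essentiality, pick a closed path inside the stabilized window that visits both, and extend it periodically, citing the observation that a periodic solution touching two multivectors is essential. The only cosmetic difference is that you arrange the closed path to return to the exact same point $s$, whereas the paper only needs the two endpoints to lie in the same multivector (which already makes the concatenation a solution); both work, and you work with the left-essential case while the paper treats the symmetric right-essential one.
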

\begin{proof}
    We only consider the case of a right-essential solution~$\varphi$.
  By Proposition \ref{prop:uim-non-empty} there exists a $k\in\ZZ$ such that $\uimp\varphi=\varphi([k,+\infty))$.
% Since $\varphi$ is essential, either $\uimp\varphi$ passes through a critical multivector or at least two different regular multivectors.
  We consider two cases.
  If $\uimp\varphi$ passes through a critical multivector, then we can easily build a stationary essential solution.
  In the second case, we have at least two different multivectors $V,W\in\cV$ such that $V\cap\uimp\varphi\neq\emptyset\neq W\cap\uimp\varphi$.
  Then there exist $t,s,u\in\ZZ$ with $k<t<s<u$ and $\varphi(t)\in V$, $\varphi(s)\in W$,
  and $\varphi(u)\in V$. But then the concatenation $\dots\cdot\varphi([t,u])\cdot\varphi([t,u])\cdot\dots$ is clearly essential.
\end{proof}

\begin{defn}
  We say that an invariant set $A\subset X$ is {\em minimal} if the only attractor in $A$ is the entire set $A$. 
\end{defn}

\begin{prop}\label{prop:minimal-is-scc}
  Let $A\subset X$ be an invariant set.  
  Then $A$ is minimal if and only if $A$ is a strongly connected set in $G_\cV$.
\end{prop}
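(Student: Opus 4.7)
The plan is to prove both implications using the finite structure of the restricted digraph $G_\cV|_A$. For the direction~($\Leftarrow$), I would assume $A$ is strongly connected and let $B\subset A$ be any nonempty attractor. Fixing $x\in B$ and an arbitrary $y\in A$, strong connectedness provides a path $x=x_0,x_1,\dots,x_n=y$ in $A$. Since $\Pi_\cV(B)=B$, a straightforward induction on $i$ yields $x_i\in B$ for all $i$, so $y\in B$; as $y$ was arbitrary, $A\subset B$ and thus $B=A$. This direction follows essentially from the definitions.

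For the direction~($\Rightarrow$), assume $A$ is minimal, so that $A$ is itself an attractor and hence, by Theorem~\ref{thm:attractor_closedness}, closed, $\cV$-compatible and invariant. I would pick a sink strongly connected component $C$ of the finite digraph $G_\cV|_A$, i.e., an SCC such that no edge of $G_\cV|_A$ leaves $C$; such a $C$ exists by finiteness of the condensation DAG. Since $A$ is closed and $\cV$-compatible one has $\Pi_\cV(A)\subset A$, and combined with the sink property this gives $\Pi_\cV(C)\subset C$; in particular $\cl z\subset C$ and $[z]_\cV\subset C$ for every $z\in C$, so $C$ is both closed in $X$ and $\cV$-compatible. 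The remaining task is to show that $C$ is invariant; Theorem~\ref{thm:attractor_closedness} will then promote $C$ to an attractor, and the minimality of $A$ will force $C=A$, exhibiting $A$ as a single SCC of $G_\cV|_A$ and hence as strongly connected.

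The invariance of $C$ is the heart of the argument. Given $z\in C$, the invariance of $A$ supplies an essential solution $\varphi\in\esol_\cV(z,A)$, and the sink property confines the forward tail $\varphi|_{\ZZ^+}$ to $C$. Using the strong connectedness of $C$, I would pick a closed path $\rho$ in $C$ through $z$ that visits every multivector of $\cV$ meeting $C$, and construct a full solution in $C$ through $z$ by prefixing this forward tail with the periodic backward extension of $\rho$. Essentiality is then verified by cases: if $C$ meets at least two multivectors of $\cV$, the backward periodic extension alternates between multivectors and the resulting full solution is essential; if instead $C$ is contained in a single multivector $V$, then $\cV$-compatibility of $C$ forces $C=V$, and the right-essentiality of $\varphi$ combined with $\varphi|_{\ZZ^+}$ being trapped in $V$ rules out $V$ being regular, leaving $V$ critical and making the stationary solution at $z$ essential. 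This case split, and in particular the use of right-essentiality to exclude a sink SCC consisting of a single regular multivector, is the main technical obstacle.
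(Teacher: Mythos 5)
Your $(\Leftarrow)$ direction is fine and is essentially the same induction-along-a-path argument that underlies the paper's proof via $\pipl(x,A)=A$.

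The $(\Rightarrow)$ direction has a genuine gap at its very first step: you write ``assume $A$ is minimal, so that $A$ is itself an attractor and hence, by Theorem~\ref{thm:attractor_closedness}, closed, $\cV$-compatible and invariant.'' This does not follow. Minimality of $A$ (Definition of a minimal invariant set in the paper) is a statement about attractors \emph{contained in $A$} for the dynamics restricted to $A$ (cf.\ the paper's use of $\pipl(\cdot,A)$ in the proof), not a statement that $A$ is an attractor in $X$. Indeed, minimal invariant sets are exactly the strongly connected invariant sets, and by Proposition~\ref{prop:scc-Vcomp-lcl} such sets are only guaranteed to be locally closed, not closed. For example, every $\alpha$- or $\omega$-limit set is minimal (Theorem~\ref{thm:limit-set-is-iso-inv} together with the present Proposition), but limit sets are certainly not attractors in general -- some are repellers. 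So you cannot invoke Theorem~\ref{thm:attractor_closedness} to conclude that $A$ is closed and $\cV$-compatible in $X$, and in turn you lose the inclusion $\Pi_\cV(A)\subset A$ that you use to argue $\Pi_\cV(C)\subset C$ for your sink SCC~$C$.

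The approach can be repaired by working entirely inside $A$: take $C$ to be a sink SCC of $G_\cV\restriction_A$, so that $\pipl(C,A)=C$ by the sink property (without appealing to $\Pi_\cV(A)\subset A$), then prove $\Inv C\neq\emptyset$ and in fact that $C$ is invariant, and conclude that $C$ is a nonempty attractor in $A$ in the sense of Proposition~\ref{prop:attr-push-forward}, forcing $C=A$ by minimality. Your construction of an essential solution through a given $z\in C$ (periodic backward extension of a closed path visiting all multivectors meeting $C$, concatenated with the forward tail of an essential solution in $A$) would serve for that; but a shorter route, and the one closer to the paper, is to appeal to Proposition~\ref{prop:esol-in-uim} to obtain an essential solution contained in $\uimp\varphi\subset C$, and to observe that the invariant-part operator already produces the desired attractor. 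The paper in fact argues contrapositively: if $A$ is not strongly connected, pick $x,y$ with $\pathsv(x,y,A)=\emptyset$, set $A':=\Inv\pipl(x,A)$, show via Proposition~\ref{prop:esol-in-uim} that $A'\neq\emptyset$, and conclude $A'$ is a proper nonempty attractor in $A$, contradicting minimality; this avoids the SCC/condensation machinery and, more importantly, never needs $A$ to be closed in~$X$.
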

\begin{proof}
  Let $A$ be a minimal invariant set. Suppose it is not strongly connected.
  Then we can find points $x,y\in A$ such that $\pathsv(x,y,A)=\emptyset$.
  Define $A':=\inv\pipl(x,A)$.
  Clearly, $y\not\in A'$.
  We will show that $A'$ is a nonempty attractor in $A$.

  Let $z\in\pipl(x,A)$. 
  Since $A$ is invariant there exists $\varphi\in\esolp(z, A)$.
  By Proposition \ref{prop:esol-in-uim} we can construct an essential solution $\psi$ such that $\im\psi\subset\uimp\varphi\subset\pipl(x,A)$.
  Thus, $A'$ is nonempty.
  
  Now suppose that $\pipl(A',A)\neq A'$.
  Then there exists an $a\in\pipl(A',A)\setminus A'$.
  It follows from~(\ref{eq:pi_push_forward}) that for every $b\in A'$ we have $\pathsv(a,b,A)=\emptyset$,
  since otherwise we could construct an essential solution through~$a$ which lies
  in~$\pipl(x,A)$.
  Using exactly the same reasoning as above, one can further show that $\inv(\pipl(A',A)\setminus A')\neq\emptyset$.
  But since we clearly have the inclusion $\inv(\pipl(A',A)\setminus A')\subset \inv(\pipl(x,A))= A'$, this leads to a contradiction.
  Thus, Proposition \ref{prop:attr-push-forward} shows that the set $A'$ is indeed an attractor, 
  which is nonempty and a proper subset of~$A$. Since this contradicts the minimality of~$A$,
  we therefore conclude that~$A$ is strongly connected.

  Now assume conversely that $A$ is strongly connected.
  It is clear that for any point $x\in A$ we get $\pipl(x,A)=A$.  
  It follows by Proposition \ref{prop:attr-push-forward} that the only attractor in $A$ is the entire set $A$.
\end{proof}

The duality allows to adapt the proof of Proposition \ref{prop:minimal-is-scc} to get the following proposition.
\begin{prop}
  An invariant set~$R$ is a minimal invariant set if and only if the only repeller in~$R$
  is the entire set~$R$.
\end{prop}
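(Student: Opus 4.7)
The plan is to mirror the proof of Proposition \ref{prop:minimal-is-scc} with the roles of attractors and repellers exchanged. This can be done either by invoking the duality of Proposition \ref{prop:piv-dual} or by a direct, time-symmetric argument. I will take the latter route, which amounts to replacing $\pipl$ by $\pimn$, $\esolp$ by $\esolm$, and $\uimp$ by $\uimm$ throughout. Combined with Proposition \ref{prop:minimal-is-scc}, it suffices to prove that the only repeller in $R$ is $R$ itself if and only if $R$ is strongly connected in $G_\cV$.

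The forward direction is the substantive one. Assume the only repeller contained in $R$ is $R$ itself, and suppose for contradiction that $R$ is not strongly connected in $G_\cV$. Then there exist $x,y \in R$ with $\pathsv(y,x,R) = \emptyset$. Set $R' := \inv \pimn(x,R)$; since $y \notin \pimn(x,R)$ we have $R' \subsetneq R$. Nonemptiness of $R'$ follows from the invariance of $R$: take a backward essential solution $\varphi \in \esolm(x,R)$ and apply Proposition \ref{prop:esol-in-uim} to obtain an essential solution $\psi$ with $\im\psi \subset \uimm\varphi \subset \pimn(x,R)$, where the last inclusion holds because for sufficiently negative $t$ the restriction of $\varphi$ to $[t,0]$ is a path in $R$ from $\varphi(t)$ to $x$; hence $\im\psi \subset R'$. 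To see that $R'$ is a repeller in $R$, one repeats the argument of Proposition \ref{prop:minimal-is-scc}: if $\pimn(R',R) \neq R'$, pick $a \in \pimn(R',R) \setminus R'$ and apply Proposition \ref{prop:esol-in-uim} once more to produce a nonempty essential part inside $\pimn(R',R)\setminus R'$, contradicting $\inv(\pimn(R',R)\setminus R') \subset \inv \pimn(x,R) = R'$. Hence $R'$ is a proper repeller in $R$, contradicting the hypothesis.

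The reverse direction is immediate: if $R$ is strongly connected, then $\pimn(x,R) = R$ for every $x \in R$, and the dual of Proposition \ref{prop:attr-push-forward}, obtained by applying that proposition to $\cVop$ via Proposition \ref{prop:piv-dual}, shows that the only repeller contained in $R$ is $R$ itself.

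The main obstacle I anticipate is bookkeeping the dual analogues of Propositions \ref{prop:attr-push-forward}, \ref{prop:closedness_of_pi}, and \ref{prop:esol-in-uim} in the form needed. These should follow routinely by time reversal or by passing to $\cVop$, but care is needed because Figure \ref{fig:non-dual-crit} shows that a multivector's criticality depends on whether one works in $\cV$ or in $\cVop$, so essentiality of solutions is a topology-sensitive notion. Working throughout inside $\cV$ with $\esolm$ and $\uimm$ directly, rather than passing through $\cVop$, bypasses this subtlety and keeps the argument parallel to Proposition \ref{prop:minimal-is-scc}.
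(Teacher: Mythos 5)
Your proof is correct and matches what the paper intends, which is simply to note that the argument of Proposition \ref{prop:minimal-is-scc} dualizes by replacing $\pipl$, $\esolp$, $\uimp$ with their time-reversed counterparts $\pimn$, $\esolm$, $\uimm$. Your observation that working directly in $\cV$ with left-essential solutions and $\uimm$ — rather than passing to $\cVop$ — sidesteps the change of criticality under the opposite topology is a genuine subtlety that the paper's one-line appeal to ``duality'' glosses over.
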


%
%\begin{prop}
% An invariant set $A\subset X$ is {\em minimal} if and only if
% \[
%   \pipl(A,X) \cap \pimn(A,X) = A.
% \]
%\end{prop}
%\begin{proof}
% \todo{}{if it will be helpful we can prove it}
%\end{proof}

% \begin{prop}
% % Let $S\subset X$ be an isolated invariant set. 
%   Set $A\subset X$ is minimal if and only if $A$ is strongly connected in $G_\cV$.
% \end{prop}
% \begin{proof}
%   Let $A$ be minimal. 
%   Suppose we can choose $x,y\in A$ such that $\pathsv(x,y,X)=\emptyset$.
%   We have
%   \[
%     T:=\pipl(x,X)\cap\pimn(x,X) \subset\pipl(A,X)\cap\pimn(A,X).
%   \]
%   Clearly, $y\not\in\pipl(x,X)$ and $y\not\in T$.
%   Set $T$ is also nonempty, becaue $[x]_\cV\subset T$.
%   Therefore, $A$ is not minimal and we have a contradiction.
% 
%   Now assume that $A$ is a strongly connected component.
%   Let $x,y\in A$.
%   Since $A$ is strongly connected point $x$ and $y$ are connected by paths in both ways.
%   Therefore, $y\in\pipl(x,X)\cap\pimn(x,X)$.
%   On the other hand let $z\in\pipl(A,X)\cap\pimn(A,X)$.
%   Then there exists $a\in A$ and paths either from $a$ to $z$ and from $z$ to $a$.
%   Thus, $z\in A$.
%   We get
%   \[
%     A\subset\pipl(x,X)\cap\pimn(x,X)\subset\pipl(A,X)\cap\pimn(A,X)\subset A.
%   \]
%   Since we picked $x$ arbitrarly $A$ is minimal.
% \end{proof}

\begin{prop}\label{prop:minimal-intersect-attr}
  Let $S\subset X$ be a minimal invariant set and let $A\subset X$ be an attractor (a repeller).
  If $A\cap S\neq\emptyset$ then $S\subset A$.
\end{prop}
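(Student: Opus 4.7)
The plan is to leverage the strong-connectivity characterization of minimal invariant sets from Proposition~\ref{prop:minimal-is-scc} together with the push-forward/pull-back characterization of attractors and repellers from Proposition~\ref{prop:attr-push-forward}. With these two tools in hand the argument becomes essentially one line, so no real obstacle is expected; the only delicate point is making sure the path witnessing the strong connectivity of~$S$ is also admissible as a witness for membership in $\pipl(A,X)$ (resp.\ $\pimn(A,X)$), which is immediate since $S\subset X$ and so $\pathsv(\cdot,\cdot,S)\subset\pathsv(\cdot,\cdot,X)$.

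I would first treat the attractor case. Since $S$ is a minimal invariant set, Proposition~\ref{prop:minimal-is-scc} yields that $S$ is strongly connected in $G_\cV$, so for every pair $x,y\in S$ the set $\pathsv(x,y,S)$ is non-empty. Pick any $x\in A\cap S$, which is possible by hypothesis. For an arbitrary $y\in S$ choose $\varphi\in\pathsv(x,y,S)\subset\pathsv(x,y,X)$; then definition~(\ref{eq:pi_push_forward}) gives $y\in\pipl(x,X)\subset\pipl(A,X)$. Since $A$ is an attractor, Proposition~\ref{prop:attr-push-forward} shows $\pipl(A,X)=A$, so $y\in A$. As $y\in S$ was arbitrary, $S\subset A$.

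For the repeller case the argument is dual. Using again the strong connectivity of $S$, for any $y\in S$ pick a path $\psi\in\pathsv(y,x,S)\subset\pathsv(y,x,X)$ from $y$ to the chosen $x\in A\cap S$. By definition~(\ref{eq:pi_pull_back}) we get $y\in\pimn(x,X)\subset\pimn(A,X)$, and since $A$ is a repeller Proposition~\ref{prop:attr-push-forward} gives $\pimn(A,X)=A$, so again $y\in A$ and $S\subset A$.

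In summary, the proof reduces to a one-line invocation of strong connectivity combined with the forward-invariance (resp.\ backward-invariance) of $A$ expressed through $\pipl$ (resp.\ $\pimn$). There is no real hard step; the verification that $S\subset X$ makes the paths in $S$ usable as paths in $X$ is automatic.
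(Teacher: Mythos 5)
Your proof is correct and takes essentially the same approach as the paper's: use strong connectivity of the minimal invariant set $S$ (via Proposition~\ref{prop:minimal-is-scc}) to produce a path in $S$ from a chosen $x\in A\cap S$ to an arbitrary $y\in S$, then conclude $y\in A$ from the forward (resp.\ backward) invariance of $A$. The paper re-derives this invariance on the spot by an explicit induction along the domain of the path, whereas you simply invoke the already-proved push-forward/pull-back characterization in Proposition~\ref{prop:attr-push-forward}; this is a cosmetic modularization of the same argument.
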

\begin{proof}
  Let $x\in A\cap S$ and let $y\in S$. 
  There exists $\varphi\in\pathsv(x,y,S)$.
  Now define $t=\min\dom{\varphi}$ and $s=\max\dom{\varphi}$.
  Clearly 
  \[
    \varphi(t+1)\in\Piv(\varphi(t))=\Piv(x)\subset\Piv(A)=A.
  \]
  Now, by induction let $k\in\{t,t+1,\dots,s-1\}$ and $\varphi(k)\in A$ then
  \[
    \varphi(k+1)\in\Piv(\varphi(k))\subset\Piv(A)=A.
  \]
  Therefore, $y=\varphi(k+1)\in A$, and this implies $S\subset A$.
\end{proof}

For a full solution $\varphi$ in $X$, define the sets 
\begin{align}\label{eq:valpha}
  \cVm(\varphi) := \left\{V\in\cV\mid V\cap\uimm\varphi\neq\emptyset\right\},\\
  \cVp(\varphi) := \left\{V\in\cV\mid V\cap\uimp\varphi\neq\emptyset\right\}.
% \valpha(\varphi) := \left[\bigcap_{t<0} \im\varphi\restr{(-\infty, t]}\right]_\cV,\\
% \vomega(\varphi) := \left[\bigcap_{t>0} \im\varphi\restr{[t,+\infty)}\right]_\cV.
\end{align}
% \valpha(\varphi) := \left[\bigcap_{t<0} \varphi((-\infty, t])\right]_\cV,\\
% \vomega(\varphi) := \left[\bigcap_{t>0} \varphi([t,+\infty))\right]_\cV.
%
We refer to a multivector $V\in\cVm(\varphi)$ (respectively $\cVp(\varphi)$) as a \emph{backward} (respectively \emph{forward}) \emph{ultimate multivector} of $\varphi$.
The families $\cVm(\varphi)$ and $\cVp(\varphi)$ will be used in the sequel, in particular in the proof of the following theorem.

\begin{thm}\label{thm:dual-attractor}
  Assume the whole space $X$ is invariant. Let $A\subset X$ be an attractor. 
  Then $A^\star:=\Inv\left(X\setminus A\right)$ is a repeller in $X$, which 
  is called the \emph{dual repeller} of $A$. 
  Conversely, if~$R$ is a repeller, then $R^\star:=\Inv\left(X\setminus R\right)$ is an attractor in $X$, 
  called the \emph{dual attractor} of~$R$.
  Moreover, the dual repeller (or the dual attractor) is nonempty, unless we have $A=X$ (or $R=X$).
\end{thm}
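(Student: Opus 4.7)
The plan is to verify directly that $A^\star = \Inv(X \setminus A)$ is a repeller by establishing the identity $\Pi_\cV^{-1}(A^\star) = A^\star$; the dual statement for repellers will then follow via Proposition \ref{prop:piv-dual} applied in $\Xop$. By Theorem \ref{thm:attractor_closedness}, $A$ is closed, $\cV$-compatible, and satisfies $\Pi_\cV(A) = A$, so $X \setminus A$ is open and backward invariant: if $y \in X \setminus A$ and $y \in \Pi_\cV(x)$, then $x \in A$ would force $y \in \Pi_\cV(A) \subset A$, a contradiction. Moreover, $A^\star$ is automatically an invariant set, since any $\varphi \in \esol(x, X\setminus A)$ passes only through points that inherit their own essential solution in $X \setminus A$ via time-shifts of $\varphi$, so $\im\varphi \subset A^\star$ and hence $\varphi \in \esol(x, A^\star)$.

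The inclusion $A^\star \subset \Pi_\cV^{-1}(A^\star)$ is immediate since $x \in \Pi_\cV(x)$. For the reverse inclusion, fix $x \in A^\star$ and $y \in \Pi_\cV^{-1}(x)$. By Proposition \ref{prop:preimage-piv-opn}, $y \in [x]_\cV \cup \opn x$, and either the $\cV$-compatibility of $A$ (case $y \in [x]_\cV$) or the openness of $X \setminus A$ (case $y \in \opn x$) forces $y \notin A$. Choose $\varphi \in \esol(x, X \setminus A)$, and use invariance of $X$ to pick $\sigma \in \esol(y, X)$. Forward invariance of $A$ together with $\sigma(0) = y \notin A$ forces $\sigma(t) \notin A$ for all $t \leq 0$. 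Define a full solution $\tilde\varphi \colon \ZZ \to X$ by $\tilde\varphi(t) := \sigma(t)$ for $t \leq 0$ and $\tilde\varphi(t) := \varphi(t-1)$ for $t \geq 1$. The only non-obvious consistency check, $\tilde\varphi(1) \in \Pi_\cV(\tilde\varphi(0))$, reduces to $x \in \Pi_\cV(y)$, which is our assumption, and $\im\tilde\varphi \subset X \setminus A$ by the choices above. Essentialness of $\tilde\varphi$ is verified by a short case split: for a regular multivector $V$ with $V \cap \im\tilde\varphi \neq \emptyset$, left-unboundedness of $\{t : \tilde\varphi(t) \notin V\}$ either follows from essentialness of $\sigma$ (if $V \cap \im\sigma \neq \emptyset$) or is trivial (since then $V$ is avoided on all of $\ZZ^{\leq 0}$), and right-unboundedness is argued symmetrically with $\varphi$. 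Hence $y \in A^\star$.

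For nonemptiness when $A \neq X$, fix $x \in X \setminus A$ and use invariance of $X$ to pick $\varphi \in \esol(x, X)$. Forward invariance of $A$ together with $\varphi(0) \notin A$ forces $\varphi(t) \notin A$ for every $t \leq 0$, so $\varphi^-$ is a left-essential backward solution in $X \setminus A$. Proposition \ref{prop:esol-in-uim} then produces an essential solution $\psi$ with $\im\psi \subset \uimm\varphi \subset X \setminus A$; any point of $\im\psi$ thus lies in $A^\star$.

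The repeller-to-attractor claim follows by duality. By Proposition \ref{prop:piv-dual}, $R$ is a repeller of $\cV$ on $X$ if and only if $R$ is an attractor of $\cVop$ on $\Xop$, and since time reversal matches essential solutions of $\cV$ with essential solutions of $\cVop$, we have $\Inv_\cV(X \setminus R) = \Inv_\cVop(X \setminus R) = R^\star$. Applying the attractor case already established to $\cVop$ on $\Xop$ yields that $R^\star$ is a repeller of $\cVop$, hence an attractor of $\cV$, and is nonempty whenever $R \neq X$. The main obstacle I expect is the construction of $\tilde\varphi$: one has to recognize that invariance of $X$ supplies a left-essential backward solution through $y$, which can be concatenated with the essential solution $\varphi$ through $x$ using the single step $y \to x$ guaranteed by $x \in \Pi_\cV(y)$; once this construction is in place, the case analysis verifying essentialness is routine.
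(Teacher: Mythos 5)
Your argument for the attractor case is correct and, up to cosmetic differences, follows the same essential construction as the paper: given $x\in A^\star$ and $y\in\Pi_\cV^{-1}(x)$, you first show $y\notin A$, produce a backward portion of a solution from $y$ by invariance of $X$, verify it avoids $A$ by forward invariance of $A$, and concatenate with a forward essential solution through $x$ in $X\setminus A$ using the single step $x\in\Pi_\cV(y)$. The paper splits the verification differently: it proves $A^\star$ is open (the case $y\in\opn x$) and $\cV$-compatible (the case $y\in[x]_\cV$) separately and then cites the characterization Theorem~\ref{thm:repeller_openess}, whereas you unify the two cases via Proposition~\ref{prop:preimage-piv-opn} and check the repeller identity $\Pi_\cV^{-1}(A^\star)=A^\star$ directly; both routes build the same concatenated essential solution. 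Your nonemptiness argument coincides with the paper's.

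However, the duality argument you give for the repeller-to-attractor half contains a genuine gap. The key claim \emph{``time reversal matches essential solutions of $\cV$ with essential solutions of $\cVop$''} is false: the notion of essential solution is tied to which multivectors are \emph{regular}, and regularity depends on the topology, so a multivector that is regular in $X$ can be critical in $\Xop$ and vice versa (the paper illustrates exactly this in Figure~\ref{fig:non-dual-crit}). Concretely, if $\varphi$ is an essential solution of $\cV$ that eventually stays in a multivector $V$ that is critical in $X$ but regular in $\Xop$, then $\varphi^{\op}$ fails to be essential for $\cVop$. Consequently, there is no reason for $\Inv_\cV(X\setminus R)$ to equal $\Inv_{\cVop}(X\setminus R)$, and there is also no guarantee that $X$ is invariant for $\cVop$, which is a standing hypothesis needed before applying the attractor case to $\cVop$ on $\Xop$. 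Note that Proposition~\ref{prop:dual-full-solution} in the paper is stated for \emph{full} solutions, not essential ones, precisely because essentialness is not preserved under time reversal. The fix is to drop the duality and instead mirror your attractor argument verbatim: for $x\in R^\star$ and $y\in\Pi_\cV(x)$, use that $R$ is open and $\cV$-compatible (Theorem~\ref{thm:repeller_openess}) to conclude $y\notin R$, take $\sigma\in\esol(y,X)$, use backward invariance $\Pi_\cV^{-1}(R)=R$ to show $\im\sigma^+\cap R=\emptyset$, and concatenate $\varphi^-\cdot\sigma^+$ (where $\varphi\in\esol(x,X\setminus R)$) using the step $y\in\Pi_\cV(x)$; this produces the required essential solution in $X\setminus R$ through $y$ and shows $\Pi_\cV(R^\star)\subset R^\star$.
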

\begin{proof}
  We will show that $A^\star$ is open. 
  Let $x\in A^\star$ and let $y\in\opn x$. 
  Then one has $x\in\cl y$ by Proposition~\ref{prop:cl-in-ftop}.
  Since $A$ is closed as an attractor (Proposition~\ref{thm:attractor_closedness}), we immediately get $y\not\in A$. 
  The invariance of $X$ lets us select a $\varphi\in\esol(y,X)$. 
  Then $\im\varphi^-\cap A=\emptyset$, because otherwise there exists a $t\in\ZZ^-$ such that $\varphi(t)\in A$ and $\varphi(t+1)\not\in A$, which gives
  \[ 
    \varphi(t+1)\in\Piv(\varphi(t))\subset\Piv(A)=A,
  \]
  a contradiction.
  Now, let $\psi\in\esol(x,A^\star)$. 
  Clearly, $x\in\cl y\subset\Piv(y)$. 
  Thus, $\varphi^-\cdot\psi^+\in\esol(y,X\setminus A)$. 
  It follows that $y\in\inv(X\setminus A)=A^\star$ which proves that $\opn A^\star\subset A^\star$.
  Therefore, the set $A^\star$ is open.

  Since $A$ is $\cV$-compatible, also $X\setminus A$ is $\cV$-compatible. 
  Let $x\in A^\star$ and let $y\in[x]_\cV$. 
  Since $x\in A^\star\subset X\setminus A$, $\cV$-compatibility of $X\setminus A$ implies $y\not\in A$.
  Select a  $\varphi\in\esol(x,A^\star)$
  Thus, $\varphi^-\cdot y\cdot\varphi^+$ is a well-defined essential solution in $X\setminus A$, that is, $\esol(y,X\setminus A)\neq\emptyset$. 
  It follows that $y\in\inv(X\setminus A)=A^\star$. 
  Hence, $A^\star$ is $\cV$-compatible.
  Altogether, the set $A^\star$ is invariant, open and $\cV$-compatible. 
  Thus, by Theorem \ref{thm:repeller_openess} it is a repeller. 

%%  Now, we will show that $A^\star\neq\emptyset$ unless $A=X$.
%%  Suppose that $X\setminus A\neq\emptyset$.
%%  Let $x\in X\setminus A$.
%%  Let $R$ be a minimal invariant set in $X$ containing $x$.
%%  Set~$R$ always exists, because we can take the entire $X$.
%%  If $R\cap A\neq\emptyset$, then by Proposition \ref{prop:minimal-intersect-attr} we have $x\in A$, a contradiction.
%%  Otherwise we get $\emptyset\neq R\subset \inv(X\setminus A)=A^\star$.
  
  Finally, we will show that $A^\star\neq\emptyset$ unless $A=X$.
  Suppose that $X\setminus A\neq\emptyset$, and let $x\in X\setminus A$.
  Since~$X$ is invariant, there exists a $\varphi\in\esol(x,X)$.
  As in the first part of the proof one can show that $\im\varphi^-\cap A=\emptyset$,
  that is, we have $\im\varphi^- \subset X \setminus A$. According to
  Proposition~\ref{prop:esol-in-uim} there exists an essential solution $\psi$
  such that $\im\psi \subset \uimm\varphi \subset \im\varphi^- \subset X \setminus A$,
  and this immediately implies~$A^\star = \Inv\left(X\setminus A\right) \neq\emptyset$.
\end{proof}

%%%%%%%%%%%%%%%%%%%%%%%%%%%%%%%%%%%%%%%%%%%%%%%
\subsection{Limit sets}

We define the \emph{$\cV$-hull of a set $A\subset X$} as the intersection of all $\cV$-compatible, locally closed sets containing $A$, and denote it by $\langle A\rangle_\cV$.
As an immediate consequence of Proposition \ref{prop:lcl-intersection} and Proposition \ref{prop:union-intersection-vcomp} we get the following result.
\begin{prop}\label{prop:vhull-vcomp-lcl}
  For every $A\subset X$ its $\cV$-hull is $\cV$-compatible and locally closed.
\end{prop}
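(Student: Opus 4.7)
The plan is to observe that the $\cV$-hull $\langle A\rangle_\cV$ is, by definition, the intersection of the family
\[
  \cF := \{ B\subset X \mid A\subset B,\ B \text{ is } \cV\text{-compatible and locally closed}\}.
\]
First I would note that $\cF$ is non-empty, since $X$ itself belongs to $\cF$: the ambient space is trivially $\cV$-compatible (every multivector in the partition $\cV$ is contained in $X$) and locally closed (it is closed in itself, hence equal to $X\cap X$). Furthermore, because $X$ is a finite topological space, the family $\cF\subset\cP(X)$ is finite.

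Next I would apply Proposition \ref{prop:union-intersection-vcomp} to conclude that $\bigcap\cF = \langle A\rangle_\cV$ is $\cV$-compatible, and Proposition \ref{prop:lcl-intersection} to conclude that $\bigcap\cF$, being a finite intersection of locally closed subsets of $X$, is locally closed. These two facts together give the proposition.

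The only mild subtlety is that Proposition \ref{prop:lcl-intersection} is stated for finite families; this is precisely why the finiteness of $X$ (and hence of $\cF$) matters. There is no other obstacle: the result is a formal consequence of the two cited closure properties together with the fact that the full space $X$ witnesses $\cF\neq\emptyset$.
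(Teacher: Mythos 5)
Your proof is correct and follows exactly the paper's intended argument: the paper presents this proposition as an immediate consequence of Proposition \ref{prop:lcl-intersection} and Proposition \ref{prop:union-intersection-vcomp}, which is precisely what you spell out. You also correctly note the two points the paper leaves tacit, namely that $X$ itself witnesses nonemptiness of the intersecting family and that finiteness of $X$ guarantees the family is finite so that Proposition \ref{prop:lcl-intersection} applies.
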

We define the \emph{$\alpha$- and $\omega$-limit sets of a full solution $\varphi$} respectively by
\begin{align*}
  \alpha(\varphi) &:= \left\langle\uimm\varphi \right\rangle_{\cV},\\
  \omega(\varphi) &:= \left\langle\uimp\varphi \right\rangle_{\cV}.
% \alpha(\varphi) &:= \left\langle\bigcap_{t\leq 0} \im\varphi\restr{(-\infty,t]} \right\rangle_{\hspace{-1.5mm}\cV},\\
%\omega(\varphi) &:= \left\langle\bigcap_{t\geq 0} \im\varphi\restr{[t,\infty)} \right\rangle_{\hspace{-1.5mm}\cV}.
\end{align*}

The following proposition is an immediate consequence of Proposition \ref{prop:lcl-op}.
\begin{prop}\label{prop:dual-full-solution}
  Assume $\varphi$ is a full solution of $\cV$ and $\varphi^{\op}$ is the associated dual solution of $\cV^{\op}$. 
  Then
  \begin{align*}
    \alpha(\varphi) = \omega(\varphi^{\op}) \quad\text{and}\quad \omega(\varphi) = \alpha(\varphi^{\op}).
  \end{align*}
\end{prop}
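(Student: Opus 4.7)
The plan is to verify the two identities by unfolding definitions in two stages: first at the level of the ultimate images, and then at the level of the $\cV$-hulls.

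First, I would observe that by the definition $\varphi^{\op}(t) = \varphi(-t)$, for every $t \in \ZZ^+$ we have
\[
  \varphi^{\op}([t,+\infty)) = \{\varphi(-s) \mid s \geq t\} = \varphi((-\infty,-t]).
\]
Taking the intersection over $t \in \ZZ^+$ yields $\uimp{\varphi^{\op}} = \uimm{\varphi}$, and the symmetric calculation gives $\uimm{\varphi^{\op}} = \uimp{\varphi}$. So as plain subsets of $X$, the ultimate images get swapped when we pass from $\varphi$ to $\varphi^{\op}$.

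Next, I need to argue that the $\cV$-hull of a set $A\subset X$ is the same whether computed in $X$ or in $X^{\op}$. This rests on two observations. The notion of $\cV$-compatibility depends only on the partition $\cV$, not on the topology, so the family of $\cV$-compatible sets is identical in $X$ and $X^{\op}$. By Proposition \ref{prop:lcl-op}, a subset of $X$ is locally closed in $\cT$ if and only if it is locally closed in $\cTop$, so the family of locally closed sets also coincides. Hence the class of $\cV$-compatible locally closed sets containing $A$ is the same in both topologies, and therefore $\langle A\rangle_\cV$ computed in $X$ equals $\langle A\rangle_{\cV^{\op}}$ computed in $X^{\op}$.

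Combining the two steps:
\[
  \alpha(\varphi) = \langle \uimm{\varphi}\rangle_\cV
  = \langle \uimp{\varphi^{\op}}\rangle_{\cV^{\op}} = \omega(\varphi^{\op}),
\]
and symmetrically $\omega(\varphi) = \alpha(\varphi^{\op})$, which is what we needed. There is no real obstacle here; the entire argument is bookkeeping, with the only nontrivial ingredient being the topology-independence of local closedness provided by Proposition \ref{prop:lcl-op}.
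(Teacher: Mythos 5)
Your argument is correct and is precisely the fleshed-out version of what the paper gives: the paper dismisses the proposition as an ``immediate consequence of Proposition~\ref{prop:lcl-op},'' and your two-step unfolding --- ultimate images swap under $t\mapsto -t$, and the $\cV$-hull is topology-independent because $\cV$-compatibility ignores the topology while local closedness is preserved by Proposition~\ref{prop:lcl-op} --- is exactly the reasoning that justifies that claim.
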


\begin{prop}\label{prop:valpha-alpha}
  Let $\varphi$ be an essential solution. 
  Then
  \begin{align*}
    \alpha(\varphi) = \left\langle\bigcup\valpha(\varphi) \right\rangle_{\hspace{-1mm}\cV}
  \end{align*}
  and
  \begin{align*}
    \omega(\varphi) = \left\langle\bigcup\vomega(\varphi) \right\rangle_{\hspace{-1mm}\cV}.
  \end{align*}
\end{prop}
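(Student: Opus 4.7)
The plan is to prove both statements by direct double-inclusion of $\cV$-hulls, exploiting $\cV$-compatibility of the hull. I will focus on the $\omega$ case; the $\alpha$ case is symmetric (alternatively, it follows by duality via Proposition \ref{prop:dual-full-solution} and Proposition \ref{prop:lcl-op}).

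First I would observe that $\uimp\varphi \subset \bigcup \vomega(\varphi)$ trivially. Indeed, if $x \in \uimp\varphi$, then the multivector $[x]_\cV$ contains $x$ and hence intersects $\uimp\varphi$, so $[x]_\cV \in \vomega(\varphi)$ and $x \in [x]_\cV \subset \bigcup\vomega(\varphi)$. By monotonicity of the $\cV$-hull this gives
\[
  \omega(\varphi) \;=\; \langle \uimp\varphi \rangle_\cV \;\subset\; \Bigl\langle \bigcup \vomega(\varphi) \Bigr\rangle_\cV .
\]

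For the reverse inclusion, the key step is to verify $\bigcup \vomega(\varphi) \subset \omega(\varphi)$. Fix any $V \in \vomega(\varphi)$; by definition of $\vomega(\varphi)$ there is some $x \in V \cap \uimp\varphi$. Now $\omega(\varphi) = \langle \uimp\varphi\rangle_\cV$ is $\cV$-compatible by Proposition~\ref{prop:vhull-vcomp-lcl}, and it contains $x$, so it must contain the whole multivector $[x]_\cV = V$. Taking the union over all $V\in\vomega(\varphi)$ yields $\bigcup\vomega(\varphi) \subset \omega(\varphi)$. Since $\omega(\varphi)$ is itself $\cV$-compatible and locally closed (again Proposition~\ref{prop:vhull-vcomp-lcl}), it is one of the sets over which the hull $\langle \bigcup\vomega(\varphi)\rangle_\cV$ is defined as an intersection, so
\[
  \Bigl\langle \bigcup \vomega(\varphi) \Bigr\rangle_\cV \;\subset\; \omega(\varphi),
\]
completing the proof of the $\omega$ equality.

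For the $\alpha$ case I would either run the identical argument with $\uimm$ and $\valpha$ in place of $\uimp$ and $\vomega$, or invoke duality: Proposition~\ref{prop:dual-full-solution} gives $\alpha(\varphi) = \omega(\varphi^{\op})$ in $\cV^{\op}$, and the notion of $\cV$-hull is self-dual because local closedness is preserved under passage to the opposite topology (Proposition~\ref{prop:lcl-op}) while $\cV$-compatibility is a purely combinatorial condition on $\cV$. Honestly, there is no serious obstacle here: the essentiality of $\varphi$ plays no role in this particular proposition (it is used only to guarantee the relevance of the limit sets elsewhere), and the whole argument reduces to the observation that $\cV$-compatibility of the hull forces it to "swallow" any multivector that it touches.
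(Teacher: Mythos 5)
Your proof is correct and follows essentially the same route as the paper's: the forward inclusion via $\uimp\varphi\subset\bigcup\vomega(\varphi)$ plus monotonicity of the hull, and the reverse inclusion via $\cV$-compatibility of $\omega(\varphi)$ to absorb each multivector $V\in\vomega(\varphi)$, then minimality of the hull among locally closed $\cV$-compatible sets (the paper writes it out for $\alpha$ and remarks $\omega$ is analogous, while you do the opposite). Your closing observation that essentiality of $\varphi$ is not actually used in this argument is also accurate.
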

\begin{proof}
  Clearly 
  \[  
    \uimm\varphi \subset \bigcup\left\{V\in\cV\mid V\cap\uimm\varphi\neq\emptyset\right\} = \bigcup\valpha(\varphi)  
%   \bigcap_{t<0} \varphi((-\infty, t]) \subset \bigcup\left[\bigcap_{t<0} \varphi((-\infty, t])\right]_\cV = \bigcup\alpha_\cV(\varphi)  
  \]
  and therefore 
  \[  
    \alpha(\varphi) = \left\langle \uimm\varphi \right\rangle_\cV
      \subset \left\langle\bigcup\valpha(\varphi) \right\rangle_{\hspace{-1mm}\cV}.  
%   \alpha(\varphi) \subset \left\langle \bigcap_{t<0} \varphi((-\infty, t]) \right\rangle_\cV
%   \subset \left\langle\bigcup\alpha_\cV(\varphi) \right\rangle_\cV.  
  \]
  Now let $x\in \bigcup\valpha(\varphi)$. 
  Then there exists a $y\in\vclass{x}$ such that $y\in \uimm\varphi$. 
  Then $y\in\alpha(\varphi)$ and, since $\alpha(\varphi)$ is $\cV$-compatible, $\vclass{y}=\vclass{x}\subset \alpha(\varphi)$. 
  Thus, we have $\bigcup{\valpha({\varphi})}\subset \alpha(\varphi)$. 
  Since $\alpha(\varphi)$ is locally closed and $\cV$-compatible, the set $\alpha(\varphi)$ is a superset of the $\cV$-hull of $\bigcup\alpha_\cV(\varphi)$.
Hence, 
  \[ 
    \left\langle\bigcup\valpha(\varphi) \right\rangle_{\hspace{-1mm}\cV} \subset \alpha(\varphi).
  \]
The proof for $\omega(\varphi)$ is analogous.
\end{proof}

\begin{lem}\label{lem:v-valpha-in-out}
  Assume $\varphi:\ZZ\rightarrow X$ is a full solution of $\cV$ and $\valpha(\varphi)$ (respectively $\vomega(\varphi)$) contains at least two different multivectors.
  Then for every $V\in\cV$ such that $V\subset\alpha(\varphi)$ (respectively $V\subset\omega(\varphi)$) we have
  \begin{align}\label{eq:lem-v-valpha-in-out-1}
    \left(\Piv(V)\setminus V \right)\cap\alpha(\varphi)\neq\emptyset
    \;\;\; \text{(respectively }
    \left(\Piv(V)\setminus V \right)\cap\omega(\varphi)\neq\emptyset
    \text{)}
  \end{align}
  and 
  \begin{align}\label{eq:lem-v-valpha-in-out-2}
    \left(\Piv^{-1}(V)\setminus V \right)\cap\alpha(\varphi)\neq\emptyset
    \;\;\; \text{(respectively }
    \left(\Piv^{-1}(V)\setminus V \right)\cap\omega(\varphi)\neq\emptyset
    \text{).}
  \end{align}
\end{lem}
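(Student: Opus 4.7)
The plan is to prove the statement for $\alpha(\varphi)$; the version for $\omega(\varphi)$ is symmetric, with forward replacing backward and $\uimp$ in place of $\uimm$. First I would simplify the target sets: because $V$ is itself a multivector, $[v]_\cV=V$ for every $v\in V$, so $\Piv(V)=\cl V\cup V=\cl V$, and by Proposition \ref{prop:preimage-piv-opn}, $\Piv^{-1}(V)=\opn V\cup V=\opn V$. Hence (\ref{eq:lem-v-valpha-in-out-1}) and (\ref{eq:lem-v-valpha-in-out-2}) reduce to showing that $\mo V\cap\alpha(\varphi)$ and $(\opn V\setminus V)\cap\alpha(\varphi)$ are non-empty. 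I would then split into two cases depending on whether $V\in\valpha(\varphi)$.

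Suppose first $V\in\valpha(\varphi)$. By hypothesis there is some $W\in\valpha(\varphi)\setminus\{V\}$, and by Proposition \ref{prop:uim-non-empty} there is a $k\in\NN$ with $\uimm\varphi=\varphi((-\infty,-k])$. Every point of $\uimm\varphi$ is attained by $\varphi$ at arbitrarily small times, so $V$ and $W$ are both hit at arbitrarily small times. Pick $t_1<t_2\leq-k-1$ with $\varphi(t_1)\in V$ and $\varphi(t_2)\notin V$, and let $s\in(t_1,t_2]$ be minimal with $\varphi(s)\notin V$. Then $\varphi(s-1)\in V$, so $\varphi(s)\in\Piv(\varphi(s-1))\setminus V=\cl\varphi(s-1)\setminus V\subseteq\mo V$, and $s\leq -k$ gives $\varphi(s)\in\uimm\varphi\subseteq\alpha(\varphi)$. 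Swapping the roles of ``inside $V$'' and ``outside $V$'' in the search, the largest $s$ preceding an entry to $V$ at which $\varphi$ is outside $V$ yields $\varphi(s)\notin V$ and $\varphi(s+1)\in V$, whence $\varphi(s+1)\in\cl\varphi(s)$, $\varphi(s)\in\opn V\setminus V$, and again $\varphi(s)\in\uimm\varphi$.

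If instead $V\notin\valpha(\varphi)$, then $V\cap\uimm\varphi=\emptyset$ and the dynamical argument is unavailable. The key step is to exploit the minimality of the $\cV$-hull $\alpha(\varphi)=\langle\uimm\varphi\rangle_\cV$. Consider $A:=\alpha(\varphi)\setminus V$. Since $V$ is an entire multivector contained in the $\cV$-compatible set $\alpha(\varphi)$, removing it preserves $\cV$-compatibility, and $\uimm\varphi\subseteq A$ because $V\cap\uimm\varphi=\emptyset$. Were $A$ also locally closed, it would be a $\cV$-compatible, locally closed superset of $\uimm\varphi$ strictly smaller than $\langle\uimm\varphi\rangle_\cV$, contradicting minimality. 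Hence $A$ fails to be locally closed, so by Proposition \ref{prop:lcl-in-ftop} it is not convex: there exist $x,z\in A$ and $y\in X$ with $x\leq_\cT y\leq_\cT z$ but $y\notin A$. Convexity of $\alpha(\varphi)$ forces $y\in\alpha(\varphi)\setminus A=V$, and then the equalities $x=y$ or $z=y$ would place $y$ in $A$, so $x<_\cT y<_\cT z$ strictly. Hence $x\in\cl V\setminus V=\mo V$ and $z\in\opn V\setminus V$, both lying in $\alpha(\varphi)$.

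The step I expect to be the main obstacle is the second case. Whenever $V$ is a multivector contributed purely by the hull operation, it is not visited by $\varphi$ in the ultimate past image, so the natural dynamical reasoning of the first case breaks down; one must instead argue by minimality of $\langle\uimm\varphi\rangle_\cV$ and convert the resulting failure of local closedness (equivalently, of convexity) into the required elements of $\mo V$ and $\opn V\setminus V$.
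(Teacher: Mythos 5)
Your proof is correct and reaches the paper's conclusion by a partly different route. The paper splits the same way on whether $V\in\valpha(\varphi)$, but in the first case it constructs an infinite strictly decreasing sequence of exit times and invokes a pigeonhole argument to fix a single exit point $y$ with $[y]_\cV\in\valpha(\varphi)$; your first case reaches the same endpoint more directly by locating a single transition out of $V$ inside $\uimm\varphi = \varphi((-\infty,-k])$, and you also give the symmetric ``entry'' transition to get a point of $\opn V\setminus V$ in $\uimm\varphi$. The more substantial difference is the second case: the paper argues by contradiction, shows that $\mo V\cap\alpha(\varphi)=\emptyset$ would make $\alpha(\varphi)\setminus V$ a difference of closed sets (hence locally closed), and then disposes of (\ref{eq:lem-v-valpha-in-out-2}) entirely by the duality $\Pi_{\cVop}=\Piv^{-1}$ applied to $\varphi^{\op}$. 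You instead argue directly: $A=\alpha(\varphi)\setminus V$ is $\cV$-compatible and contains $\uimm\varphi$, so minimality of the $\cV$-hull forces $A$ to fail local closedness, hence by Proposition \ref{prop:lcl-in-ftop} to fail convexity; a witnessing triple $x<_\cT y<_\cT z$ with $y\in V$ and $x,z\in A$ then yields a point $x\in\mo V\cap\alpha(\varphi)$ \emph{and} a point $z\in(\opn V\setminus V)\cap\alpha(\varphi)$ simultaneously. This avoids the duality machinery and shows that the failure of convexity of $\alpha(\varphi)\setminus V$ produces both halves of the statement at once; the paper's duality route is slightly slicker if one already has $\cVop$ and Proposition \ref{prop:piv-dual} in hand, but yours is more self-contained. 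One small point worth making explicit when writing this up: in the first case, after fixing $k$ with $\uimm\varphi=\varphi((-\infty,-k])$, the times $s$ you extract must be $\leq -k$ so that $\varphi(s)\in\uimm\varphi$; your choice $t_1<t_2\leq -k-1$ guarantees this, but the reader benefits from seeing the bound stated.
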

\begin{proof}
  Assume $V\in\cV$ is such that $V\subset\alpha(\varphi)$.
  This happens if $V\in\valpha(\varphi)$, but might also happen for some $V\not\in\valpha(\varphi)$.
  
  Assume first that $V\in\valpha(\varphi)$.
  Since there are at least two different multivectors in the set $\valpha(\varphi)$ there exists a strictly decreasing sequence $k:\NN\rightarrow\ZZ^-$ such that $\varphi(k_n)\in V$ and $\varphi(k_n+1)\not\in V$.
  Since the set $\{\varphi(k_n+1)\mid n\in\NN\}\subset X$ is finite, after taking a subsequence, if necessary, we may assume that $\varphi(k_n+1)=y\not\in V$.
  Let $W:=[y]_\cV$.
  Then $W\neq V$ and $y\in W\cap\uimm\varphi\cap\Piv(V)$.
  This implies $W\in\valpha(\varphi)$ and $\Piv(V)\cap W\neq\emptyset$.
  
  By Proposition \ref{prop:valpha-alpha} we have 
  \[ 
    \emptyset\neq \Piv(V)\cap W\subset (\Piv(V)\setminus V) \cap W
    \subset \left(\Piv(V)\setminus V\right) \cap \alpha(\varphi).
  \]
  Thus, (\ref{eq:lem-v-valpha-in-out-1}) is satisfied.

  Now assume that $V\not\in\valpha(\varphi)$. 
  %We assumed $x\in\alpha(\varphi)$, therefore $V\subset\alpha(\varphi)$. 
  We have $\Piv(V)=\cl V \cup V=\cl V$. 
  Suppose that (\ref{eq:lem-v-valpha-in-out-1}) does not hold. 
  Then
  \[
    \emptyset=\left(\Piv(V)\setminus V\right)\cap\alpha(\varphi)
    =\left(\cl V\setminus V\right)\cap\alpha(\varphi)
    =\mo V\cap\alpha(\varphi)
  \]
  and therefore
  \begin{align*}
    \alpha(\varphi)\setminus V 
    &= \left(\cl\alpha(\varphi)\setminus\mo\alpha(\varphi)\right)\setminus\left(V\cup\mo V\right) \\
    &= \left(\cl\alpha(\varphi)\setminus\mo\alpha(\varphi)\right)\setminus\cl V
    = \cl\alpha(\varphi)\setminus\left(\mo\alpha(\varphi)\cup\cl V\right).
  \end{align*}
  By Proposition \ref{prop:lcl-in-ftop} the set $\alpha(\varphi)\setminus V$ is locally closed as a difference of closed sets. 
  Clearly, $\alpha(\varphi)\setminus V$ is $\cV$-compatible. 
  This shows that $\alpha(\varphi)$ is not a minimal locally closed and $\cV$-compatible set containing $\bigcup\valpha(\varphi)$. 
  This contradicts Proposition \ref{prop:valpha-alpha}. 
  Hence (\ref{eq:lem-v-valpha-in-out-1}) holds for $V\subset\alpha(\varphi)$.
  
  The proof of (\ref{eq:lem-v-valpha-in-out-1}) for $V\in\vomega(\varphi)$ is a straightforward adaptation of the proof for $V\subset\alpha(\varphi)$.
  To see (\ref{eq:lem-v-valpha-in-out-2}) observe that since $\varphi^{\op{}}$ is a full solution
      of~$\cV^{\op{}}$, $\omega(\varphi^{\op{}})=\alpha(\varphi)$ 
    by Proposition \ref{prop:dual-full-solution} and, clearly, 
    $\vomega(\varphi^{\op{}})=\valpha(\varphi)$, 
    we may apply (\ref{eq:lem-v-valpha-in-out-1}) to 
    $\cV^{\op{}}$, $\varphi^{\op{}}$ and $\omega(\varphi^{\op{}})$.
  Thus, by Proposition \ref{prop:piv-dual} we get
  \[
    \left(\Piv^{-1}(V)\setminus V\right)\cap \alpha(\varphi) =
    \left(\Pi_{\cV^{\op{}}}(V)\setminus V\right)\cap \omega(\varphi^{\op{}})\neq\emptyset,
  \]
  and the claim for $\omega(\varphi)$ follows similarly.
\end{proof}

\begin{thm}\label{thm:limit-set-is-iso-inv}
Let $\varphi$ be an essential solution in $X$. 
Then both limit sets $\alpha(\varphi)$ and $\omega(\varphi)$ are non-empty strongly connected isolated invariant sets.
\end{thm}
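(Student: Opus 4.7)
My plan is to establish the claim for $\omega(\varphi)$; the conclusion for $\alpha(\varphi)$ then follows by passing to $\cVop$ via Proposition \ref{prop:dual-full-solution}, since paths in $\cVop$ simply reverse those in $\cV$. Non-emptiness of $\omega(\varphi)$ is immediate from Proposition \ref{prop:uim-non-empty} and the inclusion $\uimp\varphi \subset \omega(\varphi)$, while $\cV$-compatibility and local closedness are given by Proposition \ref{prop:vhull-vcomp-lcl}. In view of Proposition \ref{prop:inv_conv_vcomp_is_iso}, it remains to verify invariance and strong connectedness.

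A key preliminary step is that $\uimp\varphi$ itself is strongly connected: Proposition \ref{prop:uim-non-empty} yields a $k$ with $\uimp\varphi = \varphi([k,\infty))$, each of its points is therefore visited by $\varphi$ infinitely often, and any two such points are joined by a subsegment of $\varphi|_{[k,\infty)}$ that lies entirely in $\uimp\varphi$. To extend this to the full $\cV$-hull I would exploit the \emph{minimality} of $\omega(\varphi)$ among $\cV$-compatible locally closed supersets of $\uimp\varphi$. Define
\[ T := \{x \in \omega(\varphi) : \pathsv(x,p,\omega(\varphi)) \neq \emptyset \text{ for every } p \in \uimp\varphi\} \]
together with its backward analogue $T'$. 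I claim $T$ is $\cV$-compatible and convex with respect to $\leq_\cT$. Indeed, if $x \in T$ and $y \in [x]_\cV$, then $x \in [x]_\cV = [y]_\cV \subset \Piv(y)$ lets us prepend the single step $y \to x$ to any witness path $x \to p$. If $x \leq_\cT y \leq_\cT z$ with $x \in T$ and $y$ in the ambient locally closed set $\omega(\varphi)$, then $x \in \cl y \subset \Piv(y)$ supplies the same single step. By Proposition \ref{prop:lcl-in-ftop}, $T$ is locally closed, and since $\uimp\varphi \subset T$, minimality forces $T = \omega(\varphi)$. The symmetric argument gives $T' = \omega(\varphi)$, and concatenating $x \to p \to y$ through any $p \in \uimp\varphi$ shows $\omega(\varphi)$ is strongly connected.

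For invariance I would first construct an essential periodic solution $\tilde\psi$ with $\im\tilde\psi \subset \uimp\varphi$ that visits every multivector in $\vomega(\varphi)$, by concatenating round trips in $\uimp\varphi$ from a fixed base point to one representative per multivector (available by the strong connectedness of $\uimp\varphi$) and iterating the resulting closed path. When $\#\vomega(\varphi) \geq 2$, essentiality of $\tilde\psi$ is automatic; when $\vomega(\varphi) = \{V\}$, the right-essentiality of $\varphi$ forces $V$ to be critical (otherwise $\varphi$ would eventually remain in a regular multivector), so the essentiality condition for $\tilde\psi$ is vacuous. Given an arbitrary $x \in \omega(\varphi)$, strong connectedness supplies finite paths $x \to p$ and $q \to x$ in $\omega(\varphi)$ with $p,q \in \uimp\varphi$; splicing a backward tail of $\tilde\psi$ ending at $q$, the finite piece $q \to x \to p$, and a forward tail of $\tilde\psi$ starting at $p$ yields a full essential solution through $x$ in $\omega(\varphi)$, so $\esol(x,\omega(\varphi))\neq\emptyset$.

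I expect the main obstacle to be verifying the closure properties of $T$ and $T'$, because the argument depends on weaving together the identity $\Piv(y) = \cl y \cup [y]_\cV$ with the order-theoretic description of local closedness in Proposition \ref{prop:lcl-in-ftop}; once these are in hand, the remaining steps are routine path concatenations, and a direct application of Proposition \ref{prop:inv_conv_vcomp_is_iso} completes the proof.
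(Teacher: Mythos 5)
Your proof is correct, and it takes a genuinely different route from the paper's. For invariance, the paper first establishes Lemma~\ref{lem:v-valpha-in-out}, a fairly delicate statement about exits and entrances of every multivector contained in the limit set, and uses it to build an essential solution step by step; it then proves strong connectedness separately via a case analysis on whether $V_x$, $V_y$ belong to $\valpha(\varphi)$, using the set $\alpha(\varphi)\setminus\pipl(V_x,\alpha(\varphi))$ to derive a contradiction with minimality of the $\cV$-hull. You instead reverse the order: you prove strong connectedness first, via the sets $T$ and $T'$ of points that can reach (respectively be reached from) all of $\uimp\varphi$ within $\omega(\varphi)$, showing each is $\cV$-compatible and convex and thus, by minimality of the $\cV$-hull, equal to $\omega(\varphi)$; invariance then drops out by splicing the $q\to x\to p$ piece into a periodic essential solution built inside $\uimp\varphi$. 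This avoids Lemma~\ref{lem:v-valpha-in-out} entirely, and the preliminary observation that $\uimp\varphi$ is strongly connected (each point is visited by $\varphi$ infinitely often, so any pair is joined by a segment of $\varphi|_{[k,\infty)}$) is used by the paper too, though only implicitly in the assertion~(\ref{eq:thm-limit-iso-inv-1}). Your argument buys a shorter and more self-contained path to both conclusions; the paper's Lemma~\ref{lem:v-valpha-in-out} is heavier, but it is also reusable machinery.

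Two small cleanups worth making if you write this out: in the convexity check for $T$ you should state explicitly that $x,z\in T$ (not just $x$) so that $y\in\omega(\varphi)$ follows from convexity of $\omega(\varphi)$ before you prepend the step $y\to x$; and when splicing the tails of $\tilde\psi$ you should choose $p=q$ to be the base point $b\in\im\tilde\psi$ (which your $T=T'=\omega(\varphi)$ permits), since an arbitrary point of $\uimp\varphi$ need not lie in $\im\tilde\psi$. Neither affects the validity of the argument.
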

\begin{proof}
  The nonemptiness of $\alpha(\varphi)$ and $\omega(\varphi)$ follows from Proposition \ref{prop:uim-non-empty}. 
 
  The sets $\alpha(\varphi)$ and $\omega(\varphi)$ are $\cV$-compatible and locally closed by Proposition \ref{prop:vhull-vcomp-lcl}. 
  In order to prove that they are isolated invariant sets it suffices to apply Proposition \ref{prop:inv_conv_vcomp_is_iso} as long as we prove that $\alpha(\varphi)$ and $\omega(\varphi)$ are also invariant.

  We will first prove that $\alpha(\varphi)$ is invariant. 
  Let $x\in\alpha(\varphi)$.  
  Suppose that $\valpha(\varphi)$ is a singleton. 
  Then by Proposition \ref{prop:valpha-alpha}, $\alpha(\varphi)=\vclass{x}$. 
  Since $\varphi$ is essential, this is possible only if $\vclass{x}$ is critical. 
  It follows that the stationary solution $\psi(t)=x$ is essential. 
  Hence $\alpha(\varphi)$ is an isolated invariant set.

  Assume now that there are at least two different multivectors in $\valpha(\varphi)$. 
  Then the assumptions of Lemma \ref{lem:v-valpha-in-out} are satisfied and, as a consequence of (\ref{eq:lem-v-valpha-in-out-1}), for every $x\in\alpha(\varphi)$ there exist a point $x'\in\vclass{x}$ and a $y\in\alpha(\varphi)$ such that $y\in\left(\Piv(x')\setminus \vclass{x} \right)\cap\alpha(\varphi)$. 
  Hence, we can construct a right-essential solution 
  \[ 
    x_0\cdot x'_0\cdot x_1\cdot x'_1\cdot x_2\cdot x'_2\cdot\dotsc,  
  \]
  where $x_0=x$, $x'_i \in \vclass{x_i}$, and $x_{i+1}\in\left(\Piv(x'_i)\setminus \vclass{x_i} \right)\cap\alpha(\varphi)$. 
  Property (\ref{eq:lem-v-valpha-in-out-2}) provides a complementary left-essential solution. 
  Concatenation of both solutions gives an essential solution in $\alpha(\varphi)$. 
  Hence, we proved that $\alpha(\varphi)$ is invariant and consequently an isolated invariant set.

  Finally, we prove that $\alpha(\varphi)$ is strongly connected.
  To this end consider points $x,y\in\alpha(\varphi)$.
  We will show that then $\pathsv(x,y,\alpha(\varphi))\neq\emptyset$.
  Using the two abbreviations $V_x:=\vclass{x}$ and $V_y:=\vclass{y}$
  it is clear that
  \begin{align}\label{eq:thm-limit-iso-inv-1}
    V_x,V_y\in\valpha(\varphi) 
    \quad\Rightarrow\quad
    \pathsv(x,y,\alpha(\varphi))\neq\emptyset.
  \end{align}
  % The case when $V_x,V_y\in\valpha(\varphi)$ is clear.

  Assume now that $V_x\subset\alpha(\varphi)\setminus\bigcup\valpha(\varphi)$ and $V_y\in\valpha(\varphi)$.
  By (\ref{eq:thm-limit-iso-inv-1}) it is enough to show that there exists at least one point $z\in\bigcup\valpha(\varphi)$ such that $\pathsv(x, z,\alpha(\varphi))\neq\emptyset$.
  Suppose the contrary.
  %It follows from (\ref{eq:thm-limit-iso-inv-1}) that for every $z\in\bigcup\valpha(\varphi)$ we have $\pipl(x, z,\alpha(\varphi))=\emptyset$.
  The set $\pipl(V_x,\alpha(\varphi))$ is closed and $\cV$-compatible in $\alpha(\varphi)$ by Proposition \ref{prop:closedness_of_pi}.
  By Proposition \ref{prop:lcl-cl-sub} set $A:=\alpha(\varphi)\setminus\pipl(V_x,\alpha(\varphi))$ is locally closed.
  Clearly, it is $\cV$-compatible and contains $\bigcup\valpha(\varphi)$.
  Yet this results in a contradiction, because we have now found a smaller $\cV$-hull for $\bigcup\valpha(\varphi)$.

  Now, consider the case when $V_x\in\valpha(\varphi)$ and $V_y\subset\alpha(\varphi)\setminus\bigcup\valpha(\varphi)$.
  The set $\pipl(V_x,\alpha(\varphi))$ is $\cV$-compatible and locally closed by Proposition~\ref{prop:closedness_of_pi}.
  In view of (\ref{eq:thm-limit-iso-inv-1}) we have $\bigcup\valpha(\varphi)\subset\pipl(V_x,\alpha(\varphi))$.  
  Thus, one either has $V_y\subset\pipl(V_x,\alpha(\varphi))$ or $\pipl(V_x,\alpha(\varphi))$ is a smaller $\cV$-compatible, locally closed set containing $\bigcup\valpha(\varphi)$. In both cases we get a contradiction.

  Finally, let $V_x,V_y\subset\alpha(\varphi)\setminus\bigcup\valpha(\varphi)$ and let $z\in\bigcup\valpha(\varphi)$.
  Using the previous cases we can find $\psi_1\in\pathsv(x,z,\alpha(\varphi))$ and $\psi_2\in\pathsv(z,y,\alpha(\varphi))$.
  Then, $\psi_1\cdot\psi_2\in\pathsv(x,y,\alpha(\varphi))$. 
  This finishes the proof that $\alpha(\varphi)$ is strongly connected.

  The proof for $\omega(\varphi)$ is analogous.
\end{proof}

Let $\varphi$ be an essential solution. 
We say that an isolated invariant set $S$ \emph{absorbs $\varphi$ in positive} (respectively negative) \emph{time} if $\varphi(t)\in S$ for all $t\geq t_0$ (for all $t\leq t_0$) for some $t_0\in\ZZ$. 
We denote by $\Omega(\varphi)$ (respectively by $\cA(\varphi)$) the family of isolated invariant sets absorbing $\varphi$ in positive (respectively negative) time.
\begin{prop}\label{prop:limits-cap-absorb}
  For an essential solution $\varphi$ we have
  \begin{align}%\label{eq:absorbing-alpha}
    \alpha(\varphi)&=\bigcap \cA(\varphi),\label{eq:prop-limits-cap-absorb-1}\\
    \omega(\varphi)&=\bigcap\Omega(\varphi).\label{eq:prop-limits-cap-absorb-2}
  \end{align} 
\end{prop}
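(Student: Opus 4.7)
The plan is to prove the second identity $\omega(\varphi)=\bigcap\Omega(\varphi)$ and obtain the $\alpha$-statement by duality. By Proposition~\ref{prop:dual-full-solution} we have $\alpha(\varphi)=\omega(\varphi^{\op})$, and the family $\cA(\varphi)$ corresponds to $\Omega(\varphi^{\op})$ after reversing time, so once the $\omega$-case is established, the $\alpha$-case follows by applying it to $\varphi^{\op}$ in $\cV^{\op}$.

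For the inclusion $\bigcap\Omega(\varphi)\subset\omega(\varphi)$, I would show that $\omega(\varphi)$ itself belongs to $\Omega(\varphi)$, so the intersection on the left is taken over a family that contains $\omega(\varphi)$. By Theorem~\ref{thm:limit-set-is-iso-inv}, $\omega(\varphi)$ is an isolated invariant set. Proposition~\ref{prop:uim-non-empty} yields a $k\in\NN$ with $\uimp\varphi=\varphi([k,+\infty))$, and by construction $\uimp\varphi\subset\omega(\varphi)=\langle\uimp\varphi\rangle_\cV$. Hence $\varphi(t)\in\omega(\varphi)$ for every $t\geq k$, showing $\omega(\varphi)\in\Omega(\varphi)$, and the desired inclusion follows.

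For the reverse inclusion $\omega(\varphi)\subset\bigcap\Omega(\varphi)$, fix an arbitrary $S\in\Omega(\varphi)$. Then there is a $t_0\in\ZZ$ such that $\varphi(t)\in S$ for every $t\geq t_0$. Choosing $k\geq t_0$ large enough so that Proposition~\ref{prop:uim-non-empty} gives $\uimp\varphi=\varphi([k,+\infty))$, we obtain $\uimp\varphi\subset S$. Since $S$ is an isolated invariant set, it is $\cV$-compatible by Proposition~\ref{prop:iso-is-vcomp} and locally closed by Proposition~\ref{prop:iso_inv_is_conv}. Thus $S$ is a $\cV$-compatible, locally closed set containing $\uimp\varphi$, and the minimality of the $\cV$-hull gives $\omega(\varphi)=\langle\uimp\varphi\rangle_\cV\subset S$. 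Intersecting over all $S\in\Omega(\varphi)$ proves the inclusion.

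There is no serious obstacle: the argument hinges only on the stabilization of descending families in finite spaces (Proposition~\ref{prop:uim-non-empty}) and the two structural facts that isolated invariant sets are both locally closed and $\cV$-compatible, which place them in the class over which $\langle\cdot\rangle_\cV$ is defined. The only point that requires mild care is verifying that $\omega(\varphi)$ itself is in $\Omega(\varphi)$, which is what makes the intersection nonempty and turns the two inclusions into an equality.
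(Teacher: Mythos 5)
Your proof of the $\omega$-identity is correct and follows the same two-inclusion strategy as the paper (which does the symmetric $\alpha$-case and declares $\omega$ analogous): one inclusion comes from observing that $\omega(\varphi)$ itself lies in $\Omega(\varphi)$ via Theorem~\ref{thm:limit-set-is-iso-inv} and Proposition~\ref{prop:uim-non-empty}, the other from $\uimp\varphi\subset S$ together with the minimality of the $\cV$-hull and the fact that isolated invariant sets are locally closed and $\cV$-compatible (Propositions~\ref{prop:iso-is-vcomp} and~\ref{prop:iso_inv_is_conv}). Making the latter two citations explicit is a small improvement over the terse ``$\langle S\rangle_\cV = S$'' in the paper.

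However, the proposed duality step for the $\alpha$-identity has a gap you should not gloss over. Applying the proved $\omega$-case to $\varphi^{\op}$ in $\cV^{\op}$ requires $\varphi^{\op}$ to be an \emph{essential} solution of $\cV^{\op}$, and it requires the family $\Omega_{\cV^{\op}}(\varphi^{\op})$ to coincide with $\cA_\cV(\varphi)$. Neither transfer is automatic: essentiality is tested only against \emph{regular} multivectors, and a multivector that is critical in $(X,\cT)$ may become regular in $\Xop$ (the paper's Figure~\ref{fig:non-dual-crit} exhibits exactly this). So a constant solution sitting in a multivector critical for $\cV$ but regular for $\cV^{\op}$ is essential for $\cV$ yet not essential for $\cV^{\op}$; likewise ``isolated invariant set'' depends on the notion of essential solution and hence is not obviously preserved when passing to $\cV^{\op}$. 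This is easy to repair: instead of dualizing, simply rerun your two-inclusion argument verbatim with $\uimm\varphi$, $\alpha(\varphi)$, and $\cA(\varphi)$ in place of $\uimp\varphi$, $\omega(\varphi)$, and $\Omega(\varphi)$, which is exactly what the paper means by ``analogous.'' Theorem~\ref{thm:limit-set-is-iso-inv} already gives you $\alpha(\varphi)$ as an isolated invariant set for $\cV$ under the single hypothesis that $\varphi$ is essential for $\cV$, so nothing about $\cV^{\op}$ is needed.
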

\begin{proof}
  Let $\varphi$ be an essential solution.
  It follows from Proposition \ref{prop:uim-non-empty} that there exists a $k\in\ZZ^-$ such that $\varphi((-\infty,k])=\uimm\varphi\subset\alpha(\varphi)$.
  Moreover, by Proposition \ref{thm:limit-set-is-iso-inv} we have $\alpha(\varphi)\in \cA(\varphi)$.
  Hence $\bigcap\cA(\varphi)\subset\alpha(\varphi)$.
  To see the opposite inclusion take an $S\in\cA(\varphi)$.
  Then, there exists a $t_0\in\ZZ^-$ such that $\varphi((-\infty,t_0])\subset S$.
  It follows that 
  \[ 
    \alpha(\varphi)=\langle\uimm\varphi\rangle_\cV\subset \langle\varphi((-\infty, t_0])\rangle_\cV \subset \langle S\rangle_\cV=S.
  \]
  Hence, $\alpha(\varphi)\subset\cA(\varphi)$. This proves (\ref{eq:prop-limits-cap-absorb-1}).
  The proof of (\ref{eq:prop-limits-cap-absorb-2}) is analogous.
%
%%  Now consider $V\in\alpha_\cV(\varphi)$. 
%%  Then, by (\ref{eq:valpha}) there exists $x\in V$ such that $\varphi^{-1}(x)$ is left-infinite. 
%%  Therefore, if $S\in\cA(\varphi)$, then $x\in S$. 
%%  Thus $V\subset S$, because by Proposition \ref{prop:iso-is-vcomp} we have $\cV$-compatibility of $S$. 
%%Suppose that there exists $\vclass{y}\subset\alpha(\varphi)\setminus\bigcap\cA(\varphi)$. 
%%  Then $\vclass{y}\not\in\alpha_\cV(\varphi)$. 
%%  It follows that there exists an absorbing set $S\in\cA(\varphi)$ such that $S\cap\vclass{y}=\emptyset$. 
%%  Since $S$ absorbs $\varphi$ in negative time we can find $T<0$ such that $\varphi(t)\in S$ for all $t\leq T$.
%%  Thus
%%  \[
%%    \bigcap_{t<0}\im\varphi\restr{(-\infty, t]} \subset \im\varphi\restr{(-\infty, T]} \subset S.
%%  \]
%%  
%%  By Proposition \ref{prop:iso_inv_is_conv} the set $S$ is $\cV$-compatible and locally closed. 
%%  We get a contradiction, because by Proposition \ref{prop:valpha-alpha} the set $\alpha(\varphi)$ is supposed to be the intersection of all $\cV$-compatible and locally closed sets containing $\valpha(\varphi)$. 
%%  Finally, $\vclass{y}\subset \bigcap\cA(\varphi)$ and $\valpha(\varphi)\subset\bigcap\cA(\varphi)$
\end{proof}

Let $A,B\subset X$. 
We define the \emph{connection set from $A$ to $B$} by:
\begin{equation}\label{eq:connection_set}
  C(A,B) := \left\{x\in X\ |\ \exists_{\varphi\in\esol(x,X)}\ \alpha(\varphi)\subset A\ \text{and}\ \omega(\varphi)\subset B \right\}.
\end{equation}

\begin{prop}\label{prop:invariance_of_connection}
  Assume $A,B\subset X$. 
  Then the connection set $C(A,B)$ is an isolated invariant set.
\end{prop}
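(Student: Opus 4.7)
The plan is to verify the three hypotheses of Proposition \ref{prop:inv_conv_vcomp_is_iso}, namely that $C(A,B)$ is invariant, $\cV$-compatible, and locally closed, and then conclude that $C(A,B)$ is an isolated invariant set. All three properties will be established by exhibiting suitable essential solutions whose $\alpha$- and $\omega$-limits remain inside $A$ and $B$; the limits are preserved under finite modifications of a solution, so the work essentially reduces to splicing together existing essential solutions in a legal way.

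For invariance, given $x\in C(A,B)$ with a witness $\varphi\in\esol(x,X)$, I would observe that for every $t\in\ZZ$ the shift $\varphi\circ\tau_t$ is an essential full solution through $\varphi(t)$ with the same ultimate images, hence with $\alpha=\alpha(\varphi)\subset A$ and $\omega=\omega(\varphi)\subset B$. Thus $\im\varphi\subset C(A,B)$, which gives $\varphi\in\esol(x,C(A,B))$ and proves $\Inv C(A,B)=C(A,B)$.

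For $\cV$-compatibility, fix $x\in C(A,B)$, a witness $\varphi$, and $y\in[x]_\cV$. Since $y\in[x]_\cV\subset\Piv(x)$ and $x\in[y]_\cV\subset\Piv(y)$, the map $\psi$ obtained from $\varphi$ by inserting the block $y\cdot x$ after time $0$ is again a full solution; passing to an appropriate shift places $y$ at time $0$. The set of multivectors visited by $\psi$ coincides with $\cV(\varphi)$, and for any regular $V\in\cV(\psi)$ the set $\{t:\psi(t)\notin V\}$ is again both left- and right-infinite, so $\psi$ is essential. The insertion affects only finitely many values, hence $\alpha(\psi)=\alpha(\varphi)\subset A$ and $\omega(\psi)=\omega(\varphi)\subset B$, giving $y\in C(A,B)$.

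For local closedness I would use Proposition \ref{prop:lcl-in-ftop} and verify convexity with respect to $\leq_\cT$. Given $x,z\in C(A,B)$ and $y\in X$ with $x\leq_\cT y\leq_\cT z$, Theorem \ref{thm:alexandroff} yields $x\in\cl y\subset\Piv(y)$ and $y\in\cl z\subset\Piv(z)$. Pick witnesses $\varphi_x,\varphi_z$. Concatenating the backward tail $\varphi_z^-$, the two-step path $z\cdot y\cdot x$, and the forward tail $\varphi_x^+$ yields a full solution $\psi$ through $y$; it is essential because left-essentiality descends from $\varphi_z$ and right-essentiality from $\varphi_x$, and the two tails are unchanged, so $\alpha(\psi)=\alpha(\varphi_z)\subset A$ and $\omega(\psi)=\omega(\varphi_x)\subset B$. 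Thus $y\in C(A,B)$, and Proposition \ref{prop:lcl-in-ftop} applies.

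The only point requiring genuine care is the bookkeeping for essentiality of the spliced solutions in the compatibility and convexity steps; the potential pitfall is a regular multivector that appears only transiently inside the splice, but since the tails of $\varphi_x,\varphi_z,\varphi$ are themselves essential and contain all asymptotic behaviour, the essentiality of the splice follows from the infinitude of the escape times inherited from the tails. Once this is checked, Proposition \ref{prop:inv_conv_vcomp_is_iso} finishes the proof.
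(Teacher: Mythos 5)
Your proof follows essentially the same strategy as the paper's: you verify the three hypotheses of Proposition~\ref{prop:inv_conv_vcomp_is_iso} (invariance, $\cV$-compatibility, local closedness) by splicing together witness solutions, and in each case the key observation is that the spliced solution has the same ultimate backward and forward images as the pieces it was built from, so its $\alpha$- and $\omega$-limit sets are preserved. The only cosmetic difference is that you spell out more carefully why the spliced solutions remain essential; the paper treats this as immediate.
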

\begin{proof}
  To prove that $C(A,B)$ is invariant, take an $x\in C(A,B)$ and choose a $\varphi\in\esol(x,X)$ as in (\ref{eq:connection_set}). 
  It is clear that $\varphi(t)\in C(A,B)$ for every $t\in\ZZ$. 
  Thus, $\varphi\in\esol(x,C(A,B))$, and this in turn implies
  $x\in\inv C(A,B)$ and shows that~$C(A,B)$ is invariant.
  Now consider a point $y\in\vclass{x}$. 
  Then the solution $\rho=\varphi^-\cdot y\cdot\varphi^+$ is a well-defined essential solution through $y$ such that $\alpha(\rho)\subset A$ and $\omega(\rho)\subset B$. 
  Thus, $C(A,B)$ is $\cV$-compatible. 

  In order to prove that $C(A,B)$ is locally closed, consider $x,z\in C(A,B)$, and a $y\in X$ such that $z\leq_\cT y\leq_\cT x$. 
  Select essential solutions $\varphi_x\in\esol(x,C(A,B))$ and $\varphi_z\in\esol(z,C(A,B))$. 
  Then $\psi:=\varphi^-_x\cdot y\cdot\varphi^+_z$ is a well-defined essential solution through $y$ such that $\alpha(\psi)\subset A$ and $\omega(\psi)\subset B$.
  It follows that $y\in C(A,B)$. 
  Thus, by Proposition \ref{prop:lcl-in-ftop}, $C(A,B)$ is locally closed. 
  Finally, Proposition \ref{prop:inv_conv_vcomp_is_iso} proves that $C(A,B)$ is an isolated invariant set.
\end{proof}

\begin{prop}\label{prop:empty_conn_attr_dual_rep}
  Assume $A$ is an attractor. 
  Then $C(A, A^\star)=\emptyset$. 
  Similarly, if $R$ is a repeller, then $C(R^\star,R)=\emptyset$.
\end{prop}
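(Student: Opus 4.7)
The plan is to prove the contrapositive-style claim by assuming a connecting orbit exists and deriving a contradiction from the tension between the positive invariance of~$A$ and the disjointness $A \cap A^\star = \emptyset$.

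Suppose for contradiction that $x \in C(A,A^\star)$. By definition there exists an essential solution $\varphi \in \esol(x,X)$ with $\alpha(\varphi) \subset A$ and $\omega(\varphi) \subset A^\star$. Since $\alpha(\varphi)$ is nonempty by Theorem~\ref{thm:limit-set-is-iso-inv} (indeed $\uimm\varphi \subset \alpha(\varphi)$ and $\uimm\varphi$ is nonempty by Proposition~\ref{prop:uim-non-empty}), I can pick some $t_0 \in \ZZ^-$ with $\varphi(t_0) \in A$. Because $A$ is an attractor, $\Pi_\cV(A) = A$, and an easy induction on $t \geq t_0$ using $\varphi(t+1) \in \Pi_\cV(\varphi(t))$ shows $\varphi(t) \in A$ for every $t \geq t_0$. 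In particular $\uimp\varphi \subset A$.

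Now the key step: I want to conclude $\omega(\varphi) \subset A$. By Theorem~\ref{thm:attractor_closedness} the attractor $A$ is closed (hence locally closed) and $\cV$-compatible, so $A$ itself belongs to the family of $\cV$-compatible locally closed supersets of $\uimp\varphi$ whose intersection defines $\langle \uimp\varphi\rangle_\cV$. Therefore $\omega(\varphi) = \langle \uimp\varphi\rangle_\cV \subset A$. But by assumption $\omega(\varphi) \subset A^\star \subset X \setminus A$, forcing $\omega(\varphi) \subset A \cap (X\setminus A) = \emptyset$, which contradicts the nonemptiness of $\omega(\varphi)$ (again by Theorem~\ref{thm:limit-set-is-iso-inv}). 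Hence $C(A,A^\star) = \emptyset$.

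For the repeller case, I would apply the dual argument: either repeat the same reasoning with $\Pi_\cV^{-1}$ replacing $\Pi_\cV$ (using $\Pi_\cV^{-1}(R) = R$, the openness and $\cV$-compatibility of $R$ from Theorem~\ref{thm:repeller_openess}, and the nonemptiness of $\alpha(\varphi)$ in place of $\omega(\varphi)$), or invoke Proposition~\ref{prop:piv-dual} together with Proposition~\ref{prop:dual-full-solution} to reduce the statement for $\cV$ to the already-proved attractor statement for $\cV^{\op}$. There is no real obstacle here; the only point one must be careful about is not to confuse "$\alpha(\varphi) \subset A$" with "$\varphi$ is absorbed by $A$ in negative time" (the latter is slightly stronger in general), but for this argument we only need the former to guarantee that $\varphi$ ever enters~$A$, after which positive invariance does the rest of the work.
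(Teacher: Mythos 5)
Your proof is correct and rests on the same mechanism as the paper's: positive invariance of the attractor $A$ (i.e., $\Pi_\cV(A)=A$) prevents the solution from ever leaving $A$ once it enters. The paper phrases this slightly more economically by directly exhibiting the transition time $t$ with $\varphi(t)\in A$ and $\varphi(t+1)\notin A$ and contradicting $\Pi_\cV(A)=A$, rather than routing through the $\cV$-hull argument to conclude $\omega(\varphi)\subset A$ — but the substance is the same.
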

\begin{proof}
  Suppose there exists an $x\in C(A, A^\star)$. 
  Then by (\ref{eq:connection_set}) we can choose a $\varphi\in\esol(x,X)$ and a $t\in\ZZ$ such that $\varphi(t)\in A$ and $\varphi(t+1)\not\in A$. 
  However, since $A$ is an attractor, $\varphi(t)\in A$ implies $\varphi(t+1)\in A$, a contradiction.
  The proof for a repeller is analogous.
\end{proof}

%%%%%%%%%%%%%%%%%%%%%%%%%%%%%%%%%%%%%%%%%%%%%%%
%%%%%%%%%%%%%%%%%%%%%%%%%%%%%%%%%%%%%%%%%%%%%%%
%%%%%%%%%%%%%%%%%%%%%%%%%%%%%%%%%%%%%%%%%%%%%%%
\section{Morse decomposition, Morse equation, Morse inequalities}\label{sec:morse-decomposition}

In this final section we define Morse decompositions and prove the Morse inequalities for combinatorial multivector fields.
We recall the general assumption that $\cV$ is a fixed combinatorial vector field on a finite topological space $X$
and $X$ is invariant. 

%%%%%%%%%%%%%%%%%%%%%%%%%%%%%%%%%%%%%%%%%%%%%%%
\subsection{Morse decompositions}

\begin{defn}\label{def:morse_decomposition}
  Assume $X$ is invariant and  $(\PP,\leq)$ is a finite poset.
  Then the collection $\cM=\setof{M_p\mid p\in \PP}$ is called a {\em Morse decomposition of $X$} if
  the following conditions are satisfied:
  \begin{enumerate}[label=(\roman*)]
    \item\label{it:md_i} $\cM$ is a family of mutually disjoint, isolated invariant subsets of $X$.
    \item\label{it:md_ii} For every essential solution $\varphi$ in $X$ either $\im{\varphi}\subset M_r$ for an $r\in\PP$ or
       there exist $p,q\in\PP$ such that $q > p$ and
  %%  \item\label{it:md_ii} for every point $x\in X$, and for every essential solution $\varphi\in\esol(x,X)$ either $\im{\varphi}\subset M_r$ for an $r\in\PP$ or
  %%     there exist $p,q\in\PP$ such that $p<_\cM q$ and
    \begin{equation*}
       \alpha(\varphi)\subset M_q\quad \text{and}\quad \omega(\varphi)\subset M_p.
    \end{equation*}
  \end{enumerate}
  We refer to the elements of $\cM$ as \emph{Morse sets}.
\end{defn}

Note that in the classical definition of Morse decomposition the analogue of condition {\it \ref{it:md_ii}\/} is formulated in terms of trajectories passing through points $x \not\in \bigcup\cM$. 
In our setting we have to consider all possible solutions. 
There are two reasons for that: the non-uniqueness of a solution passing through a point and the tightness of finite topological spaces. 
In particular, in the finite topological space setting it is possible to have a non-trivial Morse decomposition such that every point is contained in a Morse set. 
Without our modification of the definition of Morse decomposition, recurrent behavior spreading into several sets is a distinct possibility. 
%In particular, any partition into critical multivectors would be a correct Morse decomposition. 
Figure \ref{fig:morse-dec-count} illustrates such an example.

\begin{figure}[tb]
  \includegraphics[width=0.4\textwidth]{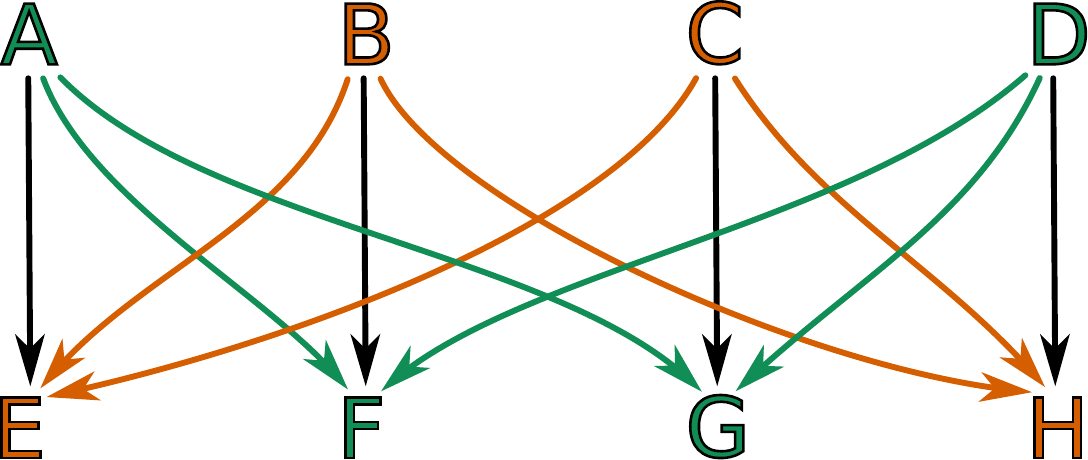}
  \caption{
    A sample combinatorial multivector field $\cV=\left\{\{A,D,F,G\}, \{B,C,E,H\}\right\}$ on the finite topological space $X=\left\{ A, B,C,D,E,F,G,H\right\}$ with Alexandroff topology induced by the partial order indicated by arrows.
    If we consider $\cM=\cV$, then one obtains a partition into isolated invariant sets
    with $X\setminus\cM=\emptyset$. 
    Note that $\dots D\cdot H\cdot B\cdot F\cdot D\cdot\dots$ is a periodic trajectory which passes through both ``Morse sets.''
  }
  \label{fig:morse-dec-count}
\end{figure}

%% \begin{defn}
%%  We say that an isolated invariant set $M$ is {\em minimal} if the only possible Morse decomposition of $M$ with nonempty Morse sets is $\mathcal{M}=\{M\}$. 
%%  We say that Morse decomposition $\mathcal{M}$ of $X$ is {\em minimal} if every $M_i\in\mathcal{M}$ is minimal.
%% \end{defn}

\begin{prop}\label{prop:attr-rep-pair-ms}
    Let~$X$ be an invariant set, 
  let $A\subset X$ be an attractor, and let $A^\star$ denote its nonempty dual repeller. 
  Furthermore, define $M_1:=A$, $M_2:=A^\star$, and let $\PP:=\{1,2\}$ be an indexing set with the order induced from~$\NN$.
  Then $\cM=\{M_1, M_2\}$ is a Morse decomposition of $X$.
\end{prop}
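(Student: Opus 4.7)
The plan is to verify in turn the two conditions of Definition \ref{def:morse_decomposition}, so I would begin with condition~(i): by Theorem \ref{thm:attractor_closedness} the attractor $M_1 = A$ is a closed isolated invariant set, while by Theorem \ref{thm:dual-attractor} together with Theorem \ref{thm:repeller_openess} the set $M_2 = A^\star$ is an open isolated invariant set. Disjointness is automatic from $A^\star \subset X \setminus A$.

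For condition~(ii) I would fix an essential solution $\varphi : \ZZ \to X$ and split into three cases according to how $\im\varphi$ meets $A$. If $\im\varphi \cap A = \emptyset$, then $\varphi$ is an essential solution contained in $X \setminus A$, so every shift of $\varphi$ witnesses $\varphi(t) \in \Inv(X\setminus A) = A^\star = M_2$, giving $\im\varphi \subset M_2$. If $\im\varphi \subset A$, then trivially $\im\varphi \subset M_1$. In the remaining mixed case I would use forward invariance of the attractor, $\Piv(A) = A$, to conclude that the set $\{t\in\ZZ : \varphi(t) \in A\}$ is upward closed; hence there is a smallest $t_0$ with $\varphi([t_0, \infty)) \subset A$ and necessarily $\varphi((-\infty, t_0)) \subset X \setminus A$. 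By Proposition \ref{prop:uim-non-empty} I may choose $k$ so large that $\uimp\varphi \subset A$ and $\uimm\varphi \subset X \setminus A$. Since $A$ is closed and $\cV$-compatible, and $X\setminus A$ is open (hence locally closed) and $\cV$-compatible, the definition of the $\cV$-hull yields
\[
  \omega(\varphi) = \langle \uimp\varphi\rangle_\cV \subset A = M_1,
  \qquad
  \alpha(\varphi) = \langle \uimm\varphi\rangle_\cV \subset X \setminus A.
\]

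To upgrade the inclusion $\alpha(\varphi) \subset X \setminus A$ to $\alpha(\varphi) \subset A^\star$, I invoke Theorem \ref{thm:limit-set-is-iso-inv}: the limit set $\alpha(\varphi)$ is itself an isolated invariant set, so every $y \in \alpha(\varphi)$ admits an essential solution lying in $\alpha(\varphi) \subset X \setminus A$, which places $y$ in $\Inv(X\setminus A) = A^\star$. Thus $\alpha(\varphi) \subset M_2$ and $\omega(\varphi) \subset M_1$, and since $2 > 1$ in the natural order on $\PP$, condition~(ii) is satisfied.

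The main obstacle I anticipate is precisely this final upgrade in the mixed case. Proposition \ref{prop:esol-in-uim} only delivers a single essential solution whose image lies in $\uimm\varphi$, and that is not enough to conclude that every point of the $\cV$-hull $\alpha(\varphi)$ lies in the dual repeller. The invariance of the limit set provided by Theorem \ref{thm:limit-set-is-iso-inv} is exactly what bridges this gap, so the argument relies on having that theorem available. The rest of the verification reduces to careful bookkeeping between forward invariance of $A$, backward invariance of $A^\star$, and the elementary fact that $X\setminus A$ is both $\cV$-compatible and locally closed.
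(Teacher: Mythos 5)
Your proof is correct, and it reaches the conclusion by a genuinely different route than the paper. The paper proceeds through the structure of the limit sets: it cites Theorem \ref{thm:limit-set-is-iso-inv} to obtain that $\alpha(\varphi)$ and $\omega(\varphi)$ are strongly connected invariant sets, then Proposition \ref{prop:minimal-is-scc} to see they are minimal, and finally Proposition \ref{prop:minimal-intersect-attr} to conclude that each limit set lies entirely in $A$ or entirely in $A^\star$; the four resulting cases are then handled, with the two mismatched cases resolved by forward invariance of the attractor. You instead analyze the structure of the solution itself: the set $\{t : \varphi(t)\in A\}$ is upward closed by $\Piv(A)=A$, which forces a clean dichotomy in the ultimate images $\uimp\varphi\subset A$ and $\uimm\varphi\subset X\setminus A$, and you then exploit the definition of the $\cV$-hull together with the fact that $A$ and $X\setminus A$ are both $\cV$-compatible and locally closed to trap $\omega(\varphi)$ in $A$ and $\alpha(\varphi)$ in $X\setminus A$. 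You still need Theorem \ref{thm:limit-set-is-iso-inv}, but only for the invariance of $\alpha(\varphi)$ (to pass from $\alpha(\varphi)\subset X\setminus A$ to $\alpha(\varphi)\subset\Inv(X\setminus A)=A^\star$), whereas the paper also uses its strong-connectedness conclusion. The net effect is that your argument bypasses the minimal-set lemmas entirely; what you pay for that is the explicit case split on $\{t : \varphi(t)\in A\}$ and the direct bookkeeping with ultimate images, which the paper's minimality lemma packages away. Both arguments are sound, and your Case~1 observation that a solution entirely in $X\setminus A$ has its whole image in $A^\star$ (via shifts) is exactly the same mechanism the paper uses in its ``$\alpha,\omega\subset M_2$'' case, just applied from the start.
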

\begin{proof}
  By Theorems \ref{thm:attractor_closedness} and \ref{thm:repeller_openess} both $A$ and $A^\star$ are isolated invariant sets which are clearly disjoint.
  Let $x\in X$ and let $\varphi\in\esol_\cV(x,X)$.
  By Theorem~\ref{thm:limit-set-is-iso-inv} the set $\omega(\varphi)$ is strongly connected and invariant. 
  It is also minimal by Proposition \ref{prop:minimal-is-scc}.
  By Proposition $\ref{prop:minimal-intersect-attr}$ it is either a subset of $A$ or a subset of $\inv (X\setminus A)=A^\star$.
  The same holds for $\alpha(\varphi)$.

  We therefore have four cases.
  The situation $\alpha(\varphi)\subset M_2$ and $\omega(\varphi)\subset M_1$ is consistent with the definition.
  The case $\alpha(\varphi)\subset M_1$ and $\omega(\varphi)\subset M_2$ is clearly in conflict with the definition of an attractor and a repeller.
  Now suppose that we have $\alpha(\varphi)\subset M_1$ and $\omega(\varphi)\subset M_1$. 
  It follows that there exists a $t\in\ZZ$ such that $\varphi((-\infty,t]))\subset A$. 
  Since $A$ is an attractor we therefore have $\varphi(t+1)\in \Piv(\varphi(t)) \subset A$,
  and induction easily implies $\im\varphi\subset A=M_1$.
  The same argument holds for $M_2$.
\end{proof}

\subsection{Strongly connected components as Morse decomposition}

We recall that $G_\cV$ stands for the digraph interpretation of the multivalued map~$\Piv$ associated with the multivector field $\cV$ on $X$. 
\begin{thm}\label{thm:sccs_morse_decomp}
  Assume $X$ is invariant. Consider the family $\cM$ of all strongly connected components~$M$ of~$G_\cV$ with~$\esol(M)\neq\emptyset$.
  Then $\cM$ is a minimal Morse decomposition of $X$.
\end{thm}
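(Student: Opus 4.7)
The plan is to verify both conditions of Definition~\ref{def:morse_decomposition} for $\cM$ and then establish minimality. For condition~\ref{it:md_i}, the mutual disjointness of the elements of $\cM$ is immediate because strongly connected components form a partition of the vertex set of $G_\cV$, and the fact that each $M\in\cM$ is an isolated invariant set is exactly the content of Theorem~\ref{thm:scc_is_iso_inv}.

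To verify condition~\ref{it:md_ii}, I will equip the index set $\PP$ with the partial order $M_p \leq M_q$ iff there exists a path in $G_\cV$ from some vertex of $M_q$ to some vertex of $M_p$; reflexivity and transitivity are clear, and antisymmetry follows from the maximality of strongly connected components. Given an essential solution $\varphi$ in $X$, Theorem~\ref{thm:limit-set-is-iso-inv} says that $\alpha(\varphi)$ and $\omega(\varphi)$ are non-empty strongly connected isolated invariant sets; since each is a non-empty invariant set it admits an essential solution, so the unique strongly connected components $M_q\supset\alpha(\varphi)$ and $M_p\supset\omega(\varphi)$ both lie in $\cM$. By Proposition~\ref{prop:uim-non-empty} there exist $t_0<s_0$ with $\varphi((-\infty,t_0])\subset\alpha(\varphi)\subset M_q$ and $\varphi([s_0,\infty))\subset\omega(\varphi)\subset M_p$. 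If $M_p\neq M_q$, then the segment $\varphi|_{[t_0,s_0]}$ realizes a path from $M_q$ to $M_p$, so $M_p<M_q$ in the partial order. If $M_p=M_q$, I claim $\im\varphi\subset M_p$: any $\varphi(t)\notin M_p$ would produce paths $\varphi(t_0)\to\varphi(t)\to\varphi(s_0)$ in $G_\cV$ together with a return path from $\varphi(s_0)$ to $\varphi(t_0)$ inside $M_p$ (provided by strong connectedness), forcing $\varphi(t)$ into the same SCC as $M_p$ and contradicting maximality.

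For minimality, let $\cM'=\{M'_r\mid r\in\PP'\}$ be any other Morse decomposition of $X$; I will show that each $M\in\cM$ is contained in some $M'_r$. Fix $M\in\cM$ and an $x\in M$. Because $M$ is strongly connected with $\esol(M)\neq\emptyset$, the equivalence between essential recurrence and non-emptiness of the invariant part for strongly connected sets (the unlabeled proposition preceding Proposition~\ref{prop:scc-Vcomp-lcl}) yields an essential periodic solution $\psi_x$ through $x$ with $\im\psi_x\subset M$. Periodicity gives $\uimm\psi_x=\uimp\psi_x=\im\psi_x$, so $\alpha(\psi_x)=\omega(\psi_x)$ is non-empty; the mutual disjointness of Morse sets in $\cM'$ then rules out the connection alternative of Definition~\ref{def:morse_decomposition}\ref{it:md_ii} and forces $\im\psi_x\subset M'_{r(x)}$ for some $r(x)\in\PP'$. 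For another $y\in M$, strong connectedness of $M$ provides paths between $x$ and $y$ in both directions; inserting such a round trip into $\psi_x$ at a visit to $x$ produces an essential solution $\psi$ in $M$ passing through both $x$ and $y$ whose backward and forward tails agree with those of $\psi_x$, so $\alpha(\psi)=\omega(\psi)$ and the same argument places $x$ and $y$ in a common Morse set of $\cM'$; hence $r(y)=r(x)$ and $M\subset M'_{r(x)}$.

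The main obstacle is the sub-case $M_p=M_q$ of condition~\ref{it:md_ii}, which requires converting the non-triviality of $\im\varphi$ outside $M_p$ into a contradiction with maximality of $M_p$ as a strongly connected component; all remaining steps are largely organizational, tying together the limit-set machinery of Section~\ref{sec:attr-rep-lim} with the essential-recurrence characterization of strongly connected sets.
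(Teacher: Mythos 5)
Your proof is correct, and the overall architecture matches the paper's: verify condition \ref{it:md_i} via Theorem~\ref{thm:scc_is_iso_inv}, define the order on the index set by reachability in $G_\cV$, verify condition \ref{it:md_ii} by placing the limit sets in Morse sets, handle the case $M_p = M_q$ by showing $\im\varphi$ sits in the common SCC, and finally establish minimality. There are, however, two genuine differences worth noting.

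First, to show that $\alpha(\varphi)$ and $\omega(\varphi)$ land in elements of $\cM$, you invoke Theorem~\ref{thm:limit-set-is-iso-inv} directly (each limit set is a nonempty strongly connected invariant set, hence lies in a unique SCC which then has a nonempty essential solution). The paper instead argues from scratch that all of $\bigcup\vomega(\varphi)$ is reachable from itself along $\varphi$, hence lies in one SCC $C$, and then applies Proposition~\ref{prop:valpha-alpha} together with $\cV$-compatibility and local closedness of $C$. Your route reuses more of the machinery already established in Section~\ref{sec:attr-rep-lim} and is arguably cleaner; the paper's route is more self-contained.

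Second, and more substantially, you formalize \emph{minimality} differently. You prove that $\cM$ \emph{refines} every Morse decomposition $\cM'$ of $X$: for each $M\in\cM$ you find, via an essential periodic solution $\psi_x$ through a chosen $x\in M$ and the trick of splicing in a round trip to any $y\in M$, that $M$ sits inside a single $M'_{r(x)}$. The paper instead interprets minimality as \emph{atomicity}: no $M\in\cM$ admits a nontrivial Morse decomposition when $M$ is treated as the ambient invariant space with $\cV_M$. The paper's argument picks disjoint multivectors $V_1, V_2$ in two hypothetical Morse sets of $M$, builds an alternating periodic essential solution between them, and derives a contradiction with disjointness. Both formalizations are reasonable readings of ``minimal Morse decomposition,'' and both arguments hinge on the same phenomenon --- periodic essential solutions force $\alpha = \omega$, which disjointness then collapses into a single Morse set --- but they prove technically distinct statements. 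Your refinement formulation is closer to the classical notion of ``finest Morse decomposition''; the paper's atomicity formulation is what its proof literally establishes. If you intend to match the paper precisely you would need to either show your notion implies the paper's (which requires extending a hypothetical nontrivial Morse decomposition of $M$ to one of $X$ and checking the axioms, a step that needs care) or simply give the paper's direct argument. As a standalone proof of a sensible reading of the theorem, however, yours is sound.
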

\begin{proof}
  For convenience, assume that $\cM=\{M_i\mid i\in\PP\}$ is bijectively indexed by a finite set $\PP$.
  Any two strongly connected components $M_i,M_j\in\cM$ are clearly disjoint and by Theorem \ref{thm:scc_is_iso_inv} they are isolated invariant sets. 
  Hence, condition {\it \ref{it:md_i}} of a Morse decomposition is satisfied.

  Define a relation $\leq$ on the indexing set $\PP$ by
  \[ 
    i\leq j\ \Leftrightarrow\ 
    \exists_{\varphi\in\pathsv(X)}\ 
    \lep{\varphi}\in M_j\ \text{and}\ \rep{\varphi}\in M_i.
  \]
  It is clear that $\leq$ is reflexive.
  To see that it is transitive consider $M_i,M_j,M_k\in\cM$ such that $k\leq j\leq i$. 
  It follows that there exist paths $\varphi$ and $\psi$ such that $\lep{\varphi}\in M_i$, $\rep{\varphi}, \lep{\psi}\in M_j$ and $\rep{\psi}\in M_k$.
  Since $M_j$ is strongly connected we can find $\rho\in\pathsv(\rep{\varphi},\lep{\psi},X)$. 
  The path $\varphi\cdot\rho\cdot\psi$ clearly connects $M_i$ with $M_k$ proving that $k\leq i$.

  In order to show that $\leq$ is antisymmetric consider sets $M_i,M_j$ with $i\leq j$ and $j\leq i$.
  It follows that there exist paths $\varphi$ and $\psi$ such that $\lep{\varphi},\rep{\psi}\in M_i$ and $\rep{\varphi},\lep{\psi}\in M_j$. 
  Since the sets $M_i,M_j$ are strongly connected we can find paths $\rho$ and $\rho'$ from $\rep{\varphi}$ to $\lep{\psi}$ and from $\rep{\psi}$ to $\lep{\varphi}$ respectively. 
  Clearly, $\varphi\in\pathsv(\lep{\varphi},\rep{\varphi}, X)$ and $\rho\cdot\psi\cdot\rho'\in(\rep{\varphi},\lep{\varphi}, X)$. 
  This proves that $M_i$ and $M_j$ are the same strongly connected component.

  Let $x\in X$ and let $\varphi\in\esol(x,X)$. 
  We will prove that $\alpha(\varphi)\subset M_q$ and $\omega(\varphi)\subset M_p$ for some $M_p,M_q\in\cM$. 
  Note that $\varphi^{-1}(V)$ is right-infinite for any $V\in\vomega(\varphi)$. 
  It follows that for any $V, W\in \vomega(\varphi)$ we can find $0<t_0<t_1<t_2$
  such that $\varphi(t_0),\varphi(t_2)\in V$ and $\varphi(t_1)\in W$.
  Thus, points in $\bigcup\vomega(\varphi)$ are in the same strongly connected component and therefore 
    $\bigcup\omega_\cV(\varphi)\subset C$ for some strongly path connected component of $G_\cV$. 
  Moreover, $\dotsc\cdot\varphi\restr{[t_0,t_2]}\cdot\varphi\restr{[t_0,t_2]}\cdot\dotsc$ is clearly an essential solution in $C$. 
  Thus $C=M_p\in\cM$ for some $p\in\PP$.
  By Proposition \ref{prop:scc-Vcomp-lcl} the set $M_p$ is $\cV$-compatible and locally closed. 
  Hence, $M_p$ is a superset of the $\cV$-hull of $\bigcup\vomega(\varphi)$ and from Proposition \ref{prop:valpha-alpha} we get $\omega(\varphi)\subset M_p$. 
  A similar argument gives $\alpha(\varphi)\subset M_q$ for some $q\in\PP$. 
  It is clear from the definition of $\leq$ that $p\leq q$.

  Next, we show that $\alpha(\varphi)\subset M\in\cM$ and $\omega(\varphi)\subset M$ implies $\im\varphi\subset M$. 
  Thus, take a $y\in\im\varphi$.
  Then $y=\varphi(t_1)$ for some $t_1\in\ZZ$. 
  Since $\alpha(\varphi)$ and $\omega(\varphi)$ are subsets of $M$, we can find $t_0<t_1$ and $t_2>t_1$, such that $x:=\varphi(t_0)\in M$ and $z:=\varphi(t_2)\in M$.
  Since $M$ is strongly connected there exists a path $\rho$ from $z$ to $x$.
  Then $\varphi\restr{[t_1,t_2]} \cdot \rho\in\pathsv(y,x,X)$.
  Since $\varphi\restr{[t_0,t_1]}\in\pathsv(x,y,X)$, we conclude that $y$ belongs to the strongly connected component of $x$, that is, $y\in M$.
  This completes the proof that $\cM$ is a Morse decomposition.

  To show that $\cM$ is a minimal Morse decomposition assume the contrary.
  Then we can find a Morse decomposition $\cM'$ of an $M\in\cM$ with at least two different Morse sets $M_1$ and $M_2$ in $\cM'$. 
  Since $M_1$ and $M_2$ are disjoint and $\cV$-compatible we can find disjoint multivectors $V_1\subset M_1$ and $V_2\subset M_2$.
  Since the set $M$ is strongly connected we can find paths $\varphi\in\pathsv(x,y,M)$ and $\rho\in\pathsv(y,x,M)$ with $x\in V_1$ and $y\in V_2$. 
  The alternating concatenation of these paths $\psi:=\dotsc\cdot\varphi\cdot\rho\cdot\varphi\cdot\rho\dotsc$ is a well-defined essential solution. 
  Then $\emptyset\neq\im\psi\subset\alpha(\psi)\cap\omega(\psi)$ which implies $\im\psi\subset M_3$ for an $M_3\in\cM'$.
  However, $\im\psi\cap M_1\neq\emptyset\neq\im\psi\cap M_2$, a contradiction.
\end{proof}

%%%%%%%%%%%%%%%%%%%%%%%%%%%%%%%%%%%%%%%%%%%%%%%
\subsection{Morse sets}

For a subset $I\subset\PP$ we define the \emph{Morse set of~$I$} by
\[ M(I):=\bigcup_{i,j\in I} C(M_i,M_j).\]

\begin{thm}\label{thm:MI_iso_inv}
  The set $M(I)$ is an isolated invariant set.
\end{thm}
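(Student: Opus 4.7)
The plan is to apply Proposition \ref{prop:inv_conv_vcomp_is_iso}, so it suffices to show that $M(I)$ is invariant, $\cV$-compatible, and locally closed. The first two properties come essentially for free: by Proposition \ref{prop:invariance_of_connection} each connection set $C(M_i,M_j)$ is an isolated invariant set, hence invariant and (by Proposition \ref{prop:iso-is-vcomp}) $\cV$-compatible. Propositions \ref{prop:union-of-invariant-sets} and \ref{prop:union-intersection-vcomp} then promote these properties to the finite union $M(I)$.

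The real work is in showing that $M(I)$ is locally closed, since a union of locally closed sets need not be locally closed. By Proposition \ref{prop:lcl-in-ftop}, I will take $x,z\in M(I)$ and $y\in X$ with $x\leq_\cT y\leq_\cT z$ and produce a pair of indices $a,b\in I$ with $y\in C(M_a,M_b)$. Fix $i,j,k,l\in I$ with $x\in C(M_i,M_j)$ and $z\in C(M_k,M_l)$, and pick witnessing essential solutions $\varphi_x\in\esol(x,X)$ and $\varphi_z\in\esol(z,X)$. Since $x\leq_\cT y\leq_\cT z$ gives $x\in\cl y\subset\Piv(y)$ and $y\in\cl z\subset\Piv(z)$, the concatenation
\[
  \psi := \varphi_z^-\cdot y\cdot x\cdot\varphi_x^+
\]
is a well-defined full solution through $y$. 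The plan is to check that $\psi$ is essential and that $\alpha(\psi)\subset M_k$ and $\omega(\psi)\subset M_j$, which will yield $y\in C(M_k,M_j)\subset M(I)$.

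The limit-set inclusions are the easy half: the tails of $\psi$ coincide with those of $\varphi_z$ and $\varphi_x$, so $\uimm\psi\subset\uimm\varphi_z$ and $\uimp\psi\subset\uimp\varphi_x$, and monotonicity of the $\cV$-hull together with the fact that $\alpha(\varphi_z)$ and $\omega(\varphi_x)$ are themselves $\cV$-compatible and locally closed (Proposition \ref{prop:vhull-vcomp-lcl}) gives $\alpha(\psi)\subset\alpha(\varphi_z)\subset M_k$ and symmetrically $\omega(\psi)\subset\omega(\varphi_x)\subset M_j$.

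The main obstacle will be verifying essentialness of $\psi$, because the bounded modification at the seam could in principle interact with regular multivectors visited by the tails. I expect to handle this as follows: for any regular $w\in\im\psi$, if $[w]_\cV\cap\im\varphi_z=\emptyset$ then $\psi(t)\notin[w]_\cV$ for every $t\leq 0$, giving the left-infinite condition for free; otherwise, choosing any $w'\in[w]_\cV\cap\im\varphi_z$, essentialness of $\varphi_z$ applied to the regular point $w'$ shows that $\{t\leq 0:\varphi_z(t)\notin[w]_\cV\}=\{t\leq 0:\varphi_z(t)\notin[w']_\cV\}$ is left-infinite. A symmetric argument with $\varphi_x$ delivers right-infiniteness. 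Once essentialness is in hand, we conclude $y\in C(M_k,M_j)\subset M(I)$, local closedness follows, and Proposition \ref{prop:inv_conv_vcomp_is_iso} completes the proof.
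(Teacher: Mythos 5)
Your proof is correct and follows essentially the same route as the paper: reduce to Proposition \ref{prop:inv_conv_vcomp_is_iso}, obtain invariance and $\cV$-compatibility from Propositions \ref{prop:invariance_of_connection}, \ref{prop:union-of-invariant-sets}, \ref{prop:union-intersection-vcomp}, and establish local closedness via the concatenation $\varphi_z^-\cdot y\cdot\varphi_x^+$ placed in $C(M_k,M_j)$. The extra $x$ you insert before $\varphi_x^+$ is harmless but redundant, since $\varphi_x^+$ already begins at $x$; the paper just writes $\varphi_a^-\cdot b\cdot\varphi_c^+$. The one place you go beyond the paper is the explicit verification that $\psi$ is essential: the paper asserts this without argument, and your case split on whether $[w]_\cV$ meets the relevant tail is exactly the right justification for that assertion.
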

\begin{proof}
  Observe that $M(I)$ is invariant, because, by Proposition \ref{prop:invariance_of_connection}, every connection set is invariant, and by Proposition \ref{prop:union-of-invariant-sets} the union of invariant sets is invariant.
  We will prove that $M(I)$ is locally closed. 
  To see that, suppose the contrary. 
  Then, by Proposition \ref{prop:lcl-in-ftop}, we can choose $a,c\in M(I)$ and a point $b\not\in M(I)$ such that $c\leq_\cT b\leq_\cT a$.
  There exist essential solutions $\varphi_a\in\esol(a,X)$ and $\varphi_c\in\esol(c,X)$ such that $\alpha(\varphi_a)\subset M_q$ and $\omega(\varphi_c)\subset M_p$ for some $p,q\in I$. 
  It follows that $\psi:=\varphi_a^-\cdot b\cdot\varphi_c^+$ is a well-defined
  essential solution such that $\alpha(\psi)\subset M_q$ and $\omega(\psi)\subset M_p$. 
  Hence, $b\in C(M_q,M_p)\subset M(I)$ which proves that $M(I)$ is locally closed.
  Moreover, $M(I)$ is $\cV$-compatible as a union of $\cV$-compatible sets.
  Thus, the conclusion follows from Proposition \ref{prop:inv_conv_vcomp_is_iso}.
\end{proof}

\begin{thm}\label{thm:down-I-attractor}
  If $I$ is a down set in $\PP$, then $M(I)$ is an attractor in $X$.
\end{thm}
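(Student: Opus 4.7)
The plan is to invoke Theorem \ref{thm:attractor_closedness}, which characterizes attractors as closed, $\cV$-compatible, invariant sets. By Theorem \ref{thm:MI_iso_inv} the set $M(I)$ is already an isolated invariant set, so in view of Propositions \ref{prop:iso-is-vcomp} and \ref{prop:iso_inv_is_conv} it is $\cV$-compatible, locally closed, and invariant. Thus the only remaining content to establish is that $M(I)$ is closed, or equivalently, that for every $x\in M(I)$ and every $y\in\cl x$ one has $y\in M(I)$.

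To show this, fix $x\in M(I)$ and $y\in\cl x$. By the definition of $M(I)$ there exist $p,q\in I$ and an essential solution $\varphi\in\esol(x,X)$ with $\alpha(\varphi)\subset M_q$ and $\omega(\varphi)\subset M_p$. Since $X$ is invariant, pick any $\eta\in\esol(y,X)$. Because $y\in\cl x\subset\Piv(x)$ and $\rep{\varphi^-}=x$, the concatenation $\psi:=\varphi^-\cdot\eta^+$ is well defined; moreover $\psi$ is essential, being left-essential (inherited from $\varphi^-$) and right-essential (inherited from $\eta^+$). By construction $\alpha(\psi)=\alpha(\varphi)\subset M_q$ and $y\in\im\psi$.

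Next I would apply the Morse decomposition property of Definition~\ref{def:morse_decomposition}\ref{it:md_ii} to $\psi$. One of two cases occurs. In the first case $\im\psi\subset M_r$ for some $r\in\PP$; since the Morse sets are mutually disjoint and $\alpha(\psi)\subset M_q\cap M_r\neq\emptyset$ (non-emptiness comes from Proposition~\ref{prop:uim-non-empty} via Theorem~\ref{thm:limit-set-is-iso-inv}), we conclude $r=q\in I$, so $y\in M_q$. A short side verification, namely that $M_q\subset C(M_q,M_q)$ (every $z\in M_q$ admits an essential solution inside $M_q$ by invariance, and its $\alpha$- and $\omega$-limits, being $\cV$-hulls of subsets of the $\cV$-compatible locally closed set $M_q$, lie in $M_q$), then yields $y\in M(I)$. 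In the second case $\alpha(\psi)\subset M_{q'}$ and $\omega(\psi)\subset M_{p'}$ with $p'<q'$; disjointness of Morse sets forces $q'=q\in I$, and since $I$ is a down set, $p'<q=q'\in I$ gives $p'\in I$; hence $y\in C(M_q,M_{p'})\subset M(I)$.

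The main structural point, and the only place where the down-set hypothesis is used, is precisely this deduction that the $\omega$-limit index $p'$ of the re-concatenated solution $\psi$ lies in $I$. Everything else is bookkeeping about concatenations of essential solutions and the disjointness of Morse sets. I do not expect a serious obstacle; the potential pitfall is making sure that $\psi$ really is essential and that the identification $\alpha(\psi)=\alpha(\varphi)$ is rigorous, both of which follow directly from the definitions of essential solution and of $\alpha$.
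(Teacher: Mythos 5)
Your proof is correct and follows essentially the same route as the paper's: reduce to showing $M(I)$ is closed, concatenate the backward half of the defining essential solution through a point of $M(I)$ with the forward half of an arbitrary essential solution through the closure point, apply the Morse decomposition axiom (ii) to the concatenated essential solution, and use the down-set hypothesis once to conclude the $\omega$-index also lies in $I$. If anything you are slightly more careful than the paper: you treat the case $\im\psi\subset M_r$ separately and note the needed inclusion $M_q\subset C(M_q,M_q)$, whereas the paper's line "Definition~\ref{def:morse_decomposition} implies that $i>j$" silently admits $i=j$ and leaves that detail implicit.
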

\begin{proof}
  We will show that $M(I)$ is closed. 
  For this, let $x\in\cl(M(I))$. 
  By Proposition \ref{prop:cl-as-union} we can choose a $y\in M(I)$ such that $x\in\cl y$. 
  Consider essential solutions $\varphi_x\in\esol(x,X)$ and $\varphi_y\in\esol(y,M(I))$ with $\alpha(\varphi_y)\subset M_i$ for some $i\in I$. 
  The concatenated solution $\varphi:=\varphi_y^-\cdot\varphi_x^+$ is well-defined and satisfies $\alpha(\varphi)\subset M_i$ and $\omega(\varphi)\subset M_j$ for some $j\in\PP$.
  Definition \ref{def:morse_decomposition} implies that $i>j$.
  Since $I$ is a down set, we get $j\in I$. 
  It follows that $x\in C(M_i,M_j)\subset M(I)$.
  Thus, $\cl M(I)\subset M(I)$, which proves that $M(I)$ is closed.
  Finally, Theorem \ref{thm:attractor_closedness} implies that the set $M(I)$ is an attractor.
\end{proof}

\begin{thm}\label{thm:idx_pair_for_convex_i}
  If $I\subset\PP$ is convex, then $(M(I^\leq), M(I^<))$ is an index pair for
  the isolated invariant set $M(I)$.
\end{thm}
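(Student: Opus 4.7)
The plan is to verify the three conditions \ref{it:ind_pair_1}--\ref{it:ind_pair_3} of Definition \ref{def:index_pair} for $P_1 := M(I^\leq)$ and $P_2 := M(I^<)$. By Proposition \ref{prop:A-leq-down-set} the sets $I^\leq$ and $I^<$ are both down sets in $\PP$, so Theorem \ref{thm:down-I-attractor} identifies $P_1$ and $P_2$ as attractors; Theorem \ref{thm:attractor_closedness} then makes both sets closed and $\cV$-compatible, with $\Piv(P_i) = P_i$ for $i=1,2$, while $P_2 \subset P_1$ is immediate from $I^< \subset I^\leq$. Conditions \ref{it:ind_pair_1} and \ref{it:ind_pair_2} are then automatic: $\Piv(P_2) = P_2$ delivers \ref{it:ind_pair_1}, and $\Piv(P_1) = P_1$ makes the hypothesis of \ref{it:ind_pair_2} empty.

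The substantive step is \ref{it:ind_pair_3}, namely $M(I) = \inv(P_1 \setminus P_2)$. First I will establish the auxiliary fact $M(I) \cap P_2 = \emptyset$. If $x$ lay in both, two essential solutions through $x$ would splice into a single essential solution whose $\alpha$-limit lies in $M_k$ for some $k \in I^<$ and whose $\omega$-limit lies in $M_j$ for some $j \in I$; Definition \ref{def:morse_decomposition}\ref{it:md_ii}, together with the disjointness of the Morse sets, would force $k > j$, and combining $j \leq k$ with a witness $q \in I$ satisfying $k \leq q$ would place the chain $j \leq k \leq q$ inside $I$ by convexity, contradicting $k \in I^<$. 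Together with $M(I) \subset P_1$ (from $I \subset I^\leq$) and the invariance of $M(I)$ granted by Theorem \ref{thm:MI_iso_inv}, this yields $M(I) \subset \inv(P_1 \setminus P_2)$.

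For the reverse inclusion, pick $x \in \inv(P_1 \setminus P_2)$ and an essential solution $\varphi$ through $x$ with $\im \varphi \subset P_1 \setminus P_2$. Since $P_1$ and $P_2$ are closed and $\cV$-compatible, the difference $P_1 \setminus P_2$ is locally closed by Proposition \ref{prop:lcl-cl-sub} and $\cV$-compatible by a direct check; hence $\alpha(\varphi) = \langle \uimm\varphi \rangle_\cV$ and $\omega(\varphi)$ are both contained in $P_1 \setminus P_2$. The Morse decomposition supplies $q, p \in \PP$ with $\alpha(\varphi) \subset M_q$ and $\omega(\varphi) \subset M_p$. I will argue $q \in I$, the case of $p$ being symmetric. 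Any $y \in \alpha(\varphi) \subset M_q \cap P_1$ lies in $C(M_i, M_j)$ for some $i, j \in I^\leq$; splicing the essential solution realizing this with an essential solution staying in $M_q$ produces a new essential solution with $\alpha$-limit in $M_i$ and $\omega$-limit in $M_q$. Definition \ref{def:morse_decomposition}\ref{it:md_ii} then gives $q \leq i$, and since $I^\leq$ is a down set we obtain $q \in I^\leq$. If $q$ were in $I^<$, then $M_q \subset M(I^<) = P_2$ would contradict $\alpha(\varphi) \cap P_2 = \emptyset$; therefore $q \in I$, and consequently $x \in C(M_q, M_p) \subset M(I)$.

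The main obstacle is the analysis of $\alpha(\varphi)$ and $\omega(\varphi)$ in the reverse inclusion: one must first trap the Morse index $q$ inside $I^\leq$ via a carefully chosen splice of essential solutions and the Morse ordering, then exclude $I^<$ using the separation from $P_2$. Convexity of $I$ enters precisely when ruling out intermediate Morse indices that would otherwise escape~$I$.
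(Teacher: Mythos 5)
Your proof is correct and follows essentially the same route as the paper's: conditions \ref{it:ind_pair_1} and \ref{it:ind_pair_2} are deduced from the attractor property of $M(I^\leq)$ and $M(I^<)$ (via Proposition~\ref{prop:A-leq-down-set} and Theorems~\ref{thm:down-I-attractor}, \ref{thm:attractor_closedness}), and \ref{it:ind_pair_3} is proved by a two-sided inclusion using the Morse decomposition property together with splicing of essential solutions. The one spot where the details diverge is in establishing that $M(I)$ misses $M(I^<)$: you splice two essential solutions through a hypothetical common point and invoke the convexity of $I$ directly to derive a contradiction, whereas the paper argues that the attractor $M(I^<)$ would absorb $\omega(\varphi)$ for an essential solution with $\omega(\varphi)\subset M_p$, $p\in I$, forcing $p\in I\cap I^<$; these are the same ingredients arranged slightly differently. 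You also spell out the splicing step behind the paper's terse assertion that the limit-set indices land in $I^\leq$, which the paper leaves implicit. So this is a matter of exposition, not a genuinely different approach.
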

\begin{proof}
  By Proposition \ref{prop:A-leq-down-set} the sets $I^\leq$ and $I^<$ are down sets. 
  Thus, by Theorem \ref{thm:down-I-attractor} both $M(I^\leq)$ and $M(I^<)$ are attractors. 
  It follows that $\Pi_\cV(M(I^\leq))\subset M(I^\leq)$ and $\Pi_\cV(M(I^<))\subset M(I^<)$. 
  Therefore, conditions \ref{it:ind_pair_1} and \ref{it:ind_pair_2} of an index pair are satisfied.

  Let $A:= M(I^\leq)\setminus M(I^<)$. 
  The set $A$ is $\cV$-compatible as a difference of $\cV$-compatible sets. 
  By Proposition \ref{prop:lcl} it is also locally closed, because $M(I^\leq)$ and~$M(I^<)$
  are closed as attractors (see Theorems \ref{thm:attractor_closedness} and \ref{thm:repeller_openess}).
  We claim that $M(I)\subset A$. 
  To see this, assume the contrary and select an $x\in M(I)\setminus A$.
  By the definition of $M(I)$ we can find an essential solution $\varphi$ through $x$ such that $\omega(\varphi)\subset M_p$ for some $p\in I$.
  Since $M(I)\subset M(I^\leq)$ and $x\not\in A$ we get $x\in M(I^<)$.
  But $M(I^<)$ is an attractor.
  Therefore $\omega(\varphi)\subset M(I^<)$, which in turn implies $p\not\in I$, a contradiction.

  To prove the opposite inclusion take an $x\in\inv(M(I^\leq)\setminus M(I^<))$.
  Then we can find an essential solution $\varphi\in\esol(x,M(I^\leq)\setminus M(I^<))$,
  and clearly one has $\im\varphi\subset M(I^\leq)\setminus M(I^<)$.
  In particular, 
  \begin{align}\label{eq:thm-idx-pair-for-convex-i-1}
    \uimm\varphi\cap M(I^<)=\emptyset
    \quad\text{and}\quad
    \uimp\varphi\cap M(I^<)=\emptyset .
  \end{align}
  We also have $\varphi\in\esol(x, M(I^\leq))$, which means that there exist $p,q\in I^\leq$ such that $p\geq q$, $\alpha(\varphi)\subset M_p$, $\omega(\varphi)\subset M_q$.
  We cannot have $p\in I^<$, because then we get $\emptyset\neq\uimm\varphi\subset\alpha(\varphi)\subset M_p\subset M(I^<)$ which contradicts (\ref{eq:thm-idx-pair-for-convex-i-1}).
  Therefore, $p\in I^\leq\setminus I^<=I$.
  By an analogous argument we get $q\in I$.
  It follows that $x\in C(M_p,M_q)\subset M(I)$.
\end{proof}

Since for a down set $I\subset\PP$ we have $I^\leq=I$, $I^<=\emptyset$, as an immediate consequence of Theorem \ref{thm:idx_pair_for_convex_i} we get the following corollary.

\begin{cor}\label{cor:idx_pair_for_lower_set}
  If $I$ is a down set in $\PP$, then $I^\leq = I$, $I^<=\emptyset$, $(M(I),\emptyset)$ is an index pair for $M(I)$.
\end{cor}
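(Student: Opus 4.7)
The plan is to verify the two auxiliary identities $I^\leq = I$ and $I^< = \emptyset$ directly from the definitions, observe that a down set is in particular convex, and then invoke Theorem~\ref{thm:idx_pair_for_convex_i}.

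First I would check that $I^\leq = I$. The inclusion $I \subset I^\leq$ is immediate from the definition, since any $b \in I$ satisfies $b \leq b$. For the opposite inclusion, if $a \in I^\leq$ then there exists $b \in I$ with $a \leq b$, and since $I$ is a down set this forces $a \in I$. Therefore $I^\leq = I$, and consequently $I^< = I^\leq \setminus I = \emptyset$. From the definition $M(J) = \bigcup_{i,j \in J} C(M_i, M_j)$ we get $M(\emptyset) = \emptyset$, so $M(I^<) = \emptyset$ and $(M(I^\leq), M(I^<)) = (M(I), \emptyset)$.

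Next I would note that every down set is convex: if $x \leq y \leq z$ with $x, z \in I$, then $y \leq z$ together with the down-set property of $I$ gives $y \in I$. Hence $I$ satisfies the hypotheses of Theorem~\ref{thm:idx_pair_for_convex_i}, which yields that $(M(I^\leq), M(I^<))$ is an index pair for $M(I)$. Substituting the identities established above, $(M(I), \emptyset)$ is an index pair for $M(I)$.

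There is no serious obstacle here since the corollary is essentially bookkeeping on top of Theorem~\ref{thm:idx_pair_for_convex_i}; the only thing to be mildly careful about is making sure the empty set qualifies as a closed subset of $X$ (it does) so that Definition~\ref{def:index_pair} applies, and checking that the three index-pair conditions reduce trivially in this case: \ref{it:ind_pair_1} is vacuous since $P_2 = \emptyset$, \ref{it:ind_pair_2} requires that $\Pi_\cV(x) \subset M(I)$ whenever $x \in M(I)$, which is precisely the attractor property guaranteed by Theorem~\ref{thm:down-I-attractor}, and \ref{it:ind_pair_3} becomes $M(I) = \Inv M(I)$, which holds by Theorem~\ref{thm:MI_iso_inv}.
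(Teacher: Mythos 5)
Your argument is correct and is essentially the paper's, which derives the corollary directly from Theorem~\ref{thm:idx_pair_for_convex_i} after noting $I^\leq = I$ and $I^< = \emptyset$ for a down set. Your extra paragraph giving a direct check of the three index-pair conditions is sound but redundant once the theorem has been applied.
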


\begin{thm}
  Assume $X$ is invariant, $A\subset X$ is an attractor and $A^\star$ is its dual repeller. 
  Then we have
  \begin{equation}\label{eq:poincare_eq_for_attr_rep_pair}
  p_A(t) + p_{A^\star}(t) = p_X(t) + (1+t)q(t)
  \end{equation}
  for a polynomial $q(t)$ with non-negative coefficients. 
  Moreover, if $q\neq 0$, then $C(A^\star, A)\neq\emptyset$.
\end{thm}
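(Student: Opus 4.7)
My plan is to deduce the main equation from Proposition \ref{prop:poincare_equation} applied to an explicit index pair for~$A^\star$, and to obtain the second claim by contraposition via the additivity Theorem \ref{thm:additivity_con_idx}.

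First, Proposition \ref{prop:attr-rep-pair-ms} yields that $\cM = \{M_1, M_2\}$ with $M_1 := A$ and $M_2 := A^\star$ is a Morse decomposition of $X$ indexed by $\PP = \{1, 2\}$ with $1 < 2$. The subset $I := \{2\} \subset \PP$ is convex with $I^\leq = \PP$ and $I^< = \{1\}$. Invariance of $X$ together with Proposition \ref{prop:empty_conn_attr_dual_rep} implies $M(I^\leq) = X$, $M(I^<) = A$, and $M(I) = A^\star$, so Theorem \ref{thm:idx_pair_for_convex_i} produces the index pair $(X, A)$ for~$A^\star$. Since $A$ and $X$ are both closed with empty mouth, $p_A(t)$ and $p_X(t)$ coincide with the Poincar\'e polynomials of $A$ and $X$ respectively, and Proposition \ref{prop:poincare_equation} applied to this index pair delivers equation (\ref{eq:poincare_eq_for_attr_rep_pair}) with $q$ having non-negative coefficients.

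For the second assertion I argue contrapositively: assuming $C(A^\star, A) = \emptyset$, I deduce $q = 0$. Since $X$ is invariant, every $x \in X$ admits an essential solution $\varphi$ through it, and Definition \ref{def:morse_decomposition} forces $(\alpha(\varphi), \omega(\varphi))$ to equal $(M_r, M_r)$ for some $r$ or $(A^\star, A)$. Because the latter case is excluded, $X = A \sqcup A^\star$. The condition $\cl A \cap A^\star = \emptyset$ is immediate from $A$ being closed and $A \cap A^\star = \emptyset$. The principal difficulty is the dual condition $A \cap \cl A^\star = \emptyset$, which I plan to verify by a dynamical concatenation argument: if some $x \in A$ belongs to $\cl y$ for a $y \in A^\star$, then $x \in \Piv(y)$, and picking essential solutions $\psi \in \esol(y, A^\star)$ and $\varphi \in \esol(x, A)$ (available by the invariance of the isolated invariant sets~$A$ and~$A^\star$), the concatenation $\chi := \psi^- \cdot x \cdot \varphi^+$ is a well-defined essential solution in $X$ with $\alpha(\chi) = \alpha(\psi) \subset A^\star$ and $\omega(\chi) = \omega(\varphi) \subset A$, placing $x \in C(A^\star, A)$ and yielding the desired contradiction.

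Having established that $X$ decomposes into the isolated invariant sets $A$ and $A^\star$ in the sense required by Theorem \ref{thm:additivity_con_idx}, that theorem provides $\con(X) = \con(A) \oplus \con(A^\star)$. Using $\cl X = X$, $\mo X = \emptyset$, $\cl A = A$, $\mo A = \emptyset$, together with Theorem \ref{thm:index_pairs_isomorphism} applied to the index pair $(X, A)$ for $A^\star$, this translates into $H(X) \cong H(A) \oplus H(\cl A^\star, \mo A^\star)$. The second clause of Proposition \ref{prop:poincare_equation} then forces $q(t) = 0$, completing the contrapositive. The main obstacle throughout the argument is the concatenation step bridging the topological condition $A \cap \cl A^\star = \emptyset$ with the dynamical condition $C(A^\star, A) = \emptyset$; all other steps are essentially assembly of results already established in the paper.
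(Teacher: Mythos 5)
Your proposal is correct and follows essentially the same strategy as the paper's proof: build the two-element Morse decomposition $\{A, A^\star\}$, take the convex set $I = \{2\}$, invoke Theorem~\ref{thm:idx_pair_for_convex_i} to obtain the index pair $(X,A)$ for $A^\star$, feed this into Proposition~\ref{prop:poincare_equation}, and then use Theorem~\ref{thm:additivity_con_idx} for the vanishing of~$q$. The one place where you go beyond the paper is in explicitly verifying that $C(A^\star,A)=\emptyset$ really does yield a decomposition of $X$ into $A$ and $A^\star$ in the precise sense required by Theorem~\ref{thm:additivity_con_idx} (i.e.\ that $X = A \cup A^\star$, $\cl A \cap A^\star = \emptyset$, and $A \cap \cl A^\star = \emptyset$); the paper simply asserts this. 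Your concatenation argument for $A \cap \cl A^\star = \emptyset$ is sound and is genuinely needed, since the topological data alone (attractor closed, repeller open, union is $X$) does not force $\mo A^\star = \emptyset$ — the invariance of $X$ and of the two pieces has to enter, exactly as your constructed essential solution $\psi^-\cdot x\cdot\varphi^+$ makes it do.
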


\begin{proof}
  Let $\PP:=\{1,2\}$ with order induced from $\NN$, $M_1:=A$ and $M_2:=A^\star$. 
  Then $\cM:=\{M_1,M_2\}$ is a Morse decomposition of $X$ by Proposition~\ref{prop:attr-rep-pair-ms}. 
  For $I:=\{2\}$ one obtains $I^\leq=\{1,2\}$ and $I^<=\{1\}$. 
  Yet, this immediately implies both $M(I^\leq)=X$ and $M(I^<)=M(\{1\})=A$.
  %Thus, we get from Corollary \ref{cor:idx_pair_for_lower_set}
  We have
  \begin{equation}\label{eq:poincare_eq_for_attr_rep_pair_prop}
  p_X(t)=p_{M(I^\leq)}(t) \quad\mbox{and}\quad p_A(t)=p_{M(I^<)}(t).
  \end{equation}
  By Theorem \ref{thm:idx_pair_for_convex_i} the pair $(M(I^\leq), M(I^<))$ is an index pair for $M(I)=A^\star$. 
  Thus, by substituting $P_1:=M(I^\leq)$, $P_2:=M(I^<)$, $S:=A^\star$ into~(\ref{eq:poincare_equation}) in Corollary \ref{prop:poincare_equation} we get (\ref{eq:poincare_eq_for_attr_rep_pair}) from (\ref{eq:poincare_eq_for_attr_rep_pair_prop}). 
  By Proposition \ref{prop:empty_conn_attr_dual_rep} we have the identity $C(A,A^\star)=\emptyset$. 
  Therefore, if in addition $C(A^\star, A)=\emptyset$, then $X$ decomposes into $A$ and $A^\star$, and Theorem~\ref{thm:additivity_con_idx} implies
  \begin{equation*}
    H(P_1)=\con(X)=\con(A)\oplus \con(A^\star)=H(P_2)\oplus H(A^\star),
  \end{equation*}
  as well as $q=0$ in view of Proposition \ref{prop:poincare_equation}. 
  This finally shows that $q\neq 0$ implies $C(A^\star, A)\neq\emptyset$.
\end{proof}

%%%%%%%%%%%%%%%%%%%%%%%%%%%%%%%%%%%%%%%%%%%%%%%
\subsection{Morse equation and Morse inequalities}
The following two theorems follow from the results of the preceding section by adapting the proofs of the corresponding results in \cite{Mr2017}.

\begin{thm}\label{thm:morse-equation}
  Assume $X$ is invariant and $\PP=\{1,2,...,n\}$ is ordered by the linear order of the natural numbers. 
  Let $\cM:=\{M_p\ |\ p\in\PP\}$ be a Morse decomposition of  $X$ and set $A_i:= M(\{i\}^{\leq})$, $A_0:=\emptyset$. 
  Then $(A_{i-1},M_i)$ is an attractor-repeller pair in $A_i$. 
  Moreover,
    \begin{equation*}
        \sum_{i=1}^{n} p_{M_i}(t) = p_X(t) + (1+t)\sum_{i=1}^n q_i(t)
    \end{equation*}
    for some polynomials $q_i(t)$ with non-negative coefficients and such that $q_i(t)\neq 0$ implies $C(M_i,A_{i-1})\neq\emptyset$ for $i=2,3,...,n$.
\end{thm}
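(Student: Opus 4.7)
The plan is to iterate an attractor--repeller pair construction along the filtration $A_0\subset A_1\subset\cdots\subset A_n$, apply the Poincar\'e equation at each step, and telescope the resulting identities.

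\medskip

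\textit{Attractor--repeller pair and telescoping.} Since $\{1,\ldots,i\}$ is a down set in $\PP$, Theorem~\ref{thm:down-I-attractor} shows that each $A_i=M(\{1,\ldots,i\})$ is an attractor in $X$, hence by Theorem~\ref{thm:attractor_closedness} closed, $\cV$-compatible, and invariant. The same properties hold for $A_{i-1}\subset A_i$, and applying Theorem~\ref{thm:attractor_closedness} to the restricted multivector field $\cV_{A_i}$ shows that $A_{i-1}$ is an attractor inside $A_i$. Moreover, $\{i\}$ is convex in $\PP$, so Theorem~\ref{thm:idx_pair_for_convex_i} gives that $(A_i,A_{i-1})$ is an index pair for $M_i=M(\{i\})$; condition~\ref{it:ind_pair_3} then identifies $M_i=\Inv(A_i\setminus A_{i-1})$ as the dual repeller of $A_{i-1}$ inside $A_i$, proving the first assertion. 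Applying Proposition~\ref{prop:poincare_equation} to the index pair $(A_i,A_{i-1})$ yields, for each $i$, a polynomial $q_i(t)$ with non-negative coefficients such that
\begin{equation}\label{eq:morse-plan-step}
p_{M_i}(t)+p_{A_{i-1}}(t)=p_{A_i}(t)+(1+t)\,q_i(t).
\end{equation}
Summing \eqref{eq:morse-plan-step} over $i=1,\ldots,n$ and telescoping the $p_{A_i}$ terms, using $p_{A_0}=0$ and the equality $A_n=M(\PP)=X$ (which holds because $X$ is invariant and Definition~\ref{def:morse_decomposition}\ref{it:md_ii} places every point in some connection set $C(M_q,M_p)$), produces the claimed Morse equation.

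\medskip

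\textit{Nontrivial $q_i$ forces a connection.} I prove the contrapositive: if $C(M_i,A_{i-1})=\emptyset$, then $q_i=0$. Under this hypothesis I show that $A_i$ decomposes into $A_{i-1}$ and $M_i$ in the sense preceding Theorem~\ref{thm:additivity_con_idx}. Disjointness $A_{i-1}\cap M_i=\emptyset$ and $\cl A_{i-1}\cap M_i=\emptyset$ follow from $M_i\subset A_i\setminus A_{i-1}$ together with the closedness of $A_{i-1}$. For $A_{i-1}\cap\cl M_i=\emptyset$: a point $x$ in this intersection would satisfy $x\in\cl y$ for some $y\in M_i$, and concatenating $\psi^-$ for some $\psi\in\esol(y,M_i)$, the one-step transition $y\to x$ (valid since $x\in\cl y\subset\Piv(y)$), and $\varphi^+$ for some $\varphi\in\esol(x,A_{i-1})$ produces an essential solution witnessing $x\in C(M_i,A_{i-1})$, a contradiction. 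For the set equality $A_i=A_{i-1}\cup M_i$: any $x\in A_i$ lies in some $C(M_j,M_k)$ with $j,k\le i$; the case $j,k\le i-1$ places $x$ in $A_{i-1}$; if $k=i$, then Definition~\ref{def:morse_decomposition}\ref{it:md_ii} combined with $j\le i$ forces $\im\varphi\subset M_i$ and hence $x\in M_i$; and the remaining case $j=i,\,k<i$ gives $x\in C(M_i,A_{i-1})$, excluded by assumption. Theorem~\ref{thm:additivity_con_idx} then yields $H(A_i)\cong H(A_{i-1})\oplus H(\cl M_i,\mo M_i)$, and the second assertion of Proposition~\ref{prop:poincare_equation} forces $q_i=0$.

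\medskip

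The principal obstacle will be the verification that $A_i=A_{i-1}\cup M_i$ under the hypothesis $C(M_i,A_{i-1})=\emptyset$. This step uses both the linear ordering of $\PP$ and the Morse-decomposition axiom in an essential way, since one must rule out essential solutions whose $\alpha$-limit lies in $M_j$ for $j>i$ while the orbit still meets $A_i$; once the decomposition is in place, the remainder is a direct application of the additivity theorem and the optimality clause of the Poincar\'e equation.
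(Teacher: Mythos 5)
The paper itself does not spell out a proof of Theorem~\ref{thm:morse-equation}; it merely remarks that it ``follows from the results of the preceding section by adapting the proofs of the corresponding results in \cite{Mr2017}.'' Your proposal is precisely the telescoping argument that this remark points to, and it is correct: each $\{i\}$ is convex so Theorem~\ref{thm:idx_pair_for_convex_i} gives the index pair $(A_i,A_{i-1})$ for $M_i$; the per-step Poincar\'e equations telescope using $A_0=\emptyset$ and $A_n=M(\PP)=X$; and the optimality clause $q_i\neq0\Rightarrow C(M_i,A_{i-1})\neq\emptyset$ is exactly the contrapositive you establish via the decomposition $A_i=A_{i-1}\cup M_i$ and Theorem~\ref{thm:additivity_con_idx}, which mirrors verbatim the paper's own use of that machinery in the unnumbered theorem at the end of Section~\ref{sec:morse-decomposition} for the attractor--repeller pair case. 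Two minor points you leave implicit but which do check out: that the concatenation $\psi^-\cdot\varphi^+$ is essential (because the two halves land in the disjoint $\cV$-compatible sets $M_i$ and $A_{i-1}$, so each half forces the other to leave every multivector infinitely often in the missing direction — the same tacit step the paper uses in Theorems~\ref{thm:MI_iso_inv} and~\ref{thm:dual-attractor}), and that $\Inv_{\cV_{A_i}}$ and $\Inv_{\cV}$ agree on subsets of $A_i$ because $A_i$ is closed and $\cV$-compatible, so $\Pi_{\cV_{A_i}}$ and $\Pi_\cV$ coincide there. In short, your proof is correct and takes the same route the paper intends.
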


As before, for a locally closed set $A\subset X$ we define its $k$th Betti number by
$\beta_k(A):=\rank H_k(\cl A, \mo A)$.
\begin{thm}\label{thm:morse-inequalities}
    Assume $X$ is invariant. For a Morse decomposition $\cM$ of~$X$ define
    \[
        m_k(\cM):=\sum_{r\in\PP}\beta_k(M_r).
    \]
    Then for any $k\in\ZZ^+$ we have the following inequalities.
    \begin{enumerate}[(i)]
        \item The strong Morse inequalities:
            \[ m_k(\cM) - m_{k-1}(\cM) + ... \pm m_0(\cM) \geq \beta_k(X) - \beta_{k-1}(X) + ... \pm \beta_0(X), \]
        \item The weak Morse inequalities:
            \[ m_k(\cM) \geq \beta_k(X).\]
    \end{enumerate}
\end{thm}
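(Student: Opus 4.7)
The plan is to derive the Morse inequalities directly from the Morse equation of Theorem~\ref{thm:morse-equation}. Since $\PP$ is a finite poset, I would first extend the partial order on $\PP$ to a linear order without loss of generality (the quantity $m_k(\cM)$ depends only on the family $\setof{M_r}_{r\in\PP}$, not on the order, and conditions (i), (ii) of Definition~\ref{def:morse_decomposition} are preserved when passing to a finer order). This enables the application of Theorem~\ref{thm:morse-equation}, which supplies the polynomial identity
\[
  \sum_{i=1}^n p_{M_i}(t) \;=\; p_X(t) + (1+t)\, Q(t),
\]
where $Q(t) := \sum_{i=1}^n q_i(t)$ is a polynomial with non-negative coefficients.

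Denoting the $k$th coefficient of $Q$ by $q_k \geq 0$ (with the convention $q_{-1}=0$), and reading off the coefficient of $t^k$ on both sides gives
\[
  m_k(\cM) - \beta_k(X) \;=\; q_k + q_{k-1}.
\]
The weak inequalities $m_k(\cM) \geq \beta_k(X)$ are then immediate since both $q_k$ and $q_{k-1}$ are non-negative.

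For the strong inequalities I would form the alternating partial sum
\[
  S_k \;:=\; \sum_{j=0}^{k} (-1)^{k-j}\bigl(m_j(\cM) - \beta_j(X)\bigr)
     \;=\; \sum_{j=0}^{k} (-1)^{k-j}(q_j + q_{j-1}).
\]
A routine telescoping (split the sum and reindex the second piece by $j \mapsto j-1$) collapses every intermediate term, leaving $S_k = q_k \geq 0$, which is exactly the strong Morse inequality rewritten in the form stated.

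There is no significant obstacle here: the Morse equation already does the heavy lifting, and the rest is formal polynomial algebra. The only point to be mindful of is that Theorem~\ref{thm:morse-equation} is stated for a linearly ordered index set, so I would explicitly note at the start that any linear extension of $\PP$ produces a Morse decomposition with the same underlying family $\cM$ and hence the same numbers $m_k(\cM)$, legitimizing its use.
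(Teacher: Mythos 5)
Your proposal is correct and takes essentially the same route the paper intends: the paper states that the Morse inequalities ``follow from the results of the preceding section by adapting the proofs of the corresponding results in \cite{Mr2017},'' and the standard adaptation is exactly what you do---pass to a linear extension of~$\PP$ (noting that the Morse-decomposition axioms and the numbers $m_k(\cM)$ are preserved), invoke the Morse equation of Theorem~\ref{thm:morse-equation}, and compare coefficients to get $m_k(\cM)-\beta_k(X)=q_k+q_{k-1}$ with $q_j\geq 0$, from which the weak inequalities are immediate and the strong inequalities follow by the telescoping you describe.
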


%%%%%%%%%%%%%%%%
%%%%%%%%%%%%%%%%%%%%%%%%%%%%%%%%%%%%%%%%%%%%%%%%%%%%%%%%%%%%%%%%%

\bibliographystyle{abbrv}
\bibliography{references-ds,references-top}

\end{document}